%% file: main.tex
\documentclass{article}
\usepackage[utf8]{inputenc}
\usepackage[dvipsnames]{xcolor}
\usepackage{amsfonts,amsmath,fullpage,amssymb,amsthm,tikz,mathtools,lmodern,paralist,mathrsfs,enumitem,tikz-3dplot,cases,thmtools,bm}
\usepackage{subcaption}
\usepackage{graphicx}
\usepackage{placeins}
\usepackage[color]{xy}
\usepackage{graphicx}
\usepackage[backref=page]{hyperref}
\hypersetup{colorlinks, citecolor=red, filecolor=black, linkcolor=blue, urlcolor=blue}
\usepackage{cleveref}
\usepackage{mathalpha}
\usepackage[hang]{footmisc}
\DeclareMathAlphabet{\mathdutch}{U}{dutchcal}{m}{n}

\setlist[enumerate]{itemsep=0.18em, topsep=0.36em, parsep=0.08em}
\setlist[itemize]{itemsep=0.18em, topsep=0.36em, parsep=0.08em}

\theoremstyle{plain}
\newtheorem{theorem}{Theorem}[section]
\newtheorem{lemma}[theorem]{Lemma}
\newtheorem{proposition}[theorem]{Proposition}
\newtheorem{corollary}[theorem]{Corollary}

\newtheorem{conjecture}[theorem]{Conjecture}

\theoremstyle{definition}
\newtheorem{definition}[theorem]{Definition}

\newtheorem{remark}[theorem]{Remark}
\newtheorem{example}[theorem]{Example}

\theoremstyle{remark}

\newtheorem{notation}{Notation}

\title{Polytopal Bases for Barycentric Subdivisions}
\author{Spencer Backman and Federico Castillo}
\date{}

\begin{document}

\maketitle
\begin{abstract}
Several important fans studied at the interface of combinatorics and algebraic geometry arise from barycentric subdivisions of other fans; the central example is the braid arrangement.  The collection of faces of the standard simplex forms a basis for the deformation cone of the braid arrangement, i.e. the cone of generalized permutahedra.  We reinterpret this simplicial basis as the collection of deep truncations of the standard simplex, and systematically abstract this perspective to produce polytopal bases for the deformation cones of barycentric subdivisions of simplicial projective fans.  We further demonstrate that these bases restrict to bases for deformation cones of fans obtained by a sequence of stellar subdivisions induced by a building set.  When the starting polytope is smooth with all edge lengths equal to 1, we show that we can upgrade our basis to a collection of flat truncations.
We then investigate two extensions of the braid arrangement where we are able to further upgrade our flat truncation basis to an indecomposable polytopal basis:
	\begin{itemize}
		\item the barycentric subdivision of the normal fan of a product of standard simplices and
		\item  the barycentric subdivision of the braid arrangement.
	\end{itemize}
    
We discuss connections to, and implications for, Archimedean solids and regular polytopes, permutahedral plates, root polytopes, permutoassociahedra, simple permutonestohedra, cosmohedra, omnitruncations of Coxeter permutahedra,  bipermutahedra, $\pi$-colored fans, and tropical $\alpha$ and $\beta$ classes.
\end{abstract}

\setcounter{tocdepth}{2}
\tableofcontents

\section{Introduction}

The collection of all polytopes whose normal fans coarsen a fixed projective fan $\Sigma$ forms a polyhedral cone called the \textbf{deformation cone} of $\Sigma$, denoted $\operatorname{Def}(\Sigma)$ \cite{mcmullen1973representations,castillo2022deformation,padrol2023deformation}.\footnote{We may also refer to the deformation cone of a polytope, by which we mean the deformation cone of its normal fan.  By this convention, any two polytopes with the same normal fan have the same deformation cone.}  This cone is of intrinsic interest to polyhedral geometers as the cone of weakly convex functions on the fan $\Sigma$, but it also plays an important role in the setting of toric geometry where it is naturally identified with the cone of nef divisors on the projective toric variety associated to $\Sigma$ \cite{demazure1970sous, oda1988convex}.\footnote{In the toric setting, $\Sigma$ is always taken to be rational.}  If $\mathscr{P} \subset \operatorname{Def}(\Sigma)$ is a collection of polytopes such that every polytope in $\operatorname{Def}(\Sigma)$ can be expressed uniquely as a signed Minkowski sum of polytopes from $\mathscr{P}$, then we say that $\mathscr{P}$ is a \textbf{polytopal basis} for $\operatorname{Def}(\Sigma)$.\footnote{We may simply say that $\mathscr{P}$ is a basis for $\Sigma$ if no confusion will arise.}

  The braid arrangement in $\mathbb{R}^n$ is the collection of all hyperplanes $\{x_i-x_j=0:1\leq i <j \leq n\}$.
As with any central hyperplane arrangement, the braid arrangement can alternately be viewed as a projective fan, which is the normal fan of a zonotope; in the case of the braid arrangement, this zonotope is the standard permutahedron (see Figure \ref{fig:standard-permutahedron-intro}).

\begin{figure}[ht]
    \centering
    \includegraphics[width=.25\textwidth]{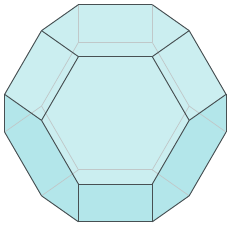}
    \caption{The standard permutahedron is a translation of the Minkowski sum of the line segments which connect type-A positive roots to the origin.}
    \label{fig:standard-permutahedron-intro}
\end{figure}

Generalized permutahedra are the polytopes with edge directions parallel to the vectors $\{e_i-e_j:1\leq i < j \leq n\}$.  These polytopes are ubiquitous in mathematics and make appearances in computer science, physics, and economics \cite{danilov2000cores, morton2009convex, goemans2015smallest, mohammadi2018generalized, early2024generalized}.  They can alternately be characterized as those polytopes whose normal fans coarsen the braid arrangement \cite{postnikov2009permutohedra, postnikov2008faces, ardila2020coxeter, castillo2022deformation}, hence the collection of all generalized permutahedra corresponds to the deformation cone of the braid arrangement.\footnote{Several of the objects mentioned here have alternate names in the literature: the braid arrangement is also the type-A Coxeter arrangement.
When viewing this arrangement as a fan, some authors use the name permutahedral fan, e.g. \cite{adiprasito2018hodge}.    
The generalized permutahedra are translation equivalent to the polymatroid base polytope of Edmonds \cite{edmonds1970submodular}, and some authors naturally use the term deformed permutahedra; see Pilaud \cite{pilaud2017nestohedra}.  
The deformation cone of a polytope is the closure of McMullen's type cone \cite{mcmullen1973representations}.  When $\Sigma$ is rational, McMullen’s type cone, modulo translations, is equivalent to the ample cone of the associated toric variety.  The deformation cone of the braid arrangement is sometimes called the submodular cone as its points can be identified with submodular functions \cite{edmonds1970submodular,loho2025manyrays}.}

It was shown independently by Danilov--Koshevoy \cite{danilov2000cores} and  Ardila--Benedetti--Doker \cite{ardila2010matroid} that every generalized permutahedron can be expressed uniquely as a signed Minkowski sum of standard simplices, i.e. the standard simplices form a polytopal basis for the cone of generalized permutahedra.
The utility of such a description was demonstrated by Postnikov \cite{postnikov2009permutohedra} who applied it to give a volume formula for generalized permutahedra.

Fine Bergman fans of matroids were introduced by Ardila--Klivans \cite{ardila2006bergman} 
as a distinguished collection of  subfans of the braid arrangement.  
Utilizing the Chow ring of a matroid, due to Feichtner--Yuzvinsky \cite{feichtner2004chow}, Adiprasito--Huh--Katz \cite{adiprasito2018hodge} settled Rota's conjecture on the log-concavity of the characteristic polynomial.  
In work of Yuzvinsky \cite{yuzvinsky2002small}  and Backman--Eur--Simpson \cite{backman2023simplicial}, it was shown that the restrictions of the standard simplices to a fine Bergman fan give a nef generating set for the Chow ring of a matroid, and the latter set of authors employed this generating set for giving a second proof of Rota's conjecture.  

The braid arrangement can alternately be described as the barycentric subdivision of the normal fan of the standard simplex.\footnote{From this perspective it may appear as something of a miracle that the braid arrangement is indeed a hyperplane arrangement; however, we find that this is a general phenomenon for regular polytopes; see Remark \ref{rmk:regularpoly}.}
In recent years, authors have become increasingly interested in investigating collections of fans which generalize the braid arrangement, the deformation cones of such fans, and the restrictions of these support functions to certain distinguished subfans and coarsenings.   
While these various fans have different motivations and constructions, they can often be understood as originating with the barycentric subdivision of some projective fan.  
The main examples of this phenomenon, at the time of writing, are the type-B Coxeter arrangement \cite{bastidas2021polytope, eur2024signed}, the $\pi$-colored fans \cite{clader2024multimatroids}, the nested braid fan \cite{castillo2022deformation}, the poset associahedra \cite{galashin2024p,sack2023realization}, the acyclonestohedra \cite{mantovani2025facial}, the permutoassociahedron \cite{kapranov1993permutoassociahedron, reiner1994coxeter, castillo2023permuto}, the permutonestohedra \cite{gaiffi2015permutonestohedra}, the cosmohedra \cite{arkani2025cosmohedra, ardila2026combinatorics} and, to some extent, the bipermutahedron \cite{ardila2023lagrangian, ardila2022bipermutahedron}.

Researchers often view the simplicial basis for generalized permutahedra as the set of faces of the standard simplex (equivalently, as encoding pullbacks of maps to projective space).  However, this interpretation does not naturally generalize to produce bases for deformation cones of other polytopes.
Our work begins with a conceptually different interpretation of the family of standard simplices:
\begin{center}
    
{\bf The standard simplices in $\mathbb{R}^n$ are the deep truncations of a single standard simplex.}   
	\begin{figure}[ht]
		\centering
		\tdplotsetmaincoords{45}{135}
         \input{tikz/triangle.tex}
		\caption{A standard simplex, two  shallow truncations, and a deep truncation realizing a line segment.}
\label{fig:split}
	\end{figure}
\end{center}

In this article we systematically abstract this perspective for constructing polytopal bases for deformation cones of fans which are barycentric subdivisions of simplicial projective fans.

We first provide an informal treatment of the terminology which is necessary for understanding the statement of our main results; we refer the reader to Subsection \ref{subsec:truncations} for further details.  By \textbf{truncating} a face $F$ of a polytope in direction $u$ by distance $\epsilon$, we mean to take the polytope obtained by moving the $F$-supporting hyperplane in direction $u$ by distance $\epsilon$ towards the polytope.  We typically take our truncations to be \textbf{central} so that $u$ is the sum of fixed normals for the facets containing the supporting face $F$.  We distinguish two notions of truncation of a face $F$: a \textbf{shallow truncation} takes $\epsilon$ small enough so that the truncation shaves off $F$ without touching any faces in the link of $F$, whereas a \textbf{deep truncation} has a defining hyperplane which passes through a nearest vertex neighboring $F$ (this determines $\epsilon$ uniquely).  A deep truncation is a \textbf{flat truncation} if it passes through all of the vertices neighboring $F$.   The \textbf{fragment} of a truncation is the piece of the polytope containing $F$ which is cut off by the truncation.

Shallow truncations are dual to stellar subdivisions, and central stellar subdivisions correspond to blow-ups in the setting of toric geometry.  Recall that the barycentric subdivision of a fan $\Sigma$ can be obtained by applying a sequence of central stellar subdivisions to the cones of $\Sigma$ in reverse order of containment.  An \textbf{omnitruncation} of a polytope is obtained by applying shallow truncations to each proper face of the original polytope in order of containment.  The normal fan of an omnitruncation is the barycentric subdivision of the normal fan of the original polytope.
	\begin{figure}[ht]
		\centering
		\tdplotsetmaincoords{60}{120}
		\input{tikz/split.tex}
		\caption{A cube, a shallow truncation, a flat vertex truncation, and a fragment.}
\label{fig:split_duplicated}
	\end{figure}
The following is our first main result. 

\begin{theorem}(\ref{thm:basis_seed}, \ref{thm:building}) Let $\mathsf{S}$ be a full-dimensional simple polytope with the origin in its interior, and let $\Psi$ be the barycentric subdivision of its normal fan. For $\epsilon$ sufficiently small, the set of $\epsilon$-truncations of $\mathsf{S}$ gives a polytopal basis for the deformation cone of $ \Psi $.  The Minkowski sum of these truncations is normally equivalent to an omnitruncation of $\mathsf{S}$.
Furthermore, if $ \mathsf{S} $ is smooth with all edge lengths 1, then the deep truncations of $\mathsf{S}$ are all flat truncations, and form a polytopal basis for the deformation cone of $\Psi$.
\end{theorem}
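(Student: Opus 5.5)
We will reduce everything to linear algebra on support functions. Let $\mathsf{S}=\{x:\langle a_i,x\rangle\le 1,\ i\in[m]\}$, and for each proper face $F$ of $\mathsf{S}$ let $I_F\subseteq[m]$ index the facets containing $F$. The rays of $\Psi$ are then in bijection with the nonempty proper faces $F$ (equivalently with the cones of the normal fan), with ray generator $u_F=\sum_{i\in I_F}a_i$. A polytope with normal fan coarsening $\Psi$ is determined by its support function on these rays. Hence $\operatorname{Def}(\Psi)$, modulo translation, sits inside $\mathbb{R}^{\{\text{faces}\}}$, and its linear span has dimension $N-n$, where $N$ is the number of proper nonempty faces. (We use here that $\Psi$ is simplicial and complete, so the deformation cone is full-dimensional in the space of piecewise-linear functions modulo linear ones.) The number of $\epsilon$-truncations is $N$, one per face, together with $\mathsf{S}$ itself. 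Since translations are quotiented out, the right count is to show that the truncations $\mathsf{S}_F^\epsilon$ span this $(N-n)$-dimensional space and satisfy exactly the expected number of relations. We would set this up carefully by checking whether $\mathsf{S}$ is included and whether translations are counted, and we would fix the convention to match \ref{thm:basis_seed}.

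The key step is the triangularity of the support values. The shallow $\epsilon$-truncation $\mathsf{S}_F^\epsilon$ is obtained by adding the inequality $\langle u_F,x\rangle\le h_F-\epsilon$, where $h_F=|I_F|$ is the value of $u_F$ on $F$. For a face $G$, the support function $h_{\mathsf{S}_F^\epsilon}(u_G)$ equals $h_G$ unless the cut affects $G$. For small $\epsilon$ that happens exactly when $G\subseteq F$, since the face $G$ is shaved only if the truncating hyperplane meets it. When $G\subseteq F$ the value drops by a positive amount of order $\epsilon$. Ordering faces by inclusion, the matrix $\big(h_{\mathsf{S}_F^\epsilon}(u_G)-h_{\mathsf{S}}(u_G)\big)_{F,G}$ is therefore triangular with nonzero diagonal. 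Together with $\mathsf{S}$ this gives linear independence of the support vectors. Independence is needed modulo linear functions, so we must also show that no combination of them is linear. This follows because the $\mathsf{S}$-column together with the triangular corrections spans, and a dimension count then shows that the linear functions lie in the span only trivially. Spanning follows from the same count: any deformation has a support function on the rays, and we solve the triangular system.

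Next we need each $\mathsf{S}_F^\epsilon$ to lie in $\operatorname{Def}(\Psi)$, i.e.\ its normal fan is coarsened by the barycentric subdivision. Its normal fan is the stellar subdivision of $\mathcal{N}(\mathsf{S})$ at the cone of $F$, and $\Psi$ refines every single stellar subdivision at one cone. The Minkowski sum of all the truncations has as normal fan the common refinement of these stellar subdivisions. Showing that this common refinement is exactly $\Psi$, so that the sum is normally equivalent to the omnitruncation, is the step I expect to be the main obstacle. I would argue it by showing that each maximal cone of $\Psi$, indexed by a flag $F_0\subset\dots\subset F_{n-1}$, is an intersection of cones from the individual stellar subdivisions. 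Concretely, it is the intersection over $k$ of the cone in the $F_k$-subdivision containing $u_{F_0},\dots$. Signed sums then express every element of the deformation cone, which completes the basis statement.

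For the smooth unit-edge case, smoothness means each vertex has $n$ edges with primitive directions forming a lattice basis dual to the facet normals. Unit length then means every neighboring vertex of a face $F$ along an edge leaving $F$ is at the same $u_F$-depth, namely $1$. The deep truncation therefore passes through all neighbors at once, so it is flat. Flat truncations differ from $\epsilon$-truncations, so the triangularity argument must be redone. The deep truncation still does not move $h(u_G)$ for faces $G$ not contained in $F$. We would check this using that the cut plane passes exactly through the link vertices, so faces not contained in $F$ keep a vertex on their supporting hyperplane. The diagonal entry becomes $1$, so independence and spanning follow as before. Membership in $\operatorname{Def}(\Psi)$ holds because the normal fan of a flat truncation is coarsened by the stellar subdivision at $F$.
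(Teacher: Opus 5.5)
Your triangularity computation is the paper's \Cref{lem:key} (and, at $\epsilon=1$ in the smooth case, \Cref{rem:key_delzant}). However, it only shows that the $N$ differences $d_{\mathsf F}=h(\operatorname{Tr}_\epsilon[\mathsf S,\mathsf F])-h(\mathsf S)$ form a basis of the full $N$-dimensional space $\operatorname{PL}(\Psi)$. That full space is where the truncations are claimed to form a basis. Modulo linear functions the dimension drops to $N-n$, so $N$ truncations could not be independent there, and linear functions do lie in their span, contrary to what you assert.

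What is actually missing is the statement $h(\mathsf S)\in\operatorname{Span}\{h(\operatorname{Tr}_\epsilon[\mathsf S,\mathsf F])\}$. Writing $h(\mathsf S)=\sum_{\mathsf F}c_{\mathsf F}d_{\mathsf F}$, the vectors $h(\mathsf S)+d_{\mathsf F}$ are independent if and only if $1+\sum_{\mathsf F}c_{\mathsf F}\neq 0$. Neither ``together with $\mathsf S$'' nor ``solve the triangular system'' addresses this, since both only produce combinations that still involve $h(\mathsf S)$.

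The paper closes this in \Cref{cor:P_in_the_interior}. For a facet $\mathsf G$, $d_{\mathsf G}$ is the piecewise-linear function on $\Sigma(\mathsf S)$ equal to $\epsilon$ at $u_{\mathsf G}$ and $0$ at the other facet normals. So with $\lambda_{\mathsf G}=-h(\mathsf S,u_{\mathsf G})/\epsilon$ one gets
\[
\sum_{\mathsf G\ \mathrm{facet}}\lambda_{\mathsf G}\,h(\operatorname{Tr}_\epsilon[\mathsf S,\mathsf G])=\Bigl(\sum_{\mathsf G\ \mathrm{facet}}\lambda_{\mathsf G}-1\Bigr)\,h(\mathsf S).
\]
Because the origin is interior, every $\lambda_{\mathsf G}$ is positive and of order $1/\epsilon$, so the scalar on the right is nonzero for small $\epsilon$.

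This step is where the interior-origin hypothesis enters, and your proposal never uses it. It matters most in the smooth case, where $\epsilon=1$ is forced and cannot be shrunk. There the paper uses integrality together with the interior origin to get $h(\mathsf S,u_{\mathsf G})\le -1$, hence $\sum_{\mathsf G}\lambda_{\mathsf G}\ge 2$ and the scalar is nonzero; so ``independence and spanning follow as before'' inherits the gap. Without the hypothesis the conclusion is false. The triangle $\mathsf S=\operatorname{Conv}\{(0,0),(1,0),(0,1)\}$ is smooth with unit edges, but its flat truncation along the facet $\{x+y=1\}$ is the point $\{\mathbf 0\}$, whose support function is identically zero.

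Two smaller points.
\begin{itemize}
\item With your normalization $\mathsf S=\{\langle a_i,x\rangle\le 1\}$, the vector $\sum_{i\in I_{\mathsf F}}a_i$ is in general not the ray of $\Psi$, which is built from primitive facet normals. Your depth-one computation in the smooth case also needs primitive normals. Write $\mathsf S=\{\langle \eta_i,x\rangle\le b_i\}$ with $\eta_i$ primitive and $b_i>0$.
\item Your plan for the normal equivalence is sound, and more elementary than the paper's reduction to nestohedra in \Cref{prop:local_building}. In coordinates $x=\sum_i x_i\eta_i$ on a maximal cone of $\Sigma(\mathsf S)$, the cone of the $\mathsf F_k$-subdivision containing the flag cone is $\{x_{j_k}\le x_i : i\in I_{\mathsf F_k}\}$, where $\{j_k\}=I_{\mathsf F_k}\setminus I_{\mathsf F_{k+1}}$ (with $I_{\mathsf F_n}=\emptyset$). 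Intersecting these over $k$ gives exactly the chamber $x_{j_0}\le\cdots\le x_{j_{n-1}}$ of $\Psi$.
\end{itemize}
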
 

The standard simplex is a smooth polytope with all edge lengths equal to 1.  As a special case of a codimension-one version of the previous theorem, we recover the simplicial basis for the cone of generalized permutahedra (see \Cref{prop:key_non_full}, \Cref{lem:standard_simplex_deep},
and \Cref{ex:postnikov_basis}).

De Concini and Procesi first introduced building sets and nested set complexes as the combinatorial framework underlying their theory of wonderful compactifications \cite{de1995wonderful}.  Feichtner--Kozlov extended building sets to more general posets and observed that, in the setting of face posets of fans, they are closely related to the theory of stellar subdivisions \cite{feichtner2004incidence}.   Nestohedra are smooth polytopes which arise from the truncations of the standard simplex according to a building set, and their associated toric varieties are the wonderful compactifications of the torus.
Feichtner--Sturmfels and Postnikov described each nestohedron as a Minkowski sum of standard simplices \cite{feichtner2005matroid,postnikov2009permutohedra}.\footnote{Postnikov showed that this presentation gives combinatorial vertex coordinates.  In recent work of Mantovani--Padrol--Pilaud, this has been utilized for giving nice vertex coordinates for building set trimmings of general polytopes via slices with the standard simplex \cite{mantovani2025facial}.}

An omnitruncation of a polytope $ \mathsf{P} $ can be understood as truncating $ \mathsf{P}$ according to the maximum building set.  The following theorem shows that our previous result extends to arbitrary building sets on simple polytopes.  In the special case of the standard simplex this recovers a result of Padrol--Pilaud--Poullot \cite{padrol2022hypergraphic}.

\begin{theorem}(\ref{thm:building}, \ref{cor:basis_building})
Let $S$ be a simple polytope with the origin in its interior, and let $B$ be a building set for $S$.  Let $P$ be a truncation of $S$ along $B$, and $\Psi$ be the normal fan of $P$.  The shallow truncations of $S$ along the individual faces in $B$ form a polytopal basis for the deformation cone of $\Psi$.  The Minkowski sum of these truncations is normally equivalent to $P$.  When $S$ is smooth with edge lengths 1, the corresponding flat truncations give a basis for the deformation cone of $\Psi$.
\end{theorem}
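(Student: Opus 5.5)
The plan is to reduce to the omnitruncation case of the previous theorem and then restrict along the building set.

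\textbf{Step 1: the fan $\Psi$.} Let $\Sigma$ be the normal fan of $S$. Truncating $S$ along $B$ means shallowly truncating the faces of $B$ in order of increasing dimension. Dually, $\Psi$ is obtained from $\Sigma$ by central stellar subdivisions at the cones $\sigma_F$, $F\in B$, in reverse order of containment. By Feichtner--Kozlov, the cones of $\Psi$ are indexed by nested sets of $B$. The rays of $\Psi$ are the original rays of $\Sigma$ (facets of $S$, which lie in $B$) together with the barycentric rays $u_F=\sum_{G\supseteq F\text{ facet}} u_G$, one for each $F\in B$. So the rays of $\Psi$ are in bijection with $B$, and the barycentric subdivision of $\Sigma$ refines $\Psi$.

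\textbf{Step 2: deformations of the truncations.} For $F\in B$, let $T_F$ be the $\epsilon$-truncation of $S$ at $F$. I will show $T_F\in\operatorname{Def}(\Psi)$. Its normal fan is $\Sigma$ stellarly subdivided only at $\sigma_F$. This fan is coarsened by $\Psi$: the later stellar subdivisions only refine it, and earlier subdivisions at faces $G\not\subseteq F$ do not break convexity of the support function. The cleanest way to see this is to note that the support function of $T_F$ is $h_S$ modified only near $u_F$, so it stays convex on every cone of $\Psi$.

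\textbf{Step 3: basis property.} This is the main step. First I compute $\dim \operatorname{Def}(\Psi)$. Since $\Psi$ is simplicial, the deformation cone spans the space of piecewise-linear functions on $\Psi$ modulo linear ones, of dimension $|B|-d$. Translations are quotiented out, consistent with the paper's conventions for signed Minkowski sums. The truncations number $|B|$, which includes $S$'s facets. Here I need the convention of the earlier theorem (\ref{thm:basis_seed}): the basis consists of $S$ itself together with the truncations of non-facet faces, matching the count for the barycentric case. The approach I would try first is restriction. By \ref{thm:basis_seed}, the truncations of all faces of $S$ form a basis of $\operatorname{Def}(\mathrm{Bar}\,\Sigma)$, and $\operatorname{Def}(\Psi)\subseteq \operatorname{Def}(\mathrm{Bar}\,\Sigma)$ is the face cut out by requiring the support function to be linear across the walls removed in passing from $\Psi$ to its refinement. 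Evaluating support functions at the rays $u_F$ gives a triangular change of basis. Ordering faces by dimension, $h_{T_F}(u_G)-h_S(u_G)$ is nonzero when $G=F$ and vanishes unless $G\subseteq F$. So the $T_F$ with $F\in B$ are independent, and their number equals the dimension.

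Spanning then follows from a dimension count. Uniqueness of the signed Minkowski sum is exactly this linear independence. For the statement that $\sum_{F\in B}T_F$ is normally equivalent to $P$, I use the fact that the normal fan of a Minkowski sum is the common refinement of the summands' fans. The common refinement of the single-stellar fans over $F\in B$ is $\Psi$, by the nested-set description.

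\textbf{Step 4: the flat case.} When $S$ is smooth with unit edges, \ref{thm:basis_seed} says deep truncations are flat. The same triangularity (values at $u_G$ supported on $G\subseteq F$, nonzero diagonal) applies verbatim to flat truncations, and I check their membership in $\operatorname{Def}(\Psi)$ as in Step 2.

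The expected obstacle is Step 2, membership for non-shallow truncations. It needs a careful local convexity check along walls of $\Psi$ where a ray $u_G$ with $G\in B$, $G\not\subseteq F$, meets the truncated region.
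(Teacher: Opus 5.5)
Your overall route matches the paper's: $\operatorname{Bar}(\Sigma)$ refines $\Psi=\Gamma_{\mathrm B}$, so $\operatorname{PL}(\Psi)\subseteq\operatorname{PL}(\operatorname{Bar}\Sigma)$, the truncations are independent there by \Cref{thm:basis_seed}, and you finish by counting. Two steps fail as written.

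\textbf{The dimension count.} The deformation space here is $\operatorname{PL}(\Psi)$ with translations included, since signed Minkowski decompositions are exact equalities (\Cref{lem:signed}). Because $\Psi$ is complete and simplicial with one ray per element of $\mathrm B$, its dimension is $|\mathrm B|$, not $|\mathrm B|-d$.

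\Cref{thm:basis_seed} does not say the basis is $S$ together with the non-facet truncations. It is all the shallow truncations, facets included, which is exactly what the statement you are proving asserts. Your substitute set has $1+|\mathrm B|-f$ elements ($f$ the number of facets), which equals $|\mathrm B|-d$ only when $S$ is a simplex. Your later sentence ``their number equals the dimension'' also contradicts your own count.

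With the correct dimension the argument closes as in the paper, provided you take independence from \Cref{thm:basis_seed} itself: a subset of a basis of $\operatorname{PL}(\operatorname{Bar}\Sigma)$ stays independent in the subspace $\operatorname{PL}(\Psi)$. Triangularity alone only shows that the differences $h(T_F)-h(S)$ are independent. Passing to the $T_F$ themselves needs $h(S)\in\operatorname{span}\{h(T_F)\mid F\text{ a facet}\}$. That is exactly where the hypothesis that the origin is interior enters (\Cref{cor:P_in_the_interior}), and your proposal never uses it. Step~4 needs the same input: integrality gives $h(S,u_F)\le -1$ on facets, which makes $h(S)$ a combination of the unit-depth facet truncations (second half of \Cref{cor:P_in_the_interior}).

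\textbf{Membership (Step~2).} This is the real content of the building-set case. Your heuristic (``$h_S$ modified only near $u_F$'') never uses the building-set axiom, and the claim is false without it. In the normal fan of $\Delta_4$, subdivide $\operatorname{Cone}(e_1,e_2)$ and then $\operatorname{Cone}(e_2,e_3)$, omitting $\operatorname{Cone}(e_1,e_2,e_3)$. This produces the cone $\operatorname{Cone}(e_1+e_2,e_2+e_3,e_3)$, which straddles the wall $x_2=x_3$ of $\mathcal C(\Sigma,\operatorname{Cone}(e_2,e_3))$. So the corresponding single truncation is not a deformation; compare \Cref{fig:blow_fail}.

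What you need is that $\Gamma_{\mathrm B}$ refines $\mathcal C(\Sigma,\sigma_F)$ for every $F\in\mathrm B$. In fact $\Gamma_{\mathrm B}=\bigwedge_{F\in\mathrm B}\mathcal C(\Sigma,\sigma_F)$ (\Cref{cor:partial_building}). You invoke this equality at the end of Step~3 for the normal-equivalence claim, and it settles Step~2 at once. However, it needs more than the Feichtner--Kozlov identification of the face poset of $\Gamma_{\mathrm B}$ with the nested set complex. The paper proves it as follows:
\begin{itemize}
\item restrict to a maximal simplicial cone and map it linearly onto a cone of the normal fan of a standard simplex;
\item use that the normal fan of the nestohedron $\sum_{I\in\mathrm B}\Delta_I$ is the nested-set fan;
\item identify each refinement by $\Sigma(\Delta_I)$ with a single stellar subdivision, via \Cref{lem:standard_simplex_deep}, \Cref{lem:convtrunc} and \Cref{lem:shallow_truncation_stellar}.
\end{itemize}

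Finally, the deep/flat truncations are not the obstacle you anticipate. By \Cref{lem:convtrunc}, $\operatorname{Tr}_{\alpha\delta_F}[S,F]=\alpha\operatorname{DTr}[S,F]+(1-\alpha)S$. So $\operatorname{DTr}[S,F]$ is a weak Minkowski summand of a shallow truncation, and it lies in $\operatorname{Def}(\Psi)$ as soon as the shallow one does (\Cref{prop:coarsening_summand}).
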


We note that the full versions of Theorems \ref{thm:basis_seed} and \ref{cor:basis_building} contain additional results concerning a canonical generating set of deep truncations for simple polytopes, and an integral generating set for smooth polytopes.  

We clarify the relationship between various types of deformations of an omnitruncation.  Let 
$\mathcal{T}_{\operatorname{Tr}}(\mathsf{S})$ be the cone of positive sums of shallow truncations of $\mathsf{S}$, and let $\mathcal{T}_{\operatorname{Omni}}(\mathsf{S})$ be the collection of all omnitruncations of $\mathsf{S}$, considered up to scaling and translation.  

\begin{theorem}\label{thm:conecontain}
Let $\mathsf S$ be a simple polytope, and let $\Psi$ be the barycentric subdivision of its normal fan.  Then $\mathcal{T}_{\operatorname{Tr}}(\mathsf{S})$ and $\mathcal{T}_{\operatorname{Omni}}(\mathsf{S})$  are full-dimensional open polyhedral cones with nonempty intersection.  In general, we find that $\mathcal{T}_{\operatorname{Tr}}(\mathsf{S}) \neq \mathcal{T}_{\operatorname{Omni}}(\mathsf{S})\neq  \operatorname{Interior}(\operatorname{Def}(\Psi))$, e.g. when $\Psi$ is the braid arrangement for $n\geq 4$.
\end{theorem}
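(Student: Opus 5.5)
We work in the space of support functions on $\Psi$, i.e. functions on the rays of $\Psi$ modulo linear functions. The rays of $\Psi$ are indexed by the nonempty proper faces $F$ of $\mathsf S$; for $F$ we take the central ray $u_F=\sum_{G\supseteq F} u_G$, where $u_G$ are the fixed normals of the facets $G$ containing $F$. By Theorem \ref{thm:basis_seed}, the $\epsilon$-truncations $\mathsf S_F$ of $\mathsf S$ form a polytopal basis of $\operatorname{Def}(\Psi)$. So every deformation $Q$ has a unique expansion $Q=\sum_F \lambda_F \mathsf S_F$ with $\lambda_F\in\mathbb R$.

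\textbf{Defining the two cones.} We set
\[
\mathcal T_{\operatorname{Tr}}(\mathsf S)=\Big\{\textstyle\sum_F\lambda_F\mathsf S_F:\lambda_F>0\Big\}.
\]
This is the open simplicial cone spanned by a basis, so it is a full-dimensional open polyhedral cone. By the second part of Theorem \ref{thm:basis_seed}, its elements are normally equivalent to omnitruncations, i.e. they have normal fan exactly $\Psi$. Hence $\mathcal T_{\operatorname{Tr}}(\mathsf S)$ lies in the interior of $\operatorname{Def}(\Psi)$.

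We parametrize omnitruncations by their truncation depths. Start with a dilate $t\mathsf S$ and truncate the faces in order of containment (vertices first), the face $F$ at depth $\delta_F$ along $u_F$. The resulting support function takes the value $h_{t\mathsf S}(u_F)-\delta_F$ on the ray $u_F$. Shallowness at each stage means the new hyperplane cuts off exactly the current face and nothing in its link. This is a finite system of strict linear inequalities in $(t,\delta)$, so the realizable data form an open polyhedral cone. Varying all $\delta_F$ independently shows it is full-dimensional. Passing to the quotient by translations does not affect this, because translations are linear. Its image in the support-function space is again open and contained in $\operatorname{Interior}(\operatorname{Def}(\Psi))$.

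\textbf{Nonempty intersection.} We take a very small positive combination of the basis elements, $\mathsf S+\eta\sum_F \mathsf S_F$, suitably rescaled. Choosing successive scales $\eta_F$ that decrease rapidly with $\dim F$, this Minkowski sum is literally an iterated shallow truncation of a dilate of $\mathsf S$. The justification is that each summand $\mathsf S_F$ is $\mathsf S$ with $F$ shaved slightly, so the sum shaves $F$ by an amount proportional to the corresponding coefficient. Since $\mathsf S$ itself is a positive combination of the $\mathsf S_F$ (up to translation, by the basis property), this point lies in both cones. The sign of the coefficient of $\mathsf S$ is the one step to double-check.

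\textbf{Strict inequalities for the braid arrangement.} Take $\mathsf S=\Delta_{n-1}$, so $\Psi$ is the braid arrangement. Here $\operatorname{Def}(\Psi)$ is the submodular cone, and the $\mathsf S_F$ correspond, up to the change of basis to truncations, to the standard simplices $\Delta_I$. This uses the codimension-one version, Proposition \ref{prop:key_non_full} and Example \ref{ex:postnikov_basis}.

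1. To show $\mathcal T_{\operatorname{Omni}}\neq\operatorname{Interior}\operatorname{Def}(\Psi)$, exhibit a strictly submodular function that is not an omnitruncation. For example, take the standard permutahedron, or a permutahedron with strongly skewed edge lengths. Truncating the vertices of a simplex first and then the edges forces inequalities among the edge lengths that a general generalized permutahedron violates.
2. To show $\mathcal T_{\operatorname{Tr}}\neq\mathcal T_{\operatorname{Omni}}$, exhibit an omnitruncation with some negative coefficient in the truncation basis, or a positive sum of truncations that cannot be realized as shallow truncations in order. The most natural candidate: a very deep vertex truncation followed by tiny further truncations.

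For $n\ge4$, I would compute explicit coordinates in $\mathbb R^4$, working with the Möbius-type coefficients $y_I$ on the 14 proper subsets. For $n\le 3$ the cones presumably coincide, which explains the bound.

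\textbf{Main obstacle.} The hardest step is the explicit separating examples for $n=4$. Specifically, one must verify that a given generalized permutahedron admits \emph{no} order of shallow truncation depths realizing it. This requires deriving the full inequality description of $\mathcal T_{\operatorname{Omni}}$ in terms of the $\delta_F$ and then checking infeasibility, which I would attempt by direct linear algebra on the 3-dimensional permutahedron.
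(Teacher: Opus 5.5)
Your structural half is essentially the paper's argument (\Cref{lem:buildingtruncationcone,lem:sumsofdeeptruncations}). Shallowness at each stage is a strict linear inequality in the earlier depths, so the omnitruncations form an open polyhedral cone. A positive combination with coefficients decreasing rapidly in $\dim\mathsf F$ is a dilated omnitruncation. Three small fixes:
in the min convention the support value is $h(t\mathsf S,u_{\mathsf F})+\delta_{\mathsf F}$, not $-\delta_{\mathsf F}$;
the depth produced at $u_{\mathsf G}$ collects contributions from every $\mathsf S_{\mathsf F}$ with $\mathsf F\supseteq\mathsf G$, and rapid decrease is what makes the $\mathsf F=\mathsf G$ term dominate;
the sign you flag is settled by \Cref{cor:P_in_the_interior}.

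However, your $\mathcal T_{\operatorname{Tr}}$ is not the paper's. The paper's cone is $\operatorname{Cone}^+(\{\operatorname{DTr}[\mathsf S,\mathsf F]\}\cup\{\mathsf S\})+\mathsf L$, the interior of the cone spanned by shallow truncations of all depths. Fixing one $\epsilon$ gives a strictly smaller, $\epsilon$-dependent cone. By \Cref{lem:convtrunc}, $h(\operatorname{DTr}[\mathsf S,\mathsf F])=\tfrac{\delta_{\mathsf F}}{\epsilon}h(\operatorname{Tr}_\epsilon[\mathsf S,\mathsf F])-(\tfrac{\delta_{\mathsf F}}{\epsilon}-1)h(\mathsf S)$, which has a negative coordinate in your basis. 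For instance, the standard permutahedron lies in the closure of the paper's cone but not of yours, so your separation statements would be about a different object.

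The genuine gap is the second sentence of the theorem: you name candidates but verify neither inequality. You also overestimate what is needed. No full description of $\mathcal T_{\operatorname{Omni}}$ is required; a single necessary condition suffices.

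For $\mathsf S=\Delta_n$, translate so that $q_A\coloneqq h(\mathsf P,e_A)$ vanishes on singletons. Shallowness of the truncations of the vertices $e_i,e_j$ along their common edge forces $q_{[n]}>q_{[n]\setminus\{i\}}+q_{[n]\setminus\{j\}}$ on $\mathcal T_{\operatorname{Omni}}$, and the weak inequality on its closure. For $\Pi_n$ one has $q_A=\binom{|A|}{2}$, and $2\binom{n-1}{2}-\binom n2=\tfrac{(n-1)(n-4)}{2}\geq0$ for $n\geq4$. Since $\Sigma(\Pi_n)=\Psi$, this gives $\Pi_n\in\operatorname{Interior}(\operatorname{Def}(\Psi))\setminus\mathcal T_{\operatorname{Omni}}$ for $n\geq4$.

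For $n\geq5$ the same example gives $\mathcal T_{\operatorname{Tr}}\neq\mathcal T_{\operatorname{Omni}}$. Indeed $\Pi_n=\sum_{i<j}\Delta(\{i,j\})$ is a nonnegative combination of deep truncations (\Cref{lem:standard_simplex_deep}), so it lies in $\overline{\mathcal T_{\operatorname{Tr}}}$ but not in $\overline{\mathcal T_{\operatorname{Omni}}}$.

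Your own unchecked second candidate settles $\mathcal T_{\operatorname{Tr}}\neq\mathcal T_{\operatorname{Omni}}$ for every $n\geq3$, including $n=4$. Take vertex depths just below $\tfrac12$ and all later depths tiny. The result is an omnitruncation whose coefficient on $\Delta_{[n]}$ in the simplicial basis is $\sum_{A\subseteq[n]}(-1)^{n-|A|}q_A\approx1-\tfrac n2<0$, so it is not in $\mathcal T_{\operatorname{Tr}}$.

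In particular, your guess that the cones coincide for $n\leq3$ is wrong. At $n=3$ only $\mathcal T_{\operatorname{Omni}}=\operatorname{Interior}(\operatorname{Def}(\Psi))$ holds, which is why the threshold in the statement is $n\geq4$.
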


The previous theorem is proven at the level of general building sets (see \Cref{lem:buildingtruncationcone,lem:sumsofdeeptruncations,prop:diffcones}).  We then propose a definition of higher generalized nestohedra (see Definition \ref{def:highernesto}), and explain how the collection of all generalized 2-nestohedra naturally forms a fan, which is no longer a cone (see Proposition \ref{prop:fan_of_2_nestohedra}).

A polytope is indecomposable if it cannot be expressed as a nontrivial Minkowski sum of polytopes.  A basis for a deformation cone is indecomposable if each element is indecomposable; equivalently, the translation classes of its elements span extremal rays of the nef cone.  While the simplicial basis is clearly indecomposable, this is not always the case for our truncation bases.   We investigate two natural extensions of the permutahedron where we are able to upgrade our truncation bases to an indecomposable basis.

The first setting is the deformation cone of a barycentric subdivision of the normal fan of a product of standard simplices.  
This generalizes the type-A and type-B Coxeter arrangements, which correspond to a single simplex and a product of line segments, respectively.  However, this framework leaves the setting of standard Coxeter theory.  We explain how the $\pi$-colored fans of Clader--Damiolini--Eur--Huang--Li arise as subfans of such fans (\Cref{prop:pi_colored_subfan_polysimplex}).
Here we show that the deep truncations form an indecomposable basis.  
As noted above, this basis agrees with the simplicial basis in type A, but in type B this already gives a new basis. 

Recall we define the fragment of a truncation to be the piece of the polytope cut off by that truncation.  In general, the fragments do not live in the deformation cone of an omnitruncation of a polytope, but we show that they do in the special case of products of standard simplices.  Moreover, these fragments form a second, complementary basis, which specializes to the basis of Eur--Fink--Larson--Spink in type B and whose support functions restrict to the Clader--Damiolini--Eur--Huang--Li basis for the nef cones of $\pi$-colored fans (\Cref{prop:pi_colored_truncations_basis,prop:pi_colored_fragments_h_basis}).

\begin{theorem}(\ref{thm:basis_polysimplex_deep}), (\ref{thm:basis_polysimplex_fragments}):
	Let $\Delta(\mathbf S)$ be a product of standard simplices associated to a partition $\mathbf S$ of $[n]$, and let $\Psi$ be the barycentric subdivision of the normal fan of $\Delta(\mathbf S)$.  The indecomposable factors of the deep truncations and the fragments of $\Delta(\mathbf S)$, together with the simplicial bases for each factor of $\Delta(\mathbf S)$, give a pair of indecomposable bases for the deformation cone of $\Psi$.
\end{theorem}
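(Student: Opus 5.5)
The plan is to reduce both claims to the general basis theorem for barycentric subdivisions (Theorem \ref{thm:basis_seed}, in its codimension-one/non-full-dimensional form \Cref{prop:key_non_full}) and then analyse the combinatorics of $\Delta(\mathbf S)=\Delta_{S_1}\times\cdots\times\Delta_{S_k}$ concretely. Since $\Delta(\mathbf S)$ is smooth with all edge lengths $1$, its deep truncations are flat truncations, and they (together with the lineality/translation directions) span $\operatorname{Def}(\Psi)$ with the right count. A face of $\Delta(\mathbf S)$ is a product $F=\Delta_{A_1}\times\cdots\times\Delta_{A_k}$ with $\emptyset\neq A_i\subseteq S_i$. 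I would compute the flat truncation of $F$ explicitly. Its defining inequality is $\sum_{i}\sum_{j\in S_i\setminus A_i}x_j\ge 1$, since neighbouring vertices differ in exactly one block by one unit. Hence the truncation is the set of points of $\Delta(\mathbf S)$ with at least one unit of mass outside $\prod \Delta_{A_i}$.

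The first main step is to factor these polytopes. A product of simplices cut by such a hyperplane decomposes as a product. The factors $\Delta_{S_i}$ in which $A_i=S_i$ split off as simplex factors. The remaining factor is the "deep truncation" of a product in which every block is proper, and I would show it is indecomposable by checking that its 2-faces are all triangles, or are connected through triangles (the standard Shephard/McMullen criterion). The same treatment applies to the fragment $\{\sum_{j\notin A}x_j\le 1\}\cap\Delta(\mathbf S)$, which factors the same way and whose nontrivial factor again has a triangle-connected 2-skeleton. The indecomposable factors that appear this way are the full simplices $\Delta_{S_i}$ and the nontrivial factors. Adding the simplicial bases of the faces of each $\Delta_{S_i}$ (the generalized permutahedra basis for each block) produces the candidate sets.

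The second step is to verify that each candidate set is a basis. First I must show membership, meaning each factor's normal fan is coarsened by $\Psi$. For truncations this follows because Minkowski summands of elements of $\operatorname{Def}(\Psi)$ lie in $\operatorname{Def}(\Psi)$. For fragments it requires a direct check: I would show that every edge direction of a fragment is an edge direction of an omnitruncation, using the explicit inequality description. Then I would compare cardinalities with $\dim\operatorname{Def}(\Psi)$, which equals the number of rays of $\Psi$ (the nonempty proper faces) minus $\dim$. Finally I would prove spanning by a triangular change of basis. The truncation of $F$ equals the replaced factor plus simplices, so the span of the new set contains every flat truncation, and these span by the earlier theorem. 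For the fragments, I would write each fragment's support function as an alternating (Möbius-type) combination of truncation support functions over the faces containing $F$, in the same way the type-B Eur--Fink--Larson--Spink basis relates to cross-polytope truncations.

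I expect the main obstacle to be the fragment half: showing that fragments lie in $\operatorname{Def}(\Psi)$ and finding the unitriangular relation with the truncations. My first attempt would be an inclusion–exclusion identity on support functions, evaluated on the rays of $\Psi$ (barycenters of faces), and checking it block by block using the product structure.
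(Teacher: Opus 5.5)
\textbf{The gap: showing the fragments are deformations.} This is the step you yourself flag, and your proposed method for it does not work. Checking that every edge of $\operatorname{Fr}[\Delta(\mathbf S),\Delta(\mathbf A)]$ is parallel to an edge of the omnitruncation is only a necessary condition. The test characterizes generalized permutahedra because the braid fan is a hyperplane arrangement, but $\Psi(\mathbf S)$ is in general not one (already for the prism $\Delta_3\times\Delta_2$). For instance, a segment parallel to an edge of $\mathsf P(\mathbf S)$ passes your test, yet it is a deformation only if the hyperplane orthogonal to it is a union of cones of $\Psi$. The paper's \Cref{ex:braid_fragment} shows that fragments really fail in this way. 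The vertex fragment of $\Pi_5$ is a simplex whose edges are all parallel to edges of $\Pi^2_5$: they are roots $e_i-e_j$ or differences $w_i-w_j$ of the neighbours $w_i$ of the vertex. Yet it is not a deformation of $\Pi^2_5$.

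Your inclusion--exclusion identity cannot close this gap either. Since $\Psi$ is simplicial, any vector of ray values already defines an element of $\operatorname{PL}(\Psi)$. An identity among ray values therefore says nothing about whether that function is convex and equals $h(\operatorname{Fr}[\Delta(\mathbf S),\Delta(\mathbf A)],\cdot)$ (cf.\ \Cref{rmk:subtletyminkowski}).

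The missing ingredient is an explicit inequality description of $\operatorname{Def}(\Psi(\mathbf S))$. Batyrev's criterion, whose primitive collections here are the incomparable pairs, combined with the uncrossing relation $u_{\mathbf A}+u_{\mathbf B}=u_{\mathbf A\wedge\mathbf B}+u_{\mathbf A\vee\mathbf B}+\sum_{i\in Z(\mathbf A,\mathbf B)}e_{S_i}$, gives the supermodular inequalities of \Cref{thm:deformation_cone}. The paper (\Cref{prop:fragment}) then proceeds in two steps:
\begin{itemize}
\item It shows that the fragment equals the polytope cut out by its ray values $f(\mathbf B)=\max(0,|Z(\mathbf A,\mathbf B)|-1)$ and $g\equiv 1$. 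This uses that the new facet normal $-u_{\mathbf A}$ equals $u_{\mathbf A^{\ast}}$ modulo $\mathsf L(\mathbf S)$.
\item It verifies the supermodular inequalities via the counting bounds $z(\mathbf B\wedge\mathbf C)+z(\mathbf B\vee\mathbf C)\geq z(\mathbf B)+z(\mathbf C)-t$ and $z(\mathbf B\wedge\mathbf C)\geq\max(z(\mathbf B),z(\mathbf C))-t$, where $z(\mathbf X)=|Z(\mathbf A,\mathbf X)|$ and $t=|Z(\mathbf B,\mathbf C)|$.
\end{itemize}

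\textbf{Smaller corrections.}
\begin{itemize}
\item \emph{The change of basis is mis-indexed.} For $m=\mathrm{slk}(\mathbf A)\geq 2$, the fragment of $\mathbf A$ is not a combination of truncations of faces containing $\Delta(\mathbf A)$. Using $\max(0,z-1)=\sum_{U\subseteq Z,\,|U|\geq2}(-1)^{|U|}$, one gets
\[
h(\operatorname{Fr}[\Delta(\mathbf S),\Delta(\mathbf A)])=\sum_{U\subseteq\mathrm{Slk}(\mathbf A),\,|U|\geq 2}(-1)^{|U|}\,h(\operatorname{DTr}[\Delta(\mathbf S),\Delta(\mathbf C^U)])+(2-m)\,h(\Delta(\mathbf S)),
\]
where $C^U_i=S_i\setminus A_i$ for $i\in U$ and $C^U_i=S_i$ otherwise. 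These faces contain the complementary face $\Delta(\mathbf A^{\ast})$. The relation is triangular with respect to slack once $\mathbf A$ is paired with $\mathbf A^{\ast}$; the paper proves the inverse identity in \Cref{prop:explicit_truncation_fragment_transform}.
\item \emph{Starting basis and dimension count.} $\Delta(\mathbf S)$ has codimension $k$, not one, so the starting basis comes from \Cref{prop:key_non_full_general}. Also, $\dim\operatorname{PL}(\Psi)$ is the number of proper valid tuples \emph{plus} $k$, not minus; the $k$ extra basis elements are the $\Delta(S_i)$.
\item \emph{Indecomposability.} Your route here differs from the paper's and is not yet an argument. The 2-faces are not all triangles, since squares of $\Delta(\mathbf S)$ away from $\Delta(\mathbf A)$ survive. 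You would need, for every full tuple, a McMullen-type family touching every facet, or an edge-graph connectivity argument as in \Cref{prop:edge_criterion}. The paper avoids this: it uses that a $0/1$-polytope is indecomposable unless it is a Cartesian product, and excludes a product using the distinguished new facet (\Cref{thm:small_truncated_indecomposable}).
\end{itemize}
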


\begin{figure}[ht]
    \centering
    \includegraphics[width=0.8\textwidth]{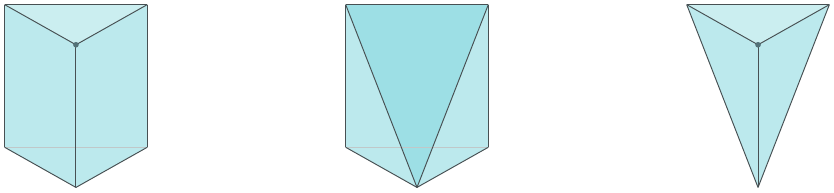}
    \caption{A prism $\Delta_3 \times \Delta_2$, a flat vertex truncation,  and the associated fragment.}
    \label{fig:triangle-segment-vertex-fragment}
\end{figure}

\begin{figure}[ht]
    \centering
    \includegraphics[width=.20\textwidth]{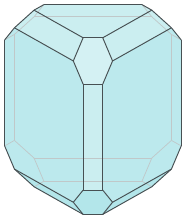}
    \caption{An omnitruncation of $\Delta_3 \times \Delta_2$.}
    \label{fig:triangle-segment-prism-omnitruncation}
\end{figure}
The second setting we investigate is the barycentric subdivision of the braid fan of Gaiffi \cite{gaiffi2015permutonestohedra} and Castillo--Liu \cite{castillo2022deformation}, which we refer to as the \textbf{$2$-braid fan}.\footnote{Castillo--Liu call this fan the nested braid fan, the normal fan of a polytope they call the nested permutahedron.  Gaiffi calls this polytope the permutopermutohedron.} Equivalently, this is the second barycentric subdivision of the normal fan of the standard simplex, and it is the normal fan of an omnitruncation of a permutahedron.  We define a $2$-permutahedron to be an $S_n$-invariant polytope whose normal fan is the $2$-braid fan (see Figure \ref{fig:castillo-liu-nested-permutahedron}).  While the deep truncations of the faces of the standard permutahedron are not indecomposable, they factor naturally and we identify a distinguished set of factors which form an indecomposable basis.

\begin{theorem}\label{thm:main2-permgherkin}(\ref{thm:basis_nested})
The apex truncations of  graphical zonotopes associated to oriented chains of complete bipartite graphs with at least 3 parts, together with the simplicial basis, give an indecomposable basis for the deformation cone of the 2-permutahedron.
	
\end{theorem}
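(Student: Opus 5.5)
The plan is to derive the basis from the general truncation basis of Theorem~\ref{thm:basis_seed}, applied in its codimension-one form (\Cref{prop:key_non_full}) with seed polytope $\mathsf{S}$ the standard permutahedron $\Pi_n$. The normal fan of $\Pi_n$ is the braid arrangement. Its barycentric subdivision is the $2$-braid fan $\Psi$. So the $\epsilon$-truncations of $\Pi_n$ along its proper faces, together with a basis for $\operatorname{Def}$ of the braid fan itself, span $\operatorname{Def}(\Psi)$ and are linearly independent modulo translations. For the braid-fan part I use the simplicial basis. A dimension count then confirms the number of basis elements: the rays of $\Psi$ correspond to chains of proper nonempty subsets of $[n]$, and $\dim \operatorname{Def}(\Psi)$ is the number of rays minus $n-1$ (translations). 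Faces of $\Pi_n$ are indexed by ordered set partitions. Hence I expect a bijection between the non-simplicial basis elements and faces of $\Pi_n$ indexed by ordered set partitions with at least $3$ blocks. Such a face corresponds to a cone in the braid fan spanned by a chain of length $\geq 2$, i.e.\ a ray of $\Psi$ that is not already a ray of the braid fan.

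The key step is to replace each truncation by an indecomposable factor without losing the basis property. A face $F$ of $\Pi_n$ indexed by an ordered set partition $(B_1,\dots,B_k)$ is a product of smaller permutahedra. Its deep truncation should decompose as a Minkowski sum of generalized permutahedra (simplices, coming from the original braid-fan directions) plus one genuinely new summand. I would identify that summand as the apex truncation of the graphical zonotope $Z_G$, where $G$ is the complete multipartite graph on the chain $B_1\to B_2\to\cdots\to B_k$ with edges only between consecutive parts, oriented along the chain. The ``apex'' is the vertex of $Z_G$ maximizing the chain's linear functional $\sum_i i\cdot\mathbf{1}_{B_i}$. I would verify this by computing support functions on rays of $\Psi$: the support function of the deep truncation of $F$ equals that of $\Pi_n$ except on rays in the star of the ray $\rho_F$ of $\Psi$. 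The apex truncation of $Z_G$ agrees with a braid-fan support function away from $\rho_F$ and is ``bent'' exactly at $\rho_F$. Their difference is then a braid-fan generalized permutahedron, expressible in the simplicial basis. This gives a unitriangular change of basis: each new element differs from the corresponding truncation by elements of the simplicial span, and hence the set is again a basis of $\operatorname{Def}(\Psi)$. For $k=2$ the graphical zonotope is itself a generalized permutahedron, which is why only $k\geq3$ is needed.

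It remains to show that the apex truncation of $Z_G$ lies in $\operatorname{Def}(\Psi)$ and is indecomposable. I would do this by checking the 2-faces. Indecomposability follows from the criterion that a polytope whose 2-faces are all triangles, or more generally whose edge graph is connected through triangular 2-faces, is indecomposable. I would exhibit such a strongly connected chain of triangles through the truncation facet. The facet is cut through the neighbors of the apex, and these neighbors are joined by triangles with the apex region because consecutive parts are completely joined. Membership in $\operatorname{Def}(\Psi)$ requires checking that all edge directions of the truncated zonotope are normal to walls of $\Psi$. The new edges lie in the truncating hyperplane and are differences of neighbors of the apex, i.e.\ of the form $(e_a-e_b)\pm(e_c-e_d)$ for adjacent parts; these must be verified to be wall normals of the $2$-braid fan.

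I expect the main obstacle to be this support-function identification: showing that the deep truncation of $F$ factors exactly as the apex-truncated zonotope plus braid-fan summands. The hardest part is that the depth of the deep truncation, which is set by the nearest neighboring vertex, must match the apex truncation depth of $Z_G$. I would handle this by computing both support functions on the finitely many ray types in the star of $\rho_F$ and reducing to the case where $F$ is a vertex, using the product structure of faces.
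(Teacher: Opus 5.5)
Your overall architecture matches the paper's, which obtains the statement by applying \Cref{thm:primitive_delzant_zonotope} to $\Pi_n$ with the simplicial basis. It takes the unit-depth truncations of faces with at least three blocks together with the simplicial basis, splits each $\operatorname{DTr}[\Pi_n,\mathsf F]$ as an apex-truncated zonotope plus a graphical zonotope, and makes a triangular change of basis. There are, however, genuine gaps in two steps.

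\textbf{Indecomposability.} In a Gherkin, triangular $2$-faces occur only at the cut: inside the truncation facet, or from parallelograms of the Faberg\'e zonotope that contained the apex. Edges away from the cut lie only in parallelograms. Already for a vertex of $\Pi_4$ (a parallelepiped with generators $a,b,c$ and its apex cut off), the edge from $a+c$ to $a+b+c$ lies in no triangle, so ``edge graph connected through triangles'' fails.

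A strongly connected chain of triangles through the truncation facet need not exist either. For $\mathcal S=1|23|4$ the truncation facet is the quadrilateral $\operatorname{Conv}\{e_2-e_1,e_3-e_1,e_4-e_2,e_4-e_3\}$. So $\mathsf R(1|23|4)$ is affinely the apex truncation of the rhombic dodecahedron in \Cref{ex:rhombic_dodecahedron}, whose four triangles pairwise share at most a vertex, and which McMullen-type criteria do not detect.

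The missing idea is to let parallelograms propagate information too. In $1$-Minkowski weights, opposite sides of a parallelogram carry equal weight, so each parallel class $\mathscr C_v$ is connected through its zone. The triangle over each edge $vw$ of the base polytope then links $\mathscr C_v$ to $\mathscr C_w$, and each edge of the truncation facet lies in such a triangle (\Cref{prop:edge_criterion,thm:gherkin}).

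\textbf{Membership in the deformation cone.} Your route to $\mathsf R(\mathcal S)\in\operatorname{Def}(\Psi)$, with $\Psi=\mathcal B^2_n$, does not work. Requiring every edge to be orthogonal to a wall suffices for a hyperplane arrangement such as $\mathcal B^1_n$, but not for its barycentric subdivision. The vertex fragment of \Cref{ex:braid_fragment} is a counterexample: it is a $4$-simplex whose edges are all parallel to edges of the Gherkin of that vertex (hence of $\Pi^2_5$), and whose facet normals are rays of $\mathcal B^2_5$, yet it is not a deformation.

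Your check of the decomposition by evaluating on rays of $\Psi$ inherits this gap. That check is only meaningful once every summand is known to lie in $\operatorname{Def}(\Psi)$ (compare \Cref{rmk:subtletyminkowski}).

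Your heuristic is also inaccurate. Modulo $\operatorname{PL}(\mathcal B^1_n)$, $h(\mathsf R(\mathcal S))$ agrees with $h(\operatorname{DTr}[\Pi_n,\mathsf F])-h(\Pi_n)$. By \eqref{eq:delzant_difference}, this is the indicator of the rays indexed by all faces $\mathsf H\subseteq\mathsf F$, not just $\mathsf F$. So the change of basis is unitriangular for the face order, not diagonal.

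The repair is to prove $\operatorname{DTr}[\Pi_n,\mathsf F]=\mathsf R(\mathcal S)+\mathsf Z(\mathsf E\setminus\mathsf U)$ as an identity of polytopes. Use \Cref{lem:contraction,lem:locality}, with flatness at depth $1$ from \Cref{prop:delzant,lem:delzant_uF}. Membership is then automatic: $\operatorname{DTr}[\Pi_n,\mathsf F]$ is a weak Minkowski summand of a shallow truncation (\Cref{lem:convtrunc}), whose normal fan is a stellar subdivision coarsened by $\Psi$.

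\textbf{Smaller points.} $\dim\operatorname{PL}(\Psi)$ is the number of rays plus one. Your count is that of the nef cone, and the points $\Delta_{\{i\}}$ account for translations. Also, facets are not excluded because the graphical zonotope is a generalized permutahedron; it always is. They are excluded because for two blocks the apex truncation itself is a generalized permutahedron, so those slots are filled by the simplicial basis.
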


\begin{figure}[ht]
    \centering
    \includegraphics[width=.35\textwidth]{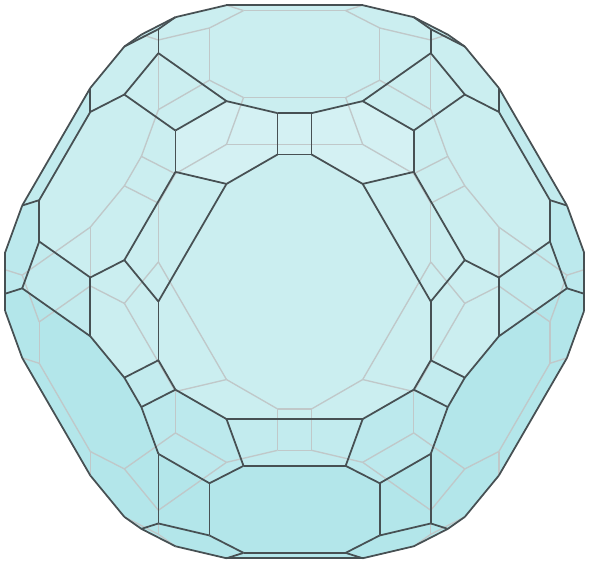}
    \caption{Castillo and Liu's realization of a 2-permutahedron.}
    \label{fig:castillo-liu-nested-permutahedron}
\end{figure}

We were naturally led to generalize Theorem \ref{thm:main2-permgherkin} to the setting of primitive Delzant zonotopes (see \Cref{thm:primitive_delzant_zonotope}), and in the process we prove an indecomposability result for the apex truncations of zonotopes over polytopes (see Theorem \ref{thm:gherkin}).  While this work was in the final stages of preparation, we learned that Padrol--Poullot, motivated by separate considerations of the submodular cone, have independently discovered this indecomposability result \cite[Corollary~5.1.9 and Example~5.1.10]{padrol2026indecomposability}.

We conclude the article in Section \ref{applicationssection} with connections and applications to Archimedean solids and regular polytopes, permutahedral plates, root polytopes, permutoassociahedra, simple permutonestohedra, cosmohedra, omnitruncations of Coxeter permutahedra,  bipermutahedra, $\pi$-colored fans, and tropical $\alpha$ and $\beta$ classes.

\subsection*{Future work}
We will investigate the mixed volumes of elements of our truncation bases in the Chow ring of the corresponding barycentrically subdivided fan to obtain general volume formulas for polytopes in their deformation cones.  Additionally, we will investigate the 2-root polytopes from \Cref{subsec:pemplatesrootpoly}.

\subsection*{Acknowledgements}
We thank Federico Ardila-Mantilla, Sof\'ia Err\'azuriz, Fu Liu, Arnau Padrol, Vincent Pilaud, Felipe Rinc\'on and Raman Sanyal for helpful  comments and conversations.
We thank Arnau Padrol and Germain Poullot for informing us of their related work on indecomposability of certain truncated zonotopes.  Additional thanks to Chris Eur for informing us that his student Matthew Snodgrass has recently taken some first steps in trying to understand the Chow rings of barycentric subdivisions of normal fans of products of simplices.  In particular, he has established a Dragon marriage type result for the fragments.

The first author was supported by NSF Grant (DMS-2246967) and Simons Foundation Gift \# 854037. 
The second author was partially supported by ANID Fondecyt-Regular Project N°1260970.
This research was partially conducted during the Intensive Research Program on Combinatorial Geometries and Geometric Combinatorics, held at the Centre de Recerca Matemàtica in October-November 2025, and funded by the Severo Ochoa and María de Maeztu Program for Centers and Units of Excellence in R\&D (CEX2020-001084-M), the Institut de Matemàtica de la Universitat de Barcelona, and the Spanish projects PID2022-137283NB-C21 and RED2024-153572-T of MICIU/AEI /10.13039/501100011033.

\subsection*{AI disclosure}  This research was conducted and written up over the period 2021-2026.  In 2026, while completing our article, we utilized OpenAI's Codex and GPT 5.5 and 5.6 Pro for the following tasks:
\begin{itemize}

\item producing and editing figures, which were generated with Sage and TikZ, and were intentionally modeled on earlier TikZ figures generated by the authors; 
\item utilizing Sage for gathering evidence for \Cref{mockconj}; 
\item assistance with proving \Cref{thm:2root} as guided by the authors;
 \item employing Sage for performing explicit Minkowski sum decomposition calculations in Section \ref{applicationssection};  
\item assistance with writing up the definitions and results in Subsection \ref{subsec:picolored} for consistency with the framework of Clader--Damiolini--Eur--Huang--Li \cite{clader2024multimatroids};
\item providing the change of basis in Proposition \ref{prop:explicit_truncation_fragment_transform}.  Here, the authors had suggested to GPT that a polysimplex version of the corresponding relation from Clader--Damiolini--Eur--Huang--Li \cite[proof of Lemma~2.11, equations~(7) and~(9)]{clader2024multimatroids} should exist, and Proposition \ref{prop:explicit_truncation_fragment_transform} is what GPT proposed;
\item performing literature searches and assisting with citation verification;
\item checking for correctness and offering suggestions for repairs when minor errors were found;
\item editing, mostly copy editing, but also proposing rewording of some sentences and providing text describing explicit calculations which were reviewed and revised by the authors.
\end{itemize}

 Apart from the above items, all other contributions of this article were human generated.

\section{Preliminaries}
We review the necessary terminology from discrete geometry to be used in the present paper.
For further background, we refer the reader to~\cite{ziegler2012lectures, grunbaum1967convex, villavicencio2024polytopes}.

Let $ \mathbb{R}^{n} $ be a finite-dimensional real vector space with the usual dot product written as $ x^{\intercal} y $.
This space has the canonical basis $\left\{ e_{1}, \dots, e_{n}\right\} $: the vector $ e_i $ has every entry equal to zero except for an entry equal to one in the coordinate $ i $.
More generally, for every subset $ I \subseteq [n] $ we define the vector
\begin{equation}\label{eq:eI}
	e_{I} \coloneqq \sum_{ i \in I } e_{i}.
\end{equation}

A lattice $N$ is a free abelian group of finite rank. We write
$N_{\mathbb Q}=N\otimes_{\mathbb Z}\mathbb Q$ and
$N_{\mathbb R}=N\otimes_{\mathbb Z}\mathbb R$. We also write
$M=\operatorname{Hom}(N,\mathbb Z)$ for the dual lattice; elements of $M$
are the integral linear functions on $N$. When $N=\mathbb Z^n$, we identify
$N_{\mathbb R}$ with $\mathbb R^n$ in the usual way, and the dot product
identifies $M$ with $\mathbb Z^n$.
A linear subspace $\mathsf L\subseteq N_{\mathbb R}$ is rational if it is
spanned over $\mathbb R$ by vectors in $N_{\mathbb Q}$. If $\mathsf L$ is
rational, then $N/(N \cap {\mathsf L})$ is a lattice,
which we call the quotient lattice. Unless otherwise stated, the lattice in
$\mathbb R^n$ is $\mathbb Z^n$.  A nonzero vector $u\in N$ is called \textbf{primitive} if whenever
$u=kw$ for some $w\in N$ and $k\in\mathbb Z_{>0}$, then $k=1$.  In this article, a recurring example will be the case $N=\mathbb Z^n$ and
$\mathsf L=\operatorname{Span}_{\mathbb R}\{e_{S_i}\mid i\in[k]\}$,
where $\{S_i\}_{i=1}^{k}$ is a (potentially trivial) partition of $[n]$.

A \textbf{rational polyhedron} $ \mathsf{P} \subseteq \mathbb{R}^{n}$ is the solution set to a system of finitely many inequalities with integer coefficients, i.e.,
a set of the form $ \left\{ x \in \mathbb{R}^{n} \,\middle|\, \mathbf{A}x \geq h \right\} $, with $ \mathbf{A} $ an integer matrix and $ h $ an integer vector.
Polyhedra in this paper will typically be taken to be rational, so we will omit the adjective.  A \textbf{polytope} $ \mathsf{P} \subseteq  \mathbb{R}^n $ is a bounded polyhedron.
Alternatively, it is the convex hull of a finite set of points in $ \mathbb{R}^n $.
If each of its vertices is integral, we say that $ \mathsf{P} $ is an integral polytope.

For a polyhedron $ \mathsf{P} \subseteq \mathbb{R}^{n} $ we define its \textbf{lineality space} as the largest linear subspace $\mathsf L$ satisfying $\mathsf P+\mathsf L=\mathsf P$.
We say $ \mathsf{P} $ is a $ d $-polyhedron if $ \dim(\mathsf{P}) = d$.
For any two polyhedra $ \mathsf{S}, \mathsf{T} \subseteq \mathbb{R}^{n} $ there are two operations that result in new polyhedra:
the \textbf{Cartesian product} $ \mathsf{S} \times \mathsf{T} $ and the \textbf{Minkowski sum}  
$ \mathsf{S}+\mathsf{T} =\left\{ s+t \,\middle|\, (s,t) \in \mathsf{S} \times  \mathsf{T}\right\} $. 

Associated to every polyhedron $ \mathsf{P} $ we have the \textbf{support function}:
\begin{equation}\label{eq:def_supp}
	h(\mathsf{P}, \cdot ) : \mathbb{R}^{n} \to \mathbb{R} \cup \left\{ - \infty \right\},\quad  h(\mathsf{P}, u) \coloneqq \min_{y \in \mathsf{P} }u^{\intercal}y . 
\end{equation}
The polyhedron and its support function uniquely determine each other.
We have
\begin{equation}\label{eq:support_sum}
	h(\mathsf{P} + \mathsf{Q}, \cdot ) =	h(\mathsf{P}, \cdot ) + h(\mathsf{Q}, \cdot ).
\end{equation}

\begin{remark}\label{rem:single_point}
When $ \mathsf{P} = \{ p \} $ is a single point, then we have that $ h( \mathsf{P}, u) = p^{\intercal}u $ is the linear functional induced by the inner product with $ p $.
\end{remark}

Given two polytopes $ \mathsf{P} $ and $ \mathsf{Q} $, we define the Minkowski difference $ \mathsf{P} - \mathsf{Q} $ to be the polytope $ \mathsf{R} $ such that $ \mathsf{P} = \mathsf{Q} + \mathsf{R} $.
If such an $ \mathsf{R} $ does not exist, we leave the subtraction undefined.
When defined, the Minkowski subtraction has the property $ (\mathsf{P}-\mathsf{Q}) + \mathsf{Q} = \mathsf{P}$.
This implies that $h(\mathsf P-\mathsf Q,\mathord\cdot)=h(\mathsf P,\mathord\cdot)-h(\mathsf Q,\mathord\cdot) $, whenever $ \mathsf{P} - \mathsf{Q} $ exists.

\begin{remark}\label{rem:subtraction}
We warn the reader that the difference of support functions is not always a support function.
Given polytopes $ \mathsf{P}, \mathsf{Q} $, we can always define
\begin{equation}\label{eq:support_difference}
	\mathsf{R}  =\left\{x:u^tx\geq h(\mathsf{P},u)-h(\mathsf{Q},u)\;\forall u\in\mathbb{R}^d\right\}.
\end{equation}
However, Equation \eqref{eq:support_difference} does not guarantee that $ \mathsf{P} = \mathsf{Q} + \mathsf{R} $.
This is because the inequality description is not guaranteed to be tight; that is, in Equation \eqref{eq:support_difference} some of the inequalities may never hold with equality for points in $ \mathsf{R} $.
\end{remark}

\begin{example}\label{ex:badminkowsi}
Let $\mathsf{P} = [0,2]^2 \subseteq \mathbb{R}^{2}$ and let $\mathsf{Q}$ be the triangle with vertices $ \left\{  (1,1),(1,0),(0,1) \right\}$. 
The polytope $\mathsf{R}$ defined by \eqref{eq:support_difference} is equal to $[0,1]^2$. However, $\mathsf{R}+\mathsf{Q} \neq \mathsf{P}$.

\begin{figure}[ht]
    \centering
    \input{tikz/badminkowski_example.tex}
    \caption{From left to right: $P$, $Q$, $R$, and $Q+R$.}
    \label{fig:badminkowski-example}
\end{figure}
\end{example}

In general, any piecewise linear function on a polytopal fan can be obtained as the difference of two support functions of polytopes \cite[Proposition 5.13]{manecke2024inscribable}, so in most cases, taking the difference of support functions will not provide a support function.
\begin{definition}\label{def:weak_summand}
We say a polytope $ \mathsf{Q} $ is a Minkowski summand of a polytope $ \mathsf{P} $ if $ \mathsf{P} - \mathsf{Q} $ exists.
More generally, we say that a polytope $\mathsf{Q}$ is a \textbf{weak Minkowski summand} or \textbf{deformation} of $\mathsf{P}$ if there exists a $\lambda>0$ such that $\mathsf{Q}$ is a Minkowski summand of $\lambda\mathsf{P}$.
\end{definition}

For a vector $ u \in \mathbb{R}^n	$ we define $ \mathsf{P}^{u} $	to be the face of $ \mathsf{P} $ in direction $ u $, $  \mathsf{P}^{u} \coloneqq\left\{ x \in \mathsf{P} \,\middle|\, u^{\intercal}x = h(\mathsf{P}, u)\right\} $.
Note that $ \mathsf{P} $ is a face of itself (in direction $ u = \mathbf{0} $); additionally by convention we let $ \emptyset $ be a face of $ \mathsf{P} $.
Faces of codimension one are called facets.
The face poset $ \mathscr{P} $ of a polyhedron is the set of nonempty faces partially ordered by inclusion.  The proper part of the face poset is the face poset with the maximum (the polyhedron itself) removed.

A polyhedron $ \mathsf{K} \subseteq \mathbb{R}^{n} $ is a \textbf{cone} if $ x\in \mathsf{K} $ implies that $ \lambda x \in \mathsf{K} $ for any nonnegative scalar $ \lambda $.
Cones used in this paper are always polyhedral and rational so we omit both adjectives.
A cone is pointed if it has a trivial lineality space (the largest linear subspace the cone contains).
In general, a cone $ \mathsf{K} $ can be decomposed as a Minkowski sum $ \mathsf{C} + \mathsf{L} $, where $ \mathsf{C} $ is a pointed cone, and $ \mathsf{L} $ is its lineality space.
The cone $ \mathsf{C}$ in this decomposition is not unique.

\begin{definition}\label{def:primitive}
A one dimensional pointed cone $ \tau \subseteq N_{\mathbb R} $ is called a ray. 
A \textbf{ray generator} of $ \tau $ is a vector $ r  $ such that $ \operatorname{Cone}(\left\{ r\right\}) = \tau $.
Thus any nonzero vector in $\tau$ is a ray generator.  However, since we are often assuming that $ \tau $ is rational, we will typically take the canonical choice of ray generator, namely the unique primitive vector in $\tau$ which we denote $u_{\tau}$.

For any pointed cone $ \sigma $ with finite set of rays $ \tau_{1}, \dots, \tau_{k} $ we define the following vector in its relative interior 
\begin{equation}\label{eq:canonical_vector}
u_{\sigma} = \sum_{i=1}^{k} u_{\tau_{i}}.
\end{equation}

\end{definition}

Faces of cones are themselves cones.
A cone $ \sigma \subseteq \mathbb{R}^{n} $ is \textbf{simplicial} if it is pointed and generated by a linearly independent set of vectors of $ \mathbb{R}^{n} $.
If $ \sigma $ is simplicial, then every face of it is simplicial too.
A \textbf{fan} $ \Sigma $ is a finite set of cones satisfying two properties:
\begin{enumerate}
  \item if $\sigma  \in \Sigma$ and $\tau$ is a face of $\sigma$, then $\tau \in \Sigma$, and
\item  if $\sigma, \tau \in \Sigma$, then $\sigma \cap \tau$ is a face of both $\sigma$ and $\tau$.
\end{enumerate}

A fan $ \Sigma $ is said to be \textbf{complete} if the union of all of its cones is equal to its ambient vector space $\mathbb{R}^n$.
The face poset $ \mathscr{F} $ of a fan $ \Sigma $ is the partial order on its elements given by inclusion.
It has a unique minimum element: the intersection of all members $ \bigcap_{\sigma \in \Sigma} \sigma $, but it may not have a maximum element.  Note that each cone in a fan must have the same lineality space $\mathsf{L}(\Sigma)$, which we call the lineality space of $\Sigma$; this is the minimum element of the face poset $\mathscr{F}$.  When a fan has lineality, we say it is simplicial if its quotient by its lineality space is simplicial.

\begin{example}[Standard simplices]\label{ex:std_simplices}

	Let $ S $ be a nonempty finite set and $ \mathbb{R}^{S} $ the vector space of all real valued functions on $ S $.
	We define the \textbf{standard simplex}
	\begin{align*}
		\Delta(S)  &= \operatorname{ConvexHull}\left\{ e_{s} \,\middle|\, s \in S\right\} \subseteq \mathbb{R}^{S},\\
		           &= \left\{ x \in \mathbb{R}^{S} \,\middle|\,  x(s) \geq 0,\,s\in S\, \quad \text{and} \quad  \sum_{s\in S} x(s) = 1 \right\},
	\end{align*}
		where the vector $ e_{s} $ is the function from $ S $ to $ \mathbb{R} $ that is 1 on $ s $ and 0 elsewhere.
	This is a $ (|S|-1) $-dimensional polytope.
	Its face poset, after adjoining a minimum element, is the Boolean poset $ \mathscr{B}(S) $ consisting of all subsets of $ S $ ordered by inclusion:
	For every subset $ I \subseteq S  $ we have the face $ \Delta(I) \coloneqq \operatorname{ConvexHull} \left\{ e_{i} \,\middle|\, i \in I \right\}$.	
	Often we identify $ S $ with $ [n] $ and simply write $ \Delta_{n} $ for $\Delta_{[n]} $.  Its inequality description is
	\[
		\Delta_{n} =  \left\{ x \in \mathbb{R}^{n} \,\middle|\, x_{i} \geq 0\quad \forall i \in [n], \quad \sum_{i=1}^{n} x_{i} = 1 \right\}.
	\]

\end{example}

Polytopes induce complete fans via the following construction.

\begin{definition}[Normal fan]\label{def:normal_fan}
For a given polytope $\mathsf{P}$, several directions $u$ can define the same nonempty face $ \mathsf{F} $.
We define $\operatorname{ncone}(\mathsf{F}, \mathsf{P}) \coloneqq\left\{ u \in \mathbb{R}^n \,\middle|\, \mathsf{F} \subseteq \mathsf{P}^{u}\right\}$, the normal cone of $ \mathsf{F} $ with respect to $ \mathsf{P} $.
Its relative interior $ \operatorname{ncone}(\mathsf{F}, \mathsf{P})^{\circ} $ consists of the directions $ u $ such that $ \mathsf{P}^{u} = \mathsf{F} $.
Note that $\mathsf{P}^\mathbf{0} = \mathsf{P}$.
The \textbf{normal fan} $ \Sigma( \mathsf{P} ) $ of $ \mathsf{P} $ is the collection of all normal cones of nonempty faces with respect to $ \mathsf{P} $.

\end{definition}

When $ \mathsf{P} \subseteq \mathbb{R}^{n} $ is full-dimensional, each normal cone is pointed and $ \dim\left( \operatorname{ncone}(\mathsf{F}, \mathsf{P}) \right) = \dim (\mathsf{P}) - \dim (\mathsf{F})$.
Let $ \mathsf{F} \subseteq \mathsf{P}	$ be a nonempty face of a full-dimensional polyhedron.
We define the \textbf{canonical inner face normal} of a nonempty proper face $\mathsf{F}$, denoted $ u_{\mathsf{F}} $, to be $ u_{\sigma} $ defined in Equation~\eqref{eq:canonical_vector},
where $ \sigma = \operatorname{ncone}(\mathsf{F}, \mathsf{P}) $.\footnote{Because we have settled on the convention that support functions are of the form $h(\mathsf P,u)=\min_{x\in \mathsf P} u^{\intercal}x$, the vector $ u_{\mathsf{F}} $ points into the polyhedron.
If we had instead used the convention that $h(\mathsf P,u)=\max_{x\in \mathsf P} u^{\intercal}x$, they would point out of the polyhedron.}
As a useful convention we set $u_{\mathsf P} = 0$.
Note that $ u_{\mathsf{F}} \in \operatorname{ncone}(\mathsf{F}, \mathsf{P})^{\circ}$.
The inner face normals are primitive for facets (by definition) but they may not be primitive for lower dimensional faces.
Thus when $ \mathsf{G} \subseteq \mathsf{P} $ is a facet, its normal cone is a ray $ \tau $, and its inner facet normal is the canonical generator of this ray.

When $ \mathsf{P} \subseteq \mathbb{R}^{n} $ is not full-dimensional then none of the normal cones are pointed.
In fact, every normal cone has the same lineality space which can be described as
\begin{equation}\label{eq:lineality_space}
	\mathsf{L}(\mathsf{P}) = \left\{ w\in \mathbb{R}^{n} \,\middle|\, \exists C \in \mathbb{R} : w^{\intercal} x = C, \forall x\in \mathsf{P} \right\}.
\end{equation}
The lineality space is the orthogonal complement to the affine span of $ \mathsf{P}$ translated to the origin.  If a fan $\Sigma$ is the normal fan of a polytope we say the fan is \textbf{polytopal}.\footnote{Some authors say \emph{regular} or \emph{projective} (in the latter case, one should assume that $\Sigma$ is rational).}

Suppose the fan $\Sigma \subset N_{\mathbb R}$ has lineality space $\mathsf L$. Projecting each cone in $\Sigma$ along $\mathsf L$ gives a pointed fan in $ N_{\mathbb R}/\mathsf L$. We define a \textbf{pointed representative} for $\Sigma$ to be a pointed fan ${}^{\wedge}\Sigma$ in the original ambient vector space such that the map ${}^{\wedge}\sigma \mapsto {}^{\wedge}\sigma+\mathsf L$ induces a bijection from the cones of ${}^{\wedge}\Sigma$ to the cones of $\Sigma$. Such a pointed representative is not unique, and if $\Sigma$ is complete, then ${}^{\wedge}\Sigma$ need not be complete in the original ambient vector space.  We say that ${}^{\wedge}\Sigma$ is a \textbf{primitive pointed representative} if,
for every ray ${}^{\wedge}\tau$ of ${}^{\wedge}\Sigma$, the image of its
primitive ray generator in the quotient lattice of
$N_{\mathbb R}/\mathsf L$ is the primitive ray generator of the
corresponding ray of the quotient fan. 
When $\Sigma$ has lineality and a primitive pointed representative ${}^{\wedge}\Sigma$ has been fixed, we write $u_{\sigma}\coloneqq u_{{}^{\wedge}\sigma}$, where ${}^{\wedge}\sigma$ is the cone corresponding to $\sigma$. In particular, for a non-full-dimensional polytope $\mathsf{P}$, the vectors $u_{\mathsf{F}}$ are defined using a fixed primitive pointed representative of $\Sigma(\mathsf{P})$.

\begin{remark}\label{rem:fan_decomposition}
We observe that one canonical choice is
\[
{}^{\wedge}\Sigma=\{\sigma\cap \mathsf L^\perp \mid \sigma\in\Sigma\}.
\]
This is complete as a fan in $\mathsf L^\perp$, but not as a fan in $\mathbb R^n$. However, as the following example indicates, this is not the pointed representative we will want to use.
\end{remark}

\begin{example}\label{ex:braid_zero}
	The normal fan $ \Sigma(\Delta_{n}) $ of the standard simplex from \Cref{ex:std_simplices} is equal to the following set of cones:
\begin{equation}\label{eq:braid_zero}
	\mathcal{B}^{0}_{n} \coloneqq \left\{ \operatorname{Cone} \left\{ e_{i} \,\middle|\, i \in I\right\} + \operatorname{Span}\left\{ e_{[n]}\right\} \,\middle|\, I \subsetneq [n] \right \}.
\end{equation}
Notice that the lineality space of the fan is the one dimensional line spanned by $ e_{[n]} $.  Here we naturally take the pointed representative 
\[{}^{\wedge}\mathcal{B}^{0}_{n}
=
\left\{
\operatorname{Cone}\{e_i \mid i\in I\}
\,\middle|\,
I\subsetneq[n]
\right\}.
\]
The reason for the zero superscript in the notation is justified by \Cref{ex:k_braid}.
The face poset $ \mathscr{B}^{0}_{n} $ is isomorphic to the Boolean lattice of subsets of $ [n] $ with the maximal element removed.
\end{example}

\begin{definition}\label{def:coarsening}
Given fans $ \Sigma_{1} $ and $ \Sigma_{2} $ we say that $ \Sigma_{1} $ is a \textbf{refinement} of $ \Sigma_{2} $ if every cone of $ \Sigma_{2}  $ is a union of cones in $ \Sigma_{1} $.
In this context we say $ \Sigma_{2} $ is a \textbf{coarsening} of $ \Sigma_{1} $.
\end{definition}

Given two fans $ \Sigma_{1} $ and $ \Sigma_{2} $ we define $ \Sigma_{1} \land \Sigma_{2} = \left\{  \sigma_{1} \cap \sigma_{2} \,\middle|\, \sigma_{1} \in \Sigma_{1}, \sigma_{2}\in \Sigma_{2} \right\} $ to be their common refinement.
The relation between Minkowski sums and normal fans is described by the following equation:
\begin{equation}\label{eq:sum_refinement}
 \Sigma(\mathsf{P} + \mathsf{Q}) = \Sigma(\mathsf{P}) \land \Sigma(\mathsf{Q}).
\end{equation}
See \cite[Proposition 7.2]{ziegler2012lectures}.
It follows that for any weak Minkowski summand $ \mathsf{Q} $ of $ \mathsf{P} $ we must have that $ \Sigma(\mathsf{Q}) $ is a coarsening of $ \Sigma(\mathsf{P}) $.
The converse is true, thus we have the following statement. 
\begin{proposition}[{see \cite[Chapter~15]{grunbaum1967convex}}]\label{prop:coarsening_summand}
A polytope $ \mathsf{Q} $ is a deformation (or, equivalently, a weak Minkowski summand) of $ \mathsf{P} $ if and only if $ \Sigma(\mathsf{Q}) $ coarsens the fan $ \Sigma(\mathsf{P}) $.
\end{proposition}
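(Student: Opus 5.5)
Both directions reduce to statements about support functions, using \eqref{eq:support_sum} and \eqref{eq:sum_refinement}.

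\emph{($\Rightarrow$).} Suppose $\lambda\mathsf P=\mathsf Q+\mathsf R$ for some $\lambda>0$. Scaling does not change the normal fan. By \eqref{eq:sum_refinement} we get $\Sigma(\mathsf P)=\Sigma(\mathsf Q)\wedge\Sigma(\mathsf R)$. Every cone of $\Sigma(\mathsf Q)$ is therefore a union of cones of the common refinement, and $\Sigma(\mathsf Q)$ coarsens $\Sigma(\mathsf P)$.

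\emph{($\Leftarrow$).} Assume $\Sigma(\mathsf Q)$ coarsens $\Sigma(\mathsf P)$. Reduce first to the full-dimensional case: both normal fans must share the lineality space of $\Sigma(\mathsf P)$, or at least $\mathsf L(\mathsf P)\subseteq\mathsf L(\mathsf Q)$. Since translations are harmless, we may work inside the affine span of $\mathsf P$.

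On each maximal cone $\operatorname{ncone}(v,\mathsf P)$, with $v$ a vertex of $\mathsf P$, the function $h(\mathsf Q,\cdot)$ is linear, equal to $u\mapsto q_v^\intercal u$, where $q_v$ is the vertex of $\mathsf Q$ whose normal cone contains $\operatorname{ncone}(v,\mathsf P)$. So $h(\mathsf Q,\cdot)$ is piecewise linear on $\Sigma(\mathsf P)$. Set
\[
g_\lambda=\lambda h(\mathsf P,\cdot)-h(\mathsf Q,\cdot),
\]
which is piecewise linear on $\Sigma(\mathsf P)$ with slope $\lambda v-q_v$ on the cone of $v$. The key step is to show $g_\lambda$ is concave (a support function in our min-convention) for $\lambda$ large. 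It suffices to check the local wall-crossing condition across each codimension-one wall $\operatorname{ncone}(e,\mathsf P)$, for $e=[v,w]$ an edge of $\mathsf P$. Concavity across the wall means that for $u$ in the cone of $v$ we need $(\lambda v-q_v)^\intercal u\le(\lambda w-q_w)^\intercal u$.

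Because $\Sigma(\mathsf Q)$ coarsens $\Sigma(\mathsf P)$, the vertices $q_v$ and $q_w$ either coincide or are joined by an edge of $\mathsf Q$ whose normal cone contains that wall. In either case $q_w-q_v$ is a nonnegative multiple $\mu_e(w-v)$ of the edge direction. The wall-crossing condition then becomes $(\lambda-\mu_e)(w-v)^\intercal u\ge0$, and $(w-v)^\intercal u\ge0$ holds on the cone of $v$. Taking $\lambda\ge\max_e\mu_e$, which is finite since there are finitely many edges, the function $g_\lambda$ is locally concave across every wall. It is therefore globally concave, because a continuous piecewise linear function on a complete fan that is concave across each wall is concave. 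Hence $g_\lambda=h(\mathsf R,\cdot)$ for the polytope $\mathsf R$ defined by \eqref{eq:support_difference}.

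The identity $h(\mathsf Q)+h(\mathsf R)=\lambda h(\mathsf P)$ then gives $\mathsf Q+\mathsf R=\lambda\mathsf P$, since support functions determine polytopes. This avoids the pathology of Remark~\ref{rem:subtraction}: the problem there arises exactly when $g$ is not a genuine support function, and here $g_\lambda$ is one.

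The main obstacle is the edge-parallelism claim that $q_w-q_v\in\mathbb R_{\ge0}(w-v)$. I would prove it by noting that $q_w-q_v$ is orthogonal to the linear span of the wall, since $h(\mathsf Q,\cdot)$ is linear along the wall. That wall is exactly the orthogonal complement of $w-v$ within the ambient space modulo lineality. The sign then follows from comparing values of $h(\mathsf Q,\cdot)$ on either side of the wall.
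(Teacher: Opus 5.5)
Your argument is correct. The forward direction is exactly the paper's: it derives that weak summands have coarser normal fans from \eqref{eq:sum_refinement}, and you supply the one implicit detail, namely that completeness of $\Sigma(\mathsf R)$ makes every cone of $\Sigma(\mathsf Q)$ a union of cones of the common refinement.

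For the converse the paper gives no argument and simply cites Gr\"unbaum, so your wall-crossing proof is a genuinely self-contained alternative. It is the classical edge-scaling criterion for summands rephrased via piecewise linear functions: each edge $[v,w]$ of $\mathsf P$ corresponds to a parallel, possibly degenerate, edge $[q_v,q_w]$ of $\mathsf Q$ with ratio $\mu_e$. What it buys is an explicit scaling and an explicit summand. Once $g_\lambda$ is concave it equals $u\mapsto\min_v(\lambda v-q_v)^{\intercal}u$, so $\mathsf R=\operatorname{conv}\{\lambda v-q_v\}$, which settles the concern of Remark~\ref{rem:subtraction} directly.

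The step you flag as the main obstacle is easier than you think. Only $q_w-q_v\in\mathbb R(w-v)$ is needed, and your continuity-across-the-wall argument already gives it. The sign of $\mu_e$ is irrelevant, because any $\lambda>0$ with $\lambda\geq\max_e\mu_e$ works; do insist on $\lambda>0$ explicitly, e.g.\ when $\mathsf Q$ is a point. The ``coincide or joined by an edge'' dichotomy is true but unused.

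The remaining unstated steps are routine:
\begin{itemize}
\item Each maximal cone of $\Sigma(\mathsf P)$ lies in a single vertex cone of $\Sigma(\mathsf Q)$. This needs completeness and the fan axioms, because the paper's definition of coarsening only says that cones of $\Sigma(\mathsf Q)$ are unions of cones of $\Sigma(\mathsf P)$.
\item The local-to-global concavity principle: restrict to lines avoiding cones of codimension at least two, then use continuity.
\end{itemize}
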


We call a polytope of the form $\lambda \mathsf{P}$, with $\lambda > 0$, a \textbf{homothety} of $\mathsf{P}$. 
Homotheties of $\mathsf{P}$ are regarded as trivial deformations of $\mathsf{P}$. 
We say that a deformation $\mathsf{Q}$ of $\mathsf{P}$ is nontrivial if it is neither a point nor a translation of a homothety of $\mathsf{P}$.

\begin{definition}\label{def:indecomposable}
A polytope $\mathsf{P}$ is called \textbf{indecomposable} if it admits no nontrivial deformations.
\end{definition}

The study of indecomposable polytopes was initiated by Meyer \cite{meyer1974indecomposable}.
Some criteria were developed by Shephard \cite{shephard1963decomposable} and McMullen \cite{mcmullen1987indecomposable}.
In Section \ref{sec:zonotopes} we present a criterion for indecomposability.

\section{Truncations}\label{subsec:truncations}

\subsection{Shallow and deep truncations}
\begin{definition}\label{def:truncation}
	Let $ \mathsf{F} \subsetneq  \mathsf{P}	$ be a nonempty proper face of a polytope and $ u $ a vector in $ \operatorname{ncone}(\mathsf{F}, \mathsf{P})^{\circ} $.
	We define the \textbf{truncation} of $ \mathsf{P} $ along $ u$ at distance $ \epsilon > 0 $ to be the polytope
	\begin{equation}\label{eq:truncation}
		\operatorname{Tr}^{u}_{\epsilon}\left[\mathsf{P}, \mathsf{F}\right] \coloneqq\left\{ x\in\mathsf{P} \,\middle|\, u^{\intercal}  x \geq h( \mathsf{P}, u) + \epsilon\right\}.
	\end{equation}

	With the particular choice of $ u = u_{\mathsf{F}} $, the canonical inner face normal introduced immediately after $\Cref{def:normal_fan}$, we refer to $ \operatorname{Tr}^{u}_{\epsilon}\left[\mathsf{P}, \mathsf{F}\right] $ as the \textbf{central truncation at distance $ \epsilon $} and denote it by $ \operatorname{Tr}_{\epsilon}\left[\mathsf{P}, \mathsf{F}\right] $ omitting the superscript $u$.
	By convention, we define $\operatorname{Tr}_{\epsilon}\left[\mathsf{P}, \emptyset \right] = \mathsf{P} $ for any $ \epsilon $.

	We say a vertex $ v $ is a \textbf{neighbor of a face} $ \mathsf{F} \subseteq \mathsf{P}$ if $ v \notin \mathsf{F} $ and there is an edge between $ v $ and a vertex $ w \in \mathsf{F} $.
	Let
\[
\delta_{\mathsf{F}} \coloneqq \min\left\{u_{\mathsf{F}}^{\intercal}y - h( \mathsf{P},u_{\mathsf F}) \,\middle|\, y \text{ is a neighbor of } \mathsf{F}\right\}.
\]
\begin{enumerate}
    \item If $ \epsilon < \delta_{\mathsf{F}}$, we call the truncation $ \operatorname{Tr}_{\epsilon}\left[\mathsf{P}, \mathsf{F}\right] $ a \textbf{shallow truncation} of $\mathsf{P}$ along $\mathsf{F}$. When the value of $\epsilon$ is understood from context, we omit it from the notation.
    \item For the specific choice $ \epsilon = \delta_{\mathsf{F}}$, we define the truncation
    \[
    \operatorname{DTr}\left[\mathsf{P}, \mathsf{F}\right] \coloneqq \operatorname{Tr}_{\delta_{\mathsf{F}}}\left[\mathsf{P}, \mathsf{F}\right]
    \]
    and refer to it as the \textbf{deep truncation} of $\mathsf{P}$ along $\mathsf{F}$.
    By convention, we set $\operatorname{DTr}[\mathsf{P},\emptyset] = \mathsf{P}$.
    \item We also introduce the associated polytope
    \[
    \operatorname{Fr}\left[\mathsf{P}, \mathsf{F}\right] \coloneqq \left\{ x \in \mathsf{P} \,\middle|\, u_\mathsf{F}^{\intercal}x \leq h(\mathsf{P}, u_\mathsf{F}) + \delta_{\mathsf{F}} \right\},
    \]
    which we call the \textbf{fragment} of $\mathsf{P}$ along $\mathsf{F}$.
\end{enumerate}
\end{definition}

\begin{example}\label{ex:split}
  Consider the unit cube in $\mathbb{R}^{{3}}$.
	In \Cref{fig:split_duplicated} we have a shallow central truncation at distance $ 1/2 $ along a vertex, a deep truncation at distance $ 1 $ along the same vertex, and the fragment along the same vertex.

\end{example}

The counterpart of a shallow truncation at the level of fans is that of a central stellar subdivision; see \cite{ewald1974stellar} for more information.
Given a cone $ \sigma $ in a simplicial fan $ \Sigma $ we define $ \mathrm{St}_{\Sigma}(\sigma) = \left\{ \kappa \in \Sigma \,\middle|\,  \sigma \subseteq \kappa \right\}, $ which is called the \textbf{open star} of $ \sigma $,
and $\overline{\mathrm{St}}_{\Sigma}(\sigma) = \left\{ \nu \in \Sigma \,\middle|\,  \nu \subseteq \kappa \in \mathrm{St}_{\Sigma}(\sigma) \right\}, $ which is called the \textbf{closed star} of $ \sigma $.

\begin{definition}\label{def:central_stellar}
Let $ \Sigma $ be a rational simplicial fan, let $ \sigma \in \Sigma $ be a cone, and let $ u \in \sigma^{\circ} $.\footnote{This definition extends to non-simplicial fans, but we won't need that level of generality for this paper.}
We define the \textbf{geometric stellar subdivision} of $ \sigma $ in $ \Sigma $ centered at $ u $
to be the fan $ \mathcal{C}_{u}( \Sigma, \sigma) $ defined as follows. 
\begin{equation}\label{eq:central_stellar}
\mathcal{C}_{u}( \Sigma, \sigma) \coloneqq \Sigma\backslash \mathrm{St}_{\Sigma}(\sigma) \cup \bigcup_{\nu \in \overline{\mathrm{St}}_{\Sigma}(\sigma) \setminus \mathrm{St}_{\Sigma}(\sigma)}   \operatorname{Cone} \bigg(\left\{ u\right\} \cup \nu \bigg)  .
\end{equation}

\end{definition}

\begin{lemma}\label{lem:shallow_truncation_stellar}
Let $\mathsf P$ be a simple polytope, let $\mathsf F$ be a nonempty
proper face, and let $\sigma_{\mathsf F}=\operatorname{ncone}(\mathsf F,\mathsf P)$
be the corresponding cone of $\Sigma(\mathsf P)$.  For $0<\epsilon<\delta_{\mathsf F}$, the normal fan of the central shallow truncation
$\operatorname{Tr}_{\epsilon}[\mathsf P,\mathsf F]$ is the central stellar
subdivision of $\Sigma(\mathsf P)$ along $\sigma_{\mathsf F}$:
\[
\Sigma(\operatorname{Tr}_{\epsilon}[\mathsf P,\mathsf F])
=
\mathcal C(\Sigma(\mathsf P),\sigma_{\mathsf F}).
\]
\end{lemma}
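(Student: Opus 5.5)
I will compare the two fans cone by cone, by describing the faces of $\mathsf Q\coloneqq\operatorname{Tr}_{\epsilon}[\mathsf P,\mathsf F]$ and their normal cones. Since $\mathsf P$ is simple, I may assume it is full-dimensional in its affine span and work modulo lineality, so that $\Sigma(\mathsf P)$ is simplicial. Write $H_\epsilon=\{x: u_{\mathsf F}^{\intercal}x = h(\mathsf P,u_{\mathsf F})+\epsilon\}$. The first step is to classify the vertices of $\mathsf Q$. Every vertex of $\mathsf P$ not in $\mathsf F$ satisfies $u_{\mathsf F}^{\intercal}y - h(\mathsf P,u_{\mathsf F})\ge \delta_{\mathsf F}>\epsilon$. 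For neighbors this is the definition of $\delta_{\mathsf F}$; for other vertices I would show it via a monotone edge-path argument. The linear functional $u_{\mathsf F}$ is minimized exactly on $\mathsf F$. From any vertex $y\notin\mathsf F$ there is therefore an edge path to $\mathsf F$ along which $u_{\mathsf F}$ weakly decreases (the simplex-method property), and the last vertex before entering $\mathsf F$ is a neighbor of $\mathsf F$.

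Hence the vertices of $\mathsf Q$ are of two kinds. The first kind are the vertices of $\mathsf P$ outside $\mathsf F$, with unchanged normal cones away from $\mathrm{St}(\sigma_{\mathsf F})$. The second kind are the points $H_\epsilon\cap e$ for edges $e$ with exactly one endpoint $w\in\mathsf F$. Such an edge $e$ leaves $\mathsf F$, and it must end at a neighbor, since edges inside $\mathsf F$ lie in $H_0$. Moreover, every face $\mathsf G\supseteq\mathsf F$ meets $H_\epsilon$ in a face of $\mathsf Q$.

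The second step is to identify the normal cones. Take a vertex $w\in\mathsf F$. Its cone $\sigma_w=\operatorname{Cone}(u_1,\dots,u_d)$ is simplicial, with $\sigma_{\mathsf F}=\operatorname{Cone}(u_1,\dots,u_k)$ a face. The edge of $\mathsf P$ at $w$ leaving $\mathsf F$ corresponds to the facet $\operatorname{Cone}(u_1,\dots,\widehat{u_i},\dots,u_d)$ for some $i\le k$. The new vertex on this edge is simple, and it lies on the facets with normals $u_j$ ($j\ne i$) and on the new facet with normal $u_{\mathsf F}$. Its normal cone is therefore $\operatorname{Cone}(u_{\mathsf F},u_1,\dots,\widehat{u_i},\dots,u_d)$. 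These are exactly the maximal cones $\operatorname{Cone}(\{u_{\mathsf F}\}\cup\nu)$ with $\nu\in\overline{\mathrm{St}}(\sigma_{\mathsf F})\setminus\mathrm{St}(\sigma_{\mathsf F})$ maximal. To confirm this, note that the maximal such $\nu$ are the facets of the maximal cones $\sigma_w$ omitting some generator of $\sigma_{\mathsf F}$. Thus the maximal cones of $\Sigma(\mathsf Q)$ are the maximal cones of $\Sigma(\mathsf P)$ not containing $\sigma_{\mathsf F}$, together with these new cones. Since both $\Sigma(\mathsf Q)$ and $\mathcal C(\Sigma(\mathsf P),\sigma_{\mathsf F})$ are fans closed under faces, equality of their maximal cones gives equality of the fans.

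The main obstacle is verifying that each new vertex really is simple with exactly that set of tight facets. I also need to check that no old facet disappears. For simplicity: the point $H_\epsilon\cap e$ lies in the relative interior of $e$, so the facets of $\mathsf P$ containing it are precisely the $d-1$ facets containing $e$, plus $H_\epsilon$ itself. For the facets: each facet of $\mathsf P$ containing $\mathsf F$ still contains a vertex of $\mathsf P$ outside $\mathsf F$ (since $\mathsf F$ is proper in that facet), and that vertex survives in $\mathsf Q$. Facets not containing $\mathsf F$ survive likewise, so all facet normals persist.
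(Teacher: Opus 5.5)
Your argument is correct. The paper does not prove this lemma: it is stated right after the definition of geometric stellar subdivision and treated as the classical duality between shallow truncations and stellar subdivisions, with a pointer to the literature. So your proposal is a self-contained verification rather than a variant of an argument in the text.

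Your route classifies the vertices of $\operatorname{Tr}_{\epsilon}[\mathsf P,\mathsf F]$, reads off their normal cones from the tight inequalities, and matches maximal cones. This makes explicit where each hypothesis enters:
\begin{itemize}
\item The monotone edge-path argument turns $\epsilon<\delta_{\mathsf F}$, which is a condition only on neighbors of $\mathsf F$, into the statement that $H_\epsilon$ separates the vertices of $\mathsf F$ from \emph{all} other vertices.
\item Simplicity makes each new vertex simple, with normal cone $\operatorname{Cone}(u_{\mathsf F},u_1,\dots,\widehat{u_i},\dots,u_d)$. These generators are independent because $u_{\mathsf F}$ has coefficient $1$ on $u_i$.
\end{itemize}
The same computation works verbatim for any $u\in\sigma_{\mathsf F}^{\circ}$, with the depth threshold defined using $u$. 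So it also yields the non-central version $\mathcal C_u$, which the paper's appeal to the classical duality implicitly covers.

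One small slip is in your last check that facet normals persist. You say each facet containing $\mathsf F$ has a vertex outside $\mathsf F$ ``since $\mathsf F$ is proper in that facet.'' This fails when $\mathsf F$ is itself a facet. In that case the old facet is replaced by a parallel one with the same normal, and the stellar subdivision along a ray is trivial, consistent with your vertex computation. The check is superfluous anyway. The normal cone of a vertex of $\{x : a_j^{\intercal}x\geq b_j\}$ is generated by the tight $a_j$ whether or not some inequalities are redundant. So your maximal-cone comparison already proves the equality of fans.
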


If $\Sigma$ has lineality space $\mathsf L(\Sigma)$, we perform stellar subdivisions on a pointed representative, equivalently on the quotient fan $\Sigma/\mathsf L(\Sigma)$, and then add the common lineality space back to every cone.

By definition, a central stellar subdivision of a fan $ \Sigma $ is a refinement (see \Cref{def:coarsening}) of $ \Sigma $.
Informally, a geometric stellar subdivision of a cone $ \sigma $ introduces a new ray generated by a vector $u \in \sigma^\circ$, subdivides the cones containing $\sigma$ by replacing them with cones obtained by adjoining this new ray to their proper faces not containing $\sigma$, and leaves all cones not containing $\sigma$ unchanged.
Notice that every cone $ \kappa \in \Sigma$ not containing $ \sigma $  is also in the fan $ \mathcal{C}( \Sigma, \sigma ) $.
See \Cref{fig:blow_fail} for an example of two central stellar subdivisions performed in succession.

The following lemma allows us to describe truncations of a face $\mathsf{F}$ as convex combinations of other truncations of the same face.

\begin{lemma}\label{lem:convtrunc}
Let $\mathsf{P}$ be a polytope, let $\mathsf{F}$ be a proper nonempty face of $\mathsf{P}$, and $0\leq \epsilon_1, \epsilon_2 \leq \delta_{\mathsf{F}}$.  Then, for $0 \leq \alpha\leq 1$,

\[
    \alpha \operatorname{Tr}_{\epsilon_1}\left[\mathsf{P}, \mathsf{F}\right]+ (1-\alpha) \operatorname{Tr}_{\epsilon_2}\left[\mathsf{P}, \mathsf{F}\right] = \operatorname{Tr}_{\beta}\left[\mathsf{P}, \mathsf{F}\right],
    \]
    where $\beta = \alpha \epsilon_1+ (1-\alpha) \epsilon_2$.  In particular, we have the following specialization:
    
\[
    \alpha  \operatorname{DTr}\left[\mathsf{P}, \mathsf{F}\right] + (1-\alpha) \mathsf{P} = \operatorname{Tr}_{\alpha \delta_\mathsf{F}}\left[\mathsf{P}, \mathsf{F}\right].
    \]
   
\end{lemma}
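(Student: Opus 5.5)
The plan is to compare support functions. Write $u=u_{\mathsf F}$, $h=h(\mathsf P,u)$ and $\delta=\delta_{\mathsf F}$. The identity claimed in \Cref{lem:convtrunc} is equivalent, by \eqref{eq:support_sum}, to the statement that for every direction $c$ the function $\epsilon\mapsto h(\operatorname{Tr}_{\epsilon}[\mathsf P,\mathsf F],c)$ is affine on $[0,\delta]$. The second displayed identity is then the special case $\epsilon_1=\delta$, $\epsilon_2=0$, using $\operatorname{Tr}_0[\mathsf P,\mathsf F]=\mathsf P$.

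The first step is to locate the vertices of $\operatorname{Tr}_\epsilon$. I claim every vertex $v\notin\mathsf F$ of $\mathsf P$ satisfies $u^{\intercal}v\ge h+\delta$.
\begin{itemize}
\item Since $u\in\operatorname{ncone}(\mathsf F,\mathsf P)^\circ$, the $u$-minimal face is exactly $\mathsf F$.
\item By the simplex-method argument, from any vertex $v\notin\mathsf F$ there is an edge path along which $u$ strictly decreases until it reaches $\mathsf F$.
\item The last vertex on that path before entering $\mathsf F$ is a neighbor $y$ of $\mathsf F$. Hence $u^{\intercal}v\ge u^{\intercal}y\ge h+\delta$.
\end{itemize}
Consequently, for $0<\epsilon<\delta$ the hyperplane $\{u^{\intercal}x=h+\epsilon\}$ contains no vertex of $\mathsf P$. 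The only edges it crosses are edges $wy$ with $w\in\mathsf F$ and $y$ a neighbor of $\mathsf F$. The vertices of $\operatorname{Tr}_\epsilon$ are therefore the vertices of $\mathsf P$ outside $\mathsf F$, together with the points
\[
w+\frac{\epsilon}{u^{\intercal}(y-w)}(y-w)
\]
for such edges. Each of these points depends affinely on $\epsilon$.

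The second step, which I expect to be the main obstacle, is to show that the normal fan of $\operatorname{Tr}_\epsilon$ is independent of $\epsilon\in(0,\delta)$, and that each vertex has a fixed normal cone along its affine trajectory. The face lattice of $\mathsf P\cap\{u^{\intercal}x\ge h+\epsilon\}$ is determined by which vertices of $\mathsf P$ lie strictly on each side of the hyperplane. Since no vertex lies on it, this partition is the same for all $\epsilon\in(0,\delta)$, so the combinatorial type is constant. The edges of $\operatorname{Tr}_\epsilon$ are of two kinds:
\begin{itemize}
\item portions of edges of $\mathsf P$;
\item intersections of $2$-faces of $\mathsf P$ with the truncating hyperplane.
\end{itemize}
In both cases the edge direction does not depend on $\epsilon$. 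The facet normals are also unchanged, so the normal cones, and hence the normal fan, are constant along this family.

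In the third step, for generic $c$ let $v_c(\epsilon)$ be the vertex of $\operatorname{Tr}_\epsilon$ whose normal cone contains $c$. By the second step this is a fixed combinatorial vertex for all $\epsilon\in(0,\delta)$. Then $h(\operatorname{Tr}_\epsilon,c)=c^{\intercal}v_c(\epsilon)$, which is affine in $\epsilon$ by the first step. Both sides of the desired identity of support functions are continuous in $c$, so genericity is harmless. Finally, $\epsilon\mapsto\operatorname{Tr}_\epsilon$ is continuous in the Hausdorff metric on $[0,\delta]$, because the truncating hyperplane moves continuously and meets $\mathsf P$ in a nonempty set throughout. This extends affineness to the closed interval, including the endpoints where vertices merge. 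Taking the combination $\alpha\,h(\operatorname{Tr}_{\epsilon_1},c)+(1-\alpha)\,h(\operatorname{Tr}_{\epsilon_2},c)=h(\operatorname{Tr}_{\beta},c)$ and applying \eqref{eq:support_sum} gives the lemma.
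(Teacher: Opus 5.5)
Your proof is correct, but it takes a different route from the paper's.

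The paper works with facet normals. It takes for granted two facts: that all shallow truncations $\operatorname{Tr}_\epsilon[\mathsf P,\mathsf F]$ with $0<\epsilon<\delta_{\mathsf F}$ share one normal fan, and that $\mathsf P$ and $\operatorname{DTr}[\mathsf P,\mathsf F]$ are deformations of it. Everything then lives in a single deformation cone, where a polytope is determined by its support values on the finitely many rays. On those rays the values are visibly affine in $\epsilon$: they are unchanged on the facet normals of $\mathsf P$, and equal $h(\mathsf P,u_{\mathsf F})+\epsilon$ on $u_{\mathsf F}$.

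You instead work with vertices. You show that each vertex of the truncation moves affinely in $\epsilon$ with a fixed normal cone, so $\epsilon\mapsto h(\operatorname{Tr}_\epsilon[\mathsf P,\mathsf F],c)$ is affine for every $c$. You then reach the endpoints $\epsilon=0$ and $\epsilon=\delta_{\mathsf F}$ by Hausdorff continuity, rather than by a coarsening claim.

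The paper's argument is shorter once the deformation-cone machinery is available. However, the two facts it asserts without proof are exactly what your Steps 1--2 and the continuity step supply: constancy of the normal fan on $(0,\delta_{\mathsf F})$, and control of the endpoints. Indeed, your conclusion $\operatorname{Tr}_\epsilon=\tfrac{\epsilon}{\delta_{\mathsf F}}\operatorname{DTr}[\mathsf P,\mathsf F]+\bigl(1-\tfrac{\epsilon}{\delta_{\mathsf F}}\bigr)\mathsf P$ shows that both $\mathsf P$ and $\operatorname{DTr}[\mathsf P,\mathsf F]$ are summands of dilates of any shallow truncation. Your version is therefore self-contained, and it makes explicit that no simplicity assumption on $\mathsf P$ is needed.
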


\begin{proof}
Each shallow truncation has the same normal fan, while $\mathsf P$ and $\operatorname{DTr}[\mathsf P,\mathsf F]$ are both deformations of any shallow truncation. It therefore suffices to compare the support functions on the rays of the normal fan of a shallow truncation. For all of the facet normals of the original polytope, with the possible exception of $\mathsf{F}$, this is clear.  For the direction $u_{\mathsf{F}}$, suppose that $h( \mathsf{P}, u_{\mathsf{F}})=\gamma$.  We can see that  $h( \operatorname{Tr}_{\epsilon}\left[\mathsf{P}, \mathsf{F}\right], u_{\mathsf{F}}) = \gamma+\epsilon$, hence $h( \alpha  \operatorname{Tr}_{\epsilon_1}\left[\mathsf{P}, \mathsf{F}\right]+ (1-\alpha)\cdot \operatorname{Tr}_{\epsilon_2}\left[\mathsf{P}, \mathsf{F}\right], u_\mathsf{F}) = \alpha (\gamma+ \epsilon_1) + (1-\alpha)(\gamma +\epsilon_2) = \gamma +\beta$ as desired.
\end{proof}

\subsection{Flat truncations}

Recall that in \Cref{def:truncation}, we defined the deep truncation $\operatorname{DTr} \left[\mathsf{P},\mathsf{F} \right]$.
\begin{definition}
	
	If we have that
	\begin{equation}\label{eqb:def_P-Q}
		\operatorname{DTr} \left[\mathsf{P},\mathsf{F} \right] = \operatorname{Conv}\left\{ v \in \operatorname{Vert}(\mathsf{P}) \,\middle|\, v \notin \mathsf{F}\right\},
	\end{equation}
	then we say that the deep truncation is a \textbf{flat truncation}.\footnote{This is equivalent to saying that the value $u_{\mathsf{F}}^{\intercal}y - h( \mathsf{P}, u_{\mathsf{F}})$ is the same for each neighbor $y$ of $\mathsf{F}$.}
\end{definition}

For example, the third polytope from the left in \Cref{fig:split_duplicated} is the flat truncation of a vertex in a cube.
In a simple polytope, the deep truncation at every vertex is flat, but this is not true for all faces.
In \Cref{prop:delzant} below, we prove that certain properties of the polytope are sufficient to ensure that all faces can be flatly truncated.

\begin{example}\label{ex:nonexample}
	Consider the non-regular cube in $ \mathbb{R}^{3}$ defined as the convex hull of the columns of the matrix
	\[
		\mathsf{P}=\operatorname{Conv}
		\begin{bmatrix}
			0&1&0&0&1&1&0&1\\
			0&0&1&0&1&0&1&1\\
			0&0&0&1&0&1&1/2&1/2
		\end{bmatrix}.
	\]
	When the highlighted edge in \Cref{fig:nonexample} is removed, the four adjacent vertices are not coplanar.
	They form two faces in the convex hull of the remaining six vertices, so the deep truncation at this edge is not flat.
	\begin{figure}[ht]
		\centering
		\tdplotsetmaincoords{70}{110}
		\input{tikz/nonexample.tex}
		\caption{From left to right: First we have a non-regular cube with an edge highlighted. 
		Then we have a shallow truncation of said edge in the central direction $u$ (the sum of the inner normals of the two facets containing the edge).
	Next is the deep truncation. Lastly, we have the convex hull of the remaining six points. Since this is not equal to the deep truncation, we conclude that the deep truncation is not flat.}
\label{fig:nonexample}
	\end{figure}
\end{example}

\begin{definition}
	\label{def:Delzant}
Let $N$ be a lattice and let $\mathsf P\subseteq N_{\mathbb R}$ be a full-dimensional integral polytope. We say that $\mathsf P$ is Delzant (or smooth) with respect to $N$ if, at every vertex, the primitive lattice vectors parallel to the incident edges form a basis of $N$.
\end{definition}

The polytope in \Cref{ex:nonexample} is Delzant with respect to the lattice $N = \mathbb{Z}e_1 \oplus \mathbb{Z}e_2 \oplus \frac{1}{2}\mathbb{Z}e_3$.
We describe a condition which ensures that the deep truncation along every face is flat.
An edge in a lattice polytope is called a \textbf{primitive edge} if its only lattice points are its endpoints.
A Delzant polytope whose edges are primitive is called a \textbf{primitive Delzant polytope}.

\begin{proposition}\label{prop:delzant}
	Let $\mathsf{P}$ be a Delzant polytope such that every edge is primitive.  
	Then the deep truncation of any proper nonempty face $ \mathsf{F} $ is flat.
	Furthermore, if a truncation has the same dimension as $\mathsf{P}$ then there is a unique new facet whose vertices are precisely the neighbors of $\mathsf{F}$.
\end{proposition}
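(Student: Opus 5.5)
The plan is to compute everything locally at the vertices of $\mathsf F$, using the Delzant condition to get explicit coordinates. Fix a vertex $w\in\mathsf F$ and write $d=\dim\mathsf P$. Since $\mathsf P$ is Delzant, the primitive edge vectors $v_1,\dots,v_d$ at $w$ form a lattice basis of $N$. Since every edge is primitive, the neighbors of $w$ along these edges are exactly $w+v_1,\dots,w+v_d$.

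Because $\mathsf P$ is simple, the facets through $w$ are in bijection with the edges at $w$. Let $\eta_1,\dots,\eta_d\in M$ be the dual basis to $(v_j)$. The facet opposite the edge $v_i$ (the one containing all edges except $v_i$) then has inner normal a positive multiple of $\eta_i$. Since $\eta_i$ is primitive (it is part of a basis of $M$), it is exactly the canonical primitive inner normal. Now reorder so that $\mathsf F$ is spanned at $w$ by $v_1,\dots,v_k$, where $k=\dim\mathsf F$. The facets containing $\mathsf F$ are those with normals $\eta_{k+1},\dots,\eta_d$, so
\[
u_{\mathsf F}=\eta_{k+1}+\cdots+\eta_d .
\]
The key computation follows. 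We have $u_{\mathsf F}^{\intercal}(w+v_j)-h(\mathsf P,u_{\mathsf F})=u_{\mathsf F}^{\intercal}v_j$. This equals $0$ for $j\le k$, which is consistent since those neighbors lie in $\mathsf F$, and equals $1$ for $j>k$. Doing this at every vertex $w$ of $\mathsf F$ shows that every neighbor $y$ of $\mathsf F$ has $u_{\mathsf F}^{\intercal}y-h(\mathsf P,u_{\mathsf F})=1$. Hence $\delta_{\mathsf F}=1$ and the value is constant over the neighbors, which is the flatness criterion in the footnote.

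To obtain the equality \eqref{eqb:def_P-Q} itself rather than just the footnote criterion, I would argue as follows.
\begin{itemize}
\item Every vertex $x$ of $\mathsf P$ not in $\mathsf F$ satisfies $u_{\mathsf F}^{\intercal}x\ge h+1$. This is because $u_{\mathsf F}\in M$ and $\mathsf P$ is a lattice polytope, so $u_{\mathsf F}^{\intercal}x-h$ is a positive integer when $x\notin\mathsf F$. This gives the inclusion $\operatorname{Conv}\{v\notin\mathsf F\}\subseteq\operatorname{DTr}[\mathsf P,\mathsf F]$.
\item For the reverse inclusion, I would identify the vertices of $\operatorname{DTr}[\mathsf P,\mathsf F]=\mathsf P\cap\{u_{\mathsf F}^{\intercal}x\ge h+1\}$. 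These are the vertices of $\mathsf P$ in the halfspace, together with the intersection points of the hyperplane with edges crossing it strictly.
\item No edge crosses strictly. An edge with one endpoint in $\mathsf F$ has its other endpoint at level exactly $1$ by the computation above. An edge with neither endpoint in $\mathsf F$ has both endpoints at level $\ge1$. Edges inside $\mathsf F$ lie at level $0$.
\end{itemize}
So all vertices of the deep truncation are vertices of $\mathsf P$ outside $\mathsf F$, which gives flatness.

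For the second statement, assume $\dim\operatorname{DTr}[\mathsf P,\mathsf F]=d$. Then the hyperplane $\{u_{\mathsf F}^{\intercal}x=h+1\}$ meets the full-dimensional truncation in a face, and I must show it is a facet whose vertex set is exactly the neighbor set. By the edge analysis, the vertices on this hyperplane are the vertices of $\mathsf P$ at level $1$. I would show these are precisely the neighbors of $\mathsf F$, and that they are affinely spanning in the hyperplane. This step is the main obstacle, since a non-neighbor vertex could a priori sit at level $1$. My approach is local: near a neighbor $w+v_j$ with $j>k$, the slice $\mathsf P\cap\{u_{\mathsf F}^{\intercal}x=h+1\}$ contains, in the cone at $w$, the simplex-like piece $w+v_j+\operatorname{span}$ of the differences $v_j-v_{j'}$ plus the directions $v_1,\dots,v_k$. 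This piece is $(d-1)$-dimensional, so the face is a facet.

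To rule out extra vertices, I would use convexity. The region of $\mathsf P$ with $u_{\mathsf F}^{\intercal}x\le h+1$ is the fragment, and locally at $\mathsf F$ it is the product-like region cut out by $\eta_{k+1},\dots,\eta_d\ge0$ with sum $\le1$. A vertex at level $1$ not adjacent to $\mathsf F$ would have to be a vertex of the fragment lying on the cut, and therefore an endpoint of an edge of the fragment emanating from $\mathsf F$. If this route runs into trouble, I would instead accept the weaker reading that the new facet's vertices are the level-$1$ vertices and argue that they coincide with the neighbors using the simple structure of the fragment.
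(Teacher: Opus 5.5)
Your flatness argument is correct and follows the paper's computation: at each vertex of $\mathsf F$ you use the Delzant basis and its dual to show that every neighbor sits at level $h+1$, where $h=h(\mathsf P,u_{\mathsf F})$. Your edge-crossing argument for $\operatorname{DTr}[\mathsf P,\mathsf F]=\operatorname{Conv}\{v\notin\mathsf F\}$ is also sound, and it is more complete than the paper, which stops at the common-level criterion of the footnote.

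The second statement has a genuine gap, exactly at the step you flag. You must show that every vertex $x$ of $\mathsf P$ with $u_{\mathsf F}^{\intercal}x=h+1$ is a neighbor of $\mathsf F$. Neither of your routes establishes this:
\begin{itemize}
\item the local product description of the fragment at vertices of $\mathsf F$ says nothing about vertices far from $\mathsf F$;
\item the ``therefore'' in your fragment argument is not justified by anything you wrote;
\item the fallback simply assumes the conclusion.
\end{itemize}

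The missing idea is short, and you already have the ingredients; it is what the paper does. Since $u_{\mathsf F}^{\intercal}x=h+1>h=\min_{\mathsf P}u_{\mathsf F}$, the vertex $x$ is not a minimizer. So some edge $[x,y]$ of $\mathsf P$ satisfies $u_{\mathsf F}^{\intercal}y<u_{\mathsf F}^{\intercal}x$. Otherwise every edge direction at $x$, and hence the whole tangent cone they generate, would be $u_{\mathsf F}$-nonnegative, and $x$ would be a global minimizer. Now $y$ is a lattice point with $h\leq u_{\mathsf F}^{\intercal}y<h+1$, so $u_{\mathsf F}^{\intercal}y=h$. Hence $y\in\mathsf F$ and $x$ is a neighbor. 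Conversely, each neighbor is a vertex of $\mathsf P$ lying in the truncation and on the new hyperplane, so it is a vertex of the new face.

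Your dimension argument also needs repair. The piece you describe lies in the tangent cone at $w$, but $\mathsf P$ is only contained in $w$ plus that cone, so nothing places the piece inside $\mathsf P$. Here is a correct replacement. Take an interior point of the full-dimensional truncation and a point of $\mathsf F$. The half-open segment between them lies in the interior of $\mathsf P$ and crosses $\{u_{\mathsf F}^{\intercal}x=h+1\}$. So the slice contains a relatively open $(\dim\mathsf P-1)$-dimensional piece and is a facet. Equivalently, use the paper's observation that the new inequality is irredundant. Uniqueness follows because every other facet of $\mathsf P\cap\{u_{\mathsf F}^{\intercal}x\geq h+1\}$ lies on a facet hyperplane of $\mathsf P$.
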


\begin{proof}
	Let $\mathsf{F} \subset \mathsf{P}$ be a face defined by the facet equalities $\eta_i^{\intercal} x=b_i$ for $i=1,\dots,k$, where the $\eta_i\in M$ are primitive inner facet normals, and let $W$ be the set of vertices of $\mathsf{P}$ which are neighbors of $\mathsf{F}$.  Let $u_{\mathsf{F}} = \sum_i\eta_i$ be the direction of the truncation of $\mathsf{F}$, and let $b = \sum_ib_i$.
	We have $u_{\mathsf{F}}^{\intercal} x\geq b$ for all $x\in\mathsf{P}$ with equality if and only if $x\in\mathsf{F}$.

	We claim that the set $\operatorname{conv}(W)$ is the face of the convex hull of vertices of  $\mathsf{P}$ not in $\mathsf{F}$ where the functional $u_{\mathsf{F}}^{\intercal} $ is minimized.
	Let $w\notin\mathsf{F}$ be an adjacent vertex to $v\in\mathsf{F}$.
	Since $\left\{w,v\right\}$ is an edge of $\mathsf{P}$ and $ \mathsf{P} $ is simple, there is a unique facet that contains $v$ and not $w$.  Since $w\notin \mathsf{F}$ we can assume that it is the facet given by $\eta_1^{\intercal} x=b_1$, with $ \eta_1 $ primitive.
	We have $\eta_1^{\intercal} w>\eta_1^{\intercal} v$, hence
	\[
	\eta_1^{\intercal}(w-v)>0.
	\]
	Since $\mathsf{P}$ is Delzant and $\left\{v,w\right\}$ is a primitive edge, after choosing a basis of $N$ we may assume that the primitive edge directions at $v$ are $e_1,\dots,e_n$, with $e_1=w-v$.

	The facet defined by $\eta_1^{\intercal}x=b_1$ contains all edges at $v$ except the one in the direction $e_1$, hence $\eta_1^{\intercal} e_j = 0$ for $j \neq 1$. 
	Since $\eta_1$ is primitive, we must have that
	\[
	\eta_1^{\intercal} e_1 =\eta_1^{\intercal}(w-v)= 1.
	\]
	Therefore
	\[
	\eta_1^{\intercal} w = \eta_1^{\intercal} v + 1 = b_1 + 1.
	\]
	Summing over all $i$, we obtain
	\[
	u_{\mathsf{F}}^{\intercal} w = 1 + \sum_i b_i, 
	\]
	as each other facet defining hyperplane which contains $v$ contains $w$.
	 This implies that all of the neighbors of $\mathsf{F}$ lie on a common hyperplane orthogonal to $u_{\mathsf{F}}$ as desired.
	
	Next, suppose that $\dim(\operatorname{DTr} \left[\mathsf{P},\mathsf{F} \right])=\dim(\mathsf{P})$.  The new inequality is not implied by the defining inequalities of $\mathsf{P}$ because ignoring it results in a different polytope.  This demonstrates that this new inequality is facet defining as $\dim(\operatorname{DTr} \left[\mathsf{P},\mathsf{F} \right])=\dim(\mathsf{P})$.
	
Let $\mathsf{F}'$ be the new facet introduced by the truncation. Let $w' \in \mathsf{P}$ be a vertex of $\mathsf{F}'$.  In particular, $u_{\mathsf{F}}^{\intercal} w'=b+1$.  We must show that $w'$ is a neighbor of $\mathsf{F}$.  For $w'$ to be a vertex of $\mathsf{F}'$, it must be that $w'$ lies on an edge $e$ of $\mathsf{P}$ which is not contained in the truncation.  Move along $e$ to the endpoint $v'$ which minimizes $u_{\mathsf{F}}^{\intercal} x$.  
Because $\mathsf{P}$ is integral, $v'$ is integral, and $b\leq u_{\mathsf{F}}^{\intercal}v' < b+1$, hence $u_{\mathsf{F}}^{\intercal}v' = b$ and $v' \in \mathsf{F}$.  So $w'$ lies on an edge connecting $v'$ to a neighbor $w''$ outside of $\mathsf{F}$.  By the first part of this proof, $u_{\mathsf{F}}^{\intercal}w'' = b+1 = u_{\mathsf{F}}^{\intercal}w'$.  Because the truncation cuts this edge $e$, it must be that $w'' = w'$.  In particular, $w'$ is a neighbor of a vertex of $\mathsf{F}$ as desired.
\end{proof}

The following lemma is central to this work.
\begin{lemma}\label{lem:standard_simplex_deep}
Let $I \subseteq [n]$ be a nonempty proper subset.
The deep truncation of the face $\Delta(I) \subseteq \Delta([n])$ is $\Delta([n]\setminus I)$.
\end{lemma}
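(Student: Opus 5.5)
Since $\Delta_n$ is not full-dimensional, the first step is to fix the canonical inner face normal of $\Delta(I)$. We use the primitive pointed representative ${}^{\wedge}\mathcal{B}^0_n$ from \Cref{ex:braid_zero}, whose rays are generated by $e_1,\dots,e_n$. The face $\Delta(I)$ is the intersection of the facets $\{x_j=0\}$ for $j\in[n]\setminus I$, since it is cut out by $x_j=0$ for exactly those $j\notin I$. Its normal cone is therefore $\operatorname{Cone}\{e_j : j\notin I\}+\operatorname{Span}\{e_{[n]}\}$, and the canonical inner face normal is
\[
u_{\Delta(I)} = e_{[n]\setminus I}.
\]
Then $h(\Delta_n, u_{\Delta(I)}) = \min_{x\in\Delta_n}\sum_{j\notin I}x_j = 0$, and the minimum is attained exactly on $\Delta(I)$.

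Next we compute $\delta_{\Delta(I)}$. Every pair of vertices of a simplex spans an edge, so the neighbors of $\Delta(I)$ are exactly the vertices $e_j$ with $j\notin I$. Each such neighbor satisfies $u_{\Delta(I)}^{\intercal}e_j = 1$. Hence $\delta_{\Delta(I)}=1$, and the truncation is flat. Alternatively, flatness follows from \Cref{prop:delzant}, since $\Delta_n$ is primitive Delzant in its affine lattice, but the direct computation is simpler.

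Finally, by \Cref{def:truncation},
\[
\operatorname{DTr}[\Delta_n,\Delta(I)] = \Bigl\{x\in\Delta_n : \sum_{j\notin I}x_j\ge 1\Bigr\}.
\]
On $\Delta_n$ we have $x\ge 0$ and $\sum_j x_j = 1$, so the condition $\sum_{j\notin I}x_j\ge 1$ forces $\sum_{i\in I}x_i = 0$, and hence $x_i=0$ for every $i\in I$. The truncation is therefore $\{x\ge0,\ \sum_j x_j=1,\ x_i=0 \text{ for } i\in I\}$, which is $\Delta([n]\setminus I)$. The reverse inclusion is immediate.

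The only delicate point is the choice of normal in the non-full-dimensional setting. Other pointed representatives differ from this one by multiples of $e_{[n]}$, and the convention is fixed by \Cref{ex:braid_zero}. The result is in fact insensitive to such shifts: adding $c\,e_{[n]}$ to $u$ changes both $h(\Delta_n,u)$ and the value at each neighbor by the same constant $c$, because $e_{[n]}^{\intercal}x = 1$ on all of $\Delta_n$. So $\delta$ and the truncation are unchanged, and no genuine obstacle arises.
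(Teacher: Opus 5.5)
Your proof is correct, but it takes a more hands-on route than the paper. The paper's proof is two lines. It invokes \Cref{prop:delzant} to conclude that every deep truncation of the (primitive Delzant) simplex is flat. A flat truncation equals the convex hull of the vertices outside the face, which here is $\Delta([n]\setminus I)$.

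You instead compute everything explicitly:
\begin{itemize}
\item the canonical normal $u_{\Delta(I)}=e_{[n]\setminus I}$, from the pointed representative of \Cref{ex:braid_zero};
\item the depth $\delta_{\Delta(I)}=1$, since every neighbor $e_j$ with $j\notin I$ evaluates to $1$;
\item the truncating inequality $\sum_{j\notin I}x_j\ge 1$, which on $\Delta_n$ forces $x_i=0$ for all $i\in I$.
\end{itemize}

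What your route buys is self-containment. \Cref{prop:delzant} is stated for full-dimensional Delzant polytopes. Applying it to $\Delta_n\subseteq\mathbb{R}^n$ therefore tacitly requires passing to the affine lattice and checking compatibility with the chosen pointed representative. You handle the non-full-dimensional normal convention directly, and you correctly observe that shifting $u$ by a multiple of $e_{[n]}$ changes nothing, because $e_{[n]}^{\intercal}x=1$ on $\Delta_n$. Your computation also shows why this deep truncation drops dimension: the cutting hyperplane $\sum_{j\notin I}x_j=1$ is itself a supporting hyperplane of $\Delta_n$, meeting it exactly in $\Delta([n]\setminus I)$.

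What the paper's route buys is brevity and uniformity with the general framework. The same proposition gives flatness and unit depth for every primitive Delzant seed, and that is what is later used in \Cref{thm:basis_seed}.
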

\begin{proof}
By \Cref{prop:delzant}, every deep truncation of the standard simplex is flat, hence equals the convex hull of the vertices not in the truncated face.
For $\Delta([n])$, this is precisely $\Delta([n]\setminus I)$.
\end{proof}

Note that the flat truncations of the standard simplex are always of smaller dimension than the original simplex.  We finish this section with some remarks on linear orders.

\begin{definition}
	\label{def:orderings}
	Let $ \mathcal{P} $ be a poset with $ n $ elements.
	A \textbf{linear extension} on $ \mathcal{P} $ is an order preserving bijective map $ E: \mathcal{P} \to [n] $.
	This induces a total order on the elements of $ \mathcal{P} $, which is compatible with its defining partial order $ \preceq $.
	
\end{definition}

If a poset $ \mathcal{P} $ is ranked, we can always take a linear extension $E$ of $ \mathcal{P} $ such that  $ E(s) \leq E(t) $ whenever $ s $ has a lower rank than $ t $.  In particular, if $ \mathcal{P} $ is the face poset of a polytope $ \mathsf{P} $, then we can take a total order $E$ in which all vertices of $ \mathsf{P} $ come first, then all edges of $ \mathsf{P} $, and so on.

\section{Deformation cones for barycentric subdivisions}

\subsection{Barycentric subdivisions}

\begin{definition}\label{def:barycentric}
Let $ \Sigma $ be a fan of dimension $d+\dim\mathsf L(\Sigma)$.  We define a sequence of fans
\begin{equation*}
 \Sigma^{0}, \Sigma^{1}, \Sigma^{2}, \cdots, \Sigma^{d-1}, \Sigma^{d}.
\end{equation*}
Take $\Sigma = \Sigma^{0} $ and for $0\leq k \leq d-1 $, construct $ \Sigma^{k+1}$ from $ \Sigma^{k} $ by performing central stellar subdivisions along each of the codimension-$k$ cones of $ \Sigma $ viewed as cones in $ \Sigma^{k} $.\footnote{The subdivisions performed when passing from $ \Sigma^{k} $ to $ \Sigma^{k+1} $ are independent and therefore can be performed simultaneously or in any particular order.}
 We define the \textbf{barycentric subdivision }of $\Sigma$ to be $\operatorname{Bar}(\Sigma) = \Sigma^{d}$.\footnote{Observe that stellar subdivisions of rays in simplicial fans have no effect, so $ \Sigma^{d-1} =  \Sigma^{d} $.  We include the latter for notational convenience.}

\end{definition}

The face poset of $\operatorname{Bar}(\Sigma)$ is the order complex of the face poset of $\Sigma$ with its minimum element removed.
More precisely, for each non-empty chain $\mathsf{L}(\Sigma) \subsetneq \sigma_1 \subsetneq \dots \subsetneq \sigma_k $ of cones in $ \Sigma $, the cone
\begin{equation*}
\operatorname{Cone} \left( \sigma_1 \subsetneq \dots \subsetneq \sigma_k  \right) \coloneqq \operatorname{Cone}(\left\{ u_{\sigma_{i}} \,\middle|\, i \in [k]\right\} ) 
\end{equation*}
is a cone in $\operatorname{Bar}(\Sigma)$, and every cone in  $\operatorname{Bar}(\Sigma) $ arises in this way.  Note that the underlying simplicial complex of $\operatorname{Bar}(\Sigma)$ is a flag complex: a collection of its rays spans a cone if and only if every pair spans a cone.

\begin{example}\label{ex:braid_one}
Recall we defined a fan $ \mathcal{B}^{0}_{n} $ in \Cref{ex:braid_zero} -- it is the normal fan of the standard simplex.  We define the \textbf{braid fan}, also known as the \textbf{braid arrangement}, as
\begin{equation*}
\mathcal{B}^{1}_{n} \coloneqq \operatorname{Bar}(\mathcal{B}^{0}_{n}).
\end{equation*}

Recall that $\sigma_I=  \operatorname{Span}\left\{ e_{[n]}\right\} + \operatorname{Cone} \left\{ e_{i} \,\middle|\, i\in  I \right\}$ has pointed representative ${}^{\wedge}\sigma_I \coloneq \operatorname{Cone} \left\{ e_{i} \,\middle|\, i\in  I \right\}$.  Observe that $e_I$ canonically generates the barycenter of ${}^{\wedge}\sigma_I $. 
The correspondence between chains in the face lattice of a fan and the cones of its barycentric subdivision allows us to describe the cones of $  \mathcal{B}^{1}_{n} $ combinatorially:
for each chain of proper subsets $ \emptyset \subsetneq S_1 \subsetneq S_2 \subsetneq \dots \subsetneq S_k \subsetneq [n]  $ we have the cone 
\begin{equation*}
\sigma(S_1,\dots,S_k) = \operatorname{Cone}\left\{ e_{S_i} \,\middle|\, 1\leq i \leq k\right\} + \operatorname{Span}\left\{ e_{[n]}\right\}.
\end{equation*}
When the chain is empty we obtain the lineality space $\operatorname{Span}\left\{ e_{[n]}\right\}$; when the chain consists of a single subset we get all $ 2^{n}-2 $ possible rays modulo lineality.
Finally, when the chain is saturated, we obtain one of the $n!$ full-dimensional cones, which are naturally in bijection with permutations of $[n]$.
This fan can be alternatively described as the fan induced by the hyperplane arrangement\footnote{Although this is a hyperplane arrangement, general barycentric subdivisions of simplicial fans are not.}
\[
	\mathcal{A}_{n} = \left\{ H_{ij} \,\middle|\, 1\leq i < j \leq n \right\}, \qquad \text{where} \qquad H_{ij} = \left\{ x\in \mathbb{R}^n \,\middle|\, x_i - x_j = 0 \right\}.
\]
The braid fan has a distinguished pointed representative 
${}^{\wedge}\mathcal{B}^{1}_{n}$ consisting of the cones ${}^{\wedge}\sigma(S_1,\dots,S_k)=  \operatorname{Cone}\left\{ e_{S_i} \,\middle|\, 1\leq i \leq k\right\}$ ranging over chains of proper subsets.
\end{example}

\begin{example}\label{ex:k_braid}
More generally, we define the \textbf{$k$-braid fan} $ \mathcal{B}^{k}_{n} = \operatorname{Bar}(\mathcal{B}^{k-1}_{n})$.  For $k=0$, this is the normal fan of the standard simplex.  For $k=1$, this is the braid fan.  For $k=2$ this is the 2-braid fan, i.e. the nested braid fan or the normal fan of the permutopermutohedron.
\end{example}

We now describe the polytopal companion to barycentric subdivisions.

\begin{definition}[\textbf{Omnitruncation}]\label{def:totalnew}
Let $\mathsf{S}$ be a $d$-polytope and let $\boldsymbol{\epsilon}
=
\{\epsilon_{\mathsf{F}}> 0
\mid \emptyset\subsetneq \mathsf{F}\subsetneq \mathsf{S}\}$.  Take some linear extension $\mathsf F_1 < \ldots <  \mathsf F_k$ of the proper part of the face poset of  $\mathsf{S}$, and let $\mathsf{S}_0 \coloneq \mathsf{S}$.  We say that $\boldsymbol{\epsilon}$ is \textbf{shallow} for $\mathsf{S}$ if for each $1\leq i \leq k$, the polytope $\mathsf{S}_i \coloneq \operatorname{Tr}_{\epsilon_{\mathsf F_i}}[\mathsf{S}_{i-1},\mathsf F_i\cap \mathsf{S}_{i-1}]$ is a shallow truncation of $\mathsf{S}_{i-1}$.  If  $\boldsymbol{\epsilon}$ is shallow for $ \mathsf{S}$, the associated omnitruncation of $\mathsf{S}$ is $\operatorname{Omni}_{\boldsymbol\epsilon}(\mathsf{S})\coloneq \mathsf{S}_k$.

\end{definition}

Suppose $\mathsf{S}=\{x\in\operatorname{aff}(\mathsf S) \mid u_{\mathsf{G}}^{\intercal}x\geq b_{\mathsf{G}}
\text{ for every facet } \mathsf{G}\subsetneq \mathsf{S}\}$. For every nonempty proper face $\mathsf{F}\subsetneq \mathsf{S}$, let
$u_{\mathsf{F}}$ be the canonical inner face normal of
$\mathsf{F}$, and set $b_{\mathsf{F}} \coloneqq h(\mathsf{S},u_{\mathsf{F}})$.  By construction, 
\[
\operatorname{Omni}_{\boldsymbol{\epsilon}}(\mathsf{S})
=
\left\{
x\in\operatorname{aff}(\mathsf S) \,\middle|\,
u_{\mathsf{F}}^{\intercal}x
\geq
b_{\mathsf{F}}+\epsilon_{\mathsf{F}}
\text{ for every }
\emptyset\subsetneq \mathsf{F}\subsetneq \mathsf{S}
\right\}.
\]

The normal fan of an omnitruncation of $ \mathsf{S} $ is the  barycentric subdivision of the normal fan of $ \mathsf{S}$:
 
\[
\operatorname{Bar}(\Sigma(\mathsf{S})) = \Sigma \left( \operatorname{Omni}_{\boldsymbol{\epsilon}}(\mathsf{S}) \right).
\]

When the choice of $\boldsymbol{\epsilon}$ is immaterial, we write $\operatorname{Omni}(\mathsf{S})$ for a fixed but arbitrary omnitruncation of $\mathsf{S}$.

\begin{remark}\label{rmk:legitepsilon}
Note that \emph{a priori} the shallowness of ${\boldsymbol{\epsilon}}$ depends on the given linear extension used.  However, it can be shown that the definition is independent of the linear extension chosen.  Additionally, one might naturally allow that $\epsilon_{\mathsf{F}}=0$ when
$\mathsf{F}$ is a facet; our choice was made so that the set of omnitruncations is an open set in the deformation cone.  
\end{remark}

\begin{definition}\label{def:coneofomnitruncations}
Let $\mathcal{T}_{\operatorname{Omni}}(\mathsf{S}) \coloneq \{ \lambda\mathsf{P} + v: \lambda >0, v \in \mathbb{R}^n, \mathsf{P} \text{\,\, is an omnitruncation of\,\,} \mathsf{S}\}$.
\end{definition}

\begin{lemma}\label{lem:omnicone}
Let $\mathsf{S}$ be a simple polytope.  Then $\mathcal{T}_{\operatorname{Omni}}(\mathsf{S})$ is an open cone contained in $\operatorname{Def}(\operatorname{Bar}(\Sigma(\mathsf{S})))$.
\end{lemma}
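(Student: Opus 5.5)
The plan is to identify $\mathcal{T}_{\operatorname{Omni}}(\mathsf{S})$, modulo translations, with an explicit subset of the space of right-hand sides for the inequality system of $\operatorname{Omni}_{\boldsymbol\epsilon}(\mathsf{S})$. Every polytope whose normal fan coarsens $\Psi=\operatorname{Bar}(\Sigma(\mathsf S))$ is determined by its support values on the rays of $\Psi$. These rays are exactly the vectors $u_{\mathsf F}$ for $\emptyset\subsetneq\mathsf F\subsetneq\mathsf S$, taken modulo lineality. So $\operatorname{Def}(\Psi)$ embeds as a closed polyhedral cone in $\mathbb{R}^{\{\mathsf F\}}$ via $\mathsf Q\mapsto (h(\mathsf Q,u_{\mathsf F}))_{\mathsf F}$, cut out by the wall-crossing inequalities of $\Psi$. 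Its interior consists of those $\mathsf Q$ with $\Sigma(\mathsf Q)=\Psi$; this uses that $\Psi$ is simplicial and complete, which holds because $\mathsf S$ is simple. By the displayed inequality description of $\operatorname{Omni}_{\boldsymbol\epsilon}(\mathsf S)$ and $\Sigma(\operatorname{Omni}_{\boldsymbol\epsilon}(\mathsf S))=\Psi$, every omnitruncation lies in $\operatorname{Def}(\Psi)$. So do its dilates and translates, since $h(\lambda\mathsf P+v,\cdot)=\lambda h(\mathsf P,\cdot)+v^{\intercal}(\cdot)$. This gives the containment and shows that $\mathcal{T}_{\operatorname{Omni}}(\mathsf S)$ is closed under positive scaling, i.e.\ it is a cone.

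For openness, I will show that the set $E$ of shallow $\boldsymbol\epsilon$ is open in $\mathbb{R}_{>0}^{\{\mathsf F\}}$. I then show that the map $(\lambda,v,\boldsymbol\epsilon)\mapsto \lambda\operatorname{Omni}_{\boldsymbol\epsilon}(\mathsf S)+v$ onto its image in support-value coordinates is a local homeomorphism onto an open set. Fix a linear extension $\mathsf F_1<\dots<\mathsf F_k$. Shallowness at step $i$ is the strict inequality $\epsilon_{\mathsf F_i}<\delta_{\mathsf F_i\cap\mathsf S_{i-1}}(\mathsf S_{i-1})$. The simple combinatorial type of $\mathsf S_{i-1}$ is fixed as long as the previous steps are shallow, so $\delta$ is a minimum of finitely many affine functions of $(\epsilon_{\mathsf F_1},\dots,\epsilon_{\mathsf F_{i-1}})$, because the vertices of a simple polytope with fixed type depend affinely on the right-hand sides. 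Inductively, $E$ is a finite intersection of strict inequalities among continuous functions, hence open.

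Next I account for the parameters. The coordinates are $b_{\mathsf F}+\epsilon_{\mathsf F}$ for nonfacets $\mathsf F$ and $b_{\mathsf G}+\epsilon_{\mathsf G}$ for facets $\mathsf G$. Scaling by $\lambda$ and translating by $v$ replaces these with $\lambda(b_{\mathsf F}+\epsilon_{\mathsf F})+v^{\intercal}u_{\mathsf F}$. A point $c$ in support-value coordinates near $\lambda(b+\boldsymbol\epsilon)+v^{\intercal}u$ can be written in this form by keeping $\lambda,v$ and setting $\boldsymbol\epsilon'=(c-v^{\intercal}u)/\lambda-b$. This $\boldsymbol\epsilon'$ is close to $\boldsymbol\epsilon$, hence lies in $E$. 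Therefore a neighborhood of each point of $\mathcal{T}_{\operatorname{Omni}}(\mathsf S)$ lies in $\mathcal{T}_{\operatorname{Omni}}(\mathsf S)$, which proves openness.

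The main obstacle is verifying that $\delta$ varies continuously along the iteration. In particular, I must check that $\mathsf F_i\cap\mathsf S_{i-1}$ remains a face of the same combinatorial type under small perturbation of the earlier $\epsilon$'s. I would argue this via Lemma~\ref{lem:shallow_truncation_stellar}: each shallow step realizes a fixed stellar subdivision, so the normal fan of $\mathsf S_{i-1}$ is independent of the earlier shallow choices. With the normal fan fixed, vertex positions are linear in the right-hand sides, which gives continuity.
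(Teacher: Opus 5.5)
Your containment and openness arguments are fine, and your analysis of the shallowness conditions is more explicit than the paper's one-line openness claim. The gap is in what ``cone'' means. Closure under positive dilation is built into \Cref{def:coneofomnitruncations}, so checking it proves nothing. The content of the lemma, which the paper deduces from \Cref{lem:buildingtruncationcone}, is that $\mathcal{T}_{\operatorname{Omni}}(\mathsf S)$ is a \emph{convex} cone, and this is the part the paper actually works for. For shallow $\boldsymbol\epsilon^1,\boldsymbol\epsilon^2$ and $0<\lambda<1$, it shows inductively that $\lambda\mathsf S^1_{i+1}+(1-\lambda)\mathsf S^2_{i+1}$ is a shallow truncation of $\lambda\mathsf S^1_i+(1-\lambda)\mathsf S^2_i$ along $\mathsf F_{i+1}$. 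The key facts are that both summands share a normal fan at each stage, so support values combine linearly on rays, and they increase strictly only in the direction $u_{\mathsf F_{i+1}}$. You never address convexity, and it is not automatic. A polytope whose normal fan is exactly $\operatorname{Bar}(\Sigma(\mathsf S))$ need not come from a shallow $\boldsymbol\epsilon$ (\Cref{ex:standard_perm_not_omni}), so convexity cannot be inherited from the open type cone.

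You can close the gap inside your own framework. On the region where the first $i-1$ steps are shallow, the combinatorial type of $\mathsf S_{i-1}$ is fixed, and so is the set of neighbors $y$ of $\mathsf F_i\cap\mathsf S_{i-1}$. Hence the step-$i$ condition reads $\epsilon_{\mathsf F_i}<\ell_y(\epsilon_{\mathsf F_1},\dots,\epsilon_{\mathsf F_{i-1}})$ for finitely many affine functions $\ell_y(\cdot)=u_{\mathsf F_i}^{\intercal}y(\cdot)-b_{\mathsf F_i}$, each defined on all of $\mathbb{R}^{i-1}$. By induction, $E$ is a finite intersection of open half-spaces, hence convex; this is essentially the paper's closing polyhedrality paragraph. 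In support-value coordinates, $\mathcal{T}_{\operatorname{Omni}}(\mathsf S)$ is the positive hull of the convex set $\{h(\operatorname{Omni}_{\boldsymbol\epsilon}(\mathsf S))\mid\boldsymbol\epsilon\in E\}$, which is an affine image of $E$, plus the subspace of global linear functions. The positive hull of a convex set is convex, since $\mu\lambda_1y_1+(1-\mu)\lambda_2y_2=(\mu\lambda_1+(1-\mu)\lambda_2)(\theta y_1+(1-\theta)y_2)$ with $\theta=\mu\lambda_1/(\mu\lambda_1+(1-\mu)\lambda_2)$. With this added, your route is a valid alternative to the paper's Minkowski-sum induction and gives convexity and polyhedrality in one step.
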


In \Cref{subsect:Btruncations} we will extend the above constructions and results to the setting of general building sets on projective simplicial fans, and \Cref{lem:omnicone} will follow from \Cref{lem:buildingtruncationcone}.

\begin{example}
	A \textbf{permutahedron} is defined to be the convex hull of the $S_n$-orbit of a point in $\mathbb{R}^n$ with distinct coordinates.  
    Any symmetric omnitruncation of the standard simplex $ \Delta_{n} $ is a permutahedron, but not all permutahedra are symmetric omnitruncations of $\Delta_n$ as the next example shows.	 \end{example}

\begin{example}[The standard permutahedron is not an omnitruncation]
\label{ex:standard_perm_not_omni}
The standard permutahedron is
\[
\Pi_n
\coloneqq
\sum_{1\leq i<j\leq n}\Delta(\{i,j\}).
\]
Its normal fan is the braid fan, which is the barycentric
subdivision of the normal fan of $\Delta_n$.  Thus $\Pi_n$ is
normally equivalent to an omnitruncation of $\Delta_n$.
Nevertheless, when $n\geq5$, the standard permutahedron does not
belong to the closure of
$\mathcal{T}_{\operatorname{Omni}}(\Delta_n)$.
\end{example}

\begin{proof}
For a polytope in
$\mathcal{T}_{\operatorname{Omni}}(\Delta_n)$, translate its support
function so that its values on the vectors $e_i$ are zero, and write
\[
q_A\coloneqq h(\mathsf P,e_A).
\]
For distinct $i,j\in[n]$, the requirement that the vertex
truncations at $i$ and $j$ be performed shallowly implies
\[
q_{[n]}
>
q_{[n]\setminus\{i\}}
+
q_{[n]\setminus\{j\}}.
\]
Indeed, whichever of the two vertices is truncated first shortens the
edge joining them, and shallowness of the second truncation gives the
displayed inequality.  Consequently, every point in the closure of
the omnitruncation cone satisfies the corresponding weak inequality.

For the standard permutahedron, additivity of support functions gives
\[
h(\Pi_n,e_A)=\binom{|A|}{2},
\]
because the segment $\Delta(\{i,j\})$ contributes $1$ precisely when
$\{i,j\}\subseteq A$.  Hence
\[
q_{[n]}=\binom{n}{2}
\qquad\text{and}\qquad
q_{[n]\setminus\{i\}}=\binom{n-1}{2}.
\]
For $n\geq5$, we have
\[
2\binom{n-1}{2}-\binom{n}{2}
=
\frac{(n-1)(n-4)}{2}
>
0.
\]
Thus $\Pi_n$ violates the weak inequality defining the closure of
$\mathcal{T}_{\operatorname{Omni}}(\Delta_n)$.

We remark that $\Pi_4$ is not an omnitruncation of $\Delta_4$, however it does live on the boundary of $\mathcal{T}_{\operatorname{Omni}}(\Delta_4)$.  This is the truncated octahedron realization; the octahedron is a limit of vertex truncations of $\Delta_4$, and the vertices of the octahedron are the limits of the edges of the truncated $\Delta_4$.
\end{proof}

\begin{remark}\label{rmk:omniDelta4}
For the interested reader, we note that for $n\geq 4$, the simplex $\Delta_n$ and the negative simplex $-\Delta_n$ have disjoint omnitruncation cones.  This can be proven as follows: For $n=4$ and any $\boldsymbol{\epsilon}$ that is shallow for $\Delta_4$, the sum of the areas of the facets of $\operatorname{Omni}_{\boldsymbol{\epsilon}}(\Delta_4)$ corresponding to truncations of vertices of $\Delta_4$ is strictly less than the sum of the areas of the facets of $\operatorname{Omni}_{\boldsymbol{\epsilon}}(\Delta_4)$ corresponding to truncations of facets of $\Delta_4$.  The opposite inequality holds for any omnitruncation of $-\Delta_4$.  The statement then holds for general $n$ by considering faces.
\end{remark}

\begin{proposition}\label{prop:omni_product}
Let $\mathsf P$ and $\mathsf Q$ be simple polytopes.
Then $\operatorname{Omni}(\mathsf P)\times\operatorname{Omni}(\mathsf Q)$ is a deformation of $\operatorname{Omni}(\mathsf P\times\mathsf Q)$.
\end{proposition}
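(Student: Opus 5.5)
By \Cref{prop:coarsening_summand}, it suffices to show that the normal fan of $\operatorname{Omni}(\mathsf P)\times\operatorname{Omni}(\mathsf Q)$ coarsens the normal fan of $\operatorname{Omni}(\mathsf P\times\mathsf Q)$. The first fan is the product fan $\operatorname{Bar}(\Sigma(\mathsf P))\times\operatorname{Bar}(\Sigma(\mathsf Q))$. The second is $\operatorname{Bar}(\Sigma(\mathsf P)\times\Sigma(\mathsf Q))$, since $\Sigma(\mathsf P\times\mathsf Q)=\Sigma(\mathsf P)\times\Sigma(\mathsf Q)$. The plan is therefore to prove that every maximal cone of $\operatorname{Bar}(\Sigma_1\times\Sigma_2)$ lies inside a single cone $\tau_1\times\tau_2$ with $\tau_j\in\operatorname{Bar}(\Sigma_j)$, where $\Sigma_1=\Sigma(\mathsf P)$ and $\Sigma_2=\Sigma(\mathsf Q)$. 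Since both fans are complete, this containment of maximal cones gives the coarsening.

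For the containment, take a maximal chain $\sigma_1\times\rho_1\subsetneq\sigma_2\times\rho_2\subsetneq\cdots$ of nonzero cones in $\Sigma_1\times\Sigma_2$. Its rays are $u_{\sigma_i\times\rho_i}=u_{\sigma_i}+u_{\rho_i}$, because the rays of a product cone are the disjoint union of the rays of the two factors, and $u_{\mathbf 0}=0$. The sequences $(\sigma_i)$ and $(\rho_i)$ are weakly increasing chains in $\Sigma_1$ and $\Sigma_2$. Discarding repetitions and zero cones, they give chains $C_1$ and $C_2$, and hence cones $\tau_1=\operatorname{Cone}(u_\sigma:\sigma\in C_1)\in\operatorname{Bar}(\Sigma_1)$ and $\tau_2=\operatorname{Cone}(u_\rho:\rho\in C_2)\in\operatorname{Bar}(\Sigma_2)$. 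Each generator $u_{\sigma_i}+u_{\rho_i}$ lies in $\tau_1\times\tau_2$, and therefore the whole cone of the chain lies in $\tau_1\times\tau_2$.

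If lineality is present, I would work with pointed representatives. For polytopes that are not full-dimensional, I would take the pointed representative of $\Sigma(\mathsf P\times\mathsf Q)$ to be the product of the chosen pointed representatives of the factors, so that the formula $u_{\sigma\times\rho}=u_\sigma+u_\rho$ still holds. This is the step where I expect most of the care to go: the identification $\Sigma(\operatorname{Omni}(\mathsf P\times\mathsf Q))=\operatorname{Bar}(\Sigma_1\times\Sigma_2)$ must be made with exactly these ray generators $u_{\sigma\times\rho}$, and the paper's conventions for canonical inner face normals must be checked to be compatible with products. A secondary check is that $\operatorname{Bar}(\Sigma_1)\times\operatorname{Bar}(\Sigma_2)$ is really the normal fan of the product of omnitruncations. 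This follows from the general fact $\Sigma(\mathsf A\times\mathsf B)=\Sigma(\mathsf A)\times\Sigma(\mathsf B)$, applied to the omnitruncations in place of $\mathsf P$ and $\mathsf Q$.
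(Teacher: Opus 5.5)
Your proposal is correct and follows essentially the same route as the paper: you reduce via \Cref{prop:coarsening_summand} to showing that each maximal cone of $\operatorname{Bar}(\Sigma(\mathsf P)\times\Sigma(\mathsf Q))$ lies in a product cone $\tau_1\times\tau_2$, split a chain in the product poset into its two coordinate chains, use $u_{\sigma\times\rho}=u_\sigma+u_\rho$ (with $u=0$ for the minimal cone), and handle lineality by taking the product of the pointed representatives. The paper phrases the chains dually, as saturated chains of faces of $\mathsf P\times\mathsf Q$ indexed by vertices of the omnitruncation, but the argument is the same.
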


\begin{proof}
By \Cref{prop:coarsening_summand} it suffices to show that $\Sigma(\operatorname{Omni}(\mathsf P\times\mathsf Q))$ refines $\Sigma(\operatorname{Omni}(\mathsf P))\times\Sigma(\operatorname{Omni}(\mathsf Q))$.
Write $d=\dim\mathsf P$, $e=\dim \mathsf Q$, and $\Sigma_{\mathsf P}=\Sigma(\mathsf P)$, $\Sigma_{\mathsf Q}=\Sigma(\mathsf Q)$, so that
\[
\Sigma(\operatorname{Omni}(\mathsf P\times\mathsf Q)) = \operatorname{Bar}(\Sigma_{\mathsf P}\times\Sigma_{\mathsf Q})
\qquad\text{and}\qquad
\Sigma(\operatorname{Omni}(\mathsf P))\times\Sigma(\operatorname{Omni}(\mathsf Q)) = \operatorname{Bar}(\Sigma_{\mathsf P})\times\operatorname{Bar}(\Sigma_{\mathsf Q}).
\]
Both are complete fans of dimension $d+e$, so it suffices to show every maximal cone of the first is contained in a maximal cone of the second.

We first treat the case where both polytopes are full dimensional.
Every vertex $v$ of $\operatorname{Omni}(\mathsf P\times\mathsf Q)$ corresponds to a saturated chain of faces from a vertex of $\mathsf P\times\mathsf Q$ up to $\mathsf P\times\mathsf Q$ itself,
\[
\mathsf H_0\subsetneq\mathsf H_1\subsetneq\cdots\subsetneq\mathsf H_{d+e-1}\subsetneq\mathsf P\times\mathsf Q.
\]
Since every face of $\mathsf P\times\mathsf Q$ is a product of a face of $\mathsf P$ with a face of $\mathsf Q$, and each covering relation in a product poset changes exactly one factor, this single chain determines two saturated chains
\[
\mathsf F_0\subsetneq\cdots\subsetneq\mathsf F_{d-1}\subsetneq \mathsf{F}_{d} = \mathsf{P}
\quad,
\qquad
\mathsf G_0\subsetneq\cdots\subsetneq\mathsf G_{e-1}\subsetneq \mathsf{G}_{e} = \mathsf{Q}
\quad,
\]
namely the distinct values taken by the two factors as $\mathsf H_\bullet$ is traversed, and every $\mathsf H_r$ equals $\mathsf F_i\times\mathsf G_j$ for the values of $i,j$ current at step $r$.

The chains $\mathsf F_\bullet$ and $\mathsf G_\bullet$ are themselves the saturated chains corresponding to a vertex $p$ of $\operatorname{Omni}(\mathsf P)$ and a vertex $q$ of $\operatorname{Omni}(\mathsf Q)$, giving a maximal cone
\[
\tau
\coloneqq
\operatorname{ncone}(p,\operatorname{Omni}\mathsf P)\times\operatorname{ncone}(q,\operatorname{Omni}\mathsf Q)
=
\operatorname{Cone}\bigl(u_{\mathsf F_0},\ldots,u_{\mathsf F_{d-1}},u_{\mathsf G_0},\ldots,u_{\mathsf G_{e-1}}\bigr)
\]
of $\Sigma(\operatorname{Omni}(\mathsf P))\times\Sigma(\operatorname{Omni}(\mathsf Q))$ (recall that $ u_{\mathsf{P}} = u_{\mathsf{Q}} = 0 $).
Using $u_{\mathsf F\times\mathsf G}=u_{\mathsf F}+u_{\mathsf G}$, every ray of the normal cone of $v$ satisfies $u_{\mathsf H_r}=u_{\mathsf F_i}+u_{\mathsf G_j}\in\tau$, so
\[
\operatorname{ncone}(v,\operatorname{Omni}(\mathsf P\times\mathsf Q))
=
\operatorname{Cone}\bigl(u_{\mathsf H_0},\ldots,u_{\mathsf H_{d+e-1}}\bigr)
\subseteq
\tau.
\]

Since $v$ was an arbitrary vertex, every maximal cone of $\operatorname{Bar}(\Sigma_{\mathsf P}\times\Sigma_{\mathsf Q})$ lies in a maximal cone of $\operatorname{Bar}(\Sigma_{\mathsf P})\times\operatorname{Bar}(\Sigma_{\mathsf Q})$, proving the refinement.

We now extend to the case where $\mathsf P,\mathsf Q$ need not be full-dimensional.
Let $\mathsf L(\mathsf P),\mathsf L(\mathsf Q)$ be the lineality spaces of $\Sigma_{\mathsf P},\Sigma_{\mathsf Q}$ (\Cref{eq:lineality_space}), and ${}^\wedge\Sigma_{\mathsf P},{}^\wedge\Sigma_{\mathsf Q}$ pointed representatives, so $\Sigma_{\mathsf P}={}^\wedge\Sigma_{\mathsf P}+\mathsf L(\mathsf P)$ and likewise for $\mathsf Q$ (\Cref{prop:eta}); concretely, ${}^\wedge\Sigma_{\mathsf P}$ is the normal fan of $\mathsf P$ realized as a full-dimensional polytope $\mathsf P_V$ in its affine hull direction, and $\mathsf P_V$ is simple exactly when $\mathsf P$ is.

For any fan $\Sigma$ with lineality space $L$, the map $\sigma\mapsto{}^\wedge\sigma$ is an isomorphism of face posets $\Sigma\to{}^\wedge\Sigma$, so chains of proper faces of the two agree and
\[
\operatorname{Bar}(\Sigma) = \operatorname{Bar}({}^\wedge\Sigma) + L.
\]
Since $\sigma_1\times\sigma_2 = ({}^\wedge\sigma_1\times{}^\wedge\sigma_2)+(\mathsf L(\mathsf P)\times\mathsf L(\mathsf Q))$ for $\sigma_1\in\Sigma_{\mathsf P}$, $\sigma_2\in\Sigma_{\mathsf Q}$, the fan $\Sigma_{\mathsf P}\times\Sigma_{\mathsf Q}$ has lineality space $\mathsf L(\mathsf P)\times\mathsf L(\mathsf Q)=\mathsf L(\mathsf P\times\mathsf Q)$ and pointed representative ${}^\wedge\Sigma_{\mathsf P}\times{}^\wedge\Sigma_{\mathsf Q}$.
Applying the displayed identity to $\Sigma_{\mathsf P}\times\Sigma_{\mathsf Q}$, and separately to $\Sigma_{\mathsf P}$ and $\Sigma_{\mathsf Q}$, gives
\[
\operatorname{Bar}(\Sigma_{\mathsf P}\times\Sigma_{\mathsf Q}) = \operatorname{Bar}({}^\wedge\Sigma_{\mathsf P}\times{}^\wedge\Sigma_{\mathsf Q})+\mathsf L(\mathsf P\times\mathsf Q),
\qquad
\operatorname{Bar}(\Sigma_{\mathsf P})\times\operatorname{Bar}(\Sigma_{\mathsf Q}) = \bigl(\operatorname{Bar}({}^\wedge\Sigma_{\mathsf P})\times\operatorname{Bar}({}^\wedge\Sigma_{\mathsf Q})\bigr)+\mathsf L(\mathsf P\times\mathsf Q).
\]
The argument above, applied to the full-dimensional simple polytopes $\mathsf P_V,\mathsf Q_W$, shows $\operatorname{Bar}({}^\wedge\Sigma_{\mathsf P}\times{}^\wedge\Sigma_{\mathsf Q})$ refines $\operatorname{Bar}({}^\wedge\Sigma_{\mathsf P})\times\operatorname{Bar}({}^\wedge\Sigma_{\mathsf Q})$, and adding the common lineality space $\mathsf L(\mathsf P\times\mathsf Q)$ to every cone on both sides preserves this refinement, giving the claim in general.

\end{proof}

Fixing $\mathsf F_\bullet,\mathsf G_\bullet$ and letting $\mathsf H_\bullet$ range over the $\binom{d+e}{d}$ possible interleavings shows how $\tau$ is triangulated.  This triangulation is analogous to the staircase triangulation of a product of two simplices, with the simplices replaced by the corresponding simplicial cones.\footnote{See \cite[Chapter~6]{de2010triangulations} for an introduction to staircase triangulations.}

\begin{corollary}
If $ \mathsf{P}, \mathsf{Q} $ are simple polytopes, then $\operatorname{Omni}(\mathsf P)$ is a deformation of $\operatorname{Omni}(\mathsf P\times\mathsf Q)$.
\end{corollary}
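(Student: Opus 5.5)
The plan is to combine \Cref{prop:omni_product} with the elementary fact that a factor of a Cartesian product is a Minkowski summand of that product, and then use transitivity of deformations.

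First, I will check that $\operatorname{Omni}(\mathsf P)$ is a deformation of $\operatorname{Omni}(\mathsf P)\times\operatorname{Omni}(\mathsf Q)$. Fix any point $q\in\operatorname{Omni}(\mathsf Q)$. Embedding $\mathsf P$ in the first factor, we have
\[
\operatorname{Omni}(\mathsf P)\times\operatorname{Omni}(\mathsf Q)=\bigl(\operatorname{Omni}(\mathsf P)\times\{0\}\bigr)+\bigl(\{0\}\times\operatorname{Omni}(\mathsf Q)\bigr).
\]
So $\operatorname{Omni}(\mathsf P)\times\{0\}$ is a genuine Minkowski summand in the sense of \Cref{def:weak_summand}. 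On the fan side the same statement reads: $\Sigma(\operatorname{Omni}(\mathsf P)\times\{0\})=\Sigma(\operatorname{Omni}(\mathsf P))\times(\text{ambient space of }\mathsf Q)$, and this coarsens $\Sigma(\operatorname{Omni}(\mathsf P))\times\Sigma(\operatorname{Omni}(\mathsf Q))$.

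Second, \Cref{prop:omni_product} says $\Sigma(\operatorname{Omni}(\mathsf P\times\mathsf Q))$ refines $\Sigma(\operatorname{Omni}(\mathsf P))\times\Sigma(\operatorname{Omni}(\mathsf Q))$. Refinement of fans is transitive, so $\Sigma(\operatorname{Omni}(\mathsf P)\times\{0\})$ coarsens $\Sigma(\operatorname{Omni}(\mathsf P\times\mathsf Q))$. By \Cref{prop:coarsening_summand}, $\operatorname{Omni}(\mathsf P)$ is then a deformation of $\operatorname{Omni}(\mathsf P\times\mathsf Q)$.

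The only point needing care is bookkeeping: we must regard $\operatorname{Omni}(\mathsf P)$ as a polytope in the product space, i.e. as $\operatorname{Omni}(\mathsf P)\times\{\text{pt}\}$, and allow the resulting fan to have the extra lineality space. \Cref{prop:coarsening_summand} applies to polytopes of any dimension, so no obstacle arises there. Alternatively, one could bypass fans at this step: weak Minkowski summands compose, since $\lambda\mathsf A=\mathsf B+\mathsf C$ and $\mu\mathsf B=\mathsf D+\mathsf E$ give $\lambda\mu\mathsf A=\mathsf D+(\mathsf E+\mu\mathsf C)$.
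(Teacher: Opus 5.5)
Your proposal is correct and follows the paper's own argument: combine \Cref{prop:omni_product} with the fact that $\operatorname{Omni}(\mathsf P)\times\{0\}$ is a Minkowski summand of $\operatorname{Omni}(\mathsf P)\times\operatorname{Omni}(\mathsf Q)$, then use transitivity of deformations. The only blemish is cosmetic: the point $q\in\operatorname{Omni}(\mathsf Q)$ you fix at the start is never used.
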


In the statement, we consider $ \operatorname{Omni}(\mathsf P) $ as living in the same ambient space as $\operatorname{Omni}(\mathsf P\times\mathsf Q)$ by embedding it with a zero in the second coordinate.

\begin{proof}
This follows from \Cref{prop:omni_product} and the fact that a Cartesian product is a Minkowski sum.
\end{proof}

\subsection{Parameterizing deformations}\label{sec:basis}

We recall the space of all deformations of a polytope $ \mathsf{P} $.
We will focus for now on the case where $ \mathsf{P} $ is full-dimensional, so that its normal fan is pointed, and at the end we address the general case.

By \Cref{prop:coarsening_summand}, in terms of normal fans, our goal is to describe all polytopes whose normal fan coarsens $\Sigma(\mathsf{P})$. 
We adopt this perspective, which is classical and has also been extensively developed in the context of toric varieties.

Let $ \Sigma $ be a pointed polytopal fan in $ \mathbb{R}^{n} $.
A piecewise linear function supported on $ \Sigma $ is a function $ \phi: \mathbb{R}^{n} \to \mathbb{R} $ such that there exist linear functions $ \left\{ \ell_\sigma \,\middle|\, \sigma \in \Sigma \right\} $,
such that $ \phi\vert_\sigma = \ell_\sigma $ for every cone $ \sigma \in \Sigma$.
We define $ \operatorname{PL}(\Sigma) $ as the vector space of all piecewise linear functions on $ \Sigma $.

We define the set
\begin{equation}\label{eq:ray_generators}
	\mathbf{U}_{\Sigma} = \left\{ u_{\tau} \,\middle|\, \tau \in \Sigma,\, \dim(\tau) = 1 \right\}
\end{equation}
of all primitive vectors that generate rays of the fan.
Note that when $\Sigma$ is the normal fan of a polytope, the set $\mathbf{U}_{\Sigma}$ consists of the inner facet normals of the polytope.
Restricting the domain of each piecewise linear function from $ \mathbb{R}^{n} $ to $ \mathbf{U}_{\Sigma} $ we have the following injective map of vector spaces
\begin{equation}\label{eq:iota}
	\iota: \operatorname{PL}(\Sigma) \hookrightarrow \mathbb{R}^{\mathbf{U}_{\Sigma}}.
\end{equation}

If the fan $ \Sigma $ is simplicial, the map $ \iota $ is an isomorphism;
any function $ f: \mathbf{U}_{\Sigma} \to \mathbb{R} $ can be extended \emph{uniquely} to a piecewise linear function $ \phi $ satisfying $ \phi = f $ on $ \mathbf{U}_{\Sigma} $.
If the fan is not simplicial, such an extension may not exist.
In this case, the image of $  \operatorname{PL}(\Sigma) $ under $ \iota $ is a linear subspace of $ \mathbb{R}^{\mathbf{U}_{\Sigma}} $.

Let $ \mathsf{Q} $ be a polytope such that its normal fan is a coarsening of $ \Sigma $.
This polytope can be uniquely described in the form
\begin{equation*}
\mathsf{Q} = \left\{ x \,\middle|\, u^{\intercal}x \geq h \left( {\mathsf{Q}} , u \right) , \, u \in \mathbf{U}_{\Sigma} \right\},
\end{equation*}
using the support function $ h \left( {\mathsf{Q}} , \cdot \right) $.
When $\Sigma$ is understood, we write $ h ( \mathsf{Q} ) $ for this function as a vector in $ \mathbb{R}^{\mathbf{U}_{\Sigma}} $.

\begin{definition}\label{def:def}
Let $\mathsf{P}$ be a polytope. 
The vector space $\mathrm{DS}(\mathsf{P}) = \operatorname{PL}(\Sigma(\mathsf{P}))$ is called the \textbf{deformation space} of $\mathsf{P}$.
\end{definition}

\begin{definition}\label{def:deformationcone}

The \textbf{deformation cone} of $\mathsf{P}$, denoted $\operatorname{Def}(\mathsf{P})$, is
\[
\operatorname{Def}(\mathsf{P})\coloneqq\left\{h(\mathsf{Q},\mathord\cdot)\,\middle|\,\mathsf{Q}\text{ is a deformation of }\mathsf{P}\right\}\subseteq\mathrm{DS}(\mathsf{P}).
\]
Equivalently, by \Cref{prop:coarsening_summand}, it consists of the support functions of polytopes whose normal fans coarsen $\Sigma(\mathsf{P})$.

When $\Sigma(\mathsf P)$ is a rational fan in $N_{\mathbb R}$, we define
the \textbf{integral deformation space} $\mathrm{DS}_{\mathbb Z}(\mathsf P)$ to be
the subgroup of $\mathrm{DS}(\mathsf P)$ consisting of piecewise linear
functions whose restriction to each cone of $\Sigma(\mathsf P)$ belongs to
$M$. We define the \textbf{integral deformation cone} by
$\operatorname{Def}_{\mathbb Z}(\mathsf P)
=\operatorname{Def}(\mathsf P)\cap\mathrm{DS}_{\mathbb Z}(\mathsf P)$.
\end{definition}

We will freely identify a deformation $\mathsf Q$ of $\mathsf P$ with its support function $h(\mathsf Q)\in\operatorname{Def}(\mathsf P)$. Accordingly, we use the same basis and generating-set terminology for collections of deformations and for the corresponding collections of support functions.

The fact that $\operatorname{Def}(\mathsf P)$ is a polyhedral cone is proven in \cite[Theorem 3]{mcmullen1973representations}. 
When $\Sigma(\mathsf P)$ is a rational simplicial fan, its defining inequalities can be described using Batyrev's criterion~\cite[Theorem 6.4.9]{cox2024toric}. 
Recall that a \textbf{primitive collection} is a set $ \mathbf C=\{u_1,\ldots,u_k\}\subseteq \mathbf U_{\Sigma(\mathsf P)}$
of canonical ray generators which does not span a cone of $\Sigma(\mathsf P)$, but for which every proper subset does. 
Batyrev's criterion states that a function $h\in\operatorname{PL}(\Sigma(\mathsf P))$ belongs to $\operatorname{Def}(\mathsf P)$ if and only if, for every primitive collection $\mathbf C$, it satisfies
\[
h(u_1)+\cdots+h(u_k)
\leq
h(u_1+\cdots+u_k).
\]

To express these inequalities through the isomorphism $\iota$ (Equation \eqref{eq:iota}),
let $\sigma$ be the unique cone of $\Sigma(\mathsf P)$ whose relative interior contains $u_1+\cdots+u_k$. 
Writing
\[
u_1+\cdots+u_k
=
c_1v_1+\cdots+c_\ell v_\ell,
\qquad
c_1,\ldots,c_\ell>0,
\]
where $v_1,\ldots,v_\ell$ are the canonical ray generators of $\sigma$, the inequality associated with $\mathbf C$ becomes
\begin{equation}\label{eq:batyrev}
h(u_1)+\cdots+h(u_k)
\leq
c_1h(v_1)+\cdots+c_\ell h(v_\ell).
\end{equation}

\begin{definition}
Let $\mathscr{A}$ be a collection of deformations of $\mathsf{P}$.  
If the support functions of the polytopes in $\mathscr{A}$ linearly generate the deformation space $\mathrm{DS}(\mathsf{P})$, we call  $\mathscr{A}$ a \textbf{polytopal generating set} (for the deformation space of $\mathsf{P}$). If the support functions of the polytopes in $\mathscr{A}$ are a basis for the deformation space of $\mathsf{P}$, we call $\mathscr{A}$ a \textbf{polytopal basis} (for the deformation space of $\mathsf{P}$).  We further call a polytopal generating set $\mathscr{A}$ indecomposable if each element of $\mathscr{A}$ is indecomposable, and we call $\mathscr{A}$ integral if it generates $\mathrm{DS}_{\mathbb{Z}}(\mathsf{P})$ over $\mathbb{Z}$.
\end{definition}

Having a polytopal basis for $\mathrm{DS}(\mathsf{P})$ allows us to write any deformation of $\mathsf{P}$ as a \textit{signed} Minkowski sum of the polytopes in the basis.

\begin{lemma}\label{lem:signed}
	Let $ \left\{ \mathsf{Q}_1, \dots,\mathsf{Q}_m\right\} \subset \operatorname{Def}(\mathsf{P})$ be a polytopal basis of $ \mathrm{DS}(\mathsf{P})$.
	Then for any $ \mathsf{Q} \in \operatorname{Def}(\mathsf{P})$ there exist unique real scalars $ \left\{ \lambda_i \right\}_{i\in [m]} $ such that
	\begin{equation}\label{eq:signed_supports}
	h(\mathsf{Q}) = \sum_{i=1}^{m} \lambda_{i} h(\mathsf{Q}_{i}),
	\end{equation}
	and this is equivalent to the following equality of Minkowski sums of polytopes:
	\begin{equation}\label{eq:signed_decomp}
	\mathsf{Q} + \sum_{\lambda_{i}<0} (-\lambda_{i})\mathsf{Q}_{i} = \sum_{\lambda_{i}>0} \lambda_{i}\mathsf{Q}_{i}.
	\end{equation}
	
\end{lemma}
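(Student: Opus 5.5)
Existence and uniqueness of the scalars $\lambda_i$ come directly from the definition of a polytopal basis. By definition, the support functions $h(\mathsf Q_1),\dots,h(\mathsf Q_m)$ form a basis of the vector space $\mathrm{DS}(\mathsf P)=\operatorname{PL}(\Sigma(\mathsf P))$. Since $\mathsf Q\in\operatorname{Def}(\mathsf P)\subseteq \mathrm{DS}(\mathsf P)$, its support function $h(\mathsf Q)$ has a unique expansion in this basis. That expansion is exactly \eqref{eq:signed_supports}.

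For the equivalence with \eqref{eq:signed_decomp}, I would move the negative terms to the left-hand side. Then \eqref{eq:signed_supports} is equivalent to
\[
h(\mathsf Q)+\sum_{\lambda_i<0}(-\lambda_i)h(\mathsf Q_i)=\sum_{\lambda_i>0}\lambda_i h(\mathsf Q_i).
\]
Both sides are built only from positive scalings and sums, so I can translate them into Minkowski operations. The identity $h(c\mathsf R,\cdot)=c\,h(\mathsf R,\cdot)$ for $c>0$ is immediate from the definition \eqref{eq:def_supp}. Combined with additivity \eqref{eq:support_sum}, it shows that the left side is $h\bigl(\mathsf Q+\sum_{\lambda_i<0}(-\lambda_i)\mathsf Q_i\bigr)$ and the right side is $h\bigl(\sum_{\lambda_i>0}\lambda_i\mathsf Q_i\bigr)$. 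When a sum is empty, it is read as the point $\{0\}$, whose support function is $0$ by \Cref{rem:single_point}.

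It remains to pass between equality of support functions and equality of polytopes. If the two polytopes are equal, their support functions agree trivially. Conversely, a polytope is determined by its support function: it is the intersection of the halfspaces $\{x: u^{\intercal}x\ge h(\cdot,u)\}$, as recalled in the preliminaries. One detail needs checking here. The support functions are compared as elements of $\mathrm{DS}(\mathsf P)$, that is, as functions on all of $\mathbb R^n$ that are piecewise linear on $\Sigma(\mathsf P)$, not merely on the ray generators. Every polytope appearing in the two sums is a deformation of $\mathsf P$, since Minkowski sums and positive scalings of deformations are deformations by \eqref{eq:sum_refinement} and \Cref{prop:coarsening_summand}. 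So their support functions are genuinely elements of $\mathrm{DS}(\mathsf P)$, and equality there is equality as functions on $\mathbb R^n$. This gives the equivalence, and the argument is routine throughout; the only point requiring care is this identification of support functions with elements of $\mathrm{DS}(\mathsf P)$.
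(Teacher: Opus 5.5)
Your proposal is correct and follows the same route as the paper's proof: uniqueness comes from the basis property, you move the negative coefficients to the other side, use additivity and positive homogeneity of support functions, and conclude because a polytope is determined by its support function. Your added remark, that the identity holds in $\mathrm{DS}(\mathsf P)$ as functions on all of $\mathbb R^n$ and not merely on ray generators, spells out a point the paper's proof leaves implicit and only flags afterwards in \Cref{rmk:subtletyminkowski}.
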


\begin{proof}
	 Equation \eqref{eq:signed_supports} follows by construction.
	It follows that 
	\begin{equation}\label{eq:temp}
		h(\mathsf{Q}) + \sum_{\lambda_{i}<0} (-\lambda_{i})h(\mathsf{Q}_{i}) = \sum_{\lambda_{i}>0} \lambda_{i}h(\mathsf{Q}_{i}).
	\end{equation}
	The left-hand side of Equation \eqref{eq:temp} is the support function of the sum $ \mathsf{Q} + \sum_{\lambda_{i}<0} (-\lambda_{i})\mathsf{Q}_{i}  $.
	Analogously, the right-hand side of Equation \eqref{eq:temp} is the support function of the sum $\sum_{\lambda_{i}>0} \lambda_{i}\mathsf{Q}_{i}$.
	Since the support function uniquely determines the polytope, the second conclusion follows.
\end{proof}

\begin{remark}\label{rmk:subtletyminkowski}
For the signed Minkowski sum identity in \Cref{lem:signed}, it is important that  $ \mathsf{Q} \in \operatorname{Def}(\mathsf{P})$; it is not sufficient for $\mathsf{Q}$ to be a polytope cut out by inequalities associated to the rays of $\Sigma(\mathsf{P})$.  To see an interesting instance of this subtlety, see \Cref{subsec:cosmo} where the cosmohedron is not a signed Minkowski sum of the Gherkins although its support values on the rays of the $2$-braid fan are a unique linear combination of the corresponding support values of the Gherkins.
\end{remark}

By \eqref{eq:support_sum} and \Cref{rem:single_point}, for a polytope $ \mathsf{P} $ and a point $ c \in \mathbb{R}^{n} $, we have
\begin{equation*}
	h(\mathsf{P} + \left\{ c \right\}, \cdot ) =	h(\mathsf{P}, \cdot ) + h( \left\{ c \right\}, \cdot ) =	h(\mathsf{P}, \cdot ) + c^{\intercal}(\cdot) .
\end{equation*}
In other words, the translation of a polytope corresponds to adding a global linear function to its support function.  The deformation cone $ \operatorname{Def}(\mathsf{P})$ has a lineality space $\mathsf{L}$ corresponding to the collection of translations in the ambient space.

\begin{definition}\label{def:nef_cone}
The image of the cone $ \operatorname{Def}(\mathsf{P}) $ in the quotient $\mathrm{DS}(\mathsf{P}) / \mathsf{L}$ is a pointed cone called the \textbf{nef cone} and is denoted $ \operatorname{Nef}( \mathsf{P} ) $.
\end{definition}

The extremal rays of the nef cone correspond to the indecomposable deformations of $ \mathsf{P} $, up to translation.
 The following lemma and its corollaries are well-known, but we include their proofs since they illustrate techniques that will be used later.

When $\mathsf{P}$ is a simple polytope, its deformation space $\mathrm{DS}(\mathsf{P})$ has dimension equal to the number of facets of $\mathsf{P}$; since $\mathsf P$ is simple, its normal fan is simplicial, and \eqref{eq:iota} identifies $\mathrm{DS}(\mathsf P)$ with the vector space of functions on the facets of $\mathsf P$.

\begin{lemma}\label{lem:P_affine_basis}
Let $ \mathsf{P} $ be a simple full-dimensional polytope and let $ \mathcal{F} $ be its set of facets.
For $\epsilon$ small enough, the set of vectors $ \left\{ h(\mathsf{P}) - h(\operatorname{Tr}_{\epsilon} [\mathsf{P}, \mathsf{F}] )  \,\middle|\, \mathsf{F} \in \mathcal{F} \right\}$ is a (not necessarily polytopal) basis for the deformation space $ \mathrm{DS}(\mathsf{P}) $.
\end{lemma}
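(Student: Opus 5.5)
We identify $\mathrm{DS}(\mathsf P)$ with $\mathbb R^{\mathcal F}$ via the isomorphism $\iota$ of \eqref{eq:iota}. This is legitimate because $\mathsf P$ is simple, so $\Sigma(\mathsf P)$ is simplicial and its rays are exactly the primitive inner facet normals $u_{\mathsf G}$, $\mathsf G\in\mathcal F$. The plan is to compute each vector $h(\mathsf P)-h(\operatorname{Tr}_\epsilon[\mathsf P,\mathsf F])$ coordinatewise in this identification and show that the resulting $|\mathcal F|\times|\mathcal F|$ matrix is diagonal with nonzero diagonal entries.

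\textbf{Step 1 (the truncation is a deformation with the same normal fan).} For a facet $\mathsf F$, the canonical inner normal $u_{\mathsf F}$ is already a ray of $\Sigma(\mathsf P)$. Hence the central stellar subdivision along $\operatorname{ncone}(\mathsf F,\mathsf P)$ changes nothing, and by \Cref{lem:shallow_truncation_stellar}, for $0<\epsilon<\delta_{\mathsf F}$, the truncation $\operatorname{Tr}_\epsilon[\mathsf P,\mathsf F]$ has normal fan $\Sigma(\mathsf P)$. One can also see this directly: we are parallel-translating one facet hyperplane inward by less than the distance to any neighboring vertex, so the combinatorial type is preserved. Choose $\epsilon<\min_{\mathsf F\in\mathcal F}\delta_{\mathsf F}$. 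Then all truncations lie in $\operatorname{Def}(\mathsf P)$, and their support functions are elements of $\mathrm{DS}(\mathsf P)$ determined by their values on the rays $u_{\mathsf G}$.

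\textbf{Step 2 (coordinates).} Since $\operatorname{Tr}_\epsilon[\mathsf P,\mathsf F]$ has the same normal fan as $\mathsf P$, it equals $\{x : u_{\mathsf G}^{\intercal}x\ge h(\mathsf P,u_{\mathsf G}) \text{ for } \mathsf G\neq\mathsf F,\ u_{\mathsf F}^{\intercal}x\ge h(\mathsf P,u_{\mathsf F})+\epsilon\}$, and each of these inequalities is tight, because every facet survives. Hence
\[
h(\operatorname{Tr}_\epsilon[\mathsf P,\mathsf F],u_{\mathsf G})=h(\mathsf P,u_{\mathsf G})+\epsilon\,[\mathsf G=\mathsf F],
\]
so $h(\mathsf P)-h(\operatorname{Tr}_\epsilon[\mathsf P,\mathsf F])=-\epsilon\, e_{\mathsf F}$ in $\mathbb R^{\mathcal F}$. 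The tightness claim is the only point needing care, and it is the main obstacle. To settle it, observe that the facet $\mathsf G\ne\mathsf F$ of $\mathsf P$ meets the truncated polytope in a $(d-1)$-dimensional set: $\mathsf G$ has a vertex not in $\mathsf F$ or near it, and its intersection with the half-space $u_{\mathsf F}^{\intercal}x\ge h+\epsilon$ is nonempty and full-dimensional in $\mathsf G$ for small $\epsilon$. Alternatively, this follows from the normal fan being unchanged in Step 1.

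\textbf{Step 3 (conclusion).} The vectors $\{-\epsilon e_{\mathsf F}\}_{\mathsf F\in\mathcal F}$ form a basis of $\mathbb R^{\mathcal F}\cong\mathrm{DS}(\mathsf P)$. These differences are not themselves support functions of polytopes in general (compare \Cref{rem:subtraction}), which explains the parenthetical ``not necessarily polytopal.''
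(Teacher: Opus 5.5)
Your proposal is correct and is essentially the paper's argument: identify $\mathrm{DS}(\mathsf P)$ with $\mathbb R^{\mathcal F}$, observe that $h(\mathsf P)-h(\operatorname{Tr}_\epsilon[\mathsf P,\mathsf F])$ equals $-\epsilon$ at $u_{\mathsf F}$ and $0$ at every other facet normal, and conclude from the resulting diagonal matrix. Your added tightness check for $\mathsf G\neq\mathsf F$, which the paper leaves implicit, is sound. It suffices that some vertex of $\mathsf G$ outside $\mathsf F$ survives the truncation, and this holds because every vertex outside $\mathsf F$ has $u_{\mathsf F}$-value at least $h(\mathsf P,u_{\mathsf F})+\delta_{\mathsf F}>h(\mathsf P,u_{\mathsf F})+\epsilon$.
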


\begin{proof}
The set has the right cardinality, so it suffices to show the elements are linearly independent.
Evaluating the functions on each inner normal $ u_{\mathsf{H}} $, where $\mathsf{H} \subseteq \mathsf{P}$ is a facet, we get
\begin{equation*}
		h(\mathsf{P} , u_{\mathsf{H}}) -  h(\operatorname{Tr}_{\epsilon} (\mathsf{P}, \mathsf{F}) , u_{\mathsf{H}}) =
		\begin{cases}
			-\epsilon,& \text{ if }  \mathsf{H} = \mathsf{F}, \\
			0,& \text{ else. } 
	\end{cases}
	\end{equation*}
It follows that the set is a basis.
\end{proof}

\begin{remark}\label{rmk:origin_interior}
In some results in this article, we will impose a technical condition that the origin is in the interior of a full-dimensional polytope $\mathsf{P}$.  This is equivalent to the condition $h(\mathsf{P} , u_{\mathsf{F}})<0$ for all facets $\mathsf{F}$ of $\mathsf{P}$.  This can be achieved by translation of $\mathsf{P}$ and, as noted previously, such translations are encoded by elements of the lineality space of $\operatorname{Def}(\mathsf{P}) $.  This condition is essentially harmless, as one typically cares about $\operatorname{Def}(\mathsf{P}) $ modulo lineality, i.e., $ \operatorname{Nef}( \mathsf{P} ) $.  We note that if $\mathsf{P}$ is further taken to be integral, the condition that the origin is in the interior is equivalent to the condition $h(\mathsf{P} , u_{\mathsf{F}})\leq -1$.  This condition cannot always be obtained by a lattice translation.
\end{remark}

\begin{corollary}\label{cor:P_in_the_interior}
Let $ \mathsf{P} $ be a simple full-dimensional polytope with origin in the interior and let $ \mathcal{F} $ be its set of facets.
The support vector $ h(\mathsf{P}) $ is in the interior of 
$ \operatorname{Cone}\left\{ h(\operatorname{Tr}_{\epsilon} [\mathsf{P}, \mathsf{F}] )  \,\middle|\, \mathsf{F} \in \mathcal{F} \right\} $ for $\epsilon>0$ sufficiently small.
Furthermore, if $ \mathsf{P} $ is integral, then the support vector $ h(\mathsf{P}) $ is also in the interior of 
$ \operatorname{Cone}\left\{ h(\operatorname{Tr}_1 [\mathsf{P}, \mathsf{F}] )  \,\middle|\, \mathsf{F} \in \mathcal{F} \right\} $.
\end{corollary}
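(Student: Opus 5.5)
The plan is to work in the coordinates given by the isomorphism $\iota$ of \eqref{eq:iota}. Since $\mathsf P$ is simple, $\Sigma(\mathsf P)$ is simplicial and $\mathrm{DS}(\mathsf P)\cong\mathbb R^{\mathcal F}$: a support function is recorded by its values $h(\cdot,u_{\mathsf H})$ on the facet normals. The first step is to compute the support vector of each truncation. By \Cref{lem:shallow_truncation_stellar}, for small $\epsilon$ the facet $\mathsf F$ survives in $\operatorname{Tr}_\epsilon[\mathsf P,\mathsf F]$ and every other facet of $\mathsf P$ is unaffected; equivalently, as in the proof of \Cref{lem:P_affine_basis}, the support values on the facet normals are unchanged except in direction $u_{\mathsf F}$. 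Thus
\[
h(\operatorname{Tr}_\epsilon[\mathsf P,\mathsf F])=h(\mathsf P)+\epsilon\, e_{\mathsf F}
\]
in $\mathbb R^{\mathcal F}$, where $e_{\mathsf F}$ is the coordinate vector of $\mathsf F$. Here $\operatorname{Tr}_\epsilon[\mathsf P,\mathsf F]$ is itself a deformation of $\mathsf P$ (its fan coincides with $\Sigma(\mathsf P)$ when $\mathsf F$ is a facet), so its support function lies in $\operatorname{Def}(\mathsf P)$ and is determined by these coordinates.

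The second step is to solve $h(\mathsf P)=\sum_{\mathsf F}\lambda_{\mathsf F}\bigl(h(\mathsf P)+\epsilon e_{\mathsf F}\bigr)$. Set $m=|\mathcal F|$ and $h_{\mathsf H}=h(\mathsf P,u_{\mathsf H})$, and write $\Lambda=\sum\lambda_{\mathsf F}$. Coordinatewise the equation reads $h_{\mathsf H}=\Lambda h_{\mathsf H}+\epsilon\lambda_{\mathsf H}$, so $\lambda_{\mathsf H}=(1-\Lambda)h_{\mathsf H}/\epsilon$. Summing over $\mathsf H$ gives $\Lambda=(1-\Lambda)\,s/\epsilon$ with $s=\sum_{\mathsf H}h_{\mathsf H}<0$, since the origin is in the interior (\Cref{rmk:origin_interior}). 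Hence $\Lambda=s/(s+\epsilon)$ and $1-\Lambda=\epsilon/(s+\epsilon)$, which give
\[
\lambda_{\mathsf H}=\frac{h_{\mathsf H}}{s+\epsilon}.
\]
For $0<\epsilon<-s$, each $\lambda_{\mathsf H}$ is a ratio of two negative numbers and so is strictly positive.

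Strict positivity of all coefficients, together with linear independence of the $m$ vectors $h(\mathsf P)+\epsilon e_{\mathsf F}$, places $h(\mathsf P)$ in the interior of their cone. Linear independence follows from the invertibility of $\epsilon I+h(\mathsf P)\mathbf 1^{\intercal}$: by the matrix determinant lemma its determinant is $\epsilon^{m-1}(\epsilon+s)\neq0$. I expect no real obstacle here; the only care needed is justifying the coordinate formula for the truncation and tracking signs.

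For the integral statement, take $\epsilon=1$. Integrality gives $h_{\mathsf H}\le -1$, so $s\le -m\le -2$ and hence $s+1<0$. The same formulas then give positive coefficients, and the matrix is nonsingular since $s+1\neq0$. The one point to check is that $\operatorname{Tr}_1[\mathsf P,\mathsf F]$ is still a deformation of $\mathsf P$ with the same coordinate description, meaning $1$ is shallow for the facet $\mathsf F$. Facet truncations are parallel shifts, so this holds as long as the shifted facet's normal cones persist. If that fails, I would fall back on noting that the linear identity holds in $\mathbb R^{\mathcal F}$ by the computation above, with $h(\operatorname{Tr}_1)$ read as the vector $h(\mathsf P)+e_{\mathsf F}$.
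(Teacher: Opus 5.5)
Your argument for small $\epsilon$ is correct and is essentially the paper's. The paper writes $h(\mathsf P)$ in terms of the differences $h(\mathsf P)-h(\operatorname{Tr}_\epsilon[\mathsf P,\mathsf F])=-\epsilon e_{\mathsf F}$, with coefficients $-h_{\mathsf F}/\epsilon$, and then rearranges. This yields exactly your coefficients $h_{\mathsf H}/(s+\epsilon)$, which are positive precisely when $\epsilon<-s$. Your computation $\det(\epsilon I+h(\mathsf P)\mathbf 1^{\intercal})=\epsilon^{m-1}(\epsilon+s)$ is a useful addition. It makes explicit the linear independence needed to pass from a strictly positive combination to an interior point, which the paper leaves implicit.

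The integral case has a gap at exactly the point you flag. The criterion you propose, that depth $1$ be shallow, fails in the main cases of interest. For a primitive Delzant polytope, the proof of \Cref{prop:delzant} gives $\delta_{\mathsf F}=1$ for every facet, so $\operatorname{Tr}_1[\mathsf P,\mathsf F]$ is the deep truncation, not a shallow one.

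Your fallback, reading $h(\operatorname{Tr}_1[\mathsf P,\mathsf F])$ as the vector $h(\mathsf P)+e_{\mathsf F}$, assumes what has to be proved. The statement concerns the actual polytopes, and a truncation deeper than $\delta_{\mathsf F}$ can change other support values and fail to be a deformation of $\mathsf P$.

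The missing observation, which the paper uses, is that integrality forces $\delta_{\mathsf F}\ge 1$. Since $u_{\mathsf F}$ is an integer vector and the vertices of $\mathsf P$ are integral, $u_{\mathsf F}^{\intercal}y-h(\mathsf P,u_{\mathsf F})$ is a positive integer for every vertex $y\notin\mathsf F$. So depth $1$ is either shallow or exactly deep.

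The deep case is harmless. By \Cref{lem:convtrunc}, $\tfrac12\operatorname{DTr}[\mathsf P,\mathsf F]+\tfrac12\mathsf P=\operatorname{Tr}_{\delta_{\mathsf F}/2}[\mathsf P,\mathsf F]$, which is a shallow truncation and hence normally equivalent to $\mathsf P$. Therefore $\operatorname{DTr}[\mathsf P,\mathsf F]$ is a deformation of $\mathsf P$, and comparing support values gives $h(\operatorname{DTr}[\mathsf P,\mathsf F])=h(\mathsf P)+\delta_{\mathsf F}e_{\mathsf F}$ in $\mathbb R^{\mathcal F}$.

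With this in hand, your formulas at $\epsilon=1$, together with $s\le -m<-1$, complete the proof.
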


\begin{proof}
	
	Since the origin is in the interior of $ \mathsf{P} $, we see that
	\begin{equation} \label{eq:beta}
		\beta_\mathsf{F} \coloneqq h(\mathsf{P} , u_{\mathsf{F}}) < 0,
	\end{equation}
	for every facet $ \mathsf{F} $.
	Using \Cref{lem:P_affine_basis} we obtain the equality
	\begin{equation} \label{eq:p_linear_combo}
		h(\mathsf{P}) = \sum_{\mathsf{F} \in \mathcal{F}} \lambda_\mathsf{F} \left(h(\mathsf{P}) - h(\operatorname{Tr}_{\epsilon} [\mathsf{P}, \mathsf{F}] ) \right), \qquad \text{where} \qquad \lambda_\mathsf{F} = \frac{\beta_{\mathsf{F}}}{-\epsilon} > 0.
	\end{equation}
	The above equation can be rewritten as
	\begin{equation} \label{eq:p_linear_combo_II}
\sum_{\mathsf{F} \in \mathcal{F}} \lambda_{\mathsf{F}} h(\operatorname{Tr}_{\epsilon} [\mathsf{P}, \mathsf{F}] ) = \left( \left(\sum_{\mathsf{F} \in \mathcal{F}} \lambda_{\mathsf{F}} \right) - 1 \right) h(\mathsf{P})   .
	\end{equation}

	For $ \epsilon $ small, Equation \eqref{eq:p_linear_combo_II} shows that $ h(\mathsf{P}) $ is contained in the interior of $\operatorname{Cone}\left\{ h(\operatorname{Tr}_{\epsilon} [\mathsf{P}, \mathsf{F}] )  : \mathsf{F} \in \mathcal{F} \right\}$.
	If $ \mathsf{P} $ is integral, then  $\delta_{\mathsf F}\geq1$, and we can take $ \epsilon = 1 $.  Moreover, $\beta_{\mathsf F}\leq-1$ which gives that $\lambda_\mathsf{F} \geq 1$ for each facet $\mathsf{F}$.  Because there are at least two facets,  $  \left(\sum_{\mathsf{F} \in \mathcal{F}} \lambda_\mathsf{F} \right) - 1 $ is positive and the conclusion follows.
\end{proof}

\begin{example}
Let $ \mathsf{P} $ be the triangle obtained as the convex hull of the points $ (0,0), (1,0), (0,1) $. 
This polytope is primitive Delzant and does not contain the origin in its interior.   Furthermore, none of its lattice translations contain the origin in their interior, since there are no lattice points in the interior of $ \mathsf{P} $.
In this case, the deep truncations of the three facets are the three vertices.
Clearly, there is no way to obtain the original triangle as a Minkowski sum of points.
So \Cref{cor:P_in_the_interior} does not necessarily hold without the condition that the origin is in the interior.
\end{example}

\begin{corollary}\label{cor:truncated_facets_base}
Let $ \mathsf{P} $ be a simple full-dimensional polytope with origin in the interior and let $ \mathcal{F} $ be its set of facets.
The set $ \left\{ h(\operatorname{Tr}_{\epsilon} [\mathsf{P}, \mathsf{F}] )  \,\middle|\, \mathsf{F} \in \mathcal{F} \right\} $ of support functions is a polytopal basis for $ \mathrm{DS} (\mathsf{P}) $ when $\epsilon >0$ is sufficiently small.
\end{corollary}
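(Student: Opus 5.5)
The plan is to derive the corollary directly from \Cref{lem:P_affine_basis} and \Cref{cor:P_in_the_interior} by a change of basis in $\mathrm{DS}(\mathsf{P})$. Write $T_{\mathsf F}\coloneqq h(\operatorname{Tr}_{\epsilon}[\mathsf P,\mathsf F])$ and $D_{\mathsf F}\coloneqq h(\mathsf P)-T_{\mathsf F}$ for $\mathsf F\in\mathcal F$. By \Cref{lem:P_affine_basis}, $\{D_{\mathsf F}\}$ is a basis of $\mathrm{DS}(\mathsf P)$, whose dimension is $|\mathcal F|$ because $\mathsf P$ is simple and $\iota$ is an isomorphism. Since there are exactly $|\mathcal F|$ vectors $T_{\mathsf F}$, it suffices to show that they span $\mathrm{DS}(\mathsf P)$.

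First, I will express $h(\mathsf P)$ in the span of the $T_{\mathsf F}$. By Equation \eqref{eq:p_linear_combo_II}, we have $\sum_{\mathsf F}\lambda_{\mathsf F}T_{\mathsf F}=(\Lambda-1)h(\mathsf P)$, where $\lambda_{\mathsf F}=-\beta_{\mathsf F}/\epsilon>0$ and $\Lambda=\sum_{\mathsf F}\lambda_{\mathsf F}$. The origin lies in the interior, so every $\beta_{\mathsf F}<0$, and each $\lambda_{\mathsf F}$ grows like $1/\epsilon$. Choosing $\epsilon$ small enough therefore forces $\Lambda>1$, which gives $h(\mathsf P)=(\Lambda-1)^{-1}\sum_{\mathsf F}\lambda_{\mathsf F}T_{\mathsf F}\in\operatorname{Span}\{T_{\mathsf F}\}$. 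Then $D_{\mathsf F}=h(\mathsf P)-T_{\mathsf F}$ also lies in this span for every facet $\mathsf F$. Since the $D_{\mathsf F}$ form a basis, the $T_{\mathsf F}$ span $\mathrm{DS}(\mathsf P)$, and the cardinality count makes them a basis.

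Second, I need these to be genuine polytopal elements, that is, members of $\operatorname{Def}(\mathsf P)$. I also need $\epsilon<\delta_{\mathsf F}$ for every facet, so that each truncation is shallow. A shallow truncation of a facet only translates that facet inward, without changing the combinatorics, so $\operatorname{Tr}_{\epsilon}[\mathsf P,\mathsf F]$ has the same normal fan as $\mathsf P$. (This is the facet case of \Cref{lem:shallow_truncation_stellar}, where the stellar subdivision of a ray is trivial.) In particular it is a deformation of $\mathsf P$.

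The only delicate point is fixing a single $\epsilon$ that satisfies both requirements: $\epsilon<\min_{\mathsf F}\delta_{\mathsf F}$ and $\Lambda>1$, i.e.\ $\epsilon<-\sum_{\mathsf F}\beta_{\mathsf F}$. Both conditions are upper bounds on $\epsilon$, so taking $\epsilon$ smaller than both works. One could also phrase the whole argument as inverting the matrix relating $\{T_{\mathsf F}\}$ to $\{D_{\mathsf F}\}\cup\{h(\mathsf P)\}$, but the span argument above avoids any computation.
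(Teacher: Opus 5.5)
Your proposal is correct and is essentially the paper's argument. The corollary is stated as an immediate consequence of \Cref{lem:P_affine_basis} and the identity \eqref{eq:p_linear_combo_II} from \Cref{cor:P_in_the_interior}: that identity puts $h(\mathsf P)$ in the span of the truncations, so they generate the basis $\{h(\mathsf P)-h(\operatorname{Tr}_\epsilon[\mathsf P,\mathsf F])\}$ and, having the right cardinality, form a basis. This is the same reasoning the paper spells out in item~\ref{it:main1} of \Cref{thm:basis_seed}. Your checks that shallow facet truncations are normally equivalent to $\mathsf P$, and that the two smallness conditions on $\epsilon$ can be met simultaneously, fill in details the paper leaves implicit.
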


\begin{corollary}\label{cor:simplicial_nef}
	Let $ \mathsf{P} $ be a simple full-dimensional polytope, with a simplicial normal fan $ \Sigma $.
	The deformation cone $ \operatorname{Def}(\mathsf{P}) $ is full-dimensional inside the deformation space $\mathrm{DS}(\mathsf{P})$.
	In particular, its dimension is equal to the number of facets of $ \mathsf{P} $.
\end{corollary}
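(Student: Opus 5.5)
The plan is to realize $\operatorname{Def}(\mathsf P)$ as a full-dimensional cone inside $\mathrm{DS}(\mathsf P)\cong\mathbb R^{\mathcal F}$, using two earlier facts. First, since $\mathsf P$ is simple, $\Sigma=\Sigma(\mathsf P)$ is simplicial, so the map $\iota$ of \eqref{eq:iota} is an isomorphism $\operatorname{PL}(\Sigma)\cong\mathbb R^{\mathbf U_\Sigma}$. Here $\mathbf U_\Sigma$ is exactly the set of inner facet normals, which gives $\dim\mathrm{DS}(\mathsf P)=|\mathcal F|$. It then suffices to exhibit a point of $\operatorname{Def}(\mathsf P)$ together with an open neighborhood of it in $\mathrm{DS}(\mathsf P)$ that is contained in $\operatorname{Def}(\mathsf P)$.

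To construct this neighborhood, translate $\mathsf P$ so that the origin lies in its interior. Translation changes support functions only by a global linear function, which lies in the lineality space of $\operatorname{Def}(\mathsf P)$, so it does not affect the dimension. Apply \Cref{cor:P_in_the_interior}: for small $\epsilon>0$, $h(\mathsf P)$ lies in the interior of the cone $C=\operatorname{Cone}\{h(\operatorname{Tr}_\epsilon[\mathsf P,\mathsf F])\mid \mathsf F\in\mathcal F\}$. By \Cref{lem:P_affine_basis} together with $h(\mathsf P)\neq0$, these $|\mathcal F|$ generators are a basis of $\mathrm{DS}(\mathsf P)$; this is exactly \Cref{cor:truncated_facets_base}. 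Hence $C$ is a full-dimensional simplicial cone. Each generator is the support function of a shallow truncation. By \Cref{lem:shallow_truncation_stellar} (or directly, since a shallow facet truncation is a parallel translate of a facet inequality that keeps the combinatorics), each such truncation has normal fan equal to $\Sigma$. It is therefore a deformation of $\mathsf P$ and lies in $\operatorname{Def}(\mathsf P)$.

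Since $\operatorname{Def}(\mathsf P)$ is a convex cone (McMullen; equivalently, closed under Minkowski sums and positive scaling via \eqref{eq:support_sum} and \eqref{eq:sum_refinement}), it contains $C$. So $\operatorname{Def}(\mathsf P)$ is full-dimensional in $\mathrm{DS}(\mathsf P)$, with dimension $|\mathcal F|$.

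The only delicate point is verifying that shallow facet truncations stay in $\operatorname{Def}(\mathsf P)$. This needs $\epsilon<\delta_{\mathsf F}$, so that no vertex neighboring the facet is cut off and the normal fan is unchanged. That condition is already built into the "$\epsilon$ sufficiently small" hypothesis of the cited corollaries, so I expect no real obstacle. Alternatively, one could argue directly that Batyrev's strict inequalities \eqref{eq:batyrev} hold at $h(\mathsf P)$ and are open conditions, but the truncation argument avoids needing strictness.
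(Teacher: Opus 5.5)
Your proposal is correct and follows the paper's proof essentially verbatim. You translate so the origin is interior, note that the shallow facet truncations are deformations of $\mathsf P$, and use \Cref{cor:truncated_facets_base} to see that their cone is a full-dimensional subcone of $\operatorname{Def}(\mathsf P)$; the interiority statement from \Cref{cor:P_in_the_interior} is not needed. One small correction: $h(\mathsf P)\neq 0$ alone does not turn the basis of \Cref{lem:P_affine_basis} into a basis of truncations. What is needed is that the coefficients $\lambda_{\mathsf F}$ in \eqref{eq:p_linear_combo} do not sum to $1$, which holds for small $\epsilon$. Since you ultimately cite \Cref{cor:truncated_facets_base}, the argument is complete.
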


\begin{proof}
	By applying a translation if necessary we can assume that the origin is in the interior.
	The support functions of the facet truncations are all deformations because $ \mathsf{P} $ is simple.
	Hence, we have that the cone $\operatorname{Cone}\left\{ h(\operatorname{Tr}_{\epsilon} [\mathsf{P}, \mathsf{F}] )  \,\middle|\, \mathsf{F} \in \mathcal{F} \right\}$ is in the deformation cone $ \operatorname{Def}(\mathsf{P}) $.
	By \Cref{cor:truncated_facets_base} the deformation cone contains a basis for $\mathrm{DS} (\mathsf{P}) $ so it is full-dimensional.
\end{proof}

\begin{corollary}\label{cor:deepfacetbasis}
Let $ \mathsf{P} $ be a simple full-dimensional polytope with origin in the interior and let $ \mathcal{F} $ be its set of facets.
The set $ \left\{ h(\operatorname{DTr}  \left[\mathsf{P}, \mathsf{F} \right] )  \,\middle|\, \mathsf{F} \in \mathcal{F} \right\} \cup\{ h(\mathsf{P})\} $ is a polytopal generating set for $ \mathrm{DS} (\mathsf{P}) $ of cardinality 1 greater than $\text{dim}(\operatorname{Def}(\mathsf P))$.  
\end{corollary}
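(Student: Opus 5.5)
The plan is to combine \Cref{lem:convtrunc} with \Cref{cor:truncated_facets_base}, and then count dimensions. Let $\mathcal F$ be the set of facets of $\mathsf P$, and set $m=|\mathcal F|$. By \Cref{cor:simplicial_nef}, $\dim\operatorname{Def}(\mathsf P)=\dim\mathrm{DS}(\mathsf P)=m$. The proposed set has at most $m+1$ elements. (Distinct facets give distinct deep truncations, since their support functions differ at $u_{\mathsf F}$.) So the cardinality claim is immediate once we confirm that $h(\mathsf P)$ is not itself one of the $h(\operatorname{DTr}[\mathsf P,\mathsf F])$. This holds because $h(\operatorname{DTr}[\mathsf P,\mathsf F],u_{\mathsf F})=h(\mathsf P,u_{\mathsf F})+\delta_{\mathsf F}$ with $\delta_{\mathsf F}>0$.

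Next we check that each element really is a deformation of $\mathsf P$. For $\mathsf P$ itself this is trivial. For $\operatorname{DTr}[\mathsf P,\mathsf F]$, we note that it is a limit of shallow truncations $\operatorname{Tr}_{\epsilon}[\mathsf P,\mathsf F]$ as $\epsilon\to\delta_{\mathsf F}$. Each shallow truncation of a facet has the same normal fan as $\mathsf P$: no new ray appears, because $u_{\mathsf F}$ is already a ray. Since $\operatorname{Def}(\mathsf P)$ is closed, the limit lies in it. Alternatively, we can use the specialization of \Cref{lem:convtrunc} directly: it expresses $\operatorname{Tr}_{\alpha\delta_{\mathsf F}}$ as a Minkowski combination in which $\operatorname{DTr}$ is a summand, so $\operatorname{DTr}[\mathsf P,\mathsf F]$ is a weak Minkowski summand of a shallow truncation, hence of $\mathsf P$.

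For spanning, fix $0<\epsilon<\min_{\mathsf F}\delta_{\mathsf F}$, small enough that \Cref{cor:truncated_facets_base} applies. That corollary says the $h(\operatorname{Tr}_\epsilon[\mathsf P,\mathsf F])$ form a basis of $\mathrm{DS}(\mathsf P)$. Take $\alpha=\epsilon/\delta_{\mathsf F}$ in \Cref{lem:convtrunc}. Additivity of support functions then gives
\[
h(\operatorname{Tr}_\epsilon[\mathsf P,\mathsf F])=\alpha\, h(\operatorname{DTr}[\mathsf P,\mathsf F])+(1-\alpha)\,h(\mathsf P).
\]
So every basis element lies in the linear span of the proposed set, and the proposed set spans $\mathrm{DS}(\mathsf P)$.

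The main point requiring care is the deformation claim for the deep truncations, and the case where a deep truncation drops dimension (as for the simplex). Neither causes trouble: support functions of lower-dimensional polytopes are still piecewise linear on $\Sigma(\mathsf P)$ as limits, and the argument via \Cref{lem:convtrunc} does not need full-dimensionality. The excess of exactly one over $\dim\operatorname{Def}(\mathsf P)$ then follows from the cardinality count above.
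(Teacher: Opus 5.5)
Your proposal is correct and follows the paper's proof, which is exactly the combination of \Cref{cor:truncated_facets_base} with the specialization $h(\operatorname{Tr}_{\alpha\delta_{\mathsf F}}[\mathsf P,\mathsf F])=\alpha\, h(\operatorname{DTr}[\mathsf P,\mathsf F])+(1-\alpha)\,h(\mathsf P)$ of \Cref{lem:convtrunc}. Your extra checks on membership in $\operatorname{Def}(\mathsf P)$ and on distinctness are fine, though the ``differ at $u_{\mathsf F}$'' remark silently uses $h(\operatorname{DTr}[\mathsf P,\mathsf G],u_{\mathsf F})=h(\mathsf P,u_{\mathsf F})$ for $\mathsf G\neq\mathsf F$, which follows from this same identity together with the evaluation in \Cref{lem:P_affine_basis}.
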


\begin{proof}
This follows from \Cref{cor:truncated_facets_base} combined with \Cref{lem:convtrunc}.
\end{proof}

We note that while \Cref{cor:deepfacetbasis} does not give a basis, its advantage over  \Cref{cor:truncated_facets_base} is that the generating set from \Cref{cor:deepfacetbasis}  is canonical.

\begin{remark}
Notice that when $ \mathsf{P} $ is not simple, the normal fan of a facet truncation may not be a coarsening of the fan $ \Sigma(\mathsf{P}) $. 
Hence the restriction of its support function to the rays of $\Sigma(\mathsf{P})$ need not lie in
$\operatorname{Def}(\Sigma(\mathsf{P}))$.
\end{remark}

Lastly, we address the case where $ \mathsf{P} $ is a simple polytope but not full-dimensional in its ambient space $ \mathbb{R}^{n} $.
The prototypical example is the standard simplex $ \Delta_{n} $, which is $ (n-1) $-dimensional but is naturally defined in $ \mathbb{R}^{n} $.

\begin{proposition}\label{prop:eta}
Let $\Sigma$ be a fan with lineality space $\mathsf L(\Sigma)$, and let
${}^{\wedge}\Sigma$ be a pointed representative of $\Sigma$.
Then restriction to the pointed representative and to the lineality space gives an isomorphism
\[
\operatorname{PL}(\Sigma)
\cong
\operatorname{PL}({}^{\wedge}\Sigma)
\oplus
\mathsf L(\Sigma)^\ast .
\]
\end{proposition}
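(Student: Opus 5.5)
The plan is to define the map explicitly and then build its inverse. Write $\mathsf L=\mathsf L(\Sigma)$, and let $\Phi$ send $\phi\in\operatorname{PL}(\Sigma)$ to the pair $(\phi|_{{}^{\wedge}\Sigma},\,\phi|_{\mathsf L})$. Here $\phi|_{{}^{\wedge}\Sigma}$ means the function on $|{}^{\wedge}\Sigma|$ that is linear on each ${}^{\wedge}\sigma$.

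First, $\Phi$ lands in the right spaces. Since $\mathsf L$ is contained in every cone $\sigma\in\Sigma$, the restriction $\phi|_{\mathsf L}=\ell_\sigma|_{\mathsf L}$ is linear, so it lies in $\mathsf L^\ast$. It is also the same for every $\sigma$, because $\mathsf L$ is the minimal cone of the fan and $\phi$ is a single function there. The restriction to ${}^{\wedge}\sigma\subseteq\sigma$ is the restriction of the linear function $\ell_\sigma$. These local pieces agree on overlaps ${}^{\wedge}\sigma\cap{}^{\wedge}\tau$ because they are all restrictions of the one function $\phi$. So we get an element of $\operatorname{PL}({}^{\wedge}\Sigma)$, where PL functions on a non-complete pointed fan are understood as functions on its support that are linear on each cone. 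Linearity of $\Phi$ is clear.

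Second, $\Phi$ is injective. Every $\sigma\in\Sigma$ equals ${}^{\wedge}\sigma+\mathsf L$. A linear functional $\ell_\sigma$ on $\operatorname{span}(\sigma)={}^{\wedge}\!\operatorname{span}+\mathsf L$ is determined by its values on ${}^{\wedge}\sigma$ and on $\mathsf L$, since these together span $\operatorname{span}(\sigma)$. Hence $\phi$ is determined cone by cone.

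Third, and this is the main obstacle, $\Phi$ is surjective. Given $(\psi,\lambda)$, I need to define $\ell_\sigma$ on $\operatorname{span}(\sigma)$ by $\ell_\sigma(x+y)=\psi(x)+\lambda(y)$ for $x\in\operatorname{span}({}^{\wedge}\sigma)$ and $y\in\mathsf L$. This is well defined provided the sum $\operatorname{span}({}^{\wedge}\sigma)+\mathsf L$ is direct. That holds because ${}^{\wedge}\sigma\mapsto{}^{\wedge}\sigma+\mathsf L$ is a bijection onto $\Sigma$ with ${}^{\wedge}\sigma$ pointed and of the same dimension as $\sigma/\mathsf L$. I will check this dimension claim from the face-poset isomorphism, which preserves chain lengths and hence dimensions. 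One then extends $\ell_\sigma$ arbitrarily to $\mathbb R^n$ and sets $\phi|_\sigma=\ell_\sigma|_\sigma$.

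It remains to check that these pieces agree on $\sigma\cap\tau$. Since $\sigma\cap\tau$ is a common face $\rho$ with $\rho={}^{\wedge}\rho+\mathsf L$ and ${}^{\wedge}\rho={}^{\wedge}\sigma\cap{}^{\wedge}\tau$, both $\ell_\sigma$ and $\ell_\tau$ restrict on $\rho$ to $\psi|_{{}^{\wedge}\rho}+\lambda$. The identity ${}^{\wedge}\rho={}^{\wedge}\sigma\cap{}^{\wedge}\tau$ follows from the poset isomorphism: ${}^{\wedge}\Sigma$ is a fan whose face of ${}^{\wedge}\sigma$ corresponds to the face $\rho$ of $\sigma$. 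The case needing the most care is a point $z\in\sigma\cap\tau$ written as $x+y$ with $x\notin{}^{\wedge}\sigma\cap{}^{\wedge}\tau$. I will handle it by showing that the decomposition $z=x+y$ is unique in $\operatorname{span}(\sigma)$, so $x$ lies in ${}^{\wedge}\rho$ after projecting modulo $\mathsf L$. This yields a well-defined $\phi\in\operatorname{PL}(\Sigma)$ with $\Phi(\phi)=(\psi,\lambda)$.
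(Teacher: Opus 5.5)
Your proposal is correct and is essentially the paper's argument: decompose each $\ell_\sigma$ along $\operatorname{span}(\sigma)=\operatorname{span}({}^{\wedge}\sigma)\oplus\mathsf L(\Sigma)$ and check agreement on common faces; your dimension count for the directness of this sum spells out what the paper compresses into ``the defining property of a pointed representative.'' The face-poset isomorphism you invoke is not literally part of the definition, which only asks for a bijection of cones, but it follows at once: if $\tau$ is the face of $\sigma$ cut out by $\ell\geq 0$, then ${}^{\wedge}\sigma\cap\{\ell=0\}$ is a face of ${}^{\wedge}\sigma$ whose sum with $\mathsf L(\Sigma)$ is $\tau$, so by injectivity it equals ${}^{\wedge}\tau$.
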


\begin{proof}
For each cone $\sigma\in\Sigma$, let ${}^{\wedge}\sigma$ denote the corresponding cone of
${}^{\wedge}\Sigma$, so that
\[
\sigma = {}^{\wedge}\sigma+\mathsf L(\Sigma).
\]
The defining property of a pointed representative gives that a linear function on a cone $\sigma$ decomposes uniquely as a linear function on ${}^{\wedge}\sigma$ together with a linear function on $\mathsf L(\Sigma)$.  This decomposition is compatible with passing to faces, thus decomposing any piecewise-linear function on $\Sigma$.  This decomposition respects linear combinations, implying the statement of the proposition.  \end{proof}

\begin{example}\label{ex:simplex_non_full}
    The standard simplex $\Delta_n \subseteq \mathbb{R}^n$ is not full-dimensional.
    Its normal fan $\mathcal{B}^{0}_{n}$ has lineality space $\mathsf{L} = \operatorname{Span}\left\{ e_{[n]}\right\}$.
	    Following \Cref{ex:braid_zero} we can write $\mathcal{B}^{0}_{n}$ as ${}^{\wedge}\mathcal{B}^{0}_{n} + \mathsf{L}$, where
\begin{equation}
		{}^{\wedge}\mathcal{B}^{0}_{n} = \left\{ \operatorname{Cone} \left\{ e_{i} \,\middle|\, i \in I\right\} \,\middle|\, I \subsetneq [n] \right \},  
\end{equation}
	in other words, the fan ${}^{\wedge}\mathcal{B}^{0}_{n}$ consists of all the proper faces of the positive orthant in $\mathbb{R}^n$.  This is an $(n-1)$-dimensional fan in $\mathbb{R}^n$ not contained in any linear subspace.
	We have that $\operatorname{PL}(\mathcal{B}^{0}_{n})$ is isomorphic to $\operatorname{PL}({}^{\wedge}\mathcal{B}^{0}_{n}) \oplus \mathsf{L}^\ast $.  The deformation cone of $\Delta_n$ is the sum of a ray and a linear space of dimension $n$.  This can be seen from the fact that, up to homothety, all deformations of $\Delta_n$ are translation equivalent to $\Delta_n$ or a single point.
\end{example}

\begin{example}\label{ex:omni_truncation}
    Similar to \Cref{ex:simplex_non_full}, an omnitruncation $\mathsf{P}_n$ of $\Delta_n$ is not full-dimensional either.
    It has the same lineality space $\mathsf{L} = \operatorname{Span}\left\{ e_{[n]}\right\}$.
	    Following \Cref{ex:braid_one} we can write its normal fan $\mathcal{B}^{1}_{n}$ as ${}^{\wedge}\mathcal{B}^{1}_{n} + \mathsf{L}$, where the ray generators of ${}^{\wedge}\mathcal{B}^{1}_{n}$ are the vectors $e_I$ for $I$ a nonempty proper subset of $[n]$.
	    We have that $\operatorname{PL}(\mathcal{B}^{1}_{n})$ is isomorphic to $\operatorname{PL}({}^{\wedge}\mathcal{B}^{1}_{n}) \oplus \mathsf{L}^\ast $.  See \Cref{cor:supermodular_simplex} below for a classical description of $\operatorname{Def}(\mathcal{B}^{1}_{n})$.
\end{example}

\subsection{Deformation spaces of barycentric subdivisions}\label{subsec:defbary}

In this section we describe a polytopal basis for barycentric subdivisions of normal fans of simple polytopes.  
Our main tool is truncations.

\begin{notation}
Let $ \mathsf{S} $ be a simple polytope (informally referred to as the seed) and $ \mathsf{P} = \operatorname{Omni}( \mathsf{S} ) $ be an omnitruncation of $ \mathsf{S} $.
For now we assume that $ \mathsf{S} $ is full-dimensional and treat the general case at the end.
At the level of normal fans we have that $ \Psi = \Sigma( \mathsf{P})	 $ is the barycentric subdivision of $ \Sigma = \Sigma ( \mathsf{S} ) $.
\end{notation}

Since $\Psi$ is simplicial, we freely identify piecewise linear functions on $\Psi$ with functions on its set of canonical ray generators $\mathbf{U}_{\Psi}$ via the inclusion map~\eqref{eq:iota}.
For every nonempty proper face $\mathsf{F}$ of $\mathsf{S}$, let $v_{\mathsf{F}}\in\mathbf{U}_{\Psi}$ denote the canonical ray generator of the ray $\operatorname{Cone}(u_{\mathsf{F}})$.
\footnote{Note that the vector $u_\mathsf{F}$ is not necessarily primitive, hence we could have $v_\mathsf{F}\neq u_\mathsf{F}$.}

\begin{proposition}\label{prop:barycentric_deformation_inequalities}
Let $\mathsf{S}$ be a full-dimensional simple polytope and let $\mathsf{P}$ be an omnitruncation of $\mathsf{S}$, with normal fan $\Psi=\operatorname{Bar}(\Sigma(\mathsf{S}))$. 
A function $h\in\operatorname{PL}(\Psi)$ belongs to $\operatorname{Def}(\mathsf{P})$ if and only if
\begin{equation}\label{eq:barycentric_deformation_inequalities}
    h(v_{\mathsf{F}})+h(v_{\mathsf{G}})
    \leq
    h(v_{\mathsf{F}}+v_{\mathsf{G}})
\end{equation}
for every pair of incomparable nonempty proper faces
$\mathsf{F},\mathsf{G}\subsetneq\mathsf{S}$.
Equivalently, let $ \mathsf{H}_1\subsetneq\cdots\subsetneq\mathsf{H}_{\ell} $ be the unique chain of faces of $\mathsf{S}$ such that $v_{\mathsf{F}}+v_{\mathsf{G}}$ lies in the relative interior of $ \operatorname{Cone} \left\{ v_{\mathsf{H}_1},\ldots,v_{\mathsf{H}_{\ell}} \right\}, $ and write
\[
v_{\mathsf{F}}+v_{\mathsf{G}}
=
c_1v_{\mathsf{H}_1}+\cdots+c_{\ell}v_{\mathsf{H}_{\ell}},
\qquad
c_1,\ldots,c_{\ell}>0.
\]
Then \eqref{eq:barycentric_deformation_inequalities} is equivalent to the linear inequality
\[
h(v_{\mathsf{F}})+h(v_{\mathsf{G}})
\leq
c_1h(v_{\mathsf{H}_1})+\cdots+c_{\ell}h(v_{\mathsf{H}_{\ell}}).
\]
\end{proposition}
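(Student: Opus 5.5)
The plan is to apply Batyrev's criterion (\Cref{eq:batyrev}) directly to the simplicial fan $\Psi=\operatorname{Bar}(\Sigma(\mathsf S))$. This requires identifying its primitive collections.

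\textbf{Rays and cones of $\Psi$.} By the description of the cones of a barycentric subdivision following \Cref{def:barycentric}, the rays of $\Psi$ are exactly the $\operatorname{Cone}(u_{\mathsf F})$ for nonempty proper faces $\mathsf F\subsetneq\mathsf S$. Here we use the order-reversing correspondence between faces of $\mathsf S$ and cones of $\Sigma(\mathsf S)$, which swaps containment but still sends chains to chains. Hence the canonical generators are the $v_{\mathsf F}$. A set of rays spans a cone of $\Psi$ if and only if the corresponding faces form a chain.

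\textbf{Primitive collections.} Since the underlying complex is flag, as noted after \Cref{def:barycentric}, a minimal non-face must have size two. Concretely, if a set $\{v_{\mathsf F_1},\dots,v_{\mathsf F_k}\}$ is not a chain, it contains an incomparable pair. Minimality then forces $k=2$. Conversely, any pair $\{v_{\mathsf F},v_{\mathsf G}\}$ with $\mathsf F,\mathsf G$ incomparable is a non-face whose singletons are faces, so it is primitive. Thus the primitive collections are exactly the incomparable pairs.

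\textbf{Conclusion.} Batyrev's criterion, applied to $\Psi$ (simplicial and polytopal, being the normal fan of $\mathsf P$), gives $h\in\operatorname{Def}(\mathsf P)$ if and only if \eqref{eq:barycentric_deformation_inequalities} holds for every incomparable pair. For the equivalent linear form, note that $\Psi$ is complete because $\mathsf S$ is full-dimensional. So $v_{\mathsf F}+v_{\mathsf G}$ lies in the relative interior of a unique cone of $\Psi$, and this cone is $\operatorname{Cone}\{v_{\mathsf H_1},\dots,v_{\mathsf H_\ell}\}$ for a unique chain $\mathsf H_1\subsetneq\cdots\subsetneq\mathsf H_\ell$. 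Since $h$ is linear on that cone, $h(v_{\mathsf F}+v_{\mathsf G})=\sum c_ih(v_{\mathsf H_i})$, which yields the displayed inequality.

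The main obstacle is the rationality and primitivity bookkeeping. Batyrev's criterion as cited is stated for rational simplicial fans with primitive generators, whereas $\mathsf S$ here may be non-rational. I would handle this by observing that the criterion, namely convexity of a PL function on a complete simplicial fan checked along primitive collections, is a purely real statement valid for any choice of positive ray generators. The proof is the standard local convexity check across walls, reduced to minimal non-faces. So the choice of $v_{\mathsf F}$ versus $u_{\mathsf F}$ is immaterial.
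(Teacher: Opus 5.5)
Your argument is correct and is the paper's proof: the rays of $\Psi$ are the $v_{\mathsf F}$, flagness makes the primitive collections exactly the incomparable pairs, and Batyrev's criterion together with \eqref{eq:batyrev} gives both forms of the inequality. Your rationality worry is moot, because the paper takes polyhedra to be rational by convention (which is why it uses the primitive generators $v_{\mathsf F}$). Your sketch for arbitrary generators is also too quick. A wall of $\Psi$ between flags that differ only in two vertices $p,q$ of a common edge typically gives a relation with unequal coefficients on $v_p,v_q$, and $v_p+v_q$ may lie in a cone outside both flags. So reducing wall-crossing checks to primitive collections is the real content of Batyrev's criterion, not a formality.
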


\begin{proof}
A collection of rays of $\Psi$ spans a cone if and only if the corresponding faces of $\mathsf{S}$ form a chain. 
Consequently, the minimal collections of rays which do not span a cone are precisely the pairs indexed by incomparable faces: every collection which is not a chain contains a pair of incomparable elements. 
Thus the primitive collections of $\Psi$ are exactly the sets $\{v_{\mathsf{F}},v_{\mathsf{G}}\}$ with $\mathsf{F}$ and $\mathsf{G}$ incomparable.

The first description now follows from Batyrev's criterion.
The equivalent linear description is Equation \eqref{eq:batyrev} applied to this case.
\end{proof}

\begin{corollary}\cite{edmonds1970submodular, ardila2020coxeter, castillo2022deformation}\label{cor:supermodular_simplex}
Let $\mathsf{P}_n$ be an omnitruncation of the standard simplex $\Delta([n])$, so that $ \Sigma(\mathsf{P}_n)=\mathcal{B}^1_n. $
For a function $z\colon 2^{[n]}\to\mathbb{R}$ with $z(\emptyset)=0$, consider the polyhedron
\begin{equation}\label{eq:generalized_permutahedron_supermodular}
\mathsf{Q}(z)
=
\left\{
x\in\mathbb{R}^n
\,\middle|\,
\begin{array}{ll}
\displaystyle\sum_{i\in I}x_i\geq z(I),
& \emptyset\subsetneq I\subsetneq[n],\\[6pt]
\displaystyle\sum_{i\in[n]}x_i=z([n]).
\end{array}
\right\}.
\end{equation}

Then $\mathsf{Q}(z)$ is a deformation of $\mathsf{P}_n$ if and only if $z$ is \textbf{supermodular}, that is,
\begin{equation}\label{eq:supermodular_simplex}
z(I)+z(J)
\leq
z(I\cap J)+z(I\cup J)
\end{equation}
for all $I,J\subseteq[n]$.
Equivalently, it is enough to impose
\eqref{eq:supermodular_simplex} when $I$ and $J$ are incomparable.
\end{corollary}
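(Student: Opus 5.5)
We plan to deduce \Cref{cor:supermodular_simplex} from \Cref{prop:barycentric_deformation_inequalities}, applied to the full-dimensional pointed representative of $\mathcal{B}^0_n$. Since $\Delta([n])$ is not full-dimensional, we first use \Cref{prop:eta} and \Cref{ex:omni_truncation}. A piecewise linear function on $\mathcal{B}^1_n$ splits as a piecewise linear function on ${}^{\wedge}\mathcal{B}^1_n$ together with a linear functional on $\mathsf L=\operatorname{Span}\{e_{[n]}\}$. The rays of ${}^{\wedge}\mathcal{B}^1_n$ are generated by the primitive vectors $e_I$ for $\emptyset\subsetneq I\subsetneq[n]$, and the lineality part is recorded by the value on $e_{[n]}$. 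We therefore identify $h$ with the function $z(I)=h(e_I)$, setting $z(\emptyset)=0$ and $z([n])=h(e_{[n]})$. Under this identification, $\mathsf{Q}(z)$ is exactly the polyhedron cut out by the ray inequalities and the lineality equation. The face $\Delta(I)$ has normal cone containing $u=e_{[n]\setminus I}$ up to lineality. The labeling by complements is harmless: supermodularity is invariant under $I\mapsto[n]\setminus I$ after adjusting by the lineality value $z([n])$. It is simplest to index chains directly by the subsets $S$ as in \Cref{ex:braid_one}.

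Next we identify the primitive collections. Rays $e_I$ and $e_J$ span a cone exactly when $I$ and $J$ are comparable. By the flag property, the primitive collections are the pairs $\{e_I,e_J\}$ with $I,J$ incomparable. For such a pair we have $e_I+e_J=e_{I\cap J}+e_{I\cup J}$, where $I\cap J\subsetneq I\cup J$ is a chain. Hence $e_I+e_J$ lies in the relative interior of $\operatorname{Cone}\{e_{I\cap J},e_{I\cup J}\}$ (dropping $e_{I\cap J}$ when $I\cap J=\emptyset$), with both coefficients equal to $1$. If $I\cup J=[n]$, then $e_{[n]}$ lies in the lineality space. The coefficient-one expression still holds, because the support function is linear along $\mathsf L$ and $z([n])$ is its value on $e_{[n]}$. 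The Batyrev inequality \eqref{eq:batyrev} therefore becomes $z(I)+z(J)\le z(I\cap J)+z(I\cup J)$. This gives the statement restricted to incomparable pairs.

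Finally, the comparable case is trivial. If $I\subseteq J$, then $I\cap J=I$ and $I\cup J=J$, so \eqref{eq:supermodular_simplex} holds with equality. This shows the equivalence of the full supermodularity condition with its restriction to incomparable pairs. One should also confirm that when $h$ lies in the deformation cone, $\mathsf{Q}(z)$ is indeed the polytope with support function $h$, so that every ray inequality is tight. This follows because $h(\mathsf Q(z),e_I)=z(I)$ precisely when $h$ is convex (Batyrev). The main obstacle, which is mostly bookkeeping, is the careful handling of the lineality space in the boundary cases $I\cap J=\emptyset$ and $I\cup J=[n]$. There we must check that using the pointed representative with rays $e_I$ makes the Batyrev relation read literally as supermodularity, with $z(\emptyset)=0$ and $z([n])$ the lineality value.
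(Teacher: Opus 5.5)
Your argument is correct and is essentially the paper's proof: split off the lineality space $\operatorname{Span}\{e_{[n]}\}$ via \Cref{prop:eta}, identify the primitive collections of $\mathcal{B}^1_n$ with the incomparable pairs $\{e_I,e_J\}$, read Batyrev's inequality off the relation $e_I+e_J=e_{I\cap J}+e_{I\cup J}$ (using $e_\emptyset=0$ and treating $e_{[n]}$ as lying in the lineality space), and note that comparable pairs give equality. One small correction to your closing remark: convexity of $h$ does imply that every inequality defining $\mathsf{Q}(z)$ is tight, since a deformation equals the intersection of its ray half-spaces, but tightness does not force supermodularity, so ``precisely when'' should read ``when''.
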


\begin{proof}
The braid fan has lineality space
\[
\mathsf{L}
=
\operatorname{Span}\{e_{[n]}\},
\]
and its pointed representative has canonical ray generators $e_I$ indexed by the nonempty proper subsets $I\subsetneq[n]$. 
By \Cref{prop:eta}, a piecewise linear function on $\mathcal{B}^1_n$ is therefore determined by its values
\[
h(e_I)=z(I),
\qquad
\emptyset\subsetneq I\subsetneq[n],
\]
together with its restriction to $\mathsf{L}$, which is determined by
\[
h(e_{[n]})=z([n]).
\]

By \Cref{prop:barycentric_deformation_inequalities}, the primitive collections of $\mathcal{B}^1_n$ are precisely the pairs $\{e_I,e_J\}$ indexed by incomparable nonempty proper subsets $I,J\subsetneq[n]$. 
For every such pair, the identity
\[
e_I+e_J=e_{I\cap J}+e_{I\cup J}
\]
gives the associated primitive relation, where we use $e_{\emptyset}=0$. 
If $I\cup J=[n]$, then $e_{I\cup J}=e_{[n]}$ belongs to the lineality space rather than being a ray of the pointed representative. 
Batyrev's inequality is thus precisely
\[
z(I)+z(J)
\leq
z(I\cap J)+z(I\cup J).
\]
This proves the result for incomparable pairs. 
For comparable pairs, the two sides of \eqref{eq:supermodular_simplex} are equal, so the same condition may equivalently be imposed for all $I,J\subseteq[n]$.
\end{proof}

\begin{remark}\label{rem:order}
For the proofs below, it will be convenient to fix a total order on $\mathbf{U}_{\Psi}$. 
The rays of $\Psi$ are naturally indexed by the nonempty proper faces of $\mathsf{S}$, or equivalently by the nonzero cones of $\Sigma$. 
More precisely, a face $\mathsf{F}$ corresponds to the ray $\operatorname{Cone}(u_{\mathsf{F}})$, whose canonical ray generator we denote by $v_{\mathsf{F}}\in\mathbf{U}_{\Psi}$. 
We fix a linear extension $\leq$ of the poset of nonempty proper faces of $\mathsf{S}$ and interpret it as a total order on $\mathbf{U}_{\Psi}$ through the correspondence $\mathsf{F}\mapsto v_{\mathsf{F}}$ (see \Cref{def:orderings}).
\end{remark}

The following argument follows the treatment in \cite[Section 3]{backman2023simplicial}.
For the rest of the section, $ \epsilon $ is a sufficiently small positive real number, so that all central truncations are shallow.

\begin{lemma}\label{lem:key}
	Let $ \mathsf{S} $ be a full-dimensional simple polytope and $ \mathsf{P} $ an omnitruncation of $ \mathsf{S} $.
The set of vectors
\begin{equation*}
	\left\{ h(\operatorname{Tr}_{\epsilon} [\mathsf{S}, \mathsf{F}] ) - h(\mathsf{S}) \,\middle|\, \emptyset \subsetneq \mathsf{F} \subsetneq \mathsf{S} \right\}
\end{equation*}
is a (not necessarily polytopal) basis of the deformation space of $\mathsf{P}$.
\end{lemma}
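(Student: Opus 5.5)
Since $\Psi$ is simplicial, $\mathrm{DS}(\mathsf P)=\operatorname{PL}(\Psi)$ is identified via \eqref{eq:iota} with $\mathbb{R}^{\mathbf U_\Psi}$. The rays of $\Psi$ are in bijection with the nonempty proper faces of $\mathsf S$ via $\mathsf F\mapsto v_{\mathsf F}$. Hence the proposed set has exactly $\dim \mathrm{DS}(\mathsf P)$ elements, and it suffices to prove linear independence. I would do this by showing that the square matrix $M_{\mathsf G,\mathsf F}=h(\operatorname{Tr}_\epsilon[\mathsf S,\mathsf F],v_{\mathsf G})-h(\mathsf S,v_{\mathsf G})$ is triangular with nonzero diagonal, using the linear extension of \Cref{rem:order}. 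This mirrors \Cref{lem:P_affine_basis}.

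First, each $\operatorname{Tr}_\epsilon[\mathsf S,\mathsf F]$ is a deformation of $\mathsf P$. By \Cref{lem:shallow_truncation_stellar}, its normal fan is the stellar subdivision $\mathcal C(\Sigma,\sigma_{\mathsf F})$, and $\Psi$ refines this fan: $\Psi$ is obtained by stellar subdivisions of all cones, and the ray $u_{\mathsf F}$ is a ray of $\Psi$. So both support functions are linear on cones of $\Psi$ and genuinely lie in $\operatorname{PL}(\Psi)$. The support function of a polytope evaluated at $v_{\mathsf G}$ is just the minimum of $v_{\mathsf G}^\intercal x$, so no fan bookkeeping is needed for the matrix entries.

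Next, compute the entries. The truncation removes the region $\{x\in\mathsf S: u_{\mathsf F}^\intercal x<h(\mathsf S,u_{\mathsf F})+\epsilon\}$, which is a small neighborhood of $\mathsf F$ for small $\epsilon$.
\begin{itemize}
\item For $\mathsf G=\mathsf F$, the value rises by $\epsilon$ times the positive scalar relating $u_{\mathsf F}$ and $v_{\mathsf F}$, so the diagonal entry is nonzero.
\item If $\mathsf G\not\subseteq\mathsf F$, the face $\mathsf S^{v_{\mathsf G}}=\mathsf G$ contains a vertex of $\mathsf S$ outside $\mathsf F$. That vertex survives the truncation when $\epsilon<\delta_{\mathsf F}$, so the minimum of $v_{\mathsf G}$ is unchanged and the entry is $0$.
\end{itemize}
The only nonzero off-diagonal entries therefore occur when $\mathsf G\subsetneq\mathsf F$. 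Such $\mathsf G$ precede $\mathsf F$ in any linear extension, so the matrix is triangular with nonzero diagonal and hence invertible.

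The main point requiring care is the middle claim: a vertex of $\mathsf G$ not in $\mathsf F$ lies at $u_{\mathsf F}$-height at least $\delta_{\mathsf F}$ above $h(\mathsf S,u_{\mathsf F})$. This holds because every vertex outside $\mathsf F$ has height at least the minimum over all vertices not in $\mathsf F$, which is positive. I would simply choose $\epsilon$ smaller than that minimum for every $\mathsf F$, rather than relying on $\delta_{\mathsf F}$, which is defined only via neighbors. With this choice $\mathsf G\cap\operatorname{Tr}_\epsilon\neq\emptyset$, so $h(\operatorname{Tr}_\epsilon,v_{\mathsf G})\le h(\mathsf S,v_{\mathsf G})$; since $\operatorname{Tr}_\epsilon\subseteq\mathsf S$, equality follows.
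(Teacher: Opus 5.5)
Your proposal is correct and follows the paper's own proof. You evaluate $h(\operatorname{Tr}_{\epsilon}[\mathsf S,\mathsf F])-h(\mathsf S)$ at the ray generators indexed by faces $\mathsf G$, observe that the entry vanishes unless $\mathsf G\subseteq\mathsf F$ and is nonzero when $\mathsf G=\mathsf F$, and conclude via the linear extension of \Cref{rem:order} that the square matrix is triangular and hence invertible.

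Two side remarks could be tightened.
\begin{itemize}
\item Your claim that a vertex outside $\mathsf F$ has height at least $\delta_{\mathsf F}$ is not justified by what you wrote, but it is true. The minimum of $u_{\mathsf F}^{\intercal}y-h(\mathsf S,u_{\mathsf F})$ over all vertices $y\notin\mathsf F$ equals $\delta_{\mathsf F}$: from any vertex outside $\mathsf F$ some edge strictly decreases $u_{\mathsf F}$, so a minimizer must be a neighbor of $\mathsf F$. Your workaround of shrinking $\epsilon$ is therefore unnecessary, though harmless.
\item The fact that $\Psi$ refines $\mathcal C(\Sigma,\sigma_{\mathsf F})$ should be cited from \Cref{cor:partial_building} (maximal building set). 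It does not follow merely from ``$\Psi$ is built by stellar subdivisions'', because a sequence of stellar subdivisions need not refine each individual stellar subdivision of $\Sigma$ (compare \Cref{fig:blow_fail}).
\end{itemize}
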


\begin{proof}
For every proper nonempty face $ \mathsf{F} \subseteq \mathsf{S} $ we have that $  \operatorname{Tr}_{\epsilon} [\mathsf{S}, \mathsf{F}] \subseteq \mathsf{S}$.
Furthermore, the vertices of $ \mathsf{S} $ not contained in $ \operatorname{Tr}_{\epsilon} [\mathsf{S}, \mathsf{F}] $ are exactly the vertices of the face $ \mathsf{F} $.
Thus, evaluating the support vectors at the element $ u_{H} $, for $ \mathsf{H} \subseteq \mathsf{S} $ a proper nonempty face, we get
\begin{equation*}
h(\operatorname{Tr}_{\epsilon} [\mathsf{S}, \mathsf{F}] , u_{H})  - h(\mathsf{S} , u_{H}) = 
\begin{cases}
>0, \text{ if } \emptyset \subsetneq \mathsf{H} \subseteq \mathsf{F}, \\
=0, \text{ else. } 
\end{cases}
\end{equation*}

Using the total order on the vectors $\left\{ h(\operatorname{Tr}_{\epsilon} [\mathsf{S}, \mathsf{F}] ) - h(\mathsf{S}) \,\middle|\, \emptyset \subsetneq \mathsf{F} \subsetneq \mathsf{S}\right\} $ and $ \mathbf{U}_{\Psi} $ described in \Cref{rem:order},
we obtain a square matrix which is upper triangular with non-zero entries in the diagonal.
We conclude that the columns are a basis.
\end{proof}

\begin{remark}\label{rem:key_delzant}
In the case where $ \mathsf{S} $ is Delzant (see \Cref{def:Delzant}), we can be more precise about the upper triangular matrix we built in the proof of \Cref{lem:key}, and even more, we can give its inverse.
We obtain the following description of the difference of support functions
\begin{equation}\label{eq:delzant_difference}
h(\operatorname{Tr}_{1}  \left[\mathsf{S}, \mathsf{F} \right] , u_{H})  - h(\mathsf{S} , u_{H}) = 
\begin{cases}
1, \text{ if }   \emptyset \subsetneq \mathsf{H} \subseteq \mathsf{F}, \\
0, \text{ else. } 
\end{cases}
\end{equation}
This is an integral upper triangular matrix with 1's on the diagonal, thus it can be inverted over $\mathbb{Z}$.  It is a classical fact that the inverse for this matrix can be found using M\"obius inversion on the face poset of $\mathsf{S}$.  
\end{remark}

We are now ready to present one of the main theorems of this article, which describes polytopal bases for deformation cones of omnitruncations of simple polytopes.

\begin{theorem}\label{thm:basis_seed}
Let $ \mathsf{S} $ be a full-dimensional simple polytope with the origin in its interior, and let $ \mathsf{P} $ be an omnitruncation of $ \mathsf{S} $ with normal fan $ \Psi $.
Then the following statements hold:
\begin{enumerate}
\item\label{it:main1} For $ \epsilon>0 $ small enough, the set
\[
	\mathscr{A}_{\epsilon}^{\circ} = \left\{ \operatorname{Tr}_{\epsilon} [\mathsf{S}, \mathsf{F}]   \,\middle|\, \emptyset \subsetneq \mathsf{F} \subsetneq \mathsf{S} \right\}
\]
of shallow proper $\epsilon$-truncations is a polytopal basis for the deformation space of $\mathsf{P}$.  
\item\label{it:main2} The collection
\[
	\mathscr{A}_{\operatorname{DTr}}= \left\{ \operatorname{DTr} [\mathsf{S}, \mathsf{F}]   \,\middle|\, \mathsf{F} \subsetneq \mathsf{S} \right\}
\]
of deep truncations is a polytopal generating set for the deformation space of $\mathsf{P}$.  

\item\label{it:main3} If $ \mathsf{S} $ is a Delzant polytope, then the collection
\[
	\mathscr{A}_{\operatorname{Tr}_1} = \left\{ \operatorname{Tr}_1[\mathsf{S}, \mathsf{F}]   \,\middle|\,  \mathsf{F} \subsetneq \mathsf{S} \right\}
\]
of unit-depth truncations is an integral polytopal generating set for the deformation space of $\mathsf{P}$. 

\item\label{it:main4} If $ \mathsf{S} $ is a primitive Delzant polytope, then the collection

\[
	\mathscr{A}_{\operatorname{DTr}}^{\circ} = \left\{ \operatorname{DTr}[\mathsf{S}, \mathsf{F}]   \,\middle|\, \emptyset \subsetneq \mathsf{F} \subsetneq \mathsf{S} \right\}
	=\left\{ \operatorname{Tr}_1[\mathsf{S}, \mathsf{F}]   \,\middle|\, \emptyset \subsetneq \mathsf{F} \subsetneq \mathsf{S} \right\}
\] of deep proper truncations is a flat, not necessarily integral, basis for the deformation space of $\mathsf{P}$.
\end{enumerate}
\end{theorem}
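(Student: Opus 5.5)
The plan mirrors the proof of \Cref{cor:P_in_the_interior}, with \Cref{lem:P_affine_basis} replaced by \Cref{lem:key}. Throughout, we identify $\mathrm{DS}(\mathsf P)$ with $\mathbb R^{\mathbf U_\Psi}$ via \eqref{eq:iota}, since $\Psi$ is simplicial.

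\textbf{Step 1: membership in $\operatorname{Def}(\mathsf P)$.} First we check that every truncation $\operatorname{Tr}_\epsilon[\mathsf S,\mathsf F]$, and every deep truncation, lies in $\operatorname{Def}(\mathsf P)$. A shallow truncation of the face $\mathsf F$ has normal fan equal to the stellar subdivision $\mathcal C(\Sigma(\mathsf S),\sigma_{\mathsf F})$ by \Cref{lem:shallow_truncation_stellar}. The barycentric subdivision refines this fan, because $\operatorname{Bar}$ is obtained by a sequence of stellar subdivisions that includes the one along $\sigma_{\mathsf F}$; this is just an ordering argument. Deep truncations and $\mathsf S$ itself are limits of shallow ones by \Cref{lem:convtrunc}. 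Hence all of them lie in the closed cone $\operatorname{Def}(\mathsf P)$, by \Cref{prop:coarsening_summand}.

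\textbf{Step 2: part \ref{it:main1}.} By \Cref{lem:key}, the differences $d_{\mathsf F}=h(\operatorname{Tr}_\epsilon[\mathsf S,\mathsf F])-h(\mathsf S)$ form a basis. It therefore suffices to show that $h(\mathsf S)$ lies in the span of the $h(\operatorname{Tr}_\epsilon[\mathsf S,\mathsf F])$. Since the number of truncations equals $\dim\mathrm{DS}(\mathsf P)$, the truncations are then a basis. Write
\[
h(\mathsf S)=\sum_{\mathsf F}\lambda_{\mathsf F}d_{\mathsf F}.
\]
Rearranging gives
\[
\Bigl(1+\sum\lambda_{\mathsf F}\Bigr)h(\mathsf S)=\sum\lambda_{\mathsf F}\,h(\operatorname{Tr}_\epsilon[\mathsf S,\mathsf F]),
\]
so we need $1+\sum\lambda_{\mathsf F}\neq0$. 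The matrix of the $d_{\mathsf F}$ is triangular with entries of order $\epsilon$, while $h(\mathsf S,v_{\mathsf H})<0$ is a fixed negative number because the origin is interior. Hence each $\lambda_{\mathsf F}=O(1/\epsilon)$. Moreover, the facet coefficients are exactly $\beta_{\mathsf F}/(-\epsilon)>0$, since facets are minimal in the reversed order.

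\textbf{Main obstacle.} The delicate point is controlling the sign of $\sum\lambda_{\mathsf F}$ for the lower-dimensional faces. My first approach is to scale: set $\epsilon_{\mathsf F}=\epsilon$ uniformly and note that $\epsilon\,\lambda(\epsilon)$ tends to a fixed vector $\mu$, obtained by inverting the $\epsilon$-independent normalized matrix. Then $1+\sum\lambda_{\mathsf F}=1+\tfrac1\epsilon\sum\mu_{\mathsf F}$, which is nonzero for all but at most one value of $\epsilon$. Shrinking $\epsilon$ therefore suffices. If the normalized matrix depended on $\epsilon$ in a more complicated way, I would instead argue by analyticity in $\epsilon$.

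\textbf{Part \ref{it:main2}.} By \Cref{lem:convtrunc}, each $\operatorname{Tr}_\epsilon[\mathsf S,\mathsf F]$ is a convex combination of $\operatorname{DTr}[\mathsf S,\mathsf F]$ and $\mathsf S=\operatorname{DTr}[\mathsf S,\emptyset]$. So the span of the deep truncations contains the basis from part \ref{it:main1}.

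\textbf{Part \ref{it:main3}.} If $\mathsf S$ is Delzant (and integral), then $\delta_{\mathsf F}\ge1$, so $\operatorname{Tr}_1$ is defined and lies in $\operatorname{Def}(\mathsf P)$ by Step 1 and \Cref{lem:convtrunc}. By \Cref{rem:key_delzant}, the differences $h(\operatorname{Tr}_1[\mathsf S,\mathsf F])-h(\mathsf S)$ form a unitriangular integral matrix, hence a $\mathbb Z$-basis of $\mathbb Z^{\mathbf U_\Psi}$. The Delzant property identifies this lattice with $\mathrm{DS}_{\mathbb Z}$. Adding $h(\mathsf S)$, which is $\operatorname{Tr}_1$ of $\emptyset$, then gives an integral generating set.

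\textbf{Part \ref{it:main4}.} If $\mathsf S$ is primitive Delzant, \Cref{prop:delzant} gives $\delta_{\mathsf F}=1$, so $\operatorname{DTr}=\operatorname{Tr}_1$ and these truncations are flat. Apply the argument of part \ref{it:main1} with $\epsilon=1$. Now the coefficients are $\mu=-M^{-1}h(\mathsf S)$, where $M$ is the M\"obius matrix, and we need $1+\sum\mu_{\mathsf F}\ne0$. Showing this is the second obstacle. Integrality gives $h(\mathsf S,u)\le-1$, but the M\"obius inversion may introduce signs, which is presumably why integrality is not claimed for this basis. I would try to compute $\sum\mu_{\mathsf F}$ via M\"obius inversion on the face poset, using an Euler-characteristic style identity, or failing that, show it has a fixed sign.
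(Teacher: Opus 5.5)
Parts 1--3 are essentially fine, but part 4 is not proved. You reduce it to showing $1+\sum_{\mathsf F}\lambda_{\mathsf F}\neq 0$ at the forced depth $\epsilon=1$, where your shrink-$\epsilon$ device is unavailable, and then only speculate about M\"obius inversion.

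The missing idea is that $h(\mathsf S)$ already lies in the span of the \emph{facet} differences. For a facet $\mathsf G$ and $\epsilon\le\delta_{\mathsf G}$ (in particular $\epsilon=1$ when $\mathsf S$ is integral), $\operatorname{Tr}_\epsilon[\mathsf S,\mathsf G]$ is a deformation of $\mathsf S$. So $d_{\mathsf G}$ is linear on every cone of $\Sigma(\mathsf S)$, with $d_{\mathsf G}(u_{\mathsf G})=\epsilon$ and $d_{\mathsf G}(u_{\mathsf G'})=0$ for every other facet $\mathsf G'$. Hence $h(\mathsf S)$ and $\sum_{\mathsf G}(\beta_{\mathsf G}/\epsilon)\,d_{\mathsf G}$ are both linear on the cones of $\Sigma(\mathsf S)$ and agree on its rays. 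They therefore coincide as functions on $\mathbb R^n$, and hence also in $\operatorname{PL}(\Psi)$. By uniqueness of coordinates in the basis of \Cref{lem:key}, your $\lambda_{\mathsf F}$ vanish for every non-facet face, so there is nothing to invert.

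Thus $1+\sum_{\mathsf F}\lambda_{\mathsf F}=1+\epsilon^{-1}\sum_{\mathsf G}\beta_{\mathsf G}$. This is negative for $\epsilon<-\sum_{\mathsf G}\beta_{\mathsf G}$. For integral $\mathsf S$ at $\epsilon=1$ it is at most $1-\#\{\text{facets}\}<0$, since each $\beta_{\mathsf G}\le -1$. This is exactly \Cref{cor:P_in_the_interior}, which the paper invokes in item~1 and, in its integral form, in item~4. It also shows that the non-integrality in item~4 comes from dividing by $1+\sum_{\mathsf G}\beta_{\mathsf G}$ (which is $-5$ for the hexagon of \Cref{rmk:nonintegralbasis}), not from M\"obius signs.

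Three smaller points.
\begin{enumerate}
\item With your convention $d_{\mathsf F}=h(\operatorname{Tr}_\epsilon[\mathsf S,\mathsf F])-h(\mathsf S)$, the facet coefficients are $\beta_{\mathsf F}/\epsilon<0$. The value $\beta_{\mathsf F}/(-\epsilon)$ belongs to the paper's convention $h(\mathsf S)-h(\operatorname{Tr}_\epsilon)$.
\item In part 1 you only claim $\epsilon\lambda(\epsilon)\to\mu$, which would be inconclusive if $\sum\mu_{\mathsf F}=0$. In fact $\epsilon\lambda$ is exactly constant: \Cref{lem:convtrunc} gives $d_{\mathsf F}=\frac{\epsilon}{\delta_{\mathsf F}}\bigl(h(\operatorname{DTr}[\mathsf S,\mathsf F])-h(\mathsf S)\bigr)$. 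So no analyticity argument is needed, and in any case $\sum\mu_{\mathsf F}=\sum_{\mathsf G}\beta_{\mathsf G}\neq0$.
\item In Step 1, the statement that $\operatorname{Bar}(\Sigma(\mathsf S))$ refines $\mathcal C(\Sigma(\mathsf S),\sigma_{\mathsf F})$ is true but not ``just an ordering argument.'' Stellar subdivisions do not commute in general (\Cref{fig:blow_fail}), and in $\operatorname{Bar}$ the subdivision along $\sigma_{\mathsf F}$ occurs after those along the cones containing it. Cite \Cref{cor:partial_building} for the maximum building set, or check directly that each chain cone $\operatorname{Cone}(u_{\sigma_1},\dots,u_{\sigma_k})$ lies in some $\operatorname{Cone}(\{u_{\mathsf F}\}\cup\nu)$.
\end{enumerate}
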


\begin{proof}

\begin{enumerate}[leftmargin=*]

\item	By \Cref{lem:key} the set of vectors $ \left\{ h(\operatorname{Tr}_{\epsilon} [\mathsf{S}, \mathsf{F}] ) - h(\mathsf{S}) \,\middle|\, \emptyset \subsetneq \mathsf{F} \subsetneq \mathsf{S} \right\} $ forms a basis.
	By \Cref{cor:P_in_the_interior}, the vector $ h(\mathsf{S}) $ is a linear combination of $ \left\{ h(\operatorname{Tr}_{\epsilon} [\mathsf{S}, \mathsf{F}] )  \,\middle|\, \mathsf{F} \subsetneq \mathsf{S}, \mathsf{F} \text{ facet} \right\} $.  Thus $\mathscr{A}_{\epsilon}^{\circ} $ generates the basis $ \left\{ h(\operatorname{Tr}_{\epsilon} [\mathsf{S}, \mathsf{F}] ) - h(\mathsf{S}) \,\middle|\, \emptyset \subsetneq \mathsf{F} \subsetneq \mathsf{S} \right\} $ and has the same cardinality.  Therefore, $\mathscr{A}_{\epsilon}^{\circ} $ is a basis.
	\item	This follows from \Cref{lem:convtrunc} applied to item 1. above.
	\item This follows from \Cref{rem:key_delzant}.
	\item By the proof of \Cref{prop:delzant}, the deep truncations
are flat and equal the unit-depth truncations. The basis statement
follows from \Cref{rem:key_delzant} as in item~\ref{it:main1}.
	
	\end{enumerate}
	\end{proof}
	
\begin{remark}\label{rmk:nonintegralbasis}
To see that the basis $\mathscr{A}_{\operatorname{DTr}}^{\circ}$ in \Cref{it:main4} may not be an integral basis, take the permutahedron $\mathsf{S} = \operatorname{Conv}\{\sigma(-1,0,1): \sigma \in S_3\}$, which is primitive Delzant with respect to $N=\mathbb{Z}^3\cap\{x_1+x_2+x_3=0\}$, has the origin in its interior, and is full-dimensional in $N_{\mathbb{R}}$.  We find that $\mathsf{S}$ is equal to $1/5$ multiplied by the Minkowski sum of the 6 flat facet truncations of $\mathsf{S}$.  These coefficients are not integral, thus $\mathscr{A}_{\operatorname{DTr}}^{\circ}$ is not an integral basis for the deformation space of $\mathsf{P}$.
\end{remark}

We explain how the truncations corresponding to the facets of $ \mathsf{S} $ may be replaced by any polytopal basis for
$ \operatorname{PL}(\Sigma(\mathsf{S}))$.  We note that this formulation does not
require $\mathsf S$ to be full-dimensional or to contain the origin in
its interior.
	
\begin{corollary}\label{cor:basis_augmented_barycentric}
Let $\mathsf{S}$ be a simple polytope, let $\Sigma$ be its normal fan,
and let $\mathsf{P}$ be an omnitruncation of $\mathsf{S}$ with normal fan
$\Psi=\operatorname{Bar}(\Sigma)$.  If $\mathscr{T}$ is a polytopal
basis for $\operatorname{PL}(\Sigma)$, then the set
\[
    \mathscr{C}_{\epsilon}
    \coloneqq
    \left\{
    \operatorname{Tr}_{\epsilon}[\mathsf{S},\mathsf{F}]
    \,\middle|\,
    \emptyset \subsetneq \mathsf{F} \subsetneq \mathsf{S},
    \ \operatorname{codim}(\mathsf{F})>1
    \right\}
    \cup \mathscr{T}
\]
is a polytopal basis for $\operatorname{PL}(\Psi)$.  Moreover, if $\mathsf{S}$ is smooth and $\mathscr{T}$ is an integral basis, then we can take $\epsilon=1$ and obtain an integral basis for $\operatorname{PL}(\Psi)$.
\end{corollary}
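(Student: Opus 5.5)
The plan is to reduce to the triangularity argument of \Cref{lem:key}, splitting $\operatorname{PL}(\Psi)$ into a part governed by the facets of $\mathsf S$ and a part governed by faces of codimension greater than one. First I will reduce to the full-dimensional case via \Cref{prop:eta}. Fix a primitive pointed representative; both $\operatorname{PL}(\Sigma)$ and $\operatorname{PL}(\Psi)$ split off the same summand $\mathsf L^\ast$, and the truncations live in the same affine hull as $\mathsf S$. So it suffices to treat ${}^{\wedge}\Sigma$, realized as the normal fan of $\mathsf S$ in its affine span, with the lineality summand handled identically on both sides. Translating $\mathsf S$ changes each element of $\mathscr C_\epsilon$ by a translation, i.e.\ by a global linear function. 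A global linear function lies in the span of $\mathscr T$, since $\mathscr T$ spans $\operatorname{PL}(\Sigma)$, which contains all linear functions. Hence the span of $\mathscr C_\epsilon$ is translation invariant, and no hypothesis on the origin is needed.

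Next I will show that $\mathscr C_\epsilon$ spans $\operatorname{PL}(\Psi)$. Its cardinality equals the number of nonempty proper faces, which is $\dim\operatorname{PL}(\Psi)$, because $\mathscr T$ has one element per facet. Consider the subspace $V\subseteq\operatorname{PL}(\Psi)$ of functions that are linear on each cone of $\Sigma$, i.e.\ the pullback of $\operatorname{PL}(\Sigma)$. Every polytope in $\mathscr T$ has normal fan coarsening $\Sigma$, so $\mathscr T$ is a basis of $V$, and $h(\mathsf S)\in V$. By \Cref{lem:key}, the vectors $h(\operatorname{Tr}_\epsilon[\mathsf S,\mathsf F])-h(\mathsf S)$ over all proper faces form a basis. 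Those indexed by facets $\mathsf G$ are supported only on the ray $v_{\mathsf G}$, since only $\mathsf H=\mathsf G$ satisfies $\mathsf H\subseteq\mathsf G$ among faces. Thus these facet vectors span the coordinate subspace on facet rays. The map $V\to\mathbb R^{\text{facets}}$ given by restriction to the facet rays is an isomorphism, because $\Sigma$ is simplicial.

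Now the quotient $\operatorname{PL}(\Psi)/V$ is spanned by the images of $h(\operatorname{Tr}_\epsilon[\mathsf S,\mathsf F])-h(\mathsf S)$ with $\operatorname{codim}\mathsf F>1$. These images equal the images of $h(\operatorname{Tr}_\epsilon[\mathsf S,\mathsf F])$, since $h(\mathsf S)\in V$. Indeed, $\dim\operatorname{PL}(\Psi)/V$ is the number of faces of codimension greater than one, and the full collection from \Cref{lem:key} spans, while the facet ones lie in $V$. To see that the facet vectors lie in $V$, note that $\operatorname{Tr}_\epsilon[\mathsf S,\mathsf G]$ for a facet $\mathsf G$ is a parallel facet shift, which is normally equivalent to $\mathsf S$ for small $\epsilon$. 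Combining the quotient spanning with $\mathscr T$ spanning $V$ gives that $\mathscr C_\epsilon$ spans, hence is a basis.

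For the integral statement with $\mathsf S$ smooth, I will use \Cref{rem:key_delzant}. With $\epsilon=1$, the matrix of differences is unitriangular over $\mathbb Z$, so the differences form a $\mathbb Z$-basis of $\operatorname{PL}_{\mathbb Z}(\Psi)$. The same argument then runs over $\mathbb Z$. The key point is that $V_{\mathbb Z}=V\cap\operatorname{PL}_{\mathbb Z}(\Psi)$ equals $\operatorname{PL}_{\mathbb Z}(\Sigma)$, which is spanned by $\mathscr T$ over $\mathbb Z$. The quotient lattice is spanned by the unit truncations of higher-codimension faces, using unitriangularity. The main obstacle I expect is making this integral quotient step honest. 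I need to check that $V_{\mathbb Z}$ is saturated and that the facet difference vectors (here $\operatorname{Tr}_1$ facet shifts) lie in $V_{\mathbb Z}$; both should follow from smoothness making the primitive facet normals a lattice basis on each cone. A second point to check is that $\operatorname{Tr}_1$ of higher-codimension faces is still a deformation of $\mathsf P$ even though it need not be shallow. This holds because it is a convex-combination limit of shallow truncations, by \Cref{lem:convtrunc}.
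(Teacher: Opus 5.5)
Your argument is correct and is essentially the paper's: both rest on \Cref{lem:key}, on the facet differences lying in (indeed spanning) the pulled-back copy of $\operatorname{PL}(\Sigma)$, on $h(\mathsf S)\in\operatorname{Span}(\mathscr T)$, and on the unitriangularity of \Cref{rem:key_delzant} for the integral case, so your quotient-by-$V$ phrasing is the paper's change of basis in different words. One slip: the facet difference for $\mathsf G$ is not supported only on $v_{\mathsf G}$; by \Cref{lem:key} it is nonzero at every $v_{\mathsf H}$ with $\mathsf H\subseteq\mathsf G$, as it must be since it lies in $V$. Concretely, it is $\epsilon$ times the pullback of the function on $\Sigma$ equal to $1$ on the ray of $\mathsf G$ and $0$ on the other rays, and at $\epsilon=1$ this also settles your concern about $V_{\mathbb Z}$. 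Only its restriction to the facet rays is a coordinate vector, but that is all your argument actually uses, so the proof stands.
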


\begin{proof}

By \Cref{lem:key,lem:P_affine_basis,prop:eta}, replacing the
facet-indexed part of the difference basis by $\mathscr T$ gives the
basis
\[
\mathscr T
\cup
\left\{
h(\operatorname{Tr}_{\epsilon}[\mathsf S,\mathsf F])-h(\mathsf S)
\,\middle|\,
\operatorname{codim}(\mathsf F)>1
\right\}
\]
of $\operatorname{PL}(\Psi)$.  Since
$h(\mathsf S)\in\operatorname{Span}(\mathscr T)$, adding
$h(\mathsf S)$ to each of the remaining vectors preserves the basis
and gives $\mathscr C_{\epsilon}$.  If $\mathsf S$ is smooth, \Cref{rem:key_delzant} shows that, for
$\epsilon=1$, both changes of basis are unimodular.  Hence
$\mathscr C_1$ is integral.

\end{proof}
\subsection{Codimension-one seeds}\label{subsec:codim1}

We present an alternative way of obtaining a basis for a special choice $ \mathsf{S} $; namely, when it has codimension one with respect to the ambient space. 
Notice that our central example of the standard simplex $ \Delta_{n} \subseteq \mathbb{R}^{n} $ satisfies this property.
In this case we can add the original polytope $ \mathsf{S} $ to the polytopal basis.

Recall that $ \operatorname{Tr}_{\epsilon} (\mathsf{S}, \emptyset) = \mathsf{S}$.

\begin{proposition}\label{prop:key_non_full}
Let $ \mathsf{S} $ be a $ (n-1) $-dimensional simple polytope in $ \mathbb{R}^{n} $ such that its affine span is not linear,
and let $\mathsf{P}$ be an omnitruncation of $\mathsf{S}$.  Then the set of deep truncations $\mathscr{A}_{\operatorname{DTr}}= \left\{ \operatorname{DTr} [\mathsf{S}, \mathsf{F}]   \,\middle|\, \mathsf{F} \subsetneq \mathsf{S} \right\}$ is a polytopal basis for the deformation space of $\mathsf{P}$.
\end{proposition}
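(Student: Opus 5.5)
We count dimensions first and then reduce to \Cref{lem:key}. Let $\mathsf L=\mathsf L(\Sigma(\mathsf S))$, which is one-dimensional, spanned by a normal $w$ of the affine hyperplane $\operatorname{aff}(\mathsf S)$. Because $\operatorname{aff}(\mathsf S)$ is not linear, $h(\mathsf S,w)=c\neq 0$. Fix a pointed representative ${}^{\wedge}\Sigma$ and regard $\mathsf S$ as a full-dimensional simple polytope $\mathsf S_V$ in $V=\mathbb R^n/\mathsf L$, or equivalently in a complementary hyperplane. By \Cref{prop:eta} we get $\operatorname{PL}(\Psi)\cong\operatorname{PL}({}^{\wedge}\Psi)\oplus\mathsf L^\ast$, so $\dim\mathrm{DS}(\mathsf P)$ equals the number of nonempty proper faces of $\mathsf S$ plus one. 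This matches $|\mathscr A_{\operatorname{DTr}}|$, since the family includes $\mathsf F=\emptyset$, i.e.\ $\mathsf S$ itself. It therefore suffices to show that $\mathscr A_{\operatorname{DTr}}$ spans.

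Next we transport \Cref{lem:key} to this setting. Its proof never used that the origin lies in the interior. Applied to $\mathsf S_V$, or directly to $\mathsf S$ with the functions restricted to the rays $v_{\mathsf F}$ of ${}^{\wedge}\Psi$, it shows the following. The differences $h(\operatorname{Tr}_\epsilon[\mathsf S,\mathsf F])-h(\mathsf S)$, for $\emptyset\subsetneq\mathsf F\subsetneq\mathsf S$, have a unitriangular-type evaluation matrix on the rays $v_{\mathsf H}$ of ${}^{\wedge}\Psi$. Every truncation lies in $\operatorname{aff}(\mathsf S)$, so each of these differences vanishes on $\mathsf L$, and $h(\mathsf S)$ restricts to $c\,w^\ast\neq 0$ on $\mathsf L$. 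In the block decomposition of \Cref{prop:eta}, the differences are therefore independent within the $\operatorname{PL}({}^{\wedge}\Psi)$ component and are zero in the $\mathsf L^\ast$ component, while $h(\mathsf S)$ has a nonzero $\mathsf L^\ast$ component. Hence $\{h(\mathsf S)\}\cup\{h(\operatorname{Tr}_\epsilon[\mathsf S,\mathsf F])-h(\mathsf S)\}$ is a basis. Equivalently, since adding $h(\mathsf S)$ to each difference is an invertible change of basis, $\{h(\mathsf S)\}\cup\{h(\operatorname{Tr}_\epsilon[\mathsf S,\mathsf F])\}$ is a basis. This is exactly where the non-linear affine span replaces the hypothesis of \Cref{cor:P_in_the_interior}: instead of needing $h(\mathsf S)$ inside the cone of truncations, we add $\mathsf S$ itself as the extra basis element that accounts for the lineality direction.

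Finally, \Cref{lem:convtrunc} gives $h(\operatorname{Tr}_\epsilon[\mathsf S,\mathsf F])=\alpha\,h(\operatorname{DTr}[\mathsf S,\mathsf F])+(1-\alpha)h(\mathsf S)$ with $\alpha=\epsilon/\delta_{\mathsf F}\in(0,1)$. So every shallow truncation lies in the span of $\mathscr A_{\operatorname{DTr}}$, and hence $\mathscr A_{\operatorname{DTr}}$ spans $\mathrm{DS}(\mathsf P)$. By the count above it is a basis, and its elements are deformations of $\mathsf P$ because their normal fans coarsen $\Psi$.

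The main obstacle is keeping the lineality bookkeeping honest. We must check that the evaluation matrix from \Cref{lem:key} is computed on a pointed representative whose rays are the $v_{\mathsf F}$, and that the $\mathsf L^\ast$-coordinate of each support function is read off consistently, namely as the constant value of $w^{\intercal}x$ on $\operatorname{aff}(\mathsf S)$. We also need $\delta_{\mathsf F}>0$, which holds because each nonempty proper face has a neighbor. If there is trouble, the fallback is to evaluate all support functions on the explicit set $\mathbf U_{{}^{\wedge}\Psi}\cup\{w\}$ and exhibit the block-triangular matrix directly.
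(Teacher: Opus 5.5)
Your proposal is correct and is essentially the paper's proof. \Cref{lem:key} gives independence of the differences $h(\operatorname{Tr}_\epsilon[\mathsf S,\mathsf F])-h(\mathsf S)$ on the pointed part. Each difference vanishes on the lineality direction, while $h(\mathsf S)$ does not because $\operatorname{aff}(\mathsf S)$ misses the origin, so adding $\mathsf S$ yields a basis; \Cref{lem:convtrunc} then converts the shallow truncations into deep ones. One cosmetic point: the triangular matrix from \Cref{lem:key} has positive but generally non-unit diagonal entries (they equal $1$ only in the Delzant setting of \Cref{rem:key_delzant}), which is all your argument needs.
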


\begin{proof}
By assumption, the polytope $\mathsf{S}  $ satisfies an equation of the form $  a^{\intercal} x = b $, with $ b \neq 0 $, since the affine span does not contain the origin.
The vector $ h(\operatorname{Tr}_{\epsilon} [\mathsf{S}, \emptyset] ) = h(\mathsf{S}) $ does not belong to the linear span of the set 
$\left\{ h(\operatorname{Tr}_{\epsilon} [\mathsf{S}, \mathsf{F}] ) - h(\mathsf{S}) \,\middle|\, \emptyset \subsetneq \mathsf{F} \subsetneq \mathsf{S}\right\} $,
because the latter has value zero on the vector $ a $.
This means that
$\left\{ \operatorname{Tr}_{\epsilon} [\mathsf{S}, \mathsf{F}]   \,\middle|\, \emptyset \subseteq \mathsf{F} \subsetneq \mathsf{S}\right\}$
is a polytopal basis of the deformation space of $\mathsf{P}$.  That the deep truncations $\mathscr{A}_{\operatorname{DTr}}$ form a basis now follows from \Cref{lem:convtrunc}.
\end{proof}

Notice that we are using the basis given by \Cref{lem:key} and directly adding the seed polytope $ \mathsf{S} $, without invoking \Cref{cor:P_in_the_interior}.
This means that \Cref{prop:key_non_full} is valid for primitive Delzant polytopes (of codimension one), regardless of whether they contain a lattice point in their relative interior or not, as opposed to \Cref{thm:basis_seed}.

\begin{example}\label{ex:postnikov_basis}
	Consider the case where $ \mathsf{S} = \Delta_{n} $ which has codimension one in $ \mathbb{R}^{n} $.
	This is primitive Delzant with no relative interior lattice points.
	Furthermore, it is codimension one and its affine span is a hyperplane that does not contain the origin.
By \Cref{prop:key_non_full} a polytopal basis for the deformation space of the braid fan is given by the support vectors of the polytopes
\begin{equation*}
	\operatorname{DTr} [\Delta([n]), \Delta(I)]  = \Delta([n]\setminus I), \qquad \text{for all} \qquad I \subsetneq [n].
\end{equation*}
This is called the \textbf{simplicial basis}.
\end{example}

\begin{remark}\label{rem:typeA_shards}
There are alternate indecomposable polytopal bases for $ \operatorname{PL}( \mathcal{B}^{1}_{n})$.   Padrol--Pilaud--Ritter \cite[Proposition 8]{padrol2023shard} describe the basis of \textbf{shard polytopes}, and Hafner--M\'esz\'aros--Setiabrata--St.~Dizier describe the basis of \textbf{Schubert polytopes} \cite[Theorem~1.4]{hafner2024mconvexity}.  The latter basis was recently rediscovered by Backman--Unter; an alternate derivation of that basis will appear in forthcoming work.
\end{remark}

\begin{proposition}\label{prop:key_non_full_general}
Let $ \mathsf{S} $ be the product $ \prod_{i \in I} \mathsf{S}_{i} $ of simple polytopes, each of which has codimension one and whose affine span is not linear.  Let $\Psi$ be the barycentric subdivision of the normal fan of $ \mathsf{S} $.
The set  $\left\{ \operatorname{DTr} [\mathsf{S}, \mathsf{F}]   \,\middle|\, \emptyset \subsetneq \mathsf{F} \subsetneq \mathsf{S}\right\}   \cup \left\{\mathsf{S}_{i} \,\middle|\, i \in I\right\} $ is a polytopal basis of $  \operatorname{PL}(\Psi) $.
\end{proposition}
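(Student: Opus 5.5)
The plan is to imitate the proof of \Cref{prop:key_non_full}, working in the decomposition of \Cref{prop:eta}. First I would record the set-up. Let $k=|I|$. Each $\mathsf S_i\subseteq\mathbb R^{n_i}$ satisfies an equation $a_i^{\intercal}x=b_i$ with $b_i\neq 0$. Then $\mathsf S$ has codimension $k$ in $\mathbb R^{n}$, $n=\sum n_i$, and its normal fan has lineality space $\mathsf L=\operatorname{Span}\{a_i\mid i\in I\}$, where $a_i$ is embedded in the $i$-th factor. By \Cref{prop:eta}, $\operatorname{PL}(\Psi)\cong\operatorname{PL}({}^{\wedge}\Psi)\oplus\mathsf L^\ast$, so
\[
\dim\operatorname{PL}(\Psi)=\#\{\emptyset\subsetneq\mathsf F\subsetneq\mathsf S\}+k .
\]
The proposed set has exactly this cardinality: one deep truncation per nonempty proper face, plus the $k$ factors. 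So it suffices to show the set spans, or equivalently that it is linearly independent.

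\emph{Step 1.} I would apply \Cref{lem:key} to the full-dimensional realization of $\mathsf S$ in its affine hull (equivalently, to the pointed representative ${}^{\wedge}\Psi$). This gives that the differences $h(\operatorname{Tr}_\epsilon[\mathsf S,\mathsf F])-h(\mathsf S)$, for $\emptyset\subsetneq\mathsf F\subsetneq\mathsf S$, are linearly independent: the triangularity argument evaluates on the rays $v_{\mathsf H}$ and is insensitive to lineality. Moreover, each of these differences vanishes on $\mathsf L$, since all truncations lie in the affine span of $\mathsf S$ and so $h(\operatorname{Tr}_\epsilon[\mathsf S,\mathsf F],a_i)=b_i=h(\mathsf S,a_i)$.

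\emph{Step 2.} Next I would consider the support functions $h(\mathsf S_i)$, viewed as elements of $\operatorname{PL}(\Psi)$. Each $\mathsf S_i$ is a deformation of $\mathsf S$, because $\Sigma(\mathsf S)=\prod\Sigma(\mathsf S_j)$ refines the fan of $\mathsf S_i$ pulled back along the projection; hence $\mathsf S_i$ is also a deformation of $\mathsf P$. On the lineality space we have $h(\mathsf S_i,a_j)=\delta_{ij}b_j$, using the convention that $\mathsf S_i$ sits in $\mathbb R^n$ with zeros in the other factors, so that $h(\mathsf S_i,a_j)=0$ for $j\neq i$. Since all $b_i\neq 0$, the restrictions of $h(\mathsf S_i)$ to $\mathsf L$ form a basis of $\mathsf L^\ast$. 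Combining this with Step 1, the set
\[
\{h(\operatorname{Tr}_\epsilon[\mathsf S,\mathsf F])-h(\mathsf S)\}\cup\{h(\mathsf S_i)\}
\]
is linearly independent: a vanishing linear combination, restricted to $\mathsf L$, forces the coefficients of the $h(\mathsf S_i)$ to be zero, and the remaining relation is then killed by Step 1. By cardinality this set is a basis.

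\emph{Step 3.} Because $\mathsf S=\sum_i\mathsf S_i$, we have $h(\mathsf S)=\sum_i h(\mathsf S_i)$, which lies in the span of the factors. Adding $h(\mathsf S)$ back to each difference therefore shows that
\[
\{\operatorname{Tr}_\epsilon[\mathsf S,\mathsf F]\}_{\emptyset\subsetneq\mathsf F\subsetneq\mathsf S}\cup\{\mathsf S_i\}
\]
is a polytopal basis. Finally, \Cref{lem:convtrunc} gives
\[
\operatorname{Tr}_{\epsilon}[\mathsf S,\mathsf F]=\tfrac{\epsilon}{\delta_{\mathsf F}}\operatorname{DTr}[\mathsf S,\mathsf F]+\Bigl(1-\tfrac{\epsilon}{\delta_{\mathsf F}}\Bigr)\mathsf S .
\]
This is an invertible triangular change of basis modulo $\operatorname{Span}\{h(\mathsf S_i)\}$, since the coefficient $\epsilon/\delta_{\mathsf F}$ of the deep truncation is nonzero. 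Hence the deep truncations together with the factors also form a basis.

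The main obstacle is the bookkeeping of Step 1 in the presence of a $k$-dimensional lineality space. One must check that \Cref{lem:key} really transfers through the pointed representative, which requires choosing the truncation directions $u_{\mathsf F}$ consistently with the fixed primitive pointed representative. One must also check that the $\mathsf S_i$, embedded in $\mathbb R^n$, genuinely lie in $\operatorname{Def}(\Psi)$ with the claimed values on $\mathsf L$. I would handle this by first taking $\mathsf L^\perp$-representatives factorwise and verifying $h(\mathsf S_i,a_j)=\delta_{ij}b_j$ directly.
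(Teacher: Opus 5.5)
Your proposal is correct and is essentially the paper's argument. The paper proves this only by saying it follows by generalizing the proof of \Cref{prop:key_non_full}, and your Step 2 is precisely that generalization of ``$h(\mathsf S)$ is nonzero on $a$'': the truncation differences vanish on $\mathsf L=\operatorname{Span}\{a_i\}$, while the zero-padded factors $\mathsf S_i$ restrict to the basis $a_j\mapsto\delta_{ij}b_j$ of $\mathsf L^\ast$. Your remaining steps (triangularity via \Cref{lem:key}, $h(\mathsf S)=\sum_i h(\mathsf S_i)$, and the passage to deep truncations via \Cref{lem:convtrunc}) supply the details the paper leaves implicit.
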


\begin{proof}
	This follows by a direct calculation generalizing the one given in the proof of \Cref{prop:key_non_full}.
\end{proof}

\section{Building sets}\label{sec:building}

Building sets and nested set complexes were introduced by De Concini and Procesi as the combinatorial framework for their study of wonderful compactifications of complements of complex hyperplane arrangements \cite{de1995wonderful}.  The building set encodes the flats that are blown up, and the nested set complex is the intersection complex for the boundary divisors of the resulting wonderful compactification.  Feichtner and Kozlov generalized building sets from the lattice of flats of a hyperplane arrangement to more general posets and showed that, for a building set $\mathrm{B}$ on the face poset of a fan $\Sigma$, performing stellar subdivisions along the cones of $\mathrm{B}$ in reverse order of containment produces a simplicial fan which is the fan over the associated nested set complex
$\mathcal N(\mathrm{B})$ \cite[Theorem~4.10]{feichtner2004incidence}.\footnote{In a related direction, Devadoss--Forcey--Reisdorf--Showers constructed convex polytopes from nested posets, giving an independent combinatorial route to polytopes generalizing nestohedra and graph associahedra \cite{devadoss2015convex}.}

In the work of Feichtner--Sturmfels and Postnikov, particular attention was paid to the case of the Boolean lattice \cite{feichtner2005matroid,postnikov2009permutohedra}.  This is the setting of wonderful compactifications of the complement of the coordinate hyperplane arrangement (the algebraic torus $(\mathbb{C}^*)^n$); these wonderful compactifications are smooth projective toric varieties associated to \textbf{nestohedra}, the Minkowski sums of standard
simplices corresponding to the elements of a building set; see
Feichtner--Sturmfels \cite[Theorem~3.14]{feichtner2005matroid},
Postnikov \cite[Theorem~7.4]{postnikov2009permutohedra}, or Zelevinsky
\cite[Theorem~5.1, Corollary~5.2, Theorem~6.1]{zelevinsky2006nested}.  The maximum building set corresponds to the proper part of the Boolean lattice, and the nested set complex is the order complex -- this is geometrically realized by the braid arrangement which, as we have seen, is the barycentric subdivision of the normal fan of the standard simplex, and is normal to the permutahedron.\footnote{The standard nestohedral realization for the maximum building set is the Minkowski sum of all standard simplices, which is also known as an \emph{exponential permutahedron} and is normally equivalent to the standard permutahedron.}

In this section, we investigate the deformation cone of the truncation of a simple polytope $\mathsf{S}$ along a building set $\mathrm{B}$, which we call a $\mathrm B$-truncation.\footnote{Almeter calls such an object a $\mathsf{P}$-nestohedron \cite{almeter2020generalizing}.}   We first prove that the collection of all $\mathrm B$-truncations, up to
scaling and translation, forms an open polyhedral cone inside the
deformation cone of a $\mathrm B$-truncation of $\mathsf S$
(\Cref{lem:buildingtruncationcone}). We prove that this cone has a
full-dimensional intersection with the cone of positive Minkowski sums
of individual face truncations of $\mathsf S$ according to $\mathrm B$
(\Cref{lem:convtrunc,lem:sumsofdeeptruncations}), although these cones
are typically not equal (\Cref{prop:diffcones}).   
We then describe how our bases for deformation cones of barycentric subdivisions of normal fans of polytopal simplicial fans extend to general building sets (\Cref{cor:basis_building}) via restriction.  We finish by proposing a notion of higher deformed nestohedra (\Cref{def:highernesto}) and explaining how the collection of all deformed 2-nestohedra forms a fan (\Cref{prop:fan_of_2_nestohedra}).

\subsection{Building sets and stellar subdivisions}
As a convention, we denote by $ \hat{0} $ the minimum element (if it exists) of a poset $ \mathcal{P} $.
We say that an element $ x \in \mathcal{P} $ is \textbf{irreducible} if the interval $ [ \hat{0}, x] $ is not the direct product of two other posets (each with at least two elements).

The following definition, described by Backman--Danner \cite[Proposition~2.11]{backman2024convex}, generalizes one of several equivalent definitions for a building set on the lattice of flats of a hyperplane arrangement given by De Concini--Procesi.  In the case of the Boolean lattice, the face lattice of a simplex, it is the one used by Feichtner--Sturmfels and Postnikov for investigating nestohedra.

\begin{definition}\label{def:building_set_general}
    Let $\mathcal{L}$ be a meet-semilattice with minimum element $\hat{0}$, let $I(\mathcal{L})$ be the irreducible elements of $\mathcal{L}$, and take $ \mathrm{B} \subseteq \mathcal{L}\setminus \{\hat{0}\}$.
		We say that $\mathrm{B}$ is a \textbf{building set} for $\mathcal{L}$ if the following two conditions are satisfied:
\begin{enumerate}
    \item\label{bscon1} $I(\mathcal{L})\subseteq \mathrm{B}$,
    \item\label{bscon2} if $x,y \in \mathrm{B}$ satisfy $x \wedge y \neq \hat{0}$ and $x \vee y \in \mathcal{L}$, then $x \vee y \in \mathrm{B}$.
\end{enumerate} 
\end{definition}

For any polytopal fan $ \Sigma $, its face poset is a meet-semilattice.
Furthermore, if $ \Sigma $ is simplicial, then every interval $ [\hat{0}, x] $ is isomorphic to a Boolean poset, so it is always a Cartesian product.
It follows that the only irreducible elements are the rank one elements, which correspond to the rays of the fan.

\begin{definition}\label{def:building_set_simplicial_fan}
	Let $ \Sigma $ be a fan and $ \mathcal{L} $ be the meet-semilattice formed by its faces.
	A set of cones $ \mathrm{B} \subseteq \Sigma$ is a building set for $\Sigma$ if it satisfies \Cref{def:building_set_general} for the face poset of $ \Sigma $.
	If $ \Sigma $ is a \emph{simplicial} fan, then the definition can be simplified as follows:
\begin{enumerate}
    \item\label{bscon1-fans} $ \mathrm{B} $ contains all the rays.
		\item\label{bscon2-fans} If $ \sigma_{1}, \sigma_{2} \in \mathrm{B} $ intersect non-trivially and  $\sigma_3=\operatorname{Conv}\{\sigma_{1},\sigma_{2} \}\in \Sigma $, then $ \sigma_{3} \in \mathrm{B} $.
\end{enumerate} 
\end{definition}

We note that if one is only interested in the combinatorics of the stellar subdivisions, they may omit the rays, i.e.  condition \ref{bscon1-fans}  above can be ignored.  However, for considerations of truncations and polytopal bases, facets will play a nontrivial role.

Given a building set $ \mathrm{B} $ of a fan $ \Sigma $, we order the elements of $\mathrm{B} $ with a \emph{reverse} linear extension of the face poset (see \Cref{def:orderings}).
We denote by $ \Gamma_{\mathrm B}(\Sigma) $ the iterated central stellar subdivision of $ \Sigma $ along the elements of $ \mathrm{B} $ in the order described above.
The face poset of $ \Gamma_{\mathrm B}(\Sigma) $ was described by Feichtner--Kozlov in \cite{feichtner2004incidence} as the nested set complex associated to $\mathrm{B} $.  

\begin{definition}
A \textbf{nested set}  for $\mathrm{B} $ is a collection $N \subseteq \mathrm{B} $ such that for any set $\{x_1, \ldots, x_k\} \subseteq N$ of incomparable elements with $k\geq 2$, we have $\vee_i x_i \notin \mathrm{B}$ if  $\vee_i x_i$ exists.  The \textbf{nested set complex} $\mathcal{N}(\mathrm{B})$ is the collection of all nested sets.
\end{definition}

\begin{remark}\label{rmk:maxbuildingset}
For a simple seed polytope $\mathsf S$, the non-minimal cones of $\Sigma(\mathsf S)$ form the maximum building set $\mathrm B^{\mathrm{max}}$. Its nested set complex is the order complex of the proper face poset of $\mathsf S$, equivalently the underlying simplicial complex of $\operatorname{Bar}(\Sigma(\mathsf S))$.  In this sense, we may consider results of this section about building sets for simplicial polytopal fans as generalizing the case of the barycentric subdivision of the normal fan of $\mathsf{S}$, although several results are new in that case already, e.g. those of \Cref{subsect:Btruncations}.

\end{remark}

\begin{proposition}\label{prop:local_building}
Let $ \sigma $ be a simplicial cone, let $ \Sigma $ be the non-complete fan consisting of all faces of $ \sigma $, and let $ \mathrm{B} $ be a building set. Then
\begin{equation*}
\Gamma_{\mathrm B}(\Sigma) = \bigwedge_{\sigma \in \mathrm{B}}  \mathcal{C}(\Sigma, \sigma).
\end{equation*}
In other words, $ \Gamma_{\mathrm B}(\Sigma) $ is the common refinement of each individual central stellar subdivision.
\end{proposition}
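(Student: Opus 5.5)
We would prove the identity by induction along the order in which the stellar subdivisions are performed, showing that each new stellar subdivision of the current fan is the same as refining the current fan by one more individual subdivision $\mathcal{C}(\Sigma,\sigma_J)$.

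\textbf{Normalization.} Let $\sigma=\operatorname{Cone}(u_1,\dots,u_d)$ with $u_i$ the primitive ray generators. The invertible linear map $u_i\mapsto e_i$ carries $u_{\sigma_J}=\sum_{j\in J}u_j$ to $e_J$. Hence we may assume $\sigma$ is the positive orthant of $\mathbb{R}^d$, faces are $\sigma_J=\operatorname{Cone}\{e_j\mid j\in J\}$, and $\mathrm B$ is a building set of subsets of $[d]$ containing all singletons.

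\textbf{Describing one stellar subdivision.} $\mathcal{C}(\Sigma,\sigma_J)$ is exactly the coarsest subdivision of the orthant on which $m_J(x)=\min_{j\in J}x_j$ is linear. Its maximal cones over $\sigma_J$ are $\{x\ge 0\mid x_k\le x_j \ \forall j\in J\}$ for $k\in J$, which are $\operatorname{Cone}(e_J,\ e_j\,(j\ne k),\dots)$. Thus $\bigwedge_{J\in\mathrm B}\mathcal{C}(\Sigma,\sigma_J)$ is the common domain of linearity of all the $m_J$, $J\in\mathrm B$.

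\textbf{Inductive claim.} Order $\mathrm B=\{J_1,\dots,J_r\}$ by a reverse linear extension and let $\Sigma_t$ be the fan after subdividing $J_1,\dots,J_t$. We show $\Sigma_t=\bigwedge_{s\le t}\mathcal{C}(\Sigma,\sigma_{J_s})$, i.e.\ the step
\[
\Sigma_t=\mathcal{C}(\Sigma_{t-1},\sigma_{J_t})=\Sigma_{t-1}\wedge\mathcal{C}(\Sigma,\sigma_{J_t}).
\]
By Feichtner--Kozlov, the rays of $\Sigma_{t-1}$ are the $e_K$ with $K\in\{J_1,\dots,J_{t-1}\}$ or $K$ a singleton, and the cones are spanned by nested families. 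The proof of the step has three parts.
\begin{enumerate}
\item \emph{$\sigma_J$ survives as a cone of $\Sigma_{t-1}$.} Every earlier $K$ either contains $J$ or is incomparable to it. In both cases $\sigma_K\not\subseteq\sigma_J$, and a stellar subdivision along $\sigma_K$ keeps every cone not containing $\sigma_K$.
\item \emph{Cones containing $\sigma_J$.} Let $\kappa\ni\sigma_J$ be a cone of $\Sigma_{t-1}$. Nestedness forces every other ray $e_K$ of $\kappa$ to satisfy $K\supseteq J$ or $K\cap J=\emptyset$. Writing $x=\sum_{j\in J}a_je_j+\sum_K c_Ke_K$ gives
\[
m_J(x)=\min_{j\in J} a_j+\sum_{K\supseteq J}c_K .
\]
So $m_J$ breaks linearity on $\kappa$ exactly along the argmin of the $a_j$, which is precisely the stellar subdivision of $\kappa$ at $e_J$.
\item \emph{Cones not containing $\sigma_J$.} For such $\kappa$ we must show $m_J$ is already linear on $\kappa$, so $\kappa$ is untouched on both sides.
\end{enumerate}

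\textbf{Main obstacle: part 3.} The plan is to show there is a fixed $k\in J$ with $x_k\le x_j$ on $\kappa$ for all $j\in J$. Let $\kappa$ be spanned by $e_K$, $K\in N$ nested. Take the maximal elements $K_1,\dots,K_p$ of $\{K\cap J\ne\emptyset\}$ among the rays of $\kappa$.
\begin{itemize}
\item If these did not cover $J$, choose $k\in J$ outside them. Then $x_k=0$ on $\kappa$, so $m_J=x_k=0$ is linear.
\item If they cover $J$ with $p\ge 2$, then repeated use of building-set axiom~\ref{bscon2-fans} on the overlapping sets $K_i$ and $J$ shows $\bigcup K_i\cup J\in\mathrm B$. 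Any element of $\mathrm B$ containing $J$ was processed earlier, and nestedness of $N\cup\{\cdot\}$ then contradicts either maximality or $\sigma_J\not\subseteq\kappa$. This reduces to $p=1$.
\item If $p=1$ with $K_1\supsetneq J$, recurse inside $K_1$ on the rays strictly below it, exactly as above.
\end{itemize}
Iterating produces a coordinate $k\in J$ lying in the fewest rays of $\kappa$, hence minimal on all of $\kappa$. We expect this combinatorial case analysis to be the delicate step and would check it first on small Boolean examples.

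Both sides of the step are complete fans on $\sigma$. Parts 2 and 3 show they agree cone by cone: for $\kappa\supseteq\sigma_J$ they produce the same stellar split, and for $\kappa\not\supseteq\sigma_J$ both leave $\kappa$ unchanged. This gives the step and completes the induction.
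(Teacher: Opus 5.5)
Your proof is correct, but it follows a different route from the paper's.

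\textbf{The paper's route.} The paper also normalizes to the orthant. It then embeds the orthant as a maximal cone of ${}^{\wedge}\mathcal{B}^0_{d+1}$, adjoins the ray $e_{d+1}$ to the building set, and imports the classical nestohedron theorem: the normal fan of $\sum_{S}\Delta_S$ is the nested set fan $\Gamma_{\mathrm B}$. It combines this with two facts. First, the normal fan of a Minkowski sum is the common refinement of the normal fans. Second, $\Sigma(\Delta_{d+1})\wedge\Sigma(\Delta_S)$ is the stellar subdivision at $\sigma_S$, which it derives through deep truncations of the simplex. Finally it restricts back to the orthant.

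\textbf{How your route differs.} Your $m_J(x)=\min_{j\in J}x_j$ is exactly the support function $h(\Delta_J,\cdot)$, so your description of $\mathcal{C}(\Sigma,\sigma_J)$ is the paper's second fact. The difference is that you do not cite the nestohedron theorem. Instead you reprove it locally, stage by stage, by establishing $\mathcal{C}(\Sigma_{t-1},\sigma_J)=\Sigma_{t-1}\wedge\mathcal{C}(\Sigma,\sigma_J)$ from laminarity of nested sets and the union axiom.

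\textbf{Trade-offs.} The paper's argument is shorter and reuses its truncation lemmas. However, the combinatorics sits inside a citation, and it needs care with pointed representatives and with restricting stellar subdivisions from $\mathcal{B}^0_{d+1}$ to a single cone. Your argument is self-contained. It also shows exactly where the building-set union axiom enters: the maximal rays of $\kappa$ meeting $J$ cannot cover $J$ in two or more pieces.

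\textbf{Points to tighten.}
\begin{enumerate}
\item To invoke Feichtner--Kozlov for $\Sigma_{t-1}$, state that $\mathrm B_{t-1}=\{J_1,\dots,J_{t-1}\}\cup\{\text{singletons}\}$ is itself a building set. This holds because an initial segment of a reverse linear extension is closed under unions of overlapping members. Also state that ``nested'' in Parts 2 and 3 means nested with respect to $\mathrm B_{t-1}$, which is what your arguments actually use.
\item In the $p\geq 2$ case of Part 3, the contradiction is directly with nestedness of $N$.
\begin{itemize}
\item If $\bigcup K_i\supsetneq J$, then $\bigcup K_i\in\mathrm B_{t-1}$ is the join of the antichain $K_1,\dots,K_p$.
\item If $\bigcup K_i=J$, each $K_i$ is a proper subset of $J$ lying in $\mathrm B_{t-1}$, hence a singleton. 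This forces $\sigma_J\subseteq\kappa$.
\end{itemize}
\item What the recursion produces is a $k\in J$ such that every ray $e_K$ of $\kappa$ with $k\in K$ satisfies $K\supseteq J$. That chain containment, not ``lying in the fewest rays,'' is what gives $x_k\le x_j$ on $\kappa$.
\end{enumerate}
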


\begin{proof}
Let $ d  $ be the dimension of $ \sigma $ so that it has $ d $ rays.
We apply a linear transformation $T$ which takes the ray generators for $ \sigma $ to $\left\{ e_{1}, \dots, e_{d} \right\}$, thus mapping $ \sigma $ to $ \operatorname{Cone} \left\{ e_{1}, \dots, e_{d} \right\}$.  We view $\operatorname{Cone} \left\{ e_{1}, \dots, e_{d} \right\}$ as a pointed representative for a single maximal cone in the normal fan of the standard simplex $\Delta_{d+1}$ as described in Example \ref{ex:simplex_non_full}.  We will now reduce the statement of the proposition to a statement about the normal fan of a standard simplex, which then follows from a classical result about normal fans of nestohedra.\footnote{In this proof we are making implicit use of the fact that the stellar subdivisions of the pointed representative of a fan can be identified with the stellar subdivisions of the image in the quotient vector space, so long as the ray generators are chosen in a compatible way.}

We observe that the building set $ \mathrm{B} $, now defined on $\operatorname{Cone} \left\{ e_{1}, \dots, e_{d} \right\}$, can be extended to a building set $ \mathrm{B}^*= \mathrm{B} \cup\{\operatorname{Cone}(e_{d+1})\} $ on the fan $\mathcal{B}^{0}_{d+1}$ by adding the single element corresponding to the remaining ray generator $e_{d+1}$ (which has no material effect, as noted above).

By $\eqref{eq:sum_refinement}$, the normal fan of a Minkowski sum of a collection of polytopes  is equal to the common refinement of the normal fans of these polytopes.  Therefore, the normal fan of a nestohedron is the common refinement of the normal fans associated to the standard simplices in the building set.  

Taking the common refinement of $\mathcal{B}^{0}_{d+1}$ with the normal fan of a single simplex, $\Sigma(\Delta_S)$ for $S \subset [d]$, corresponds to a stellar subdivision of the corresponding cone $\sigma_S$ in $\mathcal{B}^{0}_{d+1}$.  This follows from the facts:

\begin{enumerate}
\item \Cref{lem:standard_simplex_deep}: $\Delta_S$ is a deep truncation of $\Delta_{d+1}$,
\item \Cref{lem:convtrunc}: the Minkowski sum of a polytope $\mathrm{P}$ and a deep truncation of $\mathrm{P}$ is homothetic to a shallow truncation of $\mathrm{P}$, and
\item \Cref{lem:shallow_truncation_stellar}: shallow truncations are dual to central stellar subdivisions.
\end{enumerate}

We can now apply the nestohedral results above for the building set $ \mathrm{B}^* $ on $\mathcal{B}^{0}_{d+1}$.  We then restrict back to $\operatorname{Cone} \left\{ e_{1}, \dots, e_{d} \right\}$ to obtain the desired result for this cone.  Note that this argument makes use of the fact that this restriction respects stellar subdivisions: a stellar subdivision of $\operatorname{Cone} \left\{ e_{1}, \dots, e_{d} \right\}$ is the restriction of a stellar subdivision of the larger fan $\mathcal{B}^{0}_{d+1}$.  Finally, we may conclude the desired statement for the original cone as the coordinates of central stellar subdivisions respect the linear transformation $T$.

\end{proof}

Now we extend \Cref{prop:local_building} to simplicial fans.

\begin{corollary}\label{cor:partial_building}
Let $ \Sigma $ be a simplicial fan and $ \mathrm{B} $ a building set on $ \Sigma $. Then

\begin{equation}\label{eq:partial_building}
\Gamma_{\mathrm B}(\Sigma) = \bigwedge_{\sigma \in \mathrm{B}}  \mathcal{C}(\Sigma, \sigma) .
\end{equation}

In particular, $ \Gamma_{\mathrm B}(\Sigma) $ is a coarsening of the barycentric subdivision of $ \Sigma $.
\end{corollary}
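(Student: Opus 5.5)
The plan is to reduce the global statement to \Cref{prop:local_building} by working one maximal cone of $\Sigma$ at a time. Both sides of \eqref{eq:partial_building} refine $\Sigma$. For the left side this holds because each central stellar subdivision is a refinement. For the right side it holds because every $\mathcal{C}(\Sigma,\sigma)$ refines $\Sigma$ and common refinements preserve this. Two complete (or equal-support) fans that both refine $\Sigma$ are equal as soon as their restrictions to every maximal cone $\kappa\in\Sigma$ coincide. So I will fix a maximal cone $\kappa$ and compare $\Gamma_{\mathrm B}(\Sigma)|_\kappa$ with $\bigl(\bigwedge_{\sigma\in\mathrm B}\mathcal{C}(\Sigma,\sigma)\bigr)|_\kappa$.

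The key steps, in order, are as follows.
\begin{enumerate}
\item Let $\Sigma_\kappa$ denote the fan of faces of $\kappa$, and set $\mathrm B_\kappa=\{\sigma\in\mathrm B\mid \sigma\subseteq\kappa\}$. I will check that $\mathrm B_\kappa$ is a building set for $\Sigma_\kappa$ in the sense of \Cref{def:building_set_simplicial_fan}. It contains all rays of $\kappa$. If $\sigma_1,\sigma_2\in\mathrm B_\kappa$ intersect nontrivially, their join inside $\kappa$ is a face of $\kappa$, hence a cone of $\Sigma$; the building set axiom for $\mathrm B$ then puts it in $\mathrm B$, and therefore in $\mathrm B_\kappa$.
\item I will show that the restriction of a central stellar subdivision to $\kappa$ is local. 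If $\sigma\not\subseteq\kappa$, then $\mathcal{C}(\Sigma,\sigma)|_\kappa=\Sigma_\kappa$, since by \Cref{def:central_stellar} cones not containing $\sigma$ are unchanged and $\kappa\not\supseteq\sigma$. If $\sigma\subseteq\kappa$, then $\mathcal{C}(\Sigma,\sigma)|_\kappa=\mathcal{C}(\Sigma_\kappa,\sigma)$. It follows that restriction to $\kappa$ commutes with common refinement, and the right-hand side restricts to $\bigwedge_{\sigma\in\mathrm B_\kappa}\mathcal{C}(\Sigma_\kappa,\sigma)$.
\item The same locality applies to the iterated subdivision. Doing the subdivisions in reverse linear extension order and restricting to $\kappa$ performs exactly the subdivisions along $\mathrm B_\kappa$, in an order that is still a reverse linear extension; the others act trivially inside $\kappa$. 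Hence $\Gamma_{\mathrm B}(\Sigma)|_\kappa=\Gamma_{\mathrm B_\kappa}(\Sigma_\kappa)$.
\item With steps 1--3 in place, \Cref{prop:local_building} gives equality on each $\kappa$, and therefore equality globally.
\end{enumerate}

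For the ``in particular'' claim, I will apply \eqref{eq:partial_building} to the maximum building set. There $\Gamma_{\mathrm B^{\max}}(\Sigma)=\operatorname{Bar}(\Sigma)$ (\Cref{rmk:maxbuildingset}), so $\operatorname{Bar}(\Sigma)$ is the common refinement over all cones. Since $\mathrm B\subseteq\mathrm B^{\max}$, a common refinement over a subfamily is coarser, so $\Gamma_{\mathrm B}(\Sigma)$ coarsens $\operatorname{Bar}(\Sigma)$.

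I expect step 3 to be the main obstacle. It requires justifying that after earlier subdivisions the cone $\sigma$, ``viewed as a cone'' of the intermediate fan, is subdivided compatibly: the center $u_\sigma$ must be the same, and the cones containing $\sigma$ within $\kappa$ must be the images of earlier subdivisions of $\Sigma_\kappa$. I would argue this by induction on the number of subdivisions performed. At each stage the intermediate fan restricted to $\kappa$ equals the corresponding intermediate fan of $\Sigma_\kappa$, because the open star of $\sigma$ meets $\kappa$ exactly in cones lying in $\kappa$.
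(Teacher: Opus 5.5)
Your proposal is correct and follows essentially the same route as the paper. Both arguments localize to each maximal cone, observe that the building set and the central stellar subdivisions restrict to that cone, and then invoke \Cref{prop:local_building}. You supply more detail than the paper does, notably the induction showing that the iterated subdivisions restrict compatibly, and an explicit derivation of the ``in particular'' clause from the maximum building set, which the paper leaves implicit.
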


\begin{proof}
We know that both sides of Equation \eqref{eq:partial_building} are refinements of the original fan $ \Sigma $, so we argue locally.
It is enough to show equality restricted to each maximal cone $ \sigma $ of $ \Sigma $.  We observe that for a simplicial fan, Definition \ref{def:building_set_simplicial_fan} implies that $\mathrm{B}$ is a building set for $\Sigma$ if and only if its restriction to each maximal cone $\sigma$ is a building set.  Furthermore, central stellar subdivisions restrict to maximal cones.  Thus we may apply \Cref{prop:local_building}.
\end{proof}

\begin{figure}[ht]
\centering
\includegraphics{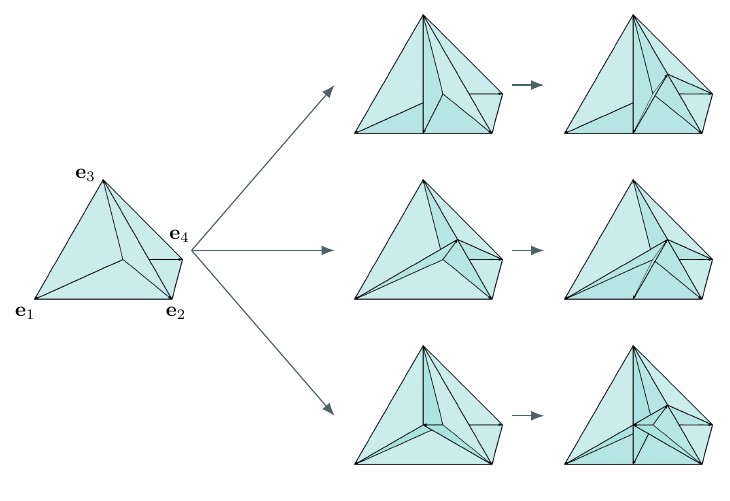}
\caption{The normal fan of a standard simplex (left).  The first two rows show two central stellar subdivisions being performed in two different orders.  Because the two faces being subdivided do not form a building set, the order of the subdivisions matters, and the resulting fans are not coarsenings of the braid arrangement.  The third row shows the result of adding the join of those two cones to the collection.  This creates a building set and the resulting fan (a coarsening of the braid arrangement) is equal to the common refinement of the three stellar subdivisions.}\label{fig:blow_fail}
\end{figure}

\begin{remark}
We are using the simpliciality of our fans in several places in this section, e.g. for non-simplicial fans the stellar subdivisions at individual cones may not be compatible with their barycentric subdivisions; see \Cref{fig:nonsimplicial-square-cone-subdivisions}.
\end{remark}

\begin{figure}[ht]
    \centering
    \includegraphics[width=.35\textwidth]{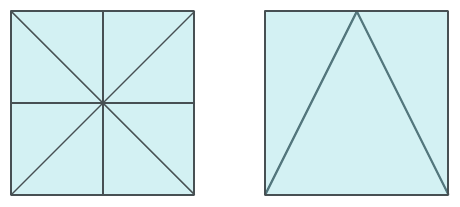}
    \caption{The barycentric subdivision of a square cone viewed from above (left), and a central stellar subdivision of a facet of that cone (right).  Observe that the fan on the right is not a coarsening of the fan on the left.}
    \label{fig:nonsimplicial-square-cone-subdivisions}
\end{figure}

\subsection{The cone of $\mathrm{B}$-truncations}\label{subsect:Btruncations}
We now investigate the polytopal counterpart to building sets on simplicial fans.

\begin{definition}
Let $ \mathsf{S} $ be a polytope.
A set $ \mathrm{B} $ of faces of $ \mathsf{S} $ is a building set for $ \mathsf{S}$  if the set $ \left\{ \operatorname{ncone}(\mathsf{F},\mathsf{S})\,\middle|\, \mathsf{F} \in \mathrm{B}  \right\} $ is a building set for the fan $ \Sigma( \mathsf{S} ) $ according to \Cref{def:building_set_simplicial_fan}.
\end{definition}

Whenever $ \mathrm{B} $ is a building set for a fixed simple polytope $ \mathsf{S} $, we abbreviate $ \Gamma_{\mathrm B}(\Sigma(\mathsf S)) $ to $ \Gamma_{\mathrm B} $.

\begin{proposition}\label{thm:building}
Let $ \mathsf{S} $ be a simple polytope, let $ \mathrm{B} $ be a building set for $ \mathsf{S} $, and let $\mathsf{P}$ be a positive Minkowski sum of shallow truncations $ \operatorname{Tr} _{\epsilon}\left[\mathsf{S},\mathsf{F} \right]$ ranging over each $ \mathsf{F} \in \mathrm{B} $.  Then $\Sigma(\mathsf{P})=\Gamma_{\mathrm B}$.
\end{proposition}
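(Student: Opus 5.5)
The plan is to combine the Minkowski-sum/common-refinement relation \eqref{eq:sum_refinement} with \Cref{lem:shallow_truncation_stellar} and \Cref{cor:partial_building}.

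\textbf{Step 1.} Positive scalings do not change normal fans. Hence, by \eqref{eq:sum_refinement},
\[
\Sigma(\mathsf P)=\bigwedge_{\mathsf F\in\mathrm B}\Sigma\bigl(\operatorname{Tr}_{\epsilon}[\mathsf S,\mathsf F]\bigr).
\]

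\textbf{Step 2.} Since $\mathsf S$ is simple and each truncation is shallow, \Cref{lem:shallow_truncation_stellar} identifies each factor:
\[
\Sigma\bigl(\operatorname{Tr}_{\epsilon}[\mathsf S,\mathsf F]\bigr)=\mathcal C\bigl(\Sigma(\mathsf S),\sigma_{\mathsf F}\bigr),
\qquad \sigma_{\mathsf F}=\operatorname{ncone}(\mathsf F,\mathsf S).
\]
For facets $\mathsf F$, the cone $\sigma_{\mathsf F}$ is a ray, and both sides equal $\Sigma(\mathsf S)$. The shallow truncation only translates that facet, and the stellar subdivision of a ray is trivial.

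\textbf{Step 3.} By definition, $\{\sigma_{\mathsf F}\mid \mathsf F\in\mathrm B\}$ is a building set on the simplicial fan $\Sigma(\mathsf S)$. So \Cref{cor:partial_building} gives
\[
\bigwedge_{\mathsf F\in\mathrm B}\mathcal C\bigl(\Sigma(\mathsf S),\sigma_{\mathsf F}\bigr)=\Gamma_{\mathrm B}.
\]
Chaining Steps 1–3 proves the claim.

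\textbf{Lineality.} If $\mathsf S$ is not full-dimensional, I would run the same argument on a pointed representative (equivalently, the quotient by $\mathsf L(\Sigma(\mathsf S))$) and add the lineality space back at the end. This is justified because every polytope involved has the same affine hull direction, and stellar subdivisions are defined modulo lineality.

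\textbf{Main obstacle.} The delicate point is the interpretation of the common refinement of \emph{geometric} stellar subdivisions. \Cref{cor:partial_building} is stated for central subdivisions with the centers $u_{\mathsf F}$, the sums of primitive ray generators. The truncations in the statement are the central ones $\operatorname{Tr}_\epsilon$, which use exactly $u_{\mathsf F}$, so the centers match.

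A second check is that the parameter $\epsilon$ may differ between faces and need only be shallow for each face separately. Each factor is a truncation of $\mathsf S$ itself, not an iterated truncation, so shallowness with respect to $\mathsf S$ alone is what \Cref{lem:shallow_truncation_stellar} requires. This avoids the linear-extension subtleties of \Cref{rmk:legitepsilon}.

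Finally, I would double-check that the local argument of \Cref{prop:local_building}, which was phrased via nestohedra on a single cone, globalizes as asserted in \Cref{cor:partial_building}. That is where any gap would hide.
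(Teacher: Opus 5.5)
Your proposal is correct and follows the paper's argument. The paper's proof simply cites \Cref{cor:partial_building} together with \eqref{eq:sum_refinement}, leaving implicit the step you make explicit via \Cref{lem:shallow_truncation_stellar}: that each shallow truncation's normal fan is the corresponding central stellar subdivision.
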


\begin{proof}
	This follows from \Cref{cor:partial_building}, since the normal fan of a Minkowski sum is the common refinement of the normal fans (see \Cref{eq:sum_refinement}).
\end{proof}

The following generalizes omnitruncations to the setting of general building sets for simple polytopes.
\begin{definition}[\textbf{Building-set truncation}]
\label{def:building_set_truncation}
Let $\mathsf{S}$ be a $d$-polytope, let $\mathrm{B}$ be a building
set for $\mathsf{S}$, and let
$\boldsymbol{\epsilon}
=
\{\epsilon_{\mathsf{F}}>0\mid\mathsf{F}\in\mathrm{B}\}$.
Take some linear extension
$\mathsf{F}_1<\ldots<\mathsf{F}_k$ of $\mathrm{B}$, and let
$\mathsf{S}_0\coloneqq\mathsf{S}$.
We say that $\boldsymbol{\epsilon}$ is \textbf{$\mathrm{B}$-shallow} for
$\mathsf{S}$ if, for each $1\leq i\leq k$, we have
$\mathsf{S}_i\coloneqq
\operatorname{Tr}_{\epsilon_{\mathsf{F}_i}}
[\mathsf{S}_{i-1},\mathsf{F}_i\cap\mathsf{S}_{i-1}]$
is a shallow truncation of $\mathsf{S}_{i-1}$.
If $\boldsymbol{\epsilon}$ is $\mathrm{B}$-shallow for
$\mathsf{S}$, the associated
$(\mathrm{B},\boldsymbol{\epsilon})$-truncation of $\mathsf{S}$ is
$\operatorname{Tr}_{\mathrm{B},\boldsymbol{\epsilon}}(\mathsf{S})
\coloneqq\mathsf{S}_k$.

Suppose
$\mathsf{S}=\{x \in\operatorname{aff}(\mathsf S)\mid u_{\mathsf{G}}^{\intercal}x\geq b_{\mathsf{G}}
\text{ for every facet }\mathsf{G}\subsetneq\mathsf{S}\}$.
For every $\mathsf{F}\in\mathrm{B}$, let $u_{\mathsf{F}}$ be the
canonical inner face normal of $\mathsf{F}$, and set
$b_{\mathsf{F}}\coloneqq h(\mathsf{S},u_{\mathsf{F}})$.
Then the associated
$(\mathrm{B},\boldsymbol{\epsilon})$-truncation is
\[
\operatorname{Tr}_{\mathrm{B},\boldsymbol{\epsilon}}(\mathsf{S})
=
\left\{
x \in\operatorname{aff}(\mathsf S) \,\middle|\,
u_{\mathsf{F}}^{\intercal}x
\geq
b_{\mathsf{F}}+\epsilon_{\mathsf{F}}
\text{ for every }\mathsf{F}\in\mathrm{B}
\right\}.
\]

If a polytope $\mathsf{P} = \operatorname{Tr}_{\mathrm{B},\boldsymbol{\epsilon}}(\mathsf{S})$ for a suitable choice of shallow $\boldsymbol{\epsilon}$, we will simply refer to $\mathsf{P}$ as a \textbf{$\mathrm{B}$-truncation}.
\end{definition}

\begin{definition}
\label{def:coneofbuildingtruncations}
Let $\mathcal{T}_{\mathrm{B}}(\mathsf{S})
\coloneqq
\{\lambda\mathsf{P}+v:
\lambda>0,\,
v\in\mathbb{R}^n,\,
\mathsf{P}\text{ is a $\mathrm{B}$-truncation of }\mathsf{S}\}$ be the cone of $\mathrm{B}$-truncations.  
\end{definition}

This terminology is justified by the following proposition.

\begin{proposition}
\label{lem:buildingtruncationcone}
Let $\mathsf{S}$ be a simple polytope, and let $\mathrm{B}$ be a building
set for $\mathsf{S}$.
Then $\mathcal{T}_{\mathrm{B}}(\mathsf{S})$ is a full-dimensional open polyhedral cone contained
in $\operatorname{Def}(\Gamma_{\mathrm B})$.
\end{proposition}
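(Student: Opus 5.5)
The plan is to parametrize $\mathrm{B}$-truncations by their support vectors in $\operatorname{PL}(\Gamma_{\mathrm B})$ and show that the set of such vectors is the solution set of finitely many strict linear inequalities. By \Cref{prop:eta} we may reduce to $\mathsf S$ full-dimensional. Since $\Gamma_{\mathrm B}$ is simplicial with rays indexed by $\mathrm B$ (all facet normals $u_{\mathsf G}$ together with the new $u_{\mathsf F}$ for nonfacet $\mathsf F\in\mathrm B$), the map $\iota$ identifies $\operatorname{PL}(\Gamma_{\mathrm B})$ with $\mathbb R^{\mathrm B}$.

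First, every $\mathrm B$-truncation has normal fan $\Gamma_{\mathrm B}$. Each successive shallow truncation of a simple polytope is a central stellar subdivision of its normal fan by \Cref{lem:shallow_truncation_stellar}, and these are performed in the reverse-containment order defining $\Gamma_{\mathrm B}$. Hence each element of $\mathcal T_{\mathrm B}(\mathsf S)$ lies in $\operatorname{Def}(\Gamma_{\mathrm B})$, and in fact in its interior, since its normal fan equals $\Gamma_{\mathrm B}$. The displayed inequality description in \Cref{def:building_set_truncation} shows that $\operatorname{Tr}_{\mathrm B,\boldsymbol\epsilon}(\mathsf S)$ is the polytope with support values $b_{\mathsf F}+\epsilon_{\mathsf F}$ on $u_{\mathsf F}$ for $\mathsf F\in\mathrm B$. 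For nonfacet $\mathsf F$ one uses $v_{\mathsf F}$ in place of $u_{\mathsf F}$, which only rescales the coordinate. These inequalities are all tight because the normal fan is $\Gamma_{\mathrm B}$.

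Next, we characterize the attainable vectors. A vector $h\in\mathbb R^{\mathrm B}$ with $h=b+\epsilon$ arises from a $\mathrm B$-shallow $\boldsymbol\epsilon$ exactly when the following holds for each $i$ in a fixed linear extension $\mathsf F_1<\cdots<\mathsf F_k$:
\[
0<\epsilon_{\mathsf F_i}<\delta_{\mathsf F_i\cap\mathsf S_{i-1}}(\mathsf S_{i-1}).
\]
Here we take $\epsilon_{\mathsf G}>0$ for facets, and enlarge $\mathcal T_{\mathrm B}$ by translations of the facet values as needed. The key claim is that $\delta_{\mathsf F_i}(\mathsf S_{i-1})$ is an affine-linear function of $(\epsilon_{\mathsf F_j})_{j<i}$ and $b$. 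Because $\mathsf S_{i-1}$ is simple with a combinatorial type fixed by shallowness, the neighbors of $\mathsf F_i\cap\mathsf S_{i-1}$ are combinatorially determined vertices. Each such vertex is the intersection of a fixed set of $d$ hyperplanes whose right-hand sides are affine in the previous parameters, so its pairing with $u_{\mathsf F_i}$ is affine. The shallowness condition is therefore a finite conjunction of strict linear inequalities (a minimum of affine functions exceeding $\epsilon_{\mathsf F_i}$). Composing these, the set of shallow $\boldsymbol\epsilon$ is an open polyhedron. Its image in $\operatorname{PL}(\Gamma_{\mathrm B})$ under the affine bijection $\boldsymbol\epsilon\mapsto b+\boldsymbol\epsilon$ is open. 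Adding all translations (the lineality of $\operatorname{Def}$) and positive scalings keeps the set open and polyhedral. It is a cone because scaling and translating a $\mathrm B$-truncation of $\mathsf S$ gives a $\mathrm B$-truncation of the scaled, translated $\mathsf S$, which can be reabsorbed. More cleanly, the cone is $\{h : h \text{ satisfies the homogenized inequalities}\}$, obtained by allowing $b$ to vary jointly.

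Openness gives full-dimensionality once the set is nonempty, which holds by taking all $\epsilon$ tiny in a geometric progression along the linear extension. The main obstacle is justifying that the shallowness conditions are genuinely linear and homogeneous once $b$ is allowed to vary. Here I would first try to show directly that the combinatorial type of $\mathsf S_{i-1}$ is constant on the shallow region, by induction on $i$ using \Cref{lem:shallow_truncation_stellar}. The neighbor-vertex formulas are then fixed linear expressions in the full support vector, which handles $b$ and $\boldsymbol\epsilon$ simultaneously and yields homogeneity. A second subtle point is the independence from the chosen linear extension, noted in \Cref{rmk:legitepsilon}. We avoid it by fixing one extension throughout.
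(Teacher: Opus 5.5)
Your argument is correct, and its engine is the same as the last step of the paper's proof. On the $\mathrm B$-shallow region each $\mathsf S_{i-1}$ has a fixed normal fan (by iterating \Cref{lem:shallow_truncation_stellar}). Hence the neighbors of $\mathsf F_i\cap\mathsf S_{i-1}$ are fixed vertices whose coordinates are affine in $(\epsilon_{\mathsf F_j})_{j<i}$.

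The difference is how you get convexity. The paper proves it by a separate induction. For two shallow vectors $\boldsymbol\epsilon^1,\boldsymbol\epsilon^2$ it compares support values on the rays to show that $\lambda\mathsf S^1_{i+1}+(1-\lambda)\mathsf S^2_{i+1}$ is a shallow truncation of $\lambda\mathsf S^1_i+(1-\lambda)\mathsf S^2_i$. Only afterwards does it use the neighbor-vertex computation to see that finitely many inequalities suffice. You observe that this computation already presents the set $E$ of shallow $\boldsymbol\epsilon$ as a finite intersection of open half-spaces. So openness, convexity and polyhedrality come out of one induction. You also spell out containment in $\operatorname{Def}(\Gamma_{\mathrm B})$ and nonemptiness, which the paper treats as clear. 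The paper's convexity argument is more conceptual, since it only uses that the intermediate normal fans agree; yours is more economical.

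Three things to tighten. First, the passage from $E$ to $\mathcal T_{\mathrm B}(\mathsf S)=\mathbb R_{>0}\bigl(h(\mathsf S)+E\bigr)+\mathsf L$ should be argued rather than described as ``reabsorbing''. The positive hull of a convex set is a convex cone. It is cut out by finitely many strict homogeneous inequalities once you homogenize with a variable $\lambda>0$ and then eliminate $\lambda$ and the $\mathsf L$-directions by Fourier--Motzkin.

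Second, in your ``homogenized'' description the seed vector may only range over $\mathbb R_{>0}h(\mathsf S)+\mathsf L$, not over all polytopes normally equivalent to $\mathsf S$. The latter gives a strictly larger set in general. For example, take $\mathsf S=[0,1]\times[0,2]$. Shallowness forces the top and bottom facet parameters below $1/2$, so every element of $\mathcal T_{\operatorname{Omni}}(\mathsf S)$ is strictly taller than wide. Omnitruncations of $[0,1]^2$ with equal facet parameters have the same normal fan but are as wide as they are tall.

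Third, nothing needs to be ``enlarged by translations of the facet values''. Facets lie in $\mathrm B$, and the definition already requires $\epsilon_{\mathsf G}>0$ for them.
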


\begin{proof}
It is clear that $\mathcal{T}_{\mathrm{B}}(\mathsf{S})$ is an open subset of $\operatorname{Def}(\Gamma_{\mathrm B})$ as we can perturb the defining parameter vector $\boldsymbol{\epsilon}$ at a point and stay in $\mathcal{T}_{\mathrm{B}}(\mathsf{S})$.  

We next prove that $\mathcal{T}_{\mathrm{B}}(\mathsf{S})$ is a cone.  It is also clear that the statement respects translations, thus we will restrict our attention to the untranslated version.  Now $\mathcal{T}_{\mathrm{B}}(\mathsf{S})$ is closed under scaling, so it suffices to prove that the collection of $\mathrm{B}$-truncations is convex.  Let $0<\lambda <1$ and let $\boldsymbol{\epsilon}^{1}, \boldsymbol{\epsilon}^{2}$ be $\mathrm{B}$-shallow for $\mathsf{S}$.  Let $\mathsf{S}^1_i$ and $\mathsf{S}^2_i$ be the truncations of $\mathsf{S}$ along $\mathsf{F}_1<\ldots<\mathsf{F}_{i}$ using $\boldsymbol{\epsilon}^{1}$ and $ \boldsymbol{\epsilon}^{2}$, respectively.  We know that $\Sigma(\mathsf{S}^1_i) = \Sigma(\mathsf{S}^2_i)$, and $\Sigma(\mathsf{S}^1_{i+1}) = \Sigma(\mathsf{S}^2_{i+1})$.  Thus 
\[
\Sigma(\lambda\mathsf{S}^1_i+ (1-\lambda)\mathsf{S}^2_i)=\Sigma(\mathsf{S}^1_i) = \Sigma(\mathsf{S}^2_i)
\]
and 
\[
\Sigma(\lambda \mathsf{S}^1_{i+1}+ (1-\lambda)\mathsf{S}^2_{i+1})=\Sigma(\mathsf{S}^1_{i+1}) = \Sigma(\mathsf{S}^2_{i+1}).
\]
Moreover, for each facet $\mathsf F$ of $\mathsf{S}^1_i$ (equivalently $\mathsf{S}^2_i$) such that $u_{\mathsf F}\neq u_{\mathsf F_{i+1}}$,
\[
h(\lambda\mathsf{S}^1_i+ (1-\lambda)\mathsf{S}^2_i, u_{\mathsf F})=\lambda h(\mathsf{S}^1_i,u_{\mathsf F})+(1-\lambda)h(\mathsf{S}^2_i,u_{\mathsf F})
\]
\[
=\lambda h(\mathsf{S}^1_{i+1}, u_{\mathsf F})+(1-\lambda)h(\mathsf{S}^2_{i+1},u_{\mathsf F})=h(\lambda\mathsf{S}^1_{i+1}+ (1-\lambda)\mathsf{S}^2_{i+1}, u_{\mathsf F}).
\]
In the direction $u_{\mathsf F_{i+1}}$,
\[
h(\lambda\mathsf{S}^1_i+ (1-\lambda)\mathsf{S}^2_i, u_{\mathsf F_{i+1}})=\lambda h(\mathsf{S}^1_i,u_{\mathsf F_{i+1}})+(1-\lambda)h(\mathsf{S}^2_i,u_{\mathsf F_{i+1}})
\]
\[
<\lambda h(\mathsf{S}^1_{i+1}, u_{\mathsf F_{i+1}})+(1-\lambda)h(\mathsf{S}^2_{i+1},u_{\mathsf F_{i+1}})=h(\lambda\mathsf{S}^1_{i+1}+ (1-\lambda)\mathsf{S}^2_{i+1}, u_{\mathsf F_{i+1}}).
\]

 This shows that  $\lambda \mathsf{S}^1_{i+1}+ (1-\lambda)\mathsf{S}^2_{i+1}$ is obtained from $\lambda\mathsf{S}^1_i+ (1-\lambda)\mathsf{S}^2_i$ by a shallow truncation along $\mathsf{F}_{i+1}$.  By induction on $i$, $\lambda \mathsf{S}^1_i+ (1-\lambda)\mathsf{S}^2_i$ is obtained by a sequence of shallow truncations of the faces $\mathsf{F}_1<\ldots<\mathsf{F}_i$.  Therefore $\lambda \mathsf{S}^1_{i+1}+ (1-\lambda)\mathsf{S}^2_{i+1}$ is obtained by a sequence of shallow truncations of the faces $\mathsf{F}_1<\ldots<\mathsf{F}_{i+1}$, and it follows that the set in question forms a cone.

 It remains to show that this open cone is determined by a finite number of distinct inequalities.  We again proceed by the number of steps in the truncation.  At each step, the truncation parameter for $\mathsf{F}_k$ satisfies $0<\epsilon_{\mathsf F_k}<u_{\mathsf F_k}^{\intercal}v-h(\mathsf S_{k-1},u_{\mathsf F_k})$ for every $v$ which is a vertex neighboring $\mathsf{F}_k$.  Each such vertex $v$ is contained in $d$ facets, which give a finite system of linear equations describing the coordinates of $v$ in terms of the earlier entries of $\boldsymbol{\epsilon}$.  This in turn gives a linear description of $\epsilon_{\mathsf F_k}$ in terms of the earlier entries of $\boldsymbol{\epsilon}$.
\end{proof}

Let $X$ be a finite set of points from a cone.  We let $\operatorname{Cone}^+(X)$ denote the set of all strictly positive combinations of elements of $X$.  Let  $\mathcal{T}_{\mathscr{A}_{\mathrm{B}}}(\mathsf{S}) \coloneqq \operatorname{Cone}^+( \left\{ \operatorname{DTr}[\mathsf{S},\mathsf{F}] \,\middle|\,\mathsf{F}\in \mathrm{B} \right\}  \cup \{\mathsf{S}\})+\mathsf L$, where $\mathsf L$ is the lineality space of $\operatorname{Def}(\Gamma_{\mathrm B})$, corresponding to translations.  Also, observe that $\mathcal{T}_{\mathscr{A}_{\mathrm{B}}}(\mathsf{S})$ is the interior of the cone generated by the deep truncations together with the seed $\mathsf{S}$, or equivalently, by \Cref{lem:convtrunc}, the interior of the cone generated by the shallow truncations of $\mathsf{S}$.

\begin{proposition}\label{lem:sumsofdeeptruncations}
  The open polyhedral cones $\mathcal{T}_{\mathscr{A}_{\mathrm{B}}}(\mathsf{S})$ and $ \mathcal{T}_{\mathrm{B}}(\mathsf{S})$ have full-dimensional intersection.  
  \end{proposition}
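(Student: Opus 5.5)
Since both sets are open cones in $\operatorname{Def}(\Gamma_{\mathrm B})$ (\Cref{lem:buildingtruncationcone} for $\mathcal{T}_{\mathrm{B}}(\mathsf{S})$; for $\mathcal{T}_{\mathscr{A}_{\mathrm{B}}}(\mathsf{S})$ it is the interior of a finitely generated cone which, by \Cref{thm:building} and the basis results, is full-dimensional in $\operatorname{PL}(\Gamma_{\mathrm B})$), it suffices to exhibit a single polytope lying in both. Indeed, the intersection of two open sets is open, so a nonempty intersection is automatically full-dimensional. The plan is therefore to construct one explicit $\mathrm{B}$-truncation of $\mathsf{S}$ that is also a strictly positive combination of $\mathsf{S}$ and the deep truncations $\operatorname{DTr}[\mathsf{S},\mathsf{F}]$, $\mathsf{F}\in\mathrm{B}$, up to translation.

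\textbf{Candidate.} Fix a reverse-compatible order, i.e.\ a linear extension $\mathsf{F}_1<\dots<\mathsf{F}_k$ of $\mathrm{B}$ by dimension. Choose scales $1\gg t_1\gg t_2\gg\cdots\gg t_k>0$, where $t_i$ is the weight attached to $\mathsf{F}_i$. Set
\[
\mathsf{P}=\mathsf{S}+\sum_{i}t_i\operatorname{Tr}_{\epsilon}[\mathsf{S},\mathsf{F}_i]
\]
with $\epsilon$ shallow. By \Cref{lem:convtrunc}, each shallow truncation is a positive combination of $\mathsf{S}$ and $\operatorname{DTr}[\mathsf{S},\mathsf{F}_i]$. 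Hence $\mathsf{P}$ is a strictly positive combination of the generators, and so lies in $\mathcal{T}_{\mathscr{A}_{\mathrm{B}}}(\mathsf{S})$. By \Cref{thm:building}, its normal fan is $\Gamma_{\mathrm B}$. It remains to show that $\mathsf{P}$, after rescaling by $(1+\sum t_i)^{-1}$ and translating, equals $\operatorname{Tr}_{\mathrm{B},\boldsymbol\epsilon}(\mathsf{S})$ for some $\mathrm{B}$-shallow $\boldsymbol\epsilon$.

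\textbf{Identifying $\mathsf{P}$ as a $\mathrm{B}$-truncation.} Since $\Sigma(\mathsf{P})=\Gamma_{\mathrm B}$, the polytope $\mathsf{P}$ is determined by its support values on the rays $u_{\mathsf{F}}$, $\mathsf{F}\in\mathrm{B}$. Its facet inequalities are therefore $u_{\mathsf{F}}^{\intercal}x\ge h(\mathsf{P},u_{\mathsf{F}})$. Define $\epsilon_{\mathsf{F}}$ so that these inequalities match the form in \Cref{def:building_set_truncation}, namely
\[
\epsilon_{\mathsf{F}}=h(\mathsf{P},u_{\mathsf{F}})-(1+\textstyle\sum t_i)\,b_{\mathsf{F}},
\]
which is positive by computing the support function summand-wise, as in \Cref{lem:key}. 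The representation of $\mathsf{P}$ as the polytope cut out by these inequalities is then automatic. What must be checked is that $\boldsymbol\epsilon$ is $\mathrm{B}$-shallow, i.e.\ that each successive truncation $\mathsf{S}_{i-1}\to\mathsf{S}_i$ is shallow.

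\textbf{Main obstacle: shallowness.} The shallowness condition at step $i$ is a strict inequality comparing $\epsilon_{\mathsf{F}_i}$ with the distances from $\mathsf{F}_i\cap\mathsf{S}_{i-1}$ to its neighbors in $\mathsf{S}_{i-1}$. Those distances depend only on $\epsilon_{\mathsf{F}_j}$ for $j<i$ and on $\mathsf{S}$. Under the hierarchy $t_1\gg\cdots\gg t_k$, the increment $\epsilon_{\mathsf{F}_i}$ is of order $t_i\epsilon$ plus contributions from $t_j$ with $\mathsf{F}_j\supseteq\mathsf{F}_i$. I expect the delicate point to be controlling these contributions from larger faces, which appear later in the order yet affect $h(\mathsf{P},u_{\mathsf{F}_i})$.

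I would first try to reorder the scaling so that faces truncated earlier receive the dominant weights, ordered by increasing dimension as in the omnitruncation construction. Each required inequality then reduces, to leading order, to the shallowness of a single truncation of an already shallowly truncated polytope. Such an inequality holds because $\epsilon_{\mathsf{F}_i}/t_i\to\epsilon$ is small compared with edge lengths that are bounded below independently of the later $t$'s.

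If this asymptotic argument becomes messy, the fallback is to avoid verifying shallowness directly. Instead, I would note that $\mathcal{T}_{\mathrm{B}}(\mathsf{S})$ is open and that $\mathcal{T}_{\mathscr{A}_{\mathrm{B}}}(\mathsf{S})$ contains every sum $\mathsf{S}+\sum t_i\operatorname{Tr}_\epsilon[\mathsf{S},\mathsf{F}_i]$. It would then suffice to show that some $\mathrm{B}$-truncation with very small, hierarchically chosen $\epsilon_{\mathsf{F}}$ has support vector equal to such a positive combination. This reduces to inverting the triangular system of \Cref{lem:key} and checking that the resulting coefficients are positive.
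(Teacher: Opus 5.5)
Your argument is essentially the paper's proof: take a strictly positive combination of $\mathsf S$, with dominant coefficient, and the face truncations, with weights decreasing along containment. Its support values $c\,(b_{\mathsf F}+\epsilon_{\mathsf F})$ on the rays $u_{\mathsf F}$ then define a small, rapidly decreasing, hence $\mathrm B$-shallow $\boldsymbol\epsilon$; the paper uses the deep truncations directly, which is equivalent to your shallow ones by \Cref{lem:convtrunc}.

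The one step to state more carefully is the shallowness comparison. The neighbor distances in $\mathsf S_{i-1}$ can be as small as order $t_j\epsilon$ for earlier $j$; for example, once both endpoints of a cube edge are truncated at depth $\epsilon_v$, that edge's shallowness bound is $\epsilon_v$. So what gives $\epsilon_{\mathsf F_i}<\delta$ is $\epsilon_{\mathsf F_i}\approx t_i\epsilon$ together with $t_i\ll t_j$, not the comparison of $\epsilon_{\mathsf F_i}/t_i\approx\epsilon$ with edge lengths.
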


\begin{proof}

We first prove that there exists a point in their intersection.  Let $\boldsymbol{\epsilon}$ be such that $\epsilon_{\mathsf F} \gg \epsilon_{\mathsf G}$ whenever $\mathsf{F} \subsetneq \mathsf{G}$.   We will refer to such an $\boldsymbol{\epsilon}$ as \emph{rapidly decreasing}.  It is straightforward to verify that any sufficiently small, rapidly decreasing $\boldsymbol{\epsilon}$ is \textbf{$\mathrm{B}$-shallow} for $\mathsf{S}$.  

Let $\boldsymbol{\lambda}$ be rapidly decreasing, and take the following Minkowski sum
\[
\mathsf{P} = \lambda_{\emptyset}\mathsf{S}+\sum_{\mathsf{F} \in \mathrm{B}} \lambda_{\mathsf{F}}\operatorname{DTr}[\mathsf{S},\mathsf{F}],
\]

with $\lambda_{\emptyset}$ some large real number.  We claim that $\mathsf{P}$ is homothetic to a $\mathrm{B}$-truncation.  First observe that by \Cref{cor:partial_building} and \Cref{lem:convtrunc}, the normal fan of $\mathsf P$ is
\[
\Sigma(\mathsf S)
\wedge
\bigwedge_{\mathsf G\in\mathrm B}
\Sigma(\operatorname{DTr}[\mathsf S,\mathsf G])
=
\Gamma_{\mathrm B},
\]
hence $\mathsf{P}$ has the correct normal fan and it suffices to check that its support function is that of a scalar multiple of a $\mathrm{B}$-truncation. 

Let $b_{\mathsf F}\coloneqq h(\mathsf S,u_{\mathsf F}).$
Define $ a_{\mathsf F,\mathsf G}
\coloneqq
h(\operatorname{DTr}[\mathsf S,\mathsf G],u_{\mathsf F})-b_{\mathsf F}$
so that $a_{\mathsf F,\mathsf G}\geq 0$ and is strictly positive
if and only if 
$\mathsf F\subseteq\mathsf G$.  Moreover, $a_{\mathsf F,\mathsf F}=\delta_{\mathsf F}$.
Set
\[
c
=
\lambda_{\emptyset}
+
\sum_{\mathsf G\in\mathrm B}\lambda_{\mathsf G}.
\]Using additivity and homogeneity of support functions, we obtain, for every $\mathsf F\in\mathrm B$,
\[
h(\mathsf P,u_{\mathsf F})
=
\lambda_{\emptyset}b_{\mathsf F}
+
\sum_{\mathsf G\in\mathrm B}
\lambda_{\mathsf G}
\bigl(b_{\mathsf F}+a_{\mathsf F,\mathsf G}\bigr).
\]Therefore
\[
h(\mathsf P,u_{\mathsf F})
=
c\,b_{\mathsf F}
+
\sum_{\mathsf G\supseteq\mathsf F}
\lambda_{\mathsf G}a_{\mathsf F,\mathsf G}.
\]Define
\[
\epsilon_{\mathsf F}
\coloneqq
\frac{1}{c}
\sum_{\mathsf G\supseteq\mathsf F}
\lambda_{\mathsf G}a_{\mathsf F,\mathsf G}
=\frac{1}{c}
\left(
\lambda_{\mathsf F}\delta_{\mathsf F}
+
\sum_{\mathsf G\supsetneq\mathsf F}
\lambda_{\mathsf G}a_{\mathsf F,\mathsf G}
\right).
\]
Then
\[
h(\mathsf P,u_{\mathsf F})
=
c\bigl(b_{\mathsf F}+\epsilon_{\mathsf F}\bigr).
\]
 Because $\boldsymbol{\lambda}$ is rapidly decreasing, the term $\lambda_{\mathsf F}\delta_{\mathsf F}$ dominates in $\epsilon_{\mathsf{F}}$, and large $\lambda_{\emptyset}$ makes the entries of $\boldsymbol{\epsilon}$ small.  Hence we see that $\boldsymbol{\epsilon}$ is small and rapidly decreasing.  We can conclude that $\mathsf{P}$ is indeed homothetic to a $\mathrm{B}$-truncation: 
\[
\mathsf P
=
c\,
\operatorname{Tr}_{\mathrm B,\boldsymbol{\epsilon}}
(\mathsf S).
\]
 Thus $\mathcal{T}_{\mathscr{A}_{\mathrm{B}}}(\mathsf{S})$ and $ \mathcal{T}_{\mathrm{B}}(\mathsf{S})$ have nonempty intersection.  Because both cones are open, this intersection is full-dimensional.

\end{proof}

\begin{proposition}\label{prop:diffcones}
In general, $\mathcal{T}_{\mathscr{A}_{\mathrm{B}}}(\mathsf{S})\neq \mathcal{T}_{\mathrm{B}}(\mathsf{S}) \neq \operatorname{Interior}(\operatorname{Def}(\Gamma_{\mathrm B}))$.  
\end{proposition}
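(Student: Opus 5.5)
Since the statement only asserts that the two inequalities fail in general, I will prove it with one explicit example: the maximum building set on the standard simplex $\Delta_n$, so that $\Gamma_{\mathrm B}$ is the braid fan $\mathcal B^1_n$. This matches the case singled out in \Cref{thm:conecontain}. I will work with support values $z(I)=h(\mathsf P,e_I)$, normalized by translation so that $z(\{i\})=0$. By \Cref{cor:supermodular_simplex}, $\operatorname{Interior}(\operatorname{Def}(\mathcal B^1_n))$ is then the set of strictly supermodular $z$.

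\emph{Step 1: $\mathcal T_{\mathrm B}(\mathsf S)\neq \operatorname{Interior}(\operatorname{Def})$.}
\Cref{ex:standard_perm_not_omni} already gives, for $n\geq 5$, a point of $\operatorname{Def}(\mathcal B^1_n)$ outside the closure of $\mathcal T_{\operatorname{Omni}}(\Delta_n)=\mathcal T_{\mathrm B}(\Delta_n)$, namely the standard permutahedron $\Pi_n$. It lies in the interior of the deformation cone, because its normal fan is exactly the braid fan. This settles the second inequality for $n\geq5$.

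For $n=4$ I will instead perturb $\Pi_4$. The remark in that example says $\Pi_4$ lies on the boundary of $\mathcal T_{\operatorname{Omni}}(\Delta_4)$: the inequality $q_{[n]}>q_{[n]\setminus i}+q_{[n]\setminus j}$ holds with equality there. Moving $z$ slightly in the direction that decreases $z([4])-z([4]\setminus i)-z([4]\setminus j)$ keeps strict supermodularity but violates shallowness.

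\emph{Step 2: $\mathcal T_{\mathscr A_{\mathrm B}}(\mathsf S)\neq\mathcal T_{\mathrm B}(\mathsf S)$.}
By \Cref{ex:postnikov_basis}, $\mathcal T_{\mathscr A_{\mathrm B}}$ consists of the positive Minkowski sums $\sum_I y_I\Delta_{[n]\setminus I}$ with all $y_I>0$, including $I=\emptyset$. Equivalently, all Möbius coefficients $y_J$ of $z$ in the simplex basis are positive. So I only need an omnitruncation whose simplex-basis expansion has a negative coefficient.

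The plan is to use the vertex-truncation constraint again, this time on the other side. Choose $\boldsymbol\epsilon$ with large vertex truncations, almost touching each other, and tiny truncations elsewhere. Then compute $y$ for some $J$ of size $n-1$ or $n-2$ by Möbius inversion, $y_J=\sum_{K\subseteq J}(-1)^{|J|-|K|}z(K)$.

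Alternatively, and more simply, I would show the reverse failure, namely a point of $\mathcal T_{\mathscr A_{\mathrm B}}$ outside $\mathcal T_{\mathrm B}$. The standard permutahedron $\Pi_n=\sum_{i<j}\Delta(\{i,j\})$ is a nonnegative combination of simplices but not a strictly positive one. Adding $\eta\sum_I\Delta_{[n]\setminus I}$ for small $\eta>0$ puts it in $\mathcal T_{\mathscr A_{\mathrm B}}$. By openness of the weak inequality from Step 1, for $n\geq5$ it still violates $q_{[n]}\geq q_{[n]\setminus i}+q_{[n]\setminus j}$, so it is not in $\mathcal T_{\mathrm B}$.

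This reverse route reuses the same computation and is what I would write. The main obstacle is only making sure the perturbation stays strictly outside the closure; the explicit gap $(n-1)(n-4)/2>0$ from \Cref{ex:standard_perm_not_omni} handles this directly for $n\geq5$.
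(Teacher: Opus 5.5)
Your proposal is correct and follows essentially the same route as the paper. For $n\geq 5$ and the maximum building set on $\Delta_n$, the standard permutahedron $\Pi_n$ lies in $\operatorname{Interior}(\operatorname{Def}(\mathcal B^1_n))$ but violates the weak inequality $q_{[n]}\geq q_{[n]\setminus\{i\}}+q_{[n]\setminus\{j\}}$ that holds on the closure of $\mathcal{T}_{\mathrm{B}}(\Delta_n)$, and your explicit point $\Pi_n+\eta\sum_{J}\Delta(J)$ is a concrete version of the paper's remark that $\Pi_n$ lies on the boundary of $\mathcal{T}_{\mathscr{A}_{\mathrm{B}}}(\Delta_n)$. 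The unfinished M\"obius-coefficient route and the separate $n=4$ perturbation are not needed for the statement, although the latter is valid and already gives the second inequality for $n=4$.
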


\begin{proof}
We demonstrate an example which illustrates this phenomenon.  We emphasize that the lack of equality of the cones in question should be viewed as typical behavior.  We will focus on $\Delta_n$ with $n \geq 5$ with maximum building set $\mathrm{B} = \mathrm{B}^{\text{max}}$.  The standard permutahedron does not belong to $\mathcal{T}_{\mathscr{A}_{\mathrm{B}}}(\Delta_n)$.  However, it does live on the boundary of $\mathcal{T}_{\mathscr{A}_{\mathrm{B}}}(\Delta_n)$; it is a Minkowski sum of line segments, and this is arbitrarily close to the corresponding sum plus a very small copy of every other simplex (which is required by our definition of $\mathcal{T}_{\mathscr{A}_{\mathrm{B}}}(\Delta_n)$).   However, for $n\geq 5$, the standard permutahedron does not live in the closure of $\mathcal{T}_{\mathrm{B}}(\Delta_n)$; see \Cref{ex:standard_perm_not_omni}.  On the other hand, the standard permutahedron certainly lives in $\operatorname{Interior}(\operatorname{Def}(\Gamma_{\mathrm B}))$.
\end{proof}

\begin{remark}\label{rmk:sumofridgetruncs}
In this setting, there is a nice generalization of the fact that the Minkowski sum of line segments gives the standard permutahedron.  If one takes a seed $\mathsf{S}$ and Minkowski sums all of the ridge (codimension-2) truncations of $\mathsf{S}$, this gives a polytope which is normally equivalent to $\operatorname{Omni}(\mathsf{S})$.  This follows by inspection of the normal fans restricted to the chambers of $\Sigma(\mathsf{S})$, where the classical result for the braid arrangement applies.
\end{remark}

\begin{example}
Take $\mathrm{B}$ to be the maximum building set on the face poset of $\Delta_4$.  After quotienting by translation, we find all three cones above are 11-dimensional and have the following numbers of facets and rays:
\[
\begin{array}{c|c|c}
\text{cone} & \text{facets} & \text{extreme rays}\\
\hline
\overline{\mathcal T_{\operatorname{Tr}}(\Delta_4)}
& 11 & 11\\
\overline{\mathcal T_{\operatorname{Omni}}(\Delta_4)}
& 24 & 120\\
\operatorname{Def}(\mathcal B_4^1)
& 24 & 37
\end{array}
\]
\end{example}

\subsection{Bases for deformation cones of truncated polytopes}

We now present a building set extension of \Cref{thm:basis_seed}.  

\begin{theorem}\label{cor:basis_building}
Let $\mathsf{S}$ be a full-dimensional simple polytope with the
origin in the interior, and let $\mathrm{B}$ be a building set of faces
of $\mathsf{S}$.  Let $\mathsf{P}_{\mathrm{B}}$ be a $\mathrm{B}$-truncation of $\mathsf{S}$.  Then the
following statements hold:
\begin{enumerate}
\item\label{it:building-main1}
For $\epsilon>0$ small enough, the set
\[
    \mathscr{A}_{\epsilon,\mathrm{B}}^{\circ}
    =
    \left\{
    \operatorname{Tr}_{\epsilon}[\mathsf{S},\mathsf{F}]
    \,\middle|\,
    \mathsf{F}\in \mathrm{B}
    \right\}
\]
of shallow $\epsilon$-truncations along the faces in $\mathrm{B}$ is a
polytopal basis for the deformation space of $\mathsf{P}_{\mathrm{B}}$.

\item\label{it:building-main2}
The collection
\[
    \mathscr{A}_{\operatorname{DTr},\mathrm{B}}
    =
    \left\{
    \operatorname{DTr}[\mathsf{S},\mathsf{F}]
    \,\middle|\,
    \mathsf{F}\in \mathrm{B}
    \right\}
    \cup \{\mathsf{S}\}
\]
of deep truncations together with the seed polytope is a polytopal
generating set for the deformation space of $\mathsf{P}_{\mathrm{B}}$.

\item\label{it:building-main3}
If $\mathsf{S}$ is a Delzant polytope, then
\[
    \mathscr{A}_{\operatorname{Tr}_1,\mathrm{B}}
    =
    \left\{
    \operatorname{Tr}_1[\mathsf{S},\mathsf{F}]
    \,\middle|\,
    \mathsf{F}\in \mathrm{B}
    \right\}
    \cup \{\mathsf{S}\}
\]
is an integral polytopal generating set for the deformation space of
$\mathsf{P}_{\mathrm{B}}$.

\item\label{it:building-main4}
If $\mathsf{S}$ is a primitive Delzant polytope, then
\[
    \mathscr{A}_{\operatorname{DTr},\mathrm{B}}^{\circ}
    =
    \left\{
    \operatorname{DTr}[\mathsf{S},\mathsf{F}]
    \,\middle|\,
    \mathsf{F}\in \mathrm{B}
    \right\}
    =
    \left\{
    \operatorname{Tr}_1[\mathsf{S},\mathsf{F}]
    \,\middle|\,
    \mathsf{F}\in \mathrm{B}
    \right\}
\]
is a flat, not necessarily integral, basis.
\end{enumerate}
\end{theorem}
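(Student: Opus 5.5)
The plan is to mirror the proof of \Cref{thm:basis_seed}, replacing the full face poset of $\mathsf S$ by the subposet $\mathrm B$ and the barycentric subdivision $\Psi$ by $\Gamma_{\mathrm B}$. Since $\Gamma_{\mathrm B}$ is simplicial, obtained from $\Sigma(\mathsf S)$ by central stellar subdivisions along the cones in $\mathrm B$, its rays are exactly the rays $\operatorname{Cone}(u_{\mathsf F})$ for $\mathsf F\in\mathrm B$. Here the rays of $\Sigma(\mathsf S)$ are included because $\mathrm B$ contains all facets. Via \eqref{eq:iota}, $\mathrm{DS}(\mathsf P_{\mathrm B})$ is therefore identified with $\mathbb R^{\mathrm B}$. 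By \Cref{thm:building} together with \Cref{cor:partial_building}, each $\operatorname{Tr}_\epsilon[\mathsf S,\mathsf F]$ with $\mathsf F\in\mathrm B$ has normal fan $\mathcal C(\Sigma(\mathsf S),\sigma_{\mathsf F})$, which coarsens $\Gamma_{\mathrm B}$. Likewise $\mathsf S$ and the deep truncations $\operatorname{DTr}[\mathsf S,\mathsf F]$ are deformations of a shallow truncation (as in the proof of \Cref{lem:convtrunc}). Hence all proposed elements lie in $\operatorname{Def}(\mathsf P_{\mathrm B})$.

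For item \ref{it:building-main1}, I will restrict the triangular argument of \Cref{lem:key}. Consider the vectors $h(\operatorname{Tr}_\epsilon[\mathsf S,\mathsf F])-h(\mathsf S)$ for $\mathsf F\in\mathrm B$, evaluated at $v_{\mathsf H}$ for $\mathsf H\in\mathrm B$. The value is positive if $\mathsf H\subseteq \mathsf F$ and zero otherwise, since a shallow truncation removes exactly the vertices of $\mathsf F$ and $h(\cdot,u_{\mathsf H})$ changes only if the minimizing face $\mathsf H$ loses vertices, i.e. when $\mathsf H\subseteq\mathsf F$. Ordering $\mathrm B$ by a linear extension (\Cref{rem:order}) gives a square triangular matrix with nonzero diagonal, so these $|\mathrm B|$ vectors form a basis. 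Next, \Cref{cor:P_in_the_interior} expresses $h(\mathsf S)$ as a linear combination of the facet truncations, and all facets lie in $\mathrm B$. Exactly as in \Cref{thm:basis_seed}, the span of $\mathscr A^\circ_{\epsilon,\mathrm B}$ then contains the difference basis, and a cardinality count finishes the argument.

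Item \ref{it:building-main2} follows from item \ref{it:building-main1} and \Cref{lem:convtrunc}, since $\operatorname{Tr}_\epsilon[\mathsf S,\mathsf F]=\tfrac{\epsilon}{\delta_{\mathsf F}}\operatorname{DTr}[\mathsf S,\mathsf F]+(1-\tfrac{\epsilon}{\delta_{\mathsf F}})\mathsf S$. For item \ref{it:building-main3}, I use \Cref{rem:key_delzant}. For Delzant $\mathsf S$ the difference matrix at depth $1$ is the $0/1$ incidence matrix of $\preceq$ restricted to $\mathrm B$, which is unitriangular. The difference vectors therefore form a $\mathbb Z$-basis of $\mathrm{DS}_{\mathbb Z}$, and adding $h(\mathsf S)$ shows the set generates over $\mathbb Z$. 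One must check that unit truncations are still deformations of $\mathsf P_{\mathrm B}$, and also that $\mathrm{DS}_{\mathbb Z}(\mathsf P_{\mathrm B})$ corresponds to integer values on the primitive generators $v_{\mathsf F}$. This holds because $\Gamma_{\mathrm B}$ is unimodular when $\Sigma(\mathsf S)$ is smooth and $u_{\mathsf F}$ is a sum of part of a basis, hence primitive. For item \ref{it:building-main4}, \Cref{prop:delzant} gives $\operatorname{DTr}=\operatorname{Tr}_1$ with $\delta_{\mathsf F}=1$, and flatness. The basis property then follows from item \ref{it:building-main1}'s argument with $\epsilon=1$: the triangular matrix is unchanged, and \Cref{cor:P_in_the_interior} applies with $\epsilon=1$ for integral $\mathsf S$.

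The main obstacle I expect is justifying that the triangular structure survives restriction. Concretely, I must check that the rays of $\Gamma_{\mathrm B}$ are exactly indexed by $\mathrm B$ with generators $v_{\mathsf F}$, and that the evaluation pattern ``positive iff $\mathsf H\subseteq\mathsf F$'' is computed on these rays only. The positivity claim needs $\mathsf S^{u_{\mathsf H}}=\mathsf H$, which is true since $u_{\mathsf H}$ is in the relative interior of the normal cone. The rest is a direct transcription of \Cref{thm:basis_seed}.
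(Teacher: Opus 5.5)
Your proposal is correct. For items 2–4 it follows the paper's proof: \Cref{lem:convtrunc} for the deep truncations, and the unitriangular matrix of \Cref{rem:key_delzant} restricted to the rows and columns indexed by $\mathrm B$.

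For item 1 your route is different. You re-run the proof of \Cref{thm:basis_seed} directly on $\Gamma_{\mathrm B}$:
\begin{itemize}
\item you identify its rays with $\{v_{\mathsf F}:\mathsf F\in\mathrm B\}$;
\item you restrict the triangular evaluation matrix of \Cref{lem:key} to $\mathrm B\times\mathrm B$;
\item you recover $h(\mathsf S)$ from the facet truncations via \Cref{cor:P_in_the_interior}.
\end{itemize}
The paper recomputes nothing on $\Gamma_{\mathrm B}$. It uses \Cref{cor:partial_building} to see that $\Psi=\operatorname{Bar}(\Sigma(\mathsf S))$ refines $\Gamma_{\mathrm B}$, so $\operatorname{PL}(\Gamma_{\mathrm B})\subseteq\operatorname{PL}(\Psi)$. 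Then $\mathscr A^{\circ}_{\epsilon,\mathrm B}$ is a subset of the basis $\mathscr A^{\circ}_{\epsilon}$ of \Cref{thm:basis_seed}, hence linearly independent. Since $\Gamma_{\mathrm B}$ is simplicial with exactly $|\mathrm B|$ rays, a dimension count finishes.

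What each approach buys: the paper's argument is shorter and needs only the number of rays of $\Gamma_{\mathrm B}$ and the refinement. Yours is self-contained on $\Gamma_{\mathrm B}$ and produces the explicit triangular change of basis there. That is exactly what the integral statements require, and the paper switches to that argument for items 3 and 4 anyway.

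Your remarks on lattice structure also fill a point the paper leaves implicit in item 3. Because $\Gamma_{\mathrm B}$ is built by iterated smooth stellar subdivisions and each $u_{\mathsf F}$ is primitive, $\Gamma_{\mathrm B}$ is unimodular. Hence $\mathrm{DS}_{\mathbb Z}(\mathsf P_{\mathrm B})$ corresponds to integer values on the generators $v_{\mathsf F}$.

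The check you flagged but did not finish is immediate. Integrality of $\mathsf S$ and $u_{\mathsf F}$ gives $\delta_{\mathsf F}\geq 1$. So, by \Cref{lem:convtrunc}, $\operatorname{Tr}_1[\mathsf S,\mathsf F]$ is a Minkowski convex combination of $\mathsf S$ and $\operatorname{DTr}[\mathsf S,\mathsf F]$. Both of these lie in the convex cone $\operatorname{Def}(\mathsf P_{\mathrm B})$, so $\operatorname{Tr}_1[\mathsf S,\mathsf F]$ does too.
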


\begin{proof}
\leavevmode
\begin{enumerate}
\item Every stellar subdivision of a simplicial fan is simplicial.
Since $ \Gamma_{\mathrm B} $ is simplicial, the vector space $  \operatorname{PL}(\Gamma_{\mathrm B})  $ has dimension equal to the number of its rays, which is equal to the cardinality of $ \mathrm{B} $.
Let $ \Psi = \operatorname{Bar}(\Sigma(\mathsf{S})) $.  By \Cref{cor:partial_building}, we have that $ \Psi $ refines $ \Gamma_{\mathrm B} $, so we have an inclusion of vector spaces $ \operatorname{PL}(\Gamma_{\mathrm B}) \subseteq \operatorname{PL}(\Psi)  $.
By \Cref{thm:basis_seed}, we have that the vectors in $ \left\{ h( \mathsf{T} ) \,\middle|\, \mathsf{T} \in \mathscr{A}_{\epsilon,\mathrm{B}}^{\circ} \right\} $ are linearly independent in $ \operatorname{PL}(\Psi)  $, so they are also independent in $  \operatorname{PL}(\Gamma_{\mathrm B})$.
Finally, the set $ \mathscr{A}_{\epsilon,\mathrm{B}}^{\circ}$ has the same cardinality as $ \mathrm{B} $, and since the support vectors are linearly independent by \Cref{thm:basis_seed}, they form a basis.
\item	This follows from \Cref{lem:convtrunc} applied to item 1. above.  
	\item This follows from an argument extending \Cref{rem:key_delzant}; one can restrict the matrix from that remark to the columns and rows indexed by $\mathrm{B}$ and obtain another upper triangular integral matrix with 1's on the diagonal.  This is again invertible over the integers. 
	\item  By the proof of \Cref{prop:delzant}, the deep truncations are flat and equal the unit-depth truncations. The basis statement follows from \Cref{rem:key_delzant}, after restricting the matrix there to the rows and columns indexed by $\mathrm B$, as in item~\ref{it:building-main1}.
		\end{enumerate}
\end{proof}

Similar to \Cref{cor:basis_augmented_barycentric}, we can replace the facet truncations in the generating sets of \Cref{cor:basis_building} items 1, 2, and 4, with any polytopal basis $\mathscr{T}$ for $\operatorname{PL}(\Sigma)$.

In the special case of $\mathsf{S}=\Delta_{n}$, the standard simplex, the flat-truncation statement in \Cref{cor:basis_building} specializes to an observation of Padrol--Pilaud--Poullot \cite[Corollary~4.14]{padrol2022hypergraphic}.

\subsection{Higher nestohedra and the 2-nestohedral fan}

In this section we suggest a theory of higher nestohedra, which is motivated by various works, some of which are discussed in Section \ref{applicationssection}.  We observe that all generalized 2-nestohedra form a fan, which strongly suggests that they should be considered in aggregate.  

Nestohedra are, up to normal equivalence, the set of polytopes which can be obtained from the standard simplex by truncating faces along a building set.  In light of this fact, we propose the following definition.

\begin{definition}\label{def:highernesto}
The $0$-nestohedron is the standard simplex.  A polytope $\mathsf{P}$ is a \textbf{$k$-nestohedron} with $k\geq 1$ if there exists a $k-1$-nestohedron $\mathsf{Q}$ such that $\mathsf{P}$ is a $\mathrm{B}$-truncation of $\mathsf{Q}$ for some building set $\mathrm{B}$ on $\mathsf{Q}$.  We define a \textbf{generalized $k$-nestohedron} to be a polytope obtained as a deformation of a $k$-nestohedron.
\end{definition}

Taking the maximum building set at each stage yields the $k$-permutahedra.

\begin{figure}[ht]
    \centering
    \includegraphics[width=.38\textwidth]{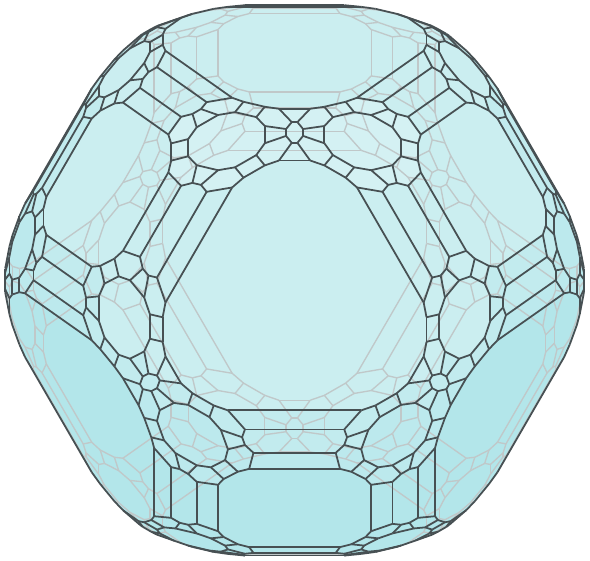}
    \caption{A $3$-permutahedron.}
    \label{fig:three-permutahedron}
\end{figure}

The notion of taking iterated building sets is also discussed in the work of Petr\'ic \cite{petric2014stretching}.  We have nothing deep to say at this juncture about this general definition of a $k$-nestohedron.  We instead focus on the case of generalized 2-nestohedra.

The 2-braid fan $\mathcal{B}^{2}_{n}$ has a distinguished pointed representative
${}^{\wedge}\mathcal{B}^{2}_{n}
\coloneqq
\operatorname{Bar}\left({}^{\wedge}\mathcal{B}^{1}_{n}\right)$.
We can take primitive ray generators for the rays of ${}^{\wedge}\mathcal{B}^{2}_{n}$ as follows: let 
\[
\mathcal{F}=\emptyset \subsetneq S_1\subsetneq \ldots \subsetneq S_k \subsetneq [n]
\]
be a flag of subsets of $[n]$. The corresponding primitive ray generator is
$u_{\mathcal{F}}=\sum_{j=1}^{k} \,\mathbf e_{S_j},$
and the corresponding ray in $\mathcal{B}^{2}_{n}$ is 
\[
\rho_{\mathcal{F}}
=
\operatorname{Cone}\{u_{\mathcal{F}}\}
+
\operatorname{Span}\{\mathbf e_{[n]}\}
\subseteq \mathbb R^n.
\]

We can equivalently characterize the vectors $u_{\mathcal{F}}$ as those nonzero integer vectors $v$ such that if we weakly order the entries of $v$, then $v_1 = 0$, and $v_{i+1}-v_{i} \in \{0,1\}$ for all $1\leq i \leq n-1$, i.e. it has \emph{no jumps}.  This is the characterization we will use below.

We also want to recall the definition of a tree preposet cone.  Let $\mathcal P=(\mathfrak T,\preceq)$ be a poset whose set of elements $\mathfrak T=\{\mathscr T_0,\mathscr T_1,\ldots,\mathscr T_m\}$ forms a partition of $[n]$.  We say that $\mathcal P$ is a \textbf{tree preposet} if the Hasse diagram of $\mathcal P$ is a tree.\footnote{The use of the term \emph{preposet} here, rather than poset, is due to the fact that the elements of our poset are elements of a partition of $[n]$ rather than elements of $[n]$ itself.}   The associated \textbf{tree-preposet cone} is 
\[
C_{\mathcal P} = \{x \in \mathbb{R}^n: x_i \leq x_j \text{\,\,whenever there exists\,\,} \mathscr T_a, \mathscr T_b \in \mathfrak{T} \text{\,\,such that\,\,} i \in \mathscr T_a, j \in \mathscr T_b \text{,\,\,and\,\,} \mathscr T_a \preceq \mathscr T_b\}.
\]

By the cone--preposet dictionary of Postnikov--Reiner--Williams
\cite[Proposition~3.5 and Corollary~3.6]{postnikov2008faces}, the tree-preposet cones are precisely those simplicial
cones which arise as unions of cones in the braid arrangement.

\begin{lemma}\label{forestposetbary}
Let $\mathfrak T=\{\mathscr T_0,\mathscr T_1,\ldots,\mathscr T_m\}$ be a nontrivial partition of $[n]$ and let $\mathcal{P} = (\mathfrak T, \preceq)$ be a tree preposet.  Let $C_{\mathcal{P}}$ be the corresponding tree-preposet cone.  The barycenter of $C_{\mathcal{P}}$ is a ray generator of the 2-braid fan.
\end{lemma}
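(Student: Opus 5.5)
We would compute the rays of $C_{\mathcal P}$ explicitly, sum them, and check the resulting vector against the ``no jumps'' characterization of the primitive ray generators $u_{\mathcal F}$ of ${}^{\wedge}\mathcal{B}^{2}_{n}$ stated just before the lemma.

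\textbf{Rays of the cone.} Work modulo the lineality $\operatorname{Span}\{\mathbf e_{[n]}\}$. The cone $C_{\mathcal P}$ is cut out by the $m$ inequalities $x_i\le x_j$, one for each edge $\mathscr T_a\prec\mathscr T_b$ of the Hasse tree. Since the Hasse diagram is a tree on $m+1$ vertices, $C_{\mathcal P}$ is simplicial of dimension $m$ modulo lineality. Its rays are obtained by making all but one defining inequality tight. Fix an edge $\mathscr T_a\prec\mathscr T_b$ and delete it. The tree splits into two components, and we let $U$ be the union of the blocks in the component containing $\mathscr T_b$. All coordinates are constant on each component, and the only strict inequality forces the $U$-coordinates to be larger. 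So the primitive ray generator is $\mathbf e_{U}$ for a nonempty proper subset $U\subsetneq[n]$, which is an up-set for $\mathcal P$ determined by the edge. This is the cone--preposet dictionary \cite{postnikov2008faces} in the tree case.

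\textbf{The barycenter.} Following \eqref{eq:canonical_vector}, the barycenter is
\[
w=\sum_{\text{edges } e}\mathbf e_{U_e}.
\]
For $i\in\mathscr T_c$, the coordinate $w_i$ counts the edges $e$ whose upper component contains $\mathscr T_c$. Equivalently, $w_i$ is the number of edges on the tree path from $\mathscr T_c$ to a fixed root, oriented so that each step ``goes up'' relative to $\mathscr T_c$. To avoid relying on that path description, we would argue directly with the sorted entries.

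\textbf{Checking the no-jumps condition.} We need the set of values of $w$ to be $\{0,1,\dots,M\}$ with no gaps. Two facts suffice.

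First, some block has value $0$. Take a minimal element of $\mathcal P$ on which the count of edges placing it on the upper side is zero. To find one, start anywhere and repeatedly move across an edge whose upper side contains the current block, moving toward the lower side. This decreases the count by exactly one, because crossing edge $e$ only changes membership in $U_e$. Any other edge $e'$ has both endpoints of $e$ on the same side of $e'$, since the graph is a tree.

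Second, the same observation gives the general step: adjacent blocks in the Hasse tree have $w$-values differing by exactly $1$. The tree is connected, so the set of values attained is a set of consecutive integers containing $0$. Hence the sorted entries of $w$ start at $0$ and increase by $0$ or $1$. The vector $w$ is nonzero because the partition is nontrivial, so $m\ge1$ and $w$ is not constant. Therefore $w=u_{\mathcal F}$ for the flag of superlevel sets $\{i: w_i\ge t\}$, $t=1,\dots,M$.

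\textbf{Main obstacle.} The delicate point is the claim that crossing a Hasse edge changes exactly one indicator $\mathbf e_{U_e}$. This is precisely where the tree hypothesis is used, and we would state it as a small separate claim. One also has to confirm that the ``barycenter'' of the tree-preposet cone is meant with primitive generators $\mathbf e_U$. This holds since these vectors are $0/1$ vectors, primitive modulo $\mathbf e_{[n]}$.
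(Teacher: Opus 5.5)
Your route is the paper's: sum the primitive generators $\mathbf e_{U_e}$ of the rays of ${}^{\wedge}C_{\mathcal P}$, and use that crossing one Hasse edge changes exactly one indicator. So adjacent blocks differ by exactly $1$ and the attained values form an interval. That part is correct.

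The step that fails is your first claim, that some block has value $0$. It is false in general. Take $n=3$ with blocks $\{1\},\{2\},\{3\}$ and $\{2\}\prec\{1\}$, $\{3\}\prec\{1\}$. The rays are $\mathbf e_{\{1,3\}}$ and $\mathbf e_{\{1,2\}}$, so $w=(2,1,1)$, whose minimum is $1$.

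Your descent argument only terminates at a \emph{minimal} element of $\mathcal P$: once no incident edge has the current block on its upper side, every move goes upward. A minimal block can still lie on the upper side of a non-incident edge. Here $\{2\}$ lies on the upper side of the edge $\{3\}\prec\{1\}$. In fact $w$ has a zero entry exactly when $\mathcal P$ has a unique minimal element. Consequently your final assertion $w=u_{\mathcal F}$ is false as stated. In general $w$ is not a generator of ${}^{\wedge}\mathcal B^2_n$, since those all have minimum entry $0$.

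The repair is the one line the paper's proof includes. Every ray of $\mathcal B^2_n$ has the form $\operatorname{Cone}\{u_{\mathcal F}\}+\operatorname{Span}\{\mathbf e_{[n]}\}$, so replace $w$ by $w-t\,\mathbf e_{[n]}$ with $t=\min_i w_i$. This spans the same ray of $\mathcal B^2_n$ and has a zero entry by construction. Your adjacent-difference argument is unaffected by the shift, so the values are $\{0,1,\dots,M\}$ with $M\geq 1$ because $w$ is not constant. Hence the shifted vector equals $u_{\mathcal F}$ for the flag of its superlevel sets. It is primitive because both $0$ and $1$ occur among its entries.
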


\begin{proof}
Let $\mathsf T$ be the directed tree which is the Hasse diagram for $\mathcal{P}$. We first recall that $C_{\mathcal{P}}$ has a standard pointed representative ${}^{\wedge}C_{\mathcal{P}}$ with the following ray generators associated to each edge $e = \mathscr T_a \lessdot \mathscr T_b$ of the Hasse diagram:

\[
u_e =  \sum_{\mathscr T_c} \mathbf e_{\mathscr T_c}
\]
where the sum ranges over all  $\mathscr T_c$ which lie in the same connected component of $\mathsf T \setminus e$ as $ \mathscr T_b$.  It suffices to prove that the primitive ray generator $u_{\mathcal{P}}$ for the barycenter of ${}^{\wedge}C_{\mathcal{P}}$ is a primitive ray generator for ${}^{\wedge}\mathcal B^2_n$.  We let

\[
\widetilde{u}_{\mathcal{P}} = \sum_{e}u_e.
\]

Take $i \in \mathscr T_c$ for some $\mathscr T_c \in \mathfrak T$.  The $i$th coordinate of $\widetilde{u}_{\mathcal{P}}$ is equal to the number of edges $\mathscr T_a \lessdot \mathscr T_b$ for which $\mathscr T_c$ lies in the component of $\mathsf T\setminus e$ containing $\mathscr T_b$.  Let $u_{\mathcal{P}}= \widetilde{u}_{\mathcal{P}} - t\cdot \mathbf e_{[n]}$ 
where $t$ is the minimum value of $\widetilde{u}_{\mathcal{P}}$.  We now show that $u_{\mathcal{P}}$ satisfies the characterization of primitive ray generators for ${}^{\wedge}\mathcal B^2_n$ given above.   By construction, $u_{\mathcal{P}}$ is nonnegative and has some entry equal to zero.   We will now prove that ${u_{\mathcal{P}}}$ has no jumps.  Let $i, j \in [n]$ and suppose, without loss of generality, that ${u_{\mathcal{P}}}_i < {u_{\mathcal{P}}}_j$.  Suppose $i \in \mathscr T_c$ and $j \in \mathscr T_d$.  Walk along the path from $\mathscr T_c$ to $\mathscr T_d$.  At each step, the value $u_{\mathcal{P}}$ associated to a vertex either increases or decreases by one.  Therefore, it takes on all intermediate values and has no jumps.

\end{proof}

\begin{lemma}\label{facetof2nesto}
Let $\mathsf{P} \in \mathbb{R}^n$ be a $2$-nestohedron.  Each ray of $\Sigma(\mathsf{P})$ is a ray of the 2-braid fan $\mathcal{B}^{2}_{n}$.  
\end{lemma}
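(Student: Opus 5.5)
By \Cref{def:highernesto}, $\mathsf P$ is a $\mathrm B$-truncation of a $1$-nestohedron $\mathsf Q$ along a building set $\mathrm B$ on $\mathsf Q$. In turn, $\mathsf Q$ is, up to normal equivalence, a $\mathrm B'$-truncation of $\Delta_n$ along a building set $\mathrm B'$ on the Boolean lattice, i.e. a nestohedron. The plan is to identify every ray of $\Sigma(\mathsf P)$ concretely and then invoke \Cref{forestposetbary}.

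First, by \Cref{thm:building} together with \Cref{cor:partial_building}, we have $\Sigma(\mathsf P)=\Gamma_{\mathrm B}(\Sigma(\mathsf Q))$. The rays of an iterated central stellar subdivision are the original rays of $\Sigma(\mathsf Q)$ together with the new rays $\operatorname{Cone}(u_{\sigma})+\mathsf L$, one for each $\sigma\in\mathrm B$, generated by the barycenter (sum of canonical ray generators) of the cone $\sigma=\operatorname{ncone}(\mathsf F,\mathsf Q)$ of the \emph{original} fan $\Sigma(\mathsf Q)$. The subdivisions are applied in reverse order of containment, so each $\sigma$ is still a cone when it is subdivided, and its barycenter is computed from its original rays. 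The rays of $\Sigma(\mathsf Q)$ itself are rays $e_S+\mathsf L$ of the braid fan. Each of these equals $u_{\mathcal F}$ for the one-element flag $\mathcal F=\{S\}$, so these old rays are already rays of $\mathcal B^2_n$. It therefore remains to handle the barycenters of cones $\sigma\in\Sigma(\mathsf Q)$ of dimension at least $2$.

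Second, I will show that every cone of the nested fan $\Sigma(\mathsf Q)$ is a tree-preposet cone. By \Cref{cor:partial_building}, $\Sigma(\mathsf Q)$ coarsens the braid fan $\mathcal B^1_n$. Moreover, $\Sigma(\mathsf Q)$ is simplicial, being an iterated stellar subdivision of the simplicial fan $\mathcal B^0_n$. Hence each cone $\sigma$ of $\Sigma(\mathsf Q)$ is a simplicial cone that is a union of braid cones. By the Postnikov--Reiner--Williams dictionary quoted before \Cref{forestposetbary}, $\sigma=C_{\mathcal P}$ for some tree preposet $\mathcal P$. One check is needed here: the canonical ray generators of $\sigma$ must coincide with the edge generators $u_e$ of ${}^{\wedge}C_{\mathcal P}$ modulo $e_{[n]}$. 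This holds because both are primitive 0/1 vectors $e_S$ spanning the same rays of the braid fan.

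Third, apply \Cref{forestposetbary}. The barycenter $u_\sigma=\sum_e u_e$, shifted by a multiple of $e_{[n]}$, is a primitive ray generator of ${}^{\wedge}\mathcal B^2_n$. Therefore every new ray introduced by $\mathrm B$ is a ray of $\mathcal B^2_n$, and the lemma follows.

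The main obstacle is the second step. One must make sure that the cones of a general nestohedral fan, and not only the maximal ones, are tree-preposet cones with the expected generators. The faces of a tree-preposet cone correspond to contracting edges of the tree, which yields tree preposets again, so the lower-dimensional cones are covered. If needed, one can verify this directly from the nested-set description: a nested set gives a forest of subsets of $[n]$, and adjoining the root $[n]$ produces a tree preposet.
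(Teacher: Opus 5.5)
Your proposal is correct and follows the same route as the paper. The rays of $\Sigma(\mathsf P)$ are barycenters of cones of the nested fan $\Sigma(\mathsf Q)$; each such cone is a tree-preposet cone because $\Sigma(\mathsf Q)$ is simplicial and coarsens $\mathcal B^1_n$; and \Cref{forestposetbary} finishes the argument.

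Your extra checks make explicit what the paper leaves implicit: that each barycenter is computed from the original rays, and that the canonical generators agree with the edge generators $u_e$ modulo $e_{[n]}$. One small citation point: for a sequential $\mathrm B$-truncation, $\Sigma(\mathsf P)=\Gamma_{\mathrm B}$ follows from applying \Cref{lem:shallow_truncation_stellar} step by step, not from \Cref{thm:building}, which is about Minkowski sums.
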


\begin{proof}
Let $\mathsf{Q}$ be a nestohedron such that $\Sigma(\mathsf{P})$ is obtained from $\Sigma(\mathsf{Q})$ by a sequence of central stellar subdivisions along a building set $\mathrm{B}$.  Then each ray $\rho$ of $\Sigma(\mathsf{P})$ is generated by the barycenter of a face $\sigma \in \Sigma(\mathsf{Q})$.  Because $\mathsf{Q}$ is a smooth generalized permutahedron, each cone in $\Sigma(\mathsf{Q})$ is a tree-preposet cone and the desired statement holds by \Cref{forestposetbary}.\end{proof}

Despite the fact that the rays of $\Sigma(\mathsf{P})$ are rays of $\mathcal{B}^{2}_{n}$, $\mathsf{P}$ may not live in the deformation cone of the 2-permutahedron because $\Sigma(\mathsf{P})$ may not be a coarsening of $\mathcal{B}^{2}_{n}$.  On the other hand, McMullen showed that the collection of all support functions of polytopes on a fixed set of rays defines a cone, which he calls the closed inner region, and this cone is subdivided by deformation cones of polytopes whose facet normals are a subset of the given ones \cite{mcmullen1973representations}; see also \cite[Lemma~2.16 and Theorem~2.18]{mcmullen2026monotypic}.  In light of Lemma \ref{facetof2nesto}, these results imply the following fact.

\begin{proposition}\label{prop:fan_of_2_nestohedra}
The collection of all deformation cones of 2-nestohedra, together with their faces, forms a fan.  We call this fan the \textbf{2-nestohedral fan}.
\end{proposition}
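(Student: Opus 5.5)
The plan is to realize every deformation cone of a $2$-nestohedron as a cone inside a single ambient space, and then invoke McMullen's subdivision of the closed inner region. Let $\mathbf U$ be the finite set of primitive ray generators $u_{\mathcal F}$ of ${}^{\wedge}\mathcal{B}^{2}_{n}$, indexed by flags $\mathcal F$ of proper subsets of $[n]$. Add the lineality directions $\pm\mathbf e_{[n]}$, or equivalently work modulo $\operatorname{Span}\{\mathbf e_{[n]}\}$ as in \Cref{prop:eta}. Let $V=\mathbb{R}^{\mathbf U}$. By \Cref{facetof2nesto}, every ray of the normal fan of any $2$-nestohedron $\mathsf P$ is among the rays indexed by $\mathbf U$. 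Hence every deformation $\mathsf Q$ of $\mathsf P$ has the form $\{x : u^{\intercal}x\ge h(\mathsf Q,u),\ u\in\mathbf U\}$. Here the inequalities coming from rays not in $\Sigma(\mathsf P)$ are not facet-defining, and the values of $h(\mathsf Q,\cdot)$ on these extra rays are determined by the piecewise linear extension on $\Sigma(\mathsf P)$. We therefore embed $\operatorname{Def}(\mathsf P)\hookrightarrow V$ by $\mathsf Q\mapsto (h(\mathsf Q,u))_{u\in\mathbf U}$, the full support vector on all of $\mathbf U$. This map is linear on $\operatorname{Def}(\mathsf P)$ and injective, so its image is a polyhedral cone in $V$ of the same dimension.

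Next, we identify these images with cells of McMullen's decomposition. McMullen's closed inner region $\mathcal I(\mathbf U)\subseteq V$ consists of the support vectors $(h(\mathsf Q,u))_{u\in\mathbf U}$ of all polytopes $\mathsf Q$ whose facet normals lie in $\mathbf U$. It is a polyhedral cone, and it is subdivided into a polyhedral fan whose cells are the closures of the type cones of the normal fans $\Sigma$ with rays in $\mathbf U$; the closure of a type cone is exactly $\operatorname{Def}(\Sigma)$, embedded as above. We quote this from \cite{mcmullen1973representations} and \cite[Lemma~2.16 and Theorem~2.18]{mcmullen2026monotypic}. Since each $\Sigma(\mathsf P)$, for $\mathsf P$ a $2$-nestohedron, is a complete fan with rays in $\mathbf U$, the embedded cone $\operatorname{Def}(\mathsf P)$ is one of the cells of this fan. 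The faces of $\operatorname{Def}(\mathsf P)$ are the deformation cones of coarsenings of $\Sigma(\mathsf P)$, and these are again cells of McMullen's fan.

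It follows that the collection in the statement, namely all $\operatorname{Def}(\mathsf P)$ together with their faces, is a subcollection of the cells of a polyhedral fan that is closed under taking faces. Any such subcollection is itself a fan, since pairwise intersections are common faces because they already are in the ambient fan.

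The main obstacle is the compatibility of the embeddings. Two different $2$-nestohedra have normal fans with different ray sets, and $\operatorname{Def}(\Sigma(\mathsf P))$ was defined intrinsically in $\operatorname{PL}(\Sigma(\mathsf P))$. We must check that the map to $V$ through full support vectors is exactly the identification McMullen uses, and that it intertwines face relations. The point to verify is that, on $\operatorname{Def}(\Sigma)$, the support value at a ray $u\notin\Sigma$ equals the linear extension of $h$ on the cone of $\Sigma$ containing $u$. This holds because $h(\mathsf Q,\cdot)$ is linear on each cone of $\Sigma(\mathsf Q)$, and $\Sigma(\mathsf Q)$ coarsens $\Sigma$. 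Once this is checked, the statement follows directly from McMullen's theorem.
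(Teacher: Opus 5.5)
Your proposal is correct and is essentially the paper's argument. \Cref{facetof2nesto} places the rays of every $2$-nestohedron's normal fan among the rays of $\mathcal{B}^{2}_{n}$, so all the deformation cones embed compatibly (via support values on these rays) into McMullen's closed inner region, whose subdivision by deformation cones of polytopes with facet normals among these rays is a fan; a face-closed subcollection of a fan is again a fan. Your explicit check that the embeddings are compatible, namely that values on extra rays are forced by linearity on the cones of the coarser fan, spells out what the paper leaves implicit.
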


This fan is quite attractive as it places so many interesting polytopes all in one place: all generalized permutahedra, all deformations of 2-permutahedra (e.g. the 2-permutahedra, permutonestohedra, and the simple permutonestohedra) as well as the cosmohedra, and presumably many more fascinating polytopes yet to be discovered!  See \Cref{applicationssection} for more on these examples.

\begin{remark}
    The above construction can be extended and generalized in the following ways.  
    \begin{enumerate}
    \item The only fact which we used about $\mathsf{Q}$ in \Cref{facetof2nesto} was that it was a smooth generalized permutahedron.  Thus one could enlarge the 2-nestohedral fan to be the fan of all deformations of polytopes obtained by truncations of smooth generalized permutahedra along building sets. 
    \item Lemma \ref{facetof2nesto} can be upgraded to all Delzant polytopes: any facet normal of a 2-pass truncation of a Delzant polytope $\mathsf{S}$ is a facet normal of a 2-omnitruncation of $\mathsf{S}$.\footnote{We take a  2-omnitruncation of $\mathsf{S}$ to mean an omnitruncation of an omnitruncation of $\mathsf{S}$.}  This is because each cone of the normal fan of $\mathsf{S}$ is unimodular and, by a unimodular transformation, can be treated as a cone in the normal fan of the standard simplex as in the proof of \Cref{prop:local_building}.  Thus the collection of all deformations of  2-pass truncations of $\mathsf{S}$ forms a fan.   This can be further extended to simple nonsmooth polytopes if one is willing to work with noncentral truncation directions.
    \end{enumerate} 
    
    One can combine the two observations above to produce even larger fans.
    \end{remark}

\section{Products of simplices}\label{sec:polysimplex}
In this section, we study the deformation space of the barycentric subdivision of the normal fan of a product of standard simplices.\footnote{Castillo--Doolittle--Goeckner--Ross--Ying proved that the nef cone of
every polytope combinatorially isomorphic to a product of simplices is
simplicial \cite{castillo2022minkowski}.  Here we instead study the
deformation cone obtained after barycentrically subdividing the normal
fan of a product of standard simplices.} 

\subsection{Polysimplices}

\begin{definition}\label{def:setup}
    Let $k \geq 1$, and let $\mathbf{S} = (S_1, \dots, S_k)$ be a tuple of nonempty finite sets.
    Let $\mathscr{S}$ denote the disjoint union of the sets in $\mathbf{S}$.
    We define the  \textbf{polysimplex}
    \begin{equation*}
        \Delta(\mathbf{S}) \coloneqq \prod_{i \in [k]} \Delta(S_i)
        \subseteq \bigoplus_{i \in [k]} \mathbb{R}^{S_i} \cong \mathbb{R}^{\mathscr{S}}.
    \end{equation*}
\end{definition}

\begin{remark}\label{rem:singleton}
    If some $S_i$ is a singleton, then $\Delta(S_i)$ is a point and the factor contributes
    nothing to the combinatorial type of $\Delta(\mathbf{S})$; such factors can be ignored
    without loss of generality.
\end{remark}

\begin{remark}\label{rem:embed}
    We write $\mathbb{R}^{\mathscr{S}} = \bigoplus_{S \in \mathbf{S}} \mathbb{R}^S$ as a direct
    sum to retain the canonical inclusions $\mathbb{R}^S \hookrightarrow \mathbb{R}^{\mathscr{S}}$.
    These allow us to view any polytope defined in $\mathbb{R}^S$, or in a sum over a
    subcollection of $\mathbf{S}$, as naturally embedded in the ambient space $\mathbb{R}^{\mathscr{S}}$.
\end{remark}

Recall that $\{e_s \mid s \in \mathscr{S}\}$ is the canonical basis for $\mathbb{R}^{\mathscr{S}}$.

\begin{example}\label{ex:running_example_polysimplex}
    Consider the product of two triangles $\Delta_3 \times \Delta_3$.
    Set $\mathbf{S} = (S_1, S_2)$ with $S_1 = \{A,B,C\}$ and $S_2 = \{X,Y,Z\}$, so
    $\Delta_3 \times \Delta_3 \cong \Delta(\mathbf{S})$ lives in
    $\mathbb{R}^{S_1} \oplus \mathbb{R}^{S_2} \cong \mathbb{R}^{\{A,B,C,X,Y,Z\}}$
    with canonical basis $\{e_A, e_B, e_C, e_X, e_Y, e_Z\}$. 
    \end{example}

\begin{figure}[htbp]
    \centering
    \includegraphics[width=.6\textwidth]{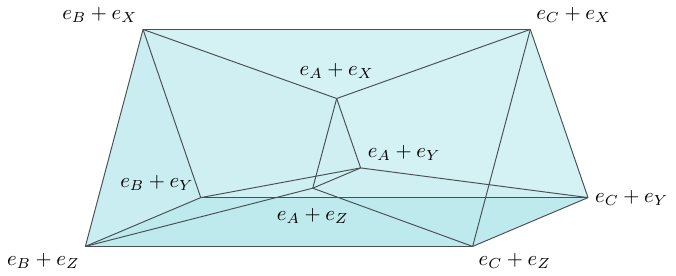}
    \caption{A Schlegel diagram of $\Delta_3 \times \Delta_3$.}
    \label{fig:triangle-product-schlegel}
\end{figure}

We identify $\mathbb{R}^{\mathscr{S}}$ with its own dual via the standard
inner product
\[
x \cdot y = \sum_{s \in \mathscr{S}} x(s)y(s),
\]
so that $e_s$ is identified with the coordinate functional
$x \mapsto x(s)$.

The facet-inequality description of the polysimplex $ \Delta ( \mathbf{S} ) $ is
	\begin{equation}\label{eq:polysimplex_Hrep}
		\Delta( \mathbf{S} ) =
		\left\{ x \in \mathbb{R}^{\mathscr{S}} \,\middle|\, 
			\begin{array}{rl}
				x(s) \geq 0,& \text{ for each }s\in \mathscr{S},\\
				\sum_{s\in S_i} x(s) = 1,& \text{ for each factor }S_i.
\end{array}
	\right\}
	\end{equation}

\begin{definition}\label{def:valid}
A tuple $\mathbf{A} = (A_1, \dots, A_k)$ is \textbf{valid} (with respect to $\mathbf{S}$) if
$\emptyset \subsetneq A_i \subseteq S_i$ for each $i$.
Valid tuples are in bijection with nonempty faces of $\Delta(\mathbf{S})$, with containment
$\Delta(\mathbf{A}) \subseteq \Delta(\mathbf{B})$ if and only if $A_i \subseteq B_i$ for each $i$.
We write $\overline{\mathbf{A}} \coloneqq \mathscr{S} \setminus \bigcup_i A_i$ for the complement of the
support of $\mathbf{A}$.
The face $\Delta(\mathbf{A})$, itself a polysimplex, is
\begin{equation}\label{eq:polysimplex_face}
    \Delta(\mathbf{A}) = \left\{ p \in \Delta(\mathbf{S}) \subseteq \mathbb{R}^{\mathscr{S}}
    \,\middle|\, p(s) = 0,\; s \in \overline{\mathbf{A}} \right\}.
\end{equation}
The inner normal vector for $\Delta(\mathbf A)\subseteq\Delta(\mathbf S)$ is
\begin{equation}\label{eq:normal_vector_polysimplex_face}
    u_{\mathbf{A}} \coloneqq \sum_{s \in \overline{\mathbf{A}}} e_s.
\end{equation}
\end{definition}

We define the \textbf{slack} of a valid tuple by
\begin{equation}\label{eq:gap_defi}
    \mathrm{Slk}(\mathbf{A}) \coloneqq \{ i \in [k] : A_i \neq S_i \},
    \qquad
    \mathrm{slk}(\mathbf{A}) \coloneqq |\mathrm{Slk}(\mathbf{A})|.
\end{equation}

\begin{example}\label{ex:running_example_polysimplex_faces}
In \Cref{ex:running_example_polysimplex} we have a two-dimensional face defined by the equalities $ p(X)=p(Y)=0 $.
This corresponds to the valid tuple $ \mathbf{A} = (\left\{ A,B,C \right\}, \left\{ Z \right\}) $.
The face is the polysimplex $ \Delta (\mathbf{A}) $ which is a triangle.
In this case the tuple has slack equal to one.
\end{example}

\begin{proposition}\label{prop:factor_out}
    Let $\mathbf{A} \subsetneq \mathbf{S}$ be a valid tuple, and partition $\mathbf{S}$ as
    \begin{equation}\label{eq:transversal_A}
        \mathbf{T}_\mathbf{A} = \left( S_i \in \mathbf{S} \,\middle|\, i \in \mathrm{Slk}(\mathbf{A}) \right),
        \qquad
        \overline{\mathbf T}_\mathbf{A} = \left( S_i \in \mathbf{S} \,\middle|\, i \notin \mathrm{Slk}(\mathbf{A}) \right).
    \end{equation}
    Then the deep truncation and fragment decompose as
    \begin{align}
        \operatorname{DTr}\left[\Delta(\mathbf{S}),\, \Delta(\mathbf{A})\right]
            &= \operatorname{DTr}\left[\Delta(\mathbf{T}_\mathbf{A}),\, \Delta(\mathbf{A}|_{\mathbf{T}_\mathbf{A}})\right]
               \times \Delta(\overline{\mathbf T}_\mathbf{A}), \label{eq:factor_out} \\
        \operatorname{Fr}\left[\Delta(\mathbf{S}),\, \Delta(\mathbf{A})\right]
            &= \operatorname{Fr}\left[\Delta(\mathbf{T}_\mathbf{A}),\, \Delta(\mathbf{A}|_{\mathbf{T}_\mathbf{A}})\right]
               \times \Delta(\overline{\mathbf T}_\mathbf{A}). \label{eq:factor_out_II}
    \end{align}
\end{proposition}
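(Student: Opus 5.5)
The plan is to compute everything explicitly from the inequality description \eqref{eq:polysimplex_Hrep}, using the fact that the truncating functional $u_{\mathbf A}$ only involves coordinates from factors in $\mathrm{Slk}(\mathbf A)$. Indeed, if $i\notin\mathrm{Slk}(\mathbf A)$ then $A_i=S_i$, so no $s\in S_i$ lies in $\overline{\mathbf A}$. Consequently $u_{\mathbf A}=\sum_{s\in\overline{\mathbf A}}e_s$ is supported on $\bigoplus_{i\in\mathrm{Slk}(\mathbf A)}\mathbb R^{S_i}$. Under the splitting $\mathbb R^{\mathscr S}=\mathbb R^{\mathscr T}\oplus\mathbb R^{\overline{\mathscr T}}$, this vector is exactly the canonical inner normal of the face $\Delta(\mathbf A|_{\mathbf T_{\mathbf A}})$ of $\Delta(\mathbf T_{\mathbf A})$. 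Here I write $\mathscr T,\overline{\mathscr T}$ for the unions of the two subcollections.

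First I will record that $\Delta(\mathbf S)=\Delta(\mathbf T_{\mathbf A})\times\Delta(\overline{\mathbf T}_{\mathbf A})$, and that for any $x=(y,z)$ in this product we have $u_{\mathbf A}^{\intercal}x=u_{\mathbf A}^{\intercal}y$. It follows that $h(\Delta(\mathbf S),u_{\mathbf A})=h(\Delta(\mathbf T_{\mathbf A}),u_{\mathbf A})$, and this common value is $0$.

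Next I will check that the two depths agree, i.e. that $\delta$ computed in $\Delta(\mathbf S)$ for $\Delta(\mathbf A)$ equals $\delta$ computed in $\Delta(\mathbf T_{\mathbf A})$ for $\Delta(\mathbf A|_{\mathbf T_{\mathbf A}})$. The face $\Delta(\mathbf A)$ equals $\Delta(\mathbf A|_{\mathbf T_{\mathbf A}})\times\Delta(\overline{\mathbf T}_{\mathbf A})$. Edges of a product are of the form edge $\times$ vertex or vertex $\times$ edge. An edge leaving this face from a vertex $(y,z)$ must therefore move in the first factor, going from $y\in\Delta(\mathbf A|_{\mathbf T_{\mathbf A}})$ to a neighbor $y'$ of that face in $\Delta(\mathbf T_{\mathbf A})$. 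An edge moving in the second factor stays inside the face. So the neighbors of $\Delta(\mathbf A)$ are exactly the pairs $(y',z)$ with $y'$ a neighbor in the first factor. Since $u_{\mathbf A}^{\intercal}(y',z)=u_{\mathbf A}^{\intercal}y'$, the minima defining $\delta$ coincide. Alternatively, both simplices are primitive Delzant, so by \Cref{prop:delzant} both depths equal $1$; this gives a quick check, though the neighbor argument is more transparent.

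Finally, with $\delta$ equal and the defining functional depending only on the first factor, the set
\[
\{(y,z)\in\Delta(\mathbf T_{\mathbf A})\times\Delta(\overline{\mathbf T}_{\mathbf A}) : u_{\mathbf A}^{\intercal}y\ge\delta\}
\]
is visibly $\operatorname{DTr}[\Delta(\mathbf T_{\mathbf A}),\Delta(\mathbf A|_{\mathbf T_{\mathbf A}})]\times\Delta(\overline{\mathbf T}_{\mathbf A})$. The same computation with $\le\delta$ gives \eqref{eq:factor_out_II}.

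The only subtle point is the neighbor/edge description of products, which is needed to match the $\delta$'s. This is the step I would write out carefully; everything else is bookkeeping of coordinates via \Cref{rem:embed}.
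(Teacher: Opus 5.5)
Your proposal is correct and follows the same route as the paper: $u_{\mathbf A}$ is supported only on coordinates from the slack factors, so the inequality cutting out the deep truncation (respectively the fragment) involves only the first factor. The defining system therefore separates along $\Delta(\mathbf T_{\mathbf A})\times\Delta(\overline{\mathbf T}_{\mathbf A})$. Your explicit check that the depth $\delta$ is the same in $\Delta(\mathbf S)$ and in $\Delta(\mathbf T_{\mathbf A})$ (via the edge structure of a product, or because both depths equal $1$) fills in a point the paper leaves implicit when it writes ``for some constant $C$''.
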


\begin{proof}
    By \eqref{eq:normal_vector_polysimplex_face}, the deep truncation and fragment are cut, respectively, from
    the system \eqref{eq:polysimplex_Hrep} by inequalities
    \begin{equation}\label{eq:factor_out_proof}
        \sum_{s \in \overline{\mathbf{A}}} p(s) \geq C \text{ and } \sum_{s \in \overline{\mathbf{A}}} p(s) \leq C
    \end{equation}
    for some constant $C$.
    Under the decomposition $\mathbb{R}^{\mathscr{S}} = \prod_{S_i \in \mathbf{T}_\mathbf{A}} \mathbb{R}^{S_i}
    \times \prod_{S_i \in \overline{\mathbf T}_\mathbf{A}} \mathbb{R}^{S_i}$,
    each of the inequalities \eqref{eq:factor_out_proof} involves only variables from the first factor,
    so the two systems separate and the product decomposition follows.
\end{proof}

\begin{remark}
    Each polytope $\operatorname{DTr}\!\left[\Delta(\mathbf{T}_\mathbf{A}),\,
    \Delta(\mathbf{A}|_{\mathbf{T}_\mathbf{A}})\right]$ is regarded inside $\mathbb{R}^{\mathscr{S}}$
    via the natural embedding of \Cref{rem:embed}.
\end{remark}

\begin{example}\label{ex:deep_truncation_edge_3d}
    See \Cref{fig:factor}. Consider the polysimplex $\Delta(\mathbf{S})$ for $\mathbf{S} = (\{1,2\},\{3,4\},\{5,6\})$,
    the product of three segments, i.e. a cube.
    The valid tuple $\mathbf{A} = (\{1\},\{3\},\{5,6\})$ has slack two, since $A_i \neq S_i$
    only for $i = 1, 2$.
    The corresponding face is the segment with endpoints $e_1 + e_3 + e_5$ and $e_1 + e_3 + e_6$.
    The flat truncation along this face removes these two vertices from the cube.
    By \Cref{prop:factor_out}, the result is the product of the deep truncation of the vertex
    $e_1 + e_3$ inside $\Delta(\{1,2\}) \times \Delta(\{3,4\})$ (a square) with the segment
    $\Delta(\{5,6\})$.
\end{example}
\begin{figure}[!htb]
    \centering
    \tdplotsetmaincoords{70}{110}
    \input{tikz/factor.tex}
    \caption{The flat truncation along an edge of the $3$-cube, exhibiting its factorization as the product of a segment and a triangle.}
    \label{fig:factor}
\end{figure}

The polytopes $\operatorname{DTr}\!\left[\Delta(\mathbf{T}_\mathbf{A}),\,
\Delta(\mathbf{A}|_{\mathbf{T}_\mathbf{A}})\right]$ and $\Delta(\overline{\mathbf T}_\mathbf{A})$
in \eqref{eq:factor_out}--\eqref{eq:factor_out_II} are canonically embedded in
$\mathbb{R}^{\mathscr{S}}$ via \Cref{rem:embed}.
We say that a valid tuple $\mathbf{A} \subseteq \mathbf{S}$ is \textbf{full} if
$\mathrm{slk}(\mathbf{A}) = k$.
Note that $\mathbf{A}$ is always full when restricted to $\mathbf{T}_\mathbf{A}$.

\begin{corollary}\label{cor:multisimplex_drop_dimension}
    Let $\mathbf{A} \subseteq \mathbf{S}$ be a valid tuple.
    \begin{enumerate}
        \item If $\mathrm{slk}(\mathbf{A}) > 1$, then both the deep truncation and the fragment
              have the same dimension as $\Delta(\mathbf{S})$.
        \item If $\mathrm{slk}(\mathbf{A}) = 1$, then the fragment is $\Delta(\mathbf{S})$ itself,
              and the deep truncation is a face of $\Delta(\mathbf{S})$.
        \item If $\mathrm{slk}(\mathbf{A}) = 0$, then $\Delta(\mathbf{A}) = \Delta(\mathbf{S})$.
            \end{enumerate}
\end{corollary}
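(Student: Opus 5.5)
The plan is to reduce everything to the case of a full tuple via \Cref{prop:factor_out}, and then compute the deep truncation explicitly using flatness. Since each $\Delta(S_i)$ is primitive Delzant with respect to the lattice $\{x\in\mathbb Z^{S_i}\}$ modulo lineality, the product $\Delta(\mathbf S)$ is primitive Delzant as well. Hence \Cref{prop:delzant} applies: every deep truncation is flat and cut out by $\sum_{s\in\overline{\mathbf A}}p(s)\geq 1$. Correspondingly, the fragment is cut out by $\sum_{s\in\overline{\mathbf A}}p(s)\leq 1$. On $\Delta(\mathbf S)$ the function $\phi(p)=\sum_{s\in\overline{\mathbf A}}p(s)$ is a sum over $i\in\mathrm{Slk}(\mathbf A)$ of the terms $\phi_i(p)=\sum_{s\in S_i\setminus A_i}p(s)\in[0,1]$. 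These terms are independent, and each ranges over all of $[0,1]$. Therefore $\phi$ ranges over $[0,\mathrm{slk}(\mathbf A)]$.

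Item 3 is immediate. If $\mathrm{slk}(\mathbf A)=0$, then $A_i=S_i$ for all $i$, so $\Delta(\mathbf A)=\Delta(\mathbf S)$.

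For item 2, suppose $\mathrm{slk}(\mathbf A)=1$ with slack index $j$. Then $\phi=\phi_j\leq 1$ on all of $\Delta(\mathbf S)$, so the fragment inequality is redundant and $\operatorname{Fr}=\Delta(\mathbf S)$. Meanwhile the deep truncation is the set where $\phi_j=1$. This is the face $\{p(s)=0 \text{ for } s\in A_j\}$, i.e. $\Delta(\mathbf S')$ with $S'_j=S_j\setminus A_j$ and $S'_i=S_i$ otherwise. It is a proper face, since $A_j\neq\emptyset$ by validity. Equivalently, by \Cref{prop:factor_out} this is $\operatorname{DTr}[\Delta(S_j),\Delta(A_j)]\times\Delta(\overline{\mathbf T}_{\mathbf A})$, and \Cref{lem:standard_simplex_deep} identifies the first factor as $\Delta(S_j\setminus A_j)$.

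For item 1, suppose $\mathrm{slk}(\mathbf A)\geq 2$. By \Cref{prop:factor_out} it suffices to show that both pieces are full-dimensional inside $\Delta(\mathbf T_{\mathbf A})$. The value $1$ lies strictly inside the range $(0,\mathrm{slk}(\mathbf A))$ of $\phi$. I would exhibit relative-interior points of $\Delta(\mathbf T_{\mathbf A})$ with $\phi>1$ and with $\phi<1$ as follows. Take the point with all coordinates positive and each $\phi_i$ equal to a common value $t\in(0,1)$. Choosing $t$ slightly above or slightly below $1/\mathrm{slk}(\mathbf A)$ gives $\phi=t\cdot\mathrm{slk}(\mathbf A)$ on either side of $1$. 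Such points exist because $A_i$ and $S_i\setminus A_i$ are both nonempty. Each of the halfspaces $\{\phi\geq1\}$ and $\{\phi\leq1\}$ therefore contains a relative-interior point of the polytope, so each intersection has full dimension. Taking the product with $\Delta(\overline{\mathbf T}_{\mathbf A})$ preserves full dimension.

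I expect the only delicate point to be verifying that the depth $\delta$ equals $1$ in the product setting, i.e. that $\Delta(\mathbf S)$ really is primitive Delzant in the appropriate quotient lattice. If that gets fiddly, I would instead compute directly: the neighbors of $\Delta(\mathbf A)$ are obtained by moving one coordinate in one slack factor from $A_i$ to $S_i\setminus A_i$, so they have $\phi=1$. This gives the cutoff value $1$ without invoking \Cref{prop:delzant}.
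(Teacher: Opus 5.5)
Your proof is correct and follows the paper's route: the paper states this as an immediate consequence of the product decomposition in \Cref{prop:factor_out}, writing out the slack-one deep truncation in \eqref{eq:deep_truncation_slk_1}, and your level-set analysis of $\phi=\sum_{i\in\mathrm{Slk}(\mathbf A)}\phi_i$ supplies the details the paper leaves implicit. You also do not need flatness or the primitive Delzant property: once your fallback computation gives $\delta=1$ from the neighbors of $\Delta(\mathbf A)$, the deep truncation and the fragment are by definition $\{\phi\geq 1\}$ and $\{\phi\leq 1\}$.
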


In the case $\mathrm{slk}(\mathbf{A}) = 1$, say $A_i \subsetneq S_i$ for a unique index $i$,
the deep truncation is
\begin{equation}\label{eq:deep_truncation_slk_1}
    \operatorname{DTr}\left[\Delta(\mathbf{S}),\, \Delta(\mathbf{A})\right] = \Delta(\mathbf{A}'),
\end{equation}
where $\mathbf{A}'$ is the tuple with $A'_i = S_i \setminus A_i$ and $A'_j = S_j$ for $j \neq i$.

\begin{lemma}\label{lem:face_deformation_omni}
    Each face of $\Delta(\mathbf{S})$ is a deformation of $\operatorname{Omni}(\Delta(\mathbf{S}))$.
\end{lemma}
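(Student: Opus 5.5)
We will show that $\Sigma(\operatorname{Omni}(\Delta(\mathbf S)))=\operatorname{Bar}(\Sigma(\Delta(\mathbf S)))$ refines $\Sigma(\Delta(\mathbf A))$ for every valid tuple $\mathbf A$, and then conclude by \Cref{prop:coarsening_summand}. The first step is to reduce to a single factor. Write $\Delta(\mathbf A)=\prod_i\Delta(A_i)$, viewed as a Minkowski sum of the polytopes $\Delta(A_i)\subseteq\mathbb R^{S_i}$ embedded via \Cref{rem:embed}. A Minkowski sum of deformations of $\operatorname{Omni}(\Delta(\mathbf S))$ is again a deformation, since by \eqref{eq:sum_refinement} its normal fan is the common refinement of fans that each coarsen the given fan. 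It therefore suffices to show that each $\Delta(A_i)$, embedded in $\mathbb R^{\mathscr S}$, is a deformation. When $A_i=S_i$ this is a face with slack $0$ in that factor, and the same argument applies.

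The second step treats one factor. By \Cref{lem:standard_simplex_deep}, $\Delta(A_i)$ is the deep truncation of $\Delta(S_i)$ along $\Delta(S_i\setminus A_i)$ when $A_i\neq S_i$, and it is $\Delta(S_i)$ itself otherwise. Hence $\Delta(A_i)$ is a deformation of $\operatorname{Omni}(\Delta(S_i))$ by \Cref{thm:basis_seed} (in its codimension-one form, \Cref{prop:key_non_full}, and \Cref{ex:postnikov_basis}); this is the classical statement that the simplices are generalized permutahedra. Next, by the Corollary following \Cref{prop:omni_product}, applied to $\mathsf P=\Delta(S_i)$ and $\mathsf Q=\prod_{j\ne i}\Delta(S_j)$ (possibly after reordering factors, which is harmless), $\operatorname{Omni}(\Delta(S_i))$ embedded with zero in the other coordinates is a deformation of $\operatorname{Omni}(\Delta(\mathbf S))$. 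Deformation is transitive: if $\Sigma(\mathsf Q)$ coarsens $\Sigma(\mathsf R)$ and $\Sigma(\mathsf R)$ coarsens $\Sigma(\mathsf P)$, then $\Sigma(\mathsf Q)$ coarsens $\Sigma(\mathsf P)$. Therefore $\Delta(A_i)$ is a deformation of $\operatorname{Omni}(\Delta(\mathbf S))$.

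The main point needing care is the embedding. Regarding a polytope in $\mathbb R^{S_i}$ as sitting in $\mathbb R^{\mathscr S}$ with zeros elsewhere replaces its normal fan $\Sigma$ by $\Sigma\times\mathbb R^{\mathscr S\setminus S_i}$, and the corollary used above is phrased with exactly this convention, so the refinement transfers. Everything is non-full-dimensional, so lineality must be tracked; by \Cref{prop:eta} and the lineality-handling paragraph in the proof of \Cref{prop:omni_product}, coarsening statements are unaffected by adding the common lineality space. If a direct check is preferred, there is an alternative route: a maximal cone of $\operatorname{Bar}$ is spanned by $u_{\mathbf H_r}=\sum_i u^{(i)}_{H_{r,i}}$ along a saturated chain. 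Along such a chain each factor's coordinate chain is monotone, so on the cone the linear functional $\min_{p\in\Delta(A_i)}$ is attained at a single vertex $e_a$ with $a\in A_i$, namely the one lying in $H_{r,i}$ for the longest run; this shows linearity of the support function on each maximal cone.
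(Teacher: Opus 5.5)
Your proof is correct, but it handles the key step differently from the paper. Both arguments write $\Delta(\mathbf A)=\sum_i\Delta(A_i)$ and use that deformations are closed under Minkowski sums and summands. The difference is in how you show that a single $\Delta(A_i)$ is a deformation.

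The paper stays inside the polysimplex. For $A_i\subsetneq S_i$, take the slack-one deep truncation of $\Delta(\mathbf S)$ along the face whose $i$-th entry is $S_i\setminus A_i$ and whose other entries are $S_j$. By \eqref{eq:deep_truncation_slk_1} this truncation is $\Delta(A_i)\times\prod_{j\neq i}\Delta(S_j)$. It is a deformation of the omnitruncation, since deep truncations of a simple polytope are (via \Cref{lem:convtrunc} and the stellar-subdivision picture). Then $\Delta(A_i)$ is a Cartesian factor of it, hence a Minkowski summand.

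You instead pass through the single simplex. First, $\Delta(A_i)$ is a generalized permutahedron, i.e.\ a deformation of $\operatorname{Omni}(\Delta(S_i))$. Then the corollary of \Cref{prop:omni_product} places $\operatorname{Omni}(\Delta(S_i))$ in the deformation cone of $\operatorname{Omni}(\Delta(\mathbf S))$.

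The two routes buy different things:
\begin{itemize}
\item The paper's route is lighter. It needs only the basic truncation mechanism and says nothing about how barycentric subdivision interacts with products.
\item Your route uses the staircase/interleaving argument of \Cref{prop:omni_product}, but it proves more. It shows that every product of generalized permutahedra on the factors is a deformation, which is exactly \Cref{prop:psi_refines_braid_product}, proved right after this lemma.
\item Your closing direct check is also valid and is the most elementary of the three. Along a saturated chain, the elements of $S_i$ enter at distinct times, so the earliest-entering $a\in A_i$ minimizes over $\Delta(A_i)$ on the whole maximal cone. This avoids both truncations and the product proposition.
\end{itemize}
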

\begin{proof}
    A face $\Delta(A_1) \times \dots \times \Delta(A_k)$ coincides with
    the Minkowski sum $\Delta(A_1) + \dots + \Delta(A_k)$ under the embedding of \Cref{rem:embed}.
If $A_i \subsetneq S_i$, then $\Delta(A_i)$ is a factor of the deep truncation described in \Cref{eq:deep_truncation_slk_1} which is, by construction, a deformation of $\operatorname{Omni}(\Delta(\mathbf{S}))$.  If $A_i = S_i$, then $\Delta(A_i)$ is a factor of $\Delta(\mathbf{S})$, which is a deformation of $ \operatorname{Omni}(\Delta(\mathbf{S}))$.
    \end{proof}

\subsection{The normal fan of a polysimplex and its barycentric subdivision}

Let $\Sigma(\mathbf{S})$ be the normal fan of $\Delta(\mathbf{S})$ and $\Psi(\mathbf{S})$ its
barycentric subdivision, which is the normal fan of $\mathsf{P}(\mathbf{S}) = \operatorname{Omni}(\Delta(\mathbf{S}))$.

\begin{proposition}\label{prop:psi_refines_braid_product}
Let $\mathbf S=(S_1,\dots,S_k)$ be a tuple of finite sets.
The fan $\Psi(\mathbf S)$ refines $\prod_{i\in[k]}\mathcal B^1_{S_i}$, the product of the braid fans of the individual factors.
\end{proposition}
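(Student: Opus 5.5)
We will derive this from \Cref{prop:omni_product} together with the identification $\operatorname{Bar}(\mathcal B^0_{S_i})=\mathcal B^1_{S_i}$ from \Cref{ex:braid_one}. Since $\Delta(\mathbf S)=\prod_{i\in[k]}\Delta(S_i)$, the normal fan of $\Delta(S_i)$ is $\mathcal B^0_{S_i}$, and so $\Sigma(\operatorname{Omni}(\Delta(S_i)))=\mathcal B^1_{S_i}$. The claim is therefore exactly that $\operatorname{Bar}(\Sigma(\prod_i\Delta(S_i)))$ refines $\prod_i\operatorname{Bar}(\Sigma(\Delta(S_i)))$.

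We argue by induction on $k$; the case $k=1$ is trivial. For the inductive step, write $\Delta(\mathbf S)=\mathsf P\times\mathsf Q$ with $\mathsf P=\prod_{i<k}\Delta(S_i)$ and $\mathsf Q=\Delta(S_k)$. Both are simple polytopes, since products of simplices are simple. The proof of \Cref{prop:omni_product}, including its treatment of the non-full-dimensional case via pointed representatives and lineality spaces, shows directly that
\[
\operatorname{Bar}(\Sigma_{\mathsf P}\times\Sigma_{\mathsf Q})\quad\text{refines}\quad\operatorname{Bar}(\Sigma_{\mathsf P})\times\operatorname{Bar}(\Sigma_{\mathsf Q}).
\]
By induction, $\operatorname{Bar}(\Sigma_{\mathsf P})=\Psi(S_1,\dots,S_{k-1})$ refines $\prod_{i<k}\mathcal B^1_{S_i}$. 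A product of fans in which one factor is refined is itself refined: each cone $\sigma\times\tau$ of the coarser product is a union of cones $\sigma'\times\tau$ with $\sigma'$ ranging over the cones subdividing $\sigma$. Refinement is transitive, which completes the induction.

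The main point requiring care is compatibility of lineality spaces and pointed representatives. In $\mathbb R^{\mathscr S}$, the lineality space of $\Sigma(\mathbf S)$ is $\operatorname{Span}\{e_{S_i}\}$, which is exactly the product of the lineality spaces $\operatorname{Span}\{e_{S_i}\}$ of the braid fans $\mathcal B^1_{S_i}$. With the pointed representatives $\operatorname{Cone}\{e_s : s\in I\}$, the canonical normals $u_{\mathbf A}=\sum_{s\in\overline{\mathbf A}}e_s$ split as sums of the factor normals, matching the identity $u_{\mathsf F\times\mathsf G}=u_{\mathsf F}+u_{\mathsf G}$ used in \Cref{prop:omni_product}.

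As an alternative, we could also argue directly. The rays of $\Psi(\mathbf S)$ are $u_{\mathbf A}=\sum_i e_{S_i\setminus A_i}$. A maximal cone of $\Psi(\mathbf S)$ corresponds to a saturated chain of valid tuples, and along this chain each set $S_i\setminus A_i$ runs through a chain of subsets of $S_i$. The cone is therefore contained in the product of the braid cones indexed by these chains, which again gives the refinement.
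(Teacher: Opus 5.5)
Your proposal is correct and is essentially the paper's proof: induction on $k$, splitting off one simplex factor via \Cref{prop:omni_product} (the paper peels off $\Delta(S_1)$ rather than $\Delta(S_k)$), then using that refinement is preserved under taking a product with a fixed fan and is transitive. Your alternative direct argument is also valid. It tracks how each $S_i\setminus A_i$ runs through a chain of proper subsets along a saturated chain of valid tuples, and it amounts to unrolling the proof of \Cref{prop:omni_product} for all $k$ factors at once; since both fans are complete, the cone containment you establish gives the refinement.
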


\begin{proof}
We induct on $k$.
For $k=1$, $\Delta(\mathbf S)=\Delta(S_1)$, so $\Psi(\mathbf S)=\mathcal B^1_{S_1}$ and the two fans coincide.
For $k\geq 2$, write $\mathbf S'=(S_2,\dots,S_k)$, so that $\Delta(\mathbf S)=\Delta(S_1)\times\Delta(\mathbf S')$.
Since $\Delta(S_1)$ and $\Delta(\mathbf S')$ are simple, being a simplex resp.\ a product of simplices, \Cref{prop:omni_product} together with \Cref{prop:coarsening_summand} shows that $\Sigma(\operatorname{Omni}(\Delta(\mathbf S)))$ refines $\Sigma(\operatorname{Omni}(\Delta(S_1)))\times\Sigma(\operatorname{Omni}(\Delta(\mathbf S')))$, that is,
\[
\Psi(\mathbf S) \text{ refines } \mathcal B^1_{S_1}\times\Psi(\mathbf S').
\]
By induction, $\Psi(\mathbf S')$ refines $\prod_{i=2}^{k}\mathcal B^1_{S_i}$, and taking the product with the fixed fan $\mathcal B^1_{S_1}$ preserves refinement, so $\mathcal B^1_{S_1}\times\Psi(\mathbf S')$ refines $\prod_{i=1}^{k}\mathcal B^1_{S_i}$.
Composing the two refinements gives the claim.
\end{proof}

The fans $\Sigma(\mathbf{S})$ and $\Psi(\mathbf{S})$ live in $\mathbb{R}^{\mathscr{S}}$ and have nontrivial lineality space.

\begin{lemma}\label{lem:fan_polysimplex}
    The fan $\Sigma(\mathbf{S})$ has lineality space
    \begin{equation}\label{eq:lineality_polysimplex}
        \mathsf{L}(\mathbf{S}) \coloneqq \operatorname{Span}\left\{ e_{S_i} \,\middle|\, i \in [k] \right\},
    \end{equation}
    where $e_{S_i} = \sum_{s \in S_i} e_s$. This lineality space has dimension $k$.

  Let $\mathbf{A} \subseteq \mathbf{S}$ be a valid tuple.  The pointed normal cone of $\Delta(\mathbf{A})$ is
\begin{equation}\label{eq:normal_cone_polysimplex_face}
    {}^{\wedge}\sigma(\mathbf{A}) \coloneqq \operatorname{Cone}\left\{ e_s \,\middle|\, s \in \overline{\mathbf{A}} \right\}.
\end{equation}

The normal cone of the face $\Delta(\mathbf{A})$ is
    \begin{equation}\label{eq:normal_cone_with_lineality}
        \sigma(\mathbf{A}) =   {}^{\wedge}\sigma(\mathbf{A}) + \mathsf{L}(\mathbf{S}),
    \end{equation}
   and the normal fan of $\Delta(\mathbf{S})$ is $\Sigma(\mathbf{S}) = \left\{ \sigma(\mathbf{A}) \,\middle|\, \mathbf{A} \text{ valid} \right\}$, which has the pointed representative 
   $ {}^{\wedge}\Sigma(\mathbf{S}) = \left\{ {}^{\wedge}\sigma(\mathbf{A}) \,\middle|\, \mathbf{A} \text{ valid} \right\}$.
   
    \end{lemma}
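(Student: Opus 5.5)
The plan is to compute the normal fan of $\Delta(\mathbf{S})$ directly from the inequality description \eqref{eq:polysimplex_Hrep}, using the product structure. First, the lineality space. By \eqref{eq:lineality_space}, $\mathsf{L}(\mathbf{S})$ is the set of $w$ that are constant on $\Delta(\mathbf{S})$, that is, the orthogonal complement of the direction space of $\operatorname{aff}(\Delta(\mathbf{S}))$. Since $\Delta(\mathbf{S})=\prod_i\Delta(S_i)$, this direction space is $\bigoplus_i\{x\in\mathbb{R}^{S_i}\mid \sum_{s\in S_i}x(s)=0\}$. Its orthogonal complement is therefore spanned by the vectors $e_{S_i}$. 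These vectors have pairwise disjoint supports, so they are linearly independent, and the dimension is $k$.

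Next, the normal cone of a face. The facets of $\Delta(\mathbf{S})$ are cut out by $x(s)\geq 0$, one for each $s$ with $|S_i|\geq 2$ (singleton factors are handled by \Cref{rem:singleton}). By \eqref{eq:polysimplex_face}, the face $\Delta(\mathbf{A})$ is exactly the set where the inequalities for $s\in\overline{\mathbf{A}}$ are tight. A standard fact for polytopes given by inequalities and equations is that the normal cone of a face is the cone on the normals of the tight inequalities plus the span of the normals of the equations. Here this gives $\operatorname{Cone}\{e_s\mid s\in\overline{\mathbf{A}}\}+\mathsf{L}(\mathbf{S})$, which is \eqref{eq:normal_cone_with_lineality}.

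I would verify this fact directly rather than cite it. For the containment $\supseteq$, a functional $w=\sum_{s\in\overline{\mathbf{A}}}c_se_s+\ell$ with $c_s\geq 0$ and $\ell\in\mathsf{L}(\mathbf{S})$ satisfies $w^{\intercal}x\geq \ell^{\intercal}x$ on $\Delta(\mathbf{S})$, and $\ell^{\intercal}x$ is constant there; equality holds on $\Delta(\mathbf{A})$. For the containment $\subseteq$, subtract from $w$ the vector $\sum_i m_ie_{S_i}$, where $m_i$ is the minimum of $w$ over $S_i$. The resulting functional is nonnegative, and its zeros are exactly the coordinates attaining each minimum. Minimization over the product then forces $A_i$ to equal the argmin set, so the support of the remainder lies in $\overline{\mathbf{A}}$.

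Finally, the pointed representative. Because ${}^{\wedge}\sigma(\mathbf{A})$ is generated by coordinate vectors, it is pointed. I still need to check that ${}^{\wedge}\sigma\mapsto{}^{\wedge}\sigma+\mathsf{L}(\mathbf{S})$ is a bijection compatible with faces. Distinct valid tuples give distinct cones. Modulo $\mathsf{L}(\mathbf{S})$, validity ensures that $\overline{\mathbf{A}}$ never contains an entire $S_i$, so the images of $\{e_s\mid s\in\overline{\mathbf{A}}\}$ remain linearly independent in the quotient.

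I expect this independence check to be the main obstacle, though it is mild. I would handle it by noting that a relation $\sum_{s\in\overline{\mathbf{A}}}c_se_s\in\mathsf{L}(\mathbf{S})$ must be constant on each $S_i$. Since each $S_i$ contains a point of $A_i$, which lies outside $\overline{\mathbf{A}}$, that constant is $0$. This shows the sets $\{{}^{\wedge}\sigma(\mathbf{A})\}$ form a simplicial pointed representative.
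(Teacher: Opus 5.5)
Your proposal is correct and takes the same route as the paper. The paper reads the lineality space off the $k$ equations $\sum_{s\in S_i}x(s)=1$ and reads the normal cones off the inequality description \eqref{eq:polysimplex_Hrep} via the bijection between valid tuples and faces; you supply the verification it leaves implicit, including the pointed-representative check. One small wording fix is needed in the $\subseteq$ direction: minimization only forces $A_i$ to be \emph{contained in} the set of minimizers of $w$ on $S_i$ (equality holds only for $w$ in the relative interior of the normal cone), but containment is all you need to conclude that the remainder is supported on $\overline{\mathbf{A}}$.
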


\begin{proof}
    The lineality space arises from the $k$ equality constraints $\sum_{s \in S_i} x(s) = 1$ in \eqref{eq:polysimplex_Hrep}: their normal directions $e_{S_i}$ lie in every normal cone.
    The description of $\Sigma(\mathbf{S})$ then follows from the bijection between valid tuples and nonempty faces of $\Delta(\mathbf{S})$, together with $\eqref{eq:polysimplex_Hrep}$.
\end{proof}

An ordered set partition of $\mathscr S$ is a sequence
$\mathfrak T=(\mathscr T_0,\mathscr T_1,\ldots,\mathscr T_m)$
of pairwise disjoint nonempty subsets whose union is $\mathscr S$.
It is \textbf{rooted} (with respect to $\mathbf S=(S_1,\ldots,S_k)$) if
$\mathscr T_0 \cap S_i \neq \emptyset$ for $1\leq i \leq k$.
It is \textbf{saturated} if $|\mathscr T_0\cap S_i|=1$ for $1\leq i \leq k$, and each of $\mathscr T_1,\ldots,\mathscr T_m$ is a singleton.    
	    
\begin{lemma}\label{lem:fan_polysimplex_bary}
    The fan $\Psi(\mathbf{S})$ has the same lineality space $\mathsf{L}(\mathbf{S})$ of dimension $k$, the length of $\mathbf{S}$.
	    We can write $\Psi(\mathbf{S}) = {}^{\wedge}\Psi(\mathbf{S}) + \mathsf{L}(\mathbf{S})$, where
	    \begin{equation}\label{eq:polysimplex_bary_cones}
	        {}^{\wedge}\Psi(\mathbf{S}) = \left\{ {}^{\wedge}\sigma(\mathfrak{T}) \,\middle|\, \mathfrak{T} \text{ is a rooted ordered set partition of } \mathscr{S} \right\},
	    \end{equation}
    where, for $\mathfrak{T} = (\mathscr{T}_{0}, \mathscr{T}_{1}, \dots, \mathscr{T}_{m})$,
    the cone ${}^{\wedge}\sigma(\mathfrak{T})$ consists of all $g \in \mathbb{R}^{\mathscr{S}}$ that are
    constant on each block $\mathscr{T}_{i}$ with value $g_i$, and satisfy
    \[
        0 = g_0 \leq g_1 \leq \dots \leq g_m.
    \]
    Furthermore, ${}^{\wedge}\sigma(\mathfrak{T})$ is maximal if and only if $\mathfrak{T}$ is saturated.
\end{lemma}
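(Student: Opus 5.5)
The plan is to read off $\Psi(\mathbf S)$ directly from the chain description of barycentric subdivisions in \Cref{def:barycentric}. Since barycentric subdivision is performed on a pointed representative and the lineality space is added back afterwards, \Cref{lem:fan_polysimplex} immediately gives that $\Psi(\mathbf S)$ has lineality space $\mathsf L(\mathbf S)$ of dimension $k$, with pointed representative $\operatorname{Bar}({}^{\wedge}\Sigma(\mathbf S))$. The nonzero cones of ${}^{\wedge}\Sigma(\mathbf S)$ are ${}^{\wedge}\sigma(\mathbf A)=\operatorname{Cone}\{e_s\mid s\in\overline{\mathbf A}\}$ for valid $\mathbf A\neq\mathbf S$. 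Their barycenters are the vectors $u_{\mathbf A}=e_{\overline{\mathbf A}}$, and these are primitive. Thus the rays of ${}^{\wedge}\Psi(\mathbf S)$ are $e_U$, where $U=\overline{\mathbf A}$ ranges over the nonempty subsets of $\mathscr S$ that miss at least one element of each $S_i$.

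Next I would translate chains into ordered set partitions. A chain $\sigma(\mathbf A^1)\subsetneq\cdots\subsetneq\sigma(\mathbf A^m)$ corresponds to a chain of subsets $\emptyset\subsetneq U_1\subsetneq\cdots\subsetneq U_m$. Each $U_j$ avoids some element of each $S_i$, and it suffices to check this for $U_m$. I then set
\[
\mathscr T_0=\mathscr S\setminus U_m,\qquad \mathscr T_j=U_{m-j+1}\setminus U_{m-j}\quad(1\le j\le m),
\]
with $U_0=\emptyset$. The condition on $U_m$ is exactly that $\mathscr T_0$ meets every $S_i$, so the partition is rooted. Conversely, every rooted ordered set partition with $m\ge 1$ arises from exactly one chain; the trivial partition with $m=0$ gives the zero cone.

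The cone $\operatorname{Cone}\{e_{U_1},\dots,e_{U_m}\}$ consists of the vectors $\sum_j c_j e_{U_j}$ with $c_j\ge0$. Such a vector is constant on each block $\mathscr T_j$, equal to $0$ on $\mathscr T_0$, and its block values $g_1,\dots,g_m$ are weakly increasing. With this reindexing the larger value sits on the smaller set $U$. The map $c\mapsto g$ is an invertible triangular change of coordinates, given by partial sums, so the cone equals ${}^{\wedge}\sigma(\mathfrak T)$ exactly. This step needs care only in matching the index order.

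For maximality, the cones of $\operatorname{Bar}$ are simplicial of dimension equal to the chain length. The maximal cones therefore come from saturated chains of length $\dim\Delta(\mathbf S)=|\mathscr S|-k$, and these are the chains starting at a facet normal cone and ending at a vertex normal cone. A vertex corresponds to $U_m$ with $|\mathscr S\setminus U_m\cap S_i|=1$ for every $i$. Saturation adds one element at a time, so every $\mathscr T_j$ with $j\ge1$ is a singleton, which is exactly the saturated condition. The converse follows by the same counting: a saturated partition has $m=|\mathscr S|-k$. The only point I expect to need care is the ordering convention, to get $0=g_0\le g_1\le\dots$ rather than the reverse. Everything else is bookkeeping.
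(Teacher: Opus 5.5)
Your proof is correct and follows the paper's proof. Both translate a chain of faces into the rooted ordered set partition formed by the complement of the largest set and the successive differences, identify the cone through partial sums, and match maximal chains with saturated partitions. The paper indexes by increasing valid tuples $\mathbf A_1\subsetneq\cdots\subsetneq\mathbf A_p$; your increasing sets $U_1\subsetneq\cdots\subsetneq U_m$ are the same chain read in reverse. Your explicit treatment of the lineality space, the empty chain ($m=0$), and the counting argument for maximality fills in details the paper leaves implicit.
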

\begin{proof}
By definition, the cones of the barycentric subdivision are of the form
\begin{equation}\label{eq:preorder}
    \operatorname{Cone}\left\{ u_{\mathbf{A}_i} \,\middle|\, i \in [p] \right\},
\end{equation}
where $ \mathbf{A}_{1} \subsetneq \dots \subsetneq \mathbf{A}_{p} $ is a chain of valid tuples.
Recall from \Cref{def:valid} that for each tuple $ \mathbf{A}_i $ we write $ \overline{\mathbf{A}}_i $ for the elements of $ \mathscr{S} $ not in any subset of the tuple $ \mathbf{A}_{i} $.
The cone in Equation \eqref{eq:preorder} equals ${}^{\wedge}\sigma(\mathfrak T)$ for the rooted ordered set partition $\mathfrak{T} = (\mathscr{T}_{0}, \mathscr{T}_{1}, \dots, \mathscr{T}_{p})$ defined by
\[
\mathscr T_0
=
\mathscr S\setminus\overline{\mathbf A}_1,
\qquad
\mathscr T_i
=
\overline{\mathbf A}_i
\setminus
\overline{\mathbf A}_{i+1}
\quad(1\leq i<p),
\qquad
\mathscr T_p
=
\overline{\mathbf A}_p.
\]
The validity of $\mathbf{A}_{1}$ ensures that $\mathscr{T}_{0}$ contains at least one element from each factor, so $\mathfrak{T}$ is rooted.
Conversely, every rooted ordered set partition arises from a unique chain of valid tuples in this way, so the description in Equation \eqref{eq:polysimplex_bary_cones} is exhaustive.

For the last claim, maximal cones correspond to maximal chains. Under
the correspondence above, these are exactly the chains for which
$\mathscr T_0$ contains one element from each factor and
$\mathscr T_1,\ldots,\mathscr T_p$ are singletons. Thus
$\mathfrak T$ is saturated.
\end{proof}

\begin{example}\label{ex:running_example_polysimplex_cone_barycentric}
Continuing with \Cref{ex:running_example_polysimplex}, consider the chain $ (\{A\}, \{X,Y\}) \subseteq (\{A,B,C\}, \{X,Y\}) $.
	The corresponding chain of faces is $ \Delta(\{A\}) \times \Delta(\{X,Y\}) \subseteq \Delta(\{A,B,C\}) \times \Delta(\{X,Y\}) $.
	The respective inner normals are $ e_B + e_C + e_Z $ and $ e_Z $, so that the cone in the barycentric subdivision corresponding to this chain is the cone generated by these two elements.
	More precisely, this cone consists of elements of the form $ \lambda e_B + \lambda e_C + (\lambda + \mu)e_Z $ with $ \lambda, \mu \geq 0 $.
	In terms of inequalities this cone can be described as the set of functions $ f \in \mathbb{R}^{\{A,B,C,X,Y,Z\}} $ such that $ f(A) = f(X) = f(Y) = 0 $ and $ 0 \leq f(B) = f(C) \leq f(Z) $,
	which corresponds to the cone $ {}^{\wedge}\sigma(\mathfrak{T}) $ for the ordered set partition $\mathfrak{T} = (\{A,X,Y\}, \{B,C\}, \{Z\}) $.
\end{example}

Now we can describe $\mathrm{DS}(\mathsf{P}(\mathbf{S})) \cong \operatorname{PL}(\Psi(\mathbf{S}))$
combinatorially. By \Cref{prop:eta},
\begin{equation*}
	    \operatorname{PL}(\Psi(\mathbf{S})) \cong \operatorname{PL}({}^{\wedge}\Psi(\mathbf{S}))
    \oplus \mathsf{L}(\mathbf{S})^{\ast}.
\end{equation*}

\begin{definition}\label{def:functions_tuples}
    Following \Cref{lem:fan_polysimplex_bary}, we define:
    \begin{itemize}
        \item $V(\mathbf{S})$: the space of real-valued functions on valid proper tuples of $\mathbf{S}$,
	              isomorphic to $\operatorname{PL}({}^{\wedge}\Psi(\mathbf{S}))$.
        \item $U(\mathbf{S})$: the space of real-valued functions on $[k]$
              (in natural bijection with the factors of $\mathbf{S}$),
              isomorphic to $\mathsf{L}(\mathbf{S})^{\ast}$.
    \end{itemize}
\end{definition}

We identify $\operatorname{PL}(\Psi(\mathbf{S}))$ with pairs $(f, g)$ where $f \in V(\mathbf{S})$
and $g \in U(\mathbf{S})$.  We extend each $f\in V(\mathbf{S})$ to all valid tuples by setting
$f(\mathbf{S})=0$.
Recall that for each valid tuple $\mathbf{A}$ we define $\overline{\mathbf{A}}$ as the complement of the union of sets in the tuple.
If $\mathsf Q$ is a polytope whose normal fan coarsens $\Psi(\mathbf S)$,
and $(f,g)$ are the coordinates of its support function under this
identification, then $\mathsf Q$ is recovered as
\begin{equation}\label{eq:polytope_fg_description}
    \left\{ x \in \mathbb{R}^{\mathscr{S}} \,\middle|\,
        \begin{array}{rl}
            \displaystyle\sum_{s \in \overline{\mathbf{A}}} x(s) \geq f(\mathbf{A}),
                & \text{for each valid } \mathbf{A}, \\[6pt]
            \displaystyle\sum_{s \in S_i} x(s) = g(i),
                & \text{for each factor } S_i.
        \end{array}
    \right\}
\end{equation}

\begin{example}\label{ex:polysimplex_full_support}
    The support function of $\Delta(\mathbf{S})$ is
    \begin{align}
        f(\mathbf{A}) &\coloneqq h(\Delta(\mathbf{S}),\, u_{\mathbf{A}}) = 0,
            &\text{for each valid tuple } \mathbf{A}, \\
        g(i) &\coloneqq h\!\left(\Delta(\mathbf{S}),\, e_{S_i}\right) = 1,
            &\text{for each factor } S_i.
    \end{align}
\end{example}

\begin{lemma}\label{lem:polysimplex_face_support}
    Let $\Delta(\mathbf{A}) \subseteq \Delta(\mathbf{S})$ be a face.
    Its support function is:
    \begin{align}
        f(\mathbf{B}) &\coloneqq h(\Delta(\mathbf{A}),\, u_{\mathbf{B}})
            = |\{i \in [k] \mid A_i \cap B_i = \emptyset\}|,
            &\text{for each valid tuple } \mathbf{B}, \\
        g(i) &\coloneqq h\!\left(\Delta(\mathbf{A}),\, e_{S_i}\right) = 1,
            &\text{for each factor } S_i.
    \end{align}
\end{lemma}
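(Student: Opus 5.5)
The computation is direct from the definition of the support function $h(\mathsf P,u)=\min_{y\in\mathsf P}u^{\intercal}y$. First, I will write $\Delta(\mathbf A)=\prod_i\Delta(A_i)$ as the Minkowski sum $\sum_i\Delta(A_i)$ under the embedding of \Cref{rem:embed}, exactly as in the proof of \Cref{lem:face_deformation_omni}. By additivity of support functions \eqref{eq:support_sum}, this reduces both formulas to computing $h(\Delta(A_i),u)$ for a single factor, with $\Delta(A_i)\subseteq\mathbb R^{S_i}\subseteq\mathbb R^{\mathscr S}$.

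For a single simplex, the minimum of a linear functional is attained at a vertex. Hence
\[
h(\Delta(A_i),u)=\min_{a\in A_i}u(a).
\]

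For the $f$-coordinates, take $u=u_{\mathbf B}=\sum_{s\in\overline{\mathbf B}}e_s$. This vector is $0/1$-valued, and its restriction to $S_i$ is the indicator of $S_i\setminus B_i$. So $\min_{a\in A_i}u_{\mathbf B}(a)$ equals $1$ exactly when every $a\in A_i$ lies outside $B_i$, that is, when $A_i\cap B_i=\emptyset$; otherwise it equals $0$. Summing over $i$ gives $f(\mathbf B)=|\{i\mid A_i\cap B_i=\emptyset\}|$. This is also consistent with the convention $f(\mathbf S)=0$, since $B_i=S_i$ meets every nonempty $A_i$.

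For the $g$-coordinates, take $u=e_{S_i}$. This vector is constant equal to $1$ on $S_i$ and $0$ on the other factors. Every point of $\Delta(A_j)$ has coordinate sum $1$ inside $S_j$ and zero elsewhere, so $h(\Delta(A_j),e_{S_i})=\delta_{ij}$. Summing over $j$ gives $g(i)=1$.

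The only step needing care is that these values are the coordinates $(f,g)$ under the identification $\operatorname{PL}(\Psi(\mathbf S))\cong V(\mathbf S)\oplus U(\mathbf S)$ of \Cref{prop:eta}, using the fixed pointed representative with ray generators $u_{\mathbf B}$ and lineality basis $e_{S_i}$. By \Cref{lem:face_deformation_omni}, $\Delta(\mathbf A)$ is a deformation of $\mathsf P(\mathbf S)$, so its support function is piecewise linear on $\Psi(\mathbf S)$. It is therefore determined by its values on these generators, which are the ones computed above. There is no real obstacle here; the main risk is mixing up the sign and min conventions, and checking against \Cref{ex:polysimplex_full_support} (the case $\mathbf A=\mathbf S$, which gives $f\equiv0$ and $g\equiv1$) confirms them.
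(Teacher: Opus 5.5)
Your proof is correct and follows essentially the paper's argument: the paper also splits the minimum over $\Delta(\mathbf A)=\prod_i\Delta(A_i)$ factor by factor (you phrase this as additivity of support functions over the Minkowski sum $\sum_i\Delta(A_i)$), and shows that each factor contributes $1$ exactly when $A_i\cap B_i=\emptyset$. The only cosmetic difference is for $g$, where the paper simply observes that $e_{S_i}$ is constant equal to $1$ on all of $\Delta(\mathbf S)$ instead of summing the per-factor contributions $\delta_{ij}$.
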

\begin{proof}
    We start with $g$.
    Since $e_{S_i} \cdot x = \sum_{s \in S_i} x(s) = 1$ for all $x \in \Delta(\mathbf{S})$
    by \eqref{eq:polysimplex_Hrep}, it is constant on every face, giving $g(i) = 1$.

    Now we argue for $f$.
    By definition
$u_{\mathbf B}=\sum_{s\in\overline{\mathbf B}} e_s$, so
\[
h(\Delta(\mathbf A),u_{\mathbf B})
=
\min_{x\in\Delta(\mathbf A)}
\sum_{s\in\overline{\mathbf B}}x(s).
\]
    Since $\Delta(\mathbf A)=\prod_i\Delta(A_i)$ in disjoint coordinate spaces,
this minimum is the sum over $i$ of the minima of
$\sum_{s\in\overline{\mathbf B}\cap S_i}x_i(s)$ on $\Delta(A_i)$.
    Since $\overline{\mathbf B}\cap S_i=S_i\setminus B_i$
and $x_i$ is supported on $A_i$, the $i$-th term reduces to
\[
\min_{x_i\in\Delta(A_i)}
\sum_{s\in A_i\setminus B_i}x_i(s).
\]
    If $A_i \cap B_i \neq \emptyset$, we can put a 1 on any element of $A_i \cap B_i$ and zero on the remaining coordinates, giving minimum $0$.
    If $A_i \cap B_i = \emptyset$, then $x_i$ is supported entirely on $A_i \setminus B_i$, forcing the sum to equal $1$.
    Summing over $i$ gives $|\{i \in [k] \mid A_i \cap B_i = \emptyset\}|$ as desired.
\end{proof}

\begin{example}
Consider the edge $\Delta(\mathbf{A})$ with $\mathbf{A} = (\{A,B\}, \{X\})$, whose vertices are $e_A + e_X$ and $e_B + e_X$, in the polysimplex of \Cref{ex:running_example_polysimplex}.
Take the valid tuple
$\mathbf B=(\{A,C\},\{Y\})$; then
$\overline{\mathbf B}=\{B,X,Z\}$
and $u_{\mathbf B}=e_B+e_X+e_Z$.
The support function evaluates to
\[
    h(\Delta(\mathbf{A}),\, u_{\mathbf{B}})
    = \min\!\left\{ (e_B + e_X + e_Z)\cdot(e_A + e_X),\;
                   (e_B + e_X + e_Z)\cdot(e_B + e_X) \right\}
    = \min\{1, 2\} = 1,
\]
consistent with the formula in \Cref{lem:polysimplex_face_support}: only the second entries are disjoint ($\{X\} \cap \{Y\} = \emptyset$), while the first entries share $A$, contributing exactly one to the count.
\end{example}

We record the support functions of the deep truncations in the $(f,g) \in V(\mathbf{S}) \times U(\mathbf{S})$ format of \Cref{def:functions_tuples}.

\begin{lemma}\label{lem:polysimplex_truncation_support}
    Let $\Delta(\mathbf{A}) \subsetneq \Delta(\mathbf{S})$ be a proper face.
    The support function of $\operatorname{DTr}\!\left[\Delta(\mathbf{S}),\, \Delta(\mathbf{A})\right]$ is
    \begin{align}
        f(\mathbf{B}) &=
        \begin{cases}
            1 & \text{if } B_i \subseteq A_i \text{ for all } i, \\
            0 & \text{otherwise,}
        \end{cases}\\
    \text{and}\,\,\,\,\,\,\,     g(i) &= 1 \qquad \text{for every factor } S_i.
    \end{align}
\end{lemma}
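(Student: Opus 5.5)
The plan is to show that the deep truncation of $\Delta(\mathbf A)$ is flat, so that it equals the convex hull of the vertices of $\Delta(\mathbf S)$ outside $\Delta(\mathbf A)$. Both coordinates of the support function can then be read off by minimizing a linear functional over that finite set of vertices.

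First I will justify flatness, via \Cref{prop:delzant}. View $\Delta(\mathbf S)$ as a full-dimensional polytope in its affine span, with the lattice induced from $\mathbb Z^{\mathscr S}$ (equivalently, the quotient lattice modulo $\mathsf L(\mathbf S)$ on the dual side).
\begin{itemize}
\item The vertices are $v=\sum_i e_{s_i}$ with $s_i\in S_i$.
\item The edges at $v$ are the segments to $v-e_{s_i}+e_t$ with $t\in S_i\setminus\{s_i\}$.
\item The edge directions $e_t-e_{s_i}$ are primitive and form a lattice basis of the direction space $\{x\in\mathbb Z^{\mathscr S}: \sum_{s\in S_i}x(s)=0 \ \forall i\}$.
\end{itemize}
So $\Delta(\mathbf S)$ is primitive Delzant, and the first part of \Cref{prop:delzant} gives flatness. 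That part does not need the truncation to be full-dimensional, which matters because when $\mathrm{slk}(\mathbf A)=1$ the deep truncation is a face, as in \Cref{cor:multisimplex_drop_dimension}. As a sanity check on $\delta$, note $h(\Delta(\mathbf S),u_{\mathbf A})=0$, and every neighbor of $\Delta(\mathbf A)$ has exactly one coordinate $s_j\notin A_j$, so $\delta_{\Delta(\mathbf A)}=1$. Hence
\[
\operatorname{DTr}[\Delta(\mathbf S),\Delta(\mathbf A)]=\operatorname{Conv}\{v=\textstyle\sum_i e_{s_i} : s_j\notin A_j \text{ for some } j\}.
\]

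Next I compute $g$. Each $e_{S_i}$ is constant equal to $1$ on $\Delta(\mathbf S)$, so $g(i)=1$, exactly as in \Cref{lem:polysimplex_face_support}.

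Then I compute $f$. For a valid tuple $\mathbf B$ and a vertex $v=\sum_i e_{s_i}$, we have $u_{\mathbf B}\cdot v=|\{i: s_i\notin B_i\}|$, so $f(\mathbf B)$ is the minimum of this count over vertices with some $s_j\notin A_j$. I split into two cases.
\begin{itemize}
\item If $B_i\not\subseteq A_i$ for some $i$, choose $s_i\in B_i\setminus A_i$ and $s_l\in B_l$ for $l\neq i$. This vertex lies outside $\Delta(\mathbf A)$ and gives value $0$, and $0$ is the obvious lower bound.
\item If $B_i\subseteq A_i$ for all $i$, any admissible vertex has some $s_j\notin A_j\supseteq B_j$, so its value is at least $1$. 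Since $\mathbf A$ is proper, pick $j$ with $A_j\neq S_j$, take $s_j\in S_j\setminus A_j$, and take $s_l\in B_l$ for $l\neq j$. This vertex gives value exactly $1$.
\end{itemize}
This yields the stated formula. The only step needing care is the flatness and primitive Delzant verification in the non-full-dimensional setting. If that causes trouble, I would instead use the inequality description $\{x\in\Delta(\mathbf S): \sum_{s\in\overline{\mathbf A}}x(s)\geq 1\}$ and minimize $u_{\mathbf B}$ over it directly, by the same case analysis.
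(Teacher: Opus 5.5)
Your argument is correct, but it takes a different route from the paper's. The paper's proof is a one-line reduction. Since $\Delta(\mathbf S)$ is primitive Delzant, the deep truncation is the unit-depth truncation. So by \eqref{eq:delzant_difference}, its support function is that of $\Delta(\mathbf S)$ (namely $f\equiv 0$, $g\equiv 1$, \Cref{ex:polysimplex_full_support}) plus the indicator of $\Delta(\mathbf B)\subseteq\Delta(\mathbf A)$, that is, of $B_i\subseteq A_i$ for all $i$.

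That route is shorter. It also presents the lemma as the polysimplex instance of the unitriangular matrix of \Cref{rem:key_delzant}, which is the structure the later basis and inclusion--exclusion arguments rely on. However, it leans on \eqref{eq:delzant_difference}, which is stated without detailed justification and for full-dimensional Delzant seeds. Applying it here therefore implicitly uses the affine-span reduction that you carry out explicitly.

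Your route is self-contained. You compute $\delta_{\Delta(\mathbf A)}=1$, identify the deep truncation with the convex hull of the vertices outside $\Delta(\mathbf A)$, and minimize $u_{\mathbf B}$ over that vertex set by explicit cases. This also covers the slack-one case, where the truncation is only a face.

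Two simplifications are available:
\begin{enumerate}
\item Your sanity check already makes the appeal to \Cref{prop:delzant} and the lattice-basis verification unnecessary. It shows every neighbor of $\Delta(\mathbf A)$ lies on $u_{\mathbf A}\cdot x=1$, which is flatness by the footnote to the definition of flat truncations.
\item Your fallback avoids flatness entirely. If $B_i\subseteq A_i$ for all $i$, then $\overline{\mathbf B}\supseteq\overline{\mathbf A}$, so $u_{\mathbf B}\geq u_{\mathbf A}$ coordinatewise. Hence $u_{\mathbf B}\cdot x\geq u_{\mathbf A}\cdot x\geq 1$ on $\{x\in\Delta(\mathbf S): u_{\mathbf A}\cdot x\geq 1\}$, and your explicit vertices supply the matching upper bounds.
\end{enumerate}
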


\begin{proof}
    This follows from Equation \eqref{eq:delzant_difference} and \Cref{ex:polysimplex_full_support}.
\end{proof}

The facets of the fragment are of two types.  First, there are the facets of $\Delta(\mathbf{S})$ that meet the face $\Delta(\mathbf{A})$.
Second, there is a unique new facet created by the truncation, whose inner normal is $-u_{\mathbf{A}}$, the negation of the inner normal of the face being removed.

In the following lemma we record the support function of the fragment, even though at this point we do not yet know whether fragments of polysimplices are deformations of their omnitruncation.
Nonetheless, their inner facet normals are a subset of the ray generators of $\Psi(\mathbf{S})$, so we can at least record the minimum of the corresponding linear functional in each of these directions.

\begin{lemma}\label{lem:polysimplex_fragment_support}
    Let $\Delta(\mathbf{A}) \subseteq \Delta(\mathbf{S})$ be a face.
		The support function of $\operatorname{Fr}\left[\Delta(\mathbf{S}),\Delta(\mathbf{A})\right]$ is 
\begin{align}\label{eq:combinatorial_matrix}
    f(\mathbf{B}) &= \max\bigl(0,\ |\{i \in [k] | A_i \cap B_i = \emptyset\}| - 1\bigr), &\text{for each valid tuple } \mathbf{B},\\
    g(i) &= 1, &\text{for each factor } S_i.
\end{align}
\end{lemma}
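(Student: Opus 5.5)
The plan is to compute the minimum of each relevant linear functional over the fragment directly, exploiting the product structure exactly as in \Cref{lem:polysimplex_face_support}. The value $g(i)=1$ needs no work: the fragment is contained in $\Delta(\mathbf S)$, so every point satisfies $\sum_{s\in S_i}x(s)=1$. For $f$, write the fragment as
\[
\operatorname{Fr}\left[\Delta(\mathbf S),\Delta(\mathbf A)\right]=\Bigl\{x\in\Delta(\mathbf S)\,\Bigm|\,\sum_{s\in\overline{\mathbf A}}x(s)\le 1\Bigr\}.
\]
Here the bound $\delta=1$ comes from \Cref{prop:delzant}, since $\Delta(\mathbf S)$ is primitive Delzant and $h(\Delta(\mathbf S),u_{\mathbf A})=0$. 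For $x$ in this set, write $t_i=\sum_{s\in S_i\setminus A_i}x(s)\in[0,1]$, so that the constraint reads $\sum_i t_i\le 1$.

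For a valid $\mathbf B$ we must minimize $\sum_i\sum_{s\in S_i\setminus B_i}x(s)$. In each factor, put $J=\{i: A_i\cap B_i=\emptyset\}$. When $i\notin J$, placing mass $1$ on an element of $A_i\cap B_i$ gives contribution $0$ and $t_i=0$. When $i\in J$, every element of $A_i$ lies outside $B_i$, so the $i$-th contribution is at least $1-t_i$ (the mass remaining on $A_i$). Summing over $J$, the objective is at least
\[
\sum_{i\in J}(1-t_i)\ \ge\ |J|-1.
\]
Combined with nonnegativity, this gives the lower bound $\max(0,|J|-1)$.

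For the matching upper bound we exhibit a minimizer. If $J=\emptyset$, the point described above attains $0$. Otherwise, pick one index $i_0\in J$ and place mass $1$ on some element of $B_{i_0}$. This element lies in $S_{i_0}\setminus A_{i_0}$ because $A_{i_0}\cap B_{i_0}=\emptyset$, so $t_{i_0}=1$ and its contribution is $0$. For every other $i\in J$, place mass $1$ on an element of $A_i$, so $t_i=0$ and the contribution is $1$. The indices outside $J$ are handled as before. The resulting point satisfies $\sum t_i=1$, so it lies in the fragment, and it attains $|J|-1$.

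The one delicate point is the inequality $\sum_{s\in S_i\setminus B_i}x(s)\ge 1-t_i$ for $i\in J$. It holds because $A_i\subseteq S_i\setminus B_i$ and $\sum_{s\in A_i}x(s)=1-t_i$. This is also where the disjointness hypothesis is used: if $A_i\cap B_i\neq\emptyset$, mass on $A_i$ can escape the functional, which is why those indices contribute nothing.

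Finally, the new facet with inner normal $-u_{\mathbf A}$ is not a ray of $\Psi(\mathbf S)$, so the lemma's $f$ only records the values on the rays $u_{\mathbf B}$. That is all the statement requires, with $f(\mathbf S)=0$ consistent since $J=\emptyset$ in that case.
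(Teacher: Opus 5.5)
Your argument is correct, and it reaches the formula by a somewhat different route from the paper.

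The paper argues through vertices. It uses that the vertices of the fragment are the vertices of $\Delta(\mathbf A)$ together with their neighbours, and that each neighbour arises by swapping a single coordinate $s_i\in A_i$ for an element of $S_i\setminus A_i$. Hence $u_{\mathbf B}$ can drop by at most one below its minimum $|\{i\mid A_i\cap B_i=\emptyset\}|$ on $\Delta(\mathbf A)$ (\Cref{lem:polysimplex_face_support}). It drops by exactly one when the swap is made at an index with $A_i\cap B_i=\emptyset$ and lands in $B_i$.

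You instead bound the linear program directly, using the aggregated masses $t_i$ and the single cut $\sum_i t_i\le 1$. Your witness point is exactly one of the paper's neighbour vertices. Your version never needs the vertex description of the fragment. In the paper that description rests on the flatness of the cut: every edge of $\Delta(\mathbf S)$ changes $u_{\mathbf A}\cdot x$ by at most one and the cut sits at an integer level, so no new vertices appear. The paper's version, in turn, shows which vertices realize the minimum. Also, $\delta_{\Delta(\mathbf A)}=1$ is immediate without \Cref{prop:delzant}, since every neighbour of $\Delta(\mathbf A)$ has exactly one coordinate outside $\mathbf A$.

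One correction to your last paragraph: the new facet normal $-u_{\mathbf A}$ \emph{is} a ray of $\Psi(\mathbf S)$ modulo lineality. By \Cref{lem:fragment_new_facet}, $-u_{\mathbf A}\equiv u_{\mathbf A^{\ast}}$ modulo $\mathsf L(\mathbf S)$. This is why the paper can say that all facet normals of the fragment lie among the rays of $\Psi(\mathbf S)$, and it is used later in \Cref{prop:fragment}. The slip does not affect your proof, because your computation covers every valid $\mathbf B$, including $\mathbf B=\mathbf A^{\ast}$. There it gives $f(\mathbf A^{\ast})=\mathrm{slk}(\mathbf A)-1$, which matches the cut $u_{\mathbf A}\cdot x\le 1$, rewritten as $u_{\mathbf A^{\ast}}\cdot x\ge \mathrm{slk}(\mathbf A)-1$.
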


\begin{proof}
By \Cref{lem:polysimplex_face_support}, the functional $u_{\mathbf{B}}$ attains minimum value $|\{i | A_i \cap B_i = \emptyset\}|$ on the face $\Delta(\mathbf{A})$, achieved at any vertex $v = e_{s_1} + \cdots + e_{s_k}$ whose $i$-th coordinate satisfies $s_i \in A_i \cap B_i$ whenever this intersection is nonempty.
On the full polytope $\Delta(\mathbf{S})$, the same functional attains minimum value $0$ by \Cref{ex:polysimplex_full_support}.
The vertices of the fragment are the vertices of $\Delta(\mathbf{A})$ together with their neighbors in $\Delta(\mathbf{S})$, where a neighbor is obtained by replacing exactly one coordinate $s_i \in A_i$ by an element of $S_i \setminus A_i$.
If the replaced coordinate is chosen at an index where $A_i \cap B_i = \emptyset$, and we replace $s_i$ by some element of $B_i$, the inner product with $u_{\mathbf{B}}$ drops by exactly $1$; any other replacement either leaves the value unchanged or increases it.
Hence the minimum over the fragment is one less than the minimum over $\Delta(\mathbf{A})$, provided there exists an index with $A_i \cap B_i = \emptyset$; otherwise the fragment inherits the minimum value $0$ from $\Delta(\mathbf{A})$.
For $g(i)$: the functional $e_{S_i}$ evaluates to $1$ on every vertex of $\Delta(\mathbf{S})$, hence on every vertex of the fragment.
\end{proof}

\begin{example}\label{ex:fragment_edge_triangle_triangle_segment}
Let $\mathbf{S} = (\{A,B,C\}, \{X,Y,Z\}, \{P,Q\})$, so $\Delta(\mathbf{S}) = \Delta_3 \times \Delta_3 \times \Delta_2$ is the product of two triangles and a segment, with $18$ vertices.
Consider the edge $\Delta(\mathbf{A})$ for $\mathbf{A} = (\{A,B\}, \{X\}, \{P\})$, with vertices $e_A + e_X + e_P$ and $e_B + e_X + e_P$.
This tuple is full since $A_i \subsetneq S_i$ for all $i$.
The fragment $\operatorname{Fr}\left[\Delta(\mathbf{S}), \Delta(\mathbf{A})\right]$ consists of the two edge vertices together with their neighbors in $\Delta(\mathbf{S})$, giving $9$ vertices in total:
\[
    \{e_A,e_B\} \times \{e_X\} \times \{e_P\}
    \;\cup\;
    \{e_C\} \times \{e_X\} \times \{e_P\}
    \;\cup\;
    \{e_A,e_B\} \times \{e_Y,e_Z\} \times \{e_P\}
    \;\cup\;
    \{e_A,e_B\} \times \{e_X\} \times \{e_Q\}.
\]
Note that $e_C + e_X + e_P$ is a neighbor of both edge vertices.
To verify \Cref{lem:polysimplex_fragment_support}, take $\mathbf{B} = (\{C\},\{Y\},\{Q\})$, so $\overline{\mathbf{B}} = \{A,B,X,Z,P\}$ and $u_{\mathbf{B}} = e_A + e_B + e_X + e_Z + e_P$.
Since $A_i \cap B_i = \emptyset$ for all three indices, the formula gives $f(\mathbf{B}) = \max(0,3-1) = 2$.
Indeed, $u_{\mathbf{B}}$ attains minimum value $2$ on the fragment, for instance at $e_C + e_X + e_P$ (contributing $1$ from $e_X$ and $1$ from $e_P$) and at $e_A + e_Y + e_P$ (contributing $1$ from $e_A$ and $1$ from $e_P$), while the edge vertices $e_A + e_X + e_P$ and $e_B + e_X + e_P$ both evaluate to $3$.
\end{example}

\subsection{The deformation cone of an omnitruncated polysimplex}

We describe the full deformation cone of $\mathsf{P}(\mathbf{S})$, the polysimplex analogue of the supermodular cone. 
Throughout, we use the identification
\[
    \operatorname{PL}(\Psi(\mathbf{S}))
    \cong
    V(\mathbf{S})\oplus U(\mathbf{S})
\]
of \Cref{def:functions_tuples}, and write a piecewise-linear function as
a pair $(f,g)$.

Recall that $ u_{\mathbf{A}} = \sum_{s\in\overline{\mathbf{A}}}e_s $ for every valid tuple $\mathbf{A}$. 
If $\mathbf{A}\neq\mathbf{S}$, then $u_{\mathbf{A}}$ is the canonical ray generator of the corresponding ray of ${}^{\wedge}\Psi(\mathbf{S})$.

For two valid tuples $\mathbf{A}$ and $\mathbf{B}$, let
\begin{equation}\label{eq:disjoint_factors}
    Z(\mathbf{A},\mathbf{B})
    \coloneqq
    \left\{
        i\in[k]
        \,\middle|\,
        A_i\cap B_i=\emptyset
    \right\}.
\end{equation}

For each $i\in[k]$, there are two cases:
\begin{enumerate}
\item If $i\notin Z(\mathbf A,\mathbf B)$, then
$A_i\cap B_i\neq\varnothing$, and we use
$A_i\cap B_i\subseteq A_i\cup B_i$.
\item If $i\in Z(\mathbf A,\mathbf B)$, then
$A_i\cap B_i=\varnothing$, and we instead use
$A_i\cup B_i\subseteq S_i$.
\end{enumerate}

\begin{definition}\label{def:polysimplex_uncrossing}
Let $\mathbf{A}$ and $\mathbf{B}$ be valid tuples.  Their \textbf{lower uncrossing} and \textbf{upper uncrossing}, denoted respectively by
\[
    \mathbf{A}\wedge\mathbf{B}
    \qquad\text{and}\qquad
    \mathbf{A}\vee\mathbf{B},
\]
are the valid tuples defined componentwise by
\begin{align}
    \left(\mathbf{A}\wedge\mathbf{B}\right)_i
    &\coloneqq
    \begin{cases}
        A_i\cap B_i,
            & i\notin Z(\mathbf{A},\mathbf{B}),\\
        A_i\cup B_i,
            & i\in Z(\mathbf{A},\mathbf{B}),
    \end{cases}
    \label{eq:lower_uncrossing}\\[4pt]
    \left(\mathbf{A}\vee\mathbf{B}\right)_i
    &\coloneqq
    \begin{cases}
        A_i\cup B_i,
            & i\notin Z(\mathbf{A},\mathbf{B}),\\
        S_i,
            & i\in Z(\mathbf{A},\mathbf{B}).
    \end{cases}
    \label{eq:upper_uncrossing}
\end{align}
\end{definition}

These operations agree with the componentwise intersection and union when $Z(\mathbf{A},\mathbf{B})=\emptyset$. 
They are not, in general, the meet and join in the poset of valid tuples. 
Their essential property is that they always form a comparable pair:
\begin{equation}\label{eq:uncrossing_comparable}
    \mathbf{A}\wedge\mathbf{B}
    \subseteq
    \mathbf{A}\vee\mathbf{B}.
\end{equation}

\begin{lemma}\label{lem:polysimplex_primitive_relation}
For every pair of valid tuples $\mathbf{A}$ and $\mathbf{B}$, we have
\begin{equation}\label{eq:polysimplex_primitive_relation}
    u_{\mathbf{A}}+u_{\mathbf{B}}
    =
    u_{\mathbf{A}\wedge\mathbf{B}}
    +
    u_{\mathbf{A}\vee\mathbf{B}}
    +
    \sum_{i\in Z(\mathbf{A},\mathbf{B})}e_{S_i}.
\end{equation}

In particular, modulo the lineality space $\mathsf{L}(\mathbf{S})$, the vector $u_{\mathbf{A}}+u_{\mathbf{B}}$ belongs to the cone of ${}^{\wedge}\Psi(\mathbf{S})$ indexed by the chain $ \mathbf{A}\wedge\mathbf{B} \subseteq \mathbf{A}\vee\mathbf{B}. $
\end{lemma}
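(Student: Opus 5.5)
The identity \eqref{eq:polysimplex_primitive_relation} is linear in the basis vectors $e_s$, and every term lives in a single factor block $\mathbb{R}^{S_i}$. So I will verify it coordinatewise, one factor $S_i$ at a time. Since $u_{\mathbf A}=\sum_{s\in\overline{\mathbf A}}e_s$ and $\overline{\mathbf A}\cap S_i=S_i\setminus A_i$, the $S_i$-component of $u_{\mathbf A}$ is the indicator vector of $S_i\setminus A_i$. In that block the identity becomes an equality of indicator functions on $S_i$:
\[
\mathbf 1_{S_i\setminus A_i}+\mathbf 1_{S_i\setminus B_i}
=\mathbf 1_{S_i\setminus(\mathbf A\wedge\mathbf B)_i}+\mathbf 1_{S_i\setminus(\mathbf A\vee\mathbf B)_i}+[i\in Z(\mathbf A,\mathbf B)]\,\mathbf 1_{S_i}.
\]

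I will check this in the two cases of \Cref{def:polysimplex_uncrossing}.

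If $i\notin Z(\mathbf A,\mathbf B)$, the right side is $\mathbf 1_{S_i\setminus(A_i\cap B_i)}+\mathbf 1_{S_i\setminus(A_i\cup B_i)}$. The left side equals this by the modular identity $\mathbf 1_X+\mathbf 1_Y=\mathbf 1_{X\cup Y}+\mathbf 1_{X\cap Y}$ applied to the complements $X=S_i\setminus A_i$ and $Y=S_i\setminus B_i$, using De Morgan.

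If $i\in Z(\mathbf A,\mathbf B)$, then $A_i\cap B_i=\emptyset$. The right side is $\mathbf 1_{S_i\setminus(A_i\cup B_i)}+0+\mathbf 1_{S_i}$. The left side is $2\cdot\mathbf 1_{S_i}-\mathbf 1_{A_i}-\mathbf 1_{B_i}=2\cdot\mathbf 1_{S_i}-\mathbf 1_{A_i\cup B_i}$, by disjointness. These agree. Summing over $i$ gives \eqref{eq:polysimplex_primitive_relation}.

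For the second assertion, the term $\sum_{i\in Z}e_{S_i}$ lies in $\mathsf L(\mathbf S)$ by \eqref{eq:lineality_polysimplex}. Modulo lineality, therefore, $u_{\mathbf A}+u_{\mathbf B}\equiv u_{\mathbf A\wedge\mathbf B}+u_{\mathbf A\vee\mathbf B}$.

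Next I must check that $\mathbf A\wedge\mathbf B$ and $\mathbf A\vee\mathbf B$ are valid tuples. When $i\notin Z$, the componentwise intersection is nonempty by definition. When $i\in Z$, the union is nonempty. Their comparability \eqref{eq:uncrossing_comparable} is also immediate componentwise: $A_i\cap B_i\subseteq A_i\cup B_i$ in the first case, and $A_i\cup B_i\subseteq S_i$ in the second.

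Hence $\mathbf A\wedge\mathbf B\subseteq\mathbf A\vee\mathbf B$ is a chain of valid tuples. By the description of cones of the barycentric subdivision used in \Cref{lem:fan_polysimplex_bary}, $\operatorname{Cone}\{u_{\mathbf A\wedge\mathbf B},u_{\mathbf A\vee\mathbf B}\}$ is a cone of ${}^{\wedge}\Psi(\mathbf S)$.

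The one subtlety, and the only place I expect any care to be needed, is degeneracy: either uncrossing may equal $\mathbf S$, or the two may coincide. If a tuple equals $\mathbf S$, its $u$ is $0$ and it simply drops out of the chain, consistent with the convention $f(\mathbf S)=0$. If the two coincide, the chain has length one. In every case the sum lies in the cone indexed by the (possibly shortened) chain, which completes the argument.
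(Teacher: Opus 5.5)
Your proof is correct and takes the same route as the paper. Like the paper, you check the identity factor by factor on indicator vectors, splitting into the cases $i\notin Z(\mathbf A,\mathbf B)$ and $i\in Z(\mathbf A,\mathbf B)$, and then use comparability of the uncrossings together with the chain description of the cones of ${}^{\wedge}\Psi(\mathbf S)$. You also verify that the uncrossings are valid and treat the degenerate cases (an uncrossing equal to $\mathbf S$, or the two uncrossings coinciding); the paper leaves these details implicit.
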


\begin{proof}
We verify \eqref{eq:polysimplex_primitive_relation} separately in each factor $S_i$. 
Recall that the restriction of $u_{\mathbf{A}}$ to $\mathbb{R}^{S_i}$ is the indicator vector of $S_i\setminus A_i$, and we take the convention that $e_{\emptyset} = 0$.

\begin{itemize}
    \item  Suppose first that $i\notin Z(\mathbf{A},\mathbf{B})$.
The usual indicator-vector identity gives
\[
    e_{S_i\setminus A_i}
    +
    e_{S_i\setminus B_i}
    =
    e_{S_i\setminus(A_i\cap B_i)}
    +
    e_{S_i\setminus(A_i\cup B_i)}.
\]
These are precisely the contributions of $u_{\mathbf{A}\wedge\mathbf{B}}$ and $u_{\mathbf{A}\vee\mathbf{B}}$ in the $i$-th factor.

\item Suppose now that $i\in Z(\mathbf{A},\mathbf{B})$. 
Since $A_i\cap B_i=\emptyset$, we have
\[
    e_{S_i\setminus A_i}
    +
    e_{S_i\setminus B_i}
    =
    e_{S_i\setminus(A_i\cup B_i)}
    +
    e_{S_i}.
\]

The first term on the right-hand side is the contribution of $u_{\mathbf{A}\wedge\mathbf{B}}$. 
Since $ \left(\mathbf{A}\vee\mathbf{B}\right)_i=S_i, $
the contribution of $u_{\mathbf{A}\vee\mathbf{B}}$ in this factor is zero, while $e_{S_i}$ belongs to the lineality space.
\end{itemize}

Summing these identities over all factors proves \eqref{eq:polysimplex_primitive_relation}. 
The final statement follows from \eqref{eq:uncrossing_comparable}, since the cones of ${}^{\wedge}\Psi(\mathbf{S})$ are indexed by chains of valid tuples. 
\end{proof}

\begin{theorem}\label{thm:deformation_cone}
The \textbf{supermodular cone of the polysimplex} $\Delta(\mathbf{S})$ is the set of pairs $ (f,g)\in V(\mathbf{S})\oplus U(\mathbf{S}) $ with $f(\mathbf{S})=0$ satisfying
\begin{equation}\label{eq:supermodular_inequality}
    f(\mathbf{A})+f(\mathbf{B})
    \leq
    f\left(\mathbf{A}\wedge\mathbf{B}\right)
    +
    f\left(\mathbf{A}\vee\mathbf{B}\right)
    +
    \sum_{i\in Z(\mathbf{A},\mathbf{B})}g(i)
\end{equation}
for every pair of valid tuples $\mathbf{A}$ and $\mathbf{B}$.
The deformation cone of $\mathsf{P}(\mathbf{S})$ is the supermodular cone of $\Delta(\mathbf{S})$.\footnote{Note that if we switch from the min to max convention for support functions, this replaces the inner normal fan with the outer normal fan.   Appropriately modifying the definitions of $\wedge$ and $\vee$, and the resulting inequalities above, would produce a \emph{submodular cone of $\Delta(\mathbf{S})$}.}
\end{theorem}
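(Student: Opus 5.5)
The plan is to apply Batyrev's criterion in the form of \Cref{prop:barycentric_deformation_inequalities}, adapted to the non-pointed setting via \Cref{prop:eta}. Since $\Psi(\mathbf S)=\operatorname{Bar}(\Sigma(\mathbf S))$ is simplicial, a piecewise-linear function on it is determined by its values on the canonical ray generators $u_{\mathbf A}$ of ${}^{\wedge}\Psi(\mathbf S)$ ($\mathbf A$ a valid proper tuple) together with a linear functional on $\mathsf L(\mathbf S)$. Both are recorded by $(f,g)$, with $f(\mathbf A)=h(u_{\mathbf A})$ and $g(i)=h(e_{S_i})$. The convention $f(\mathbf S)=0$ matches $u_{\mathbf S}=0$. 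Since $h$ is linear on $\mathsf L(\mathbf S)$ and on every cone, we have $h(x+\sum_i c_ie_{S_i})=h(x)+\sum_i c_ig(i)$ for $x$ in any cone of $\Psi(\mathbf S)$.

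First, I identify the primitive collections. As in the proof of \Cref{prop:barycentric_deformation_inequalities}, rays span a cone exactly when the corresponding faces form a chain, so the primitive collections are the pairs $\{u_{\mathbf A},u_{\mathbf B}\}$ with $\mathbf A,\mathbf B$ incomparable valid proper tuples.

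Second, I compute the primitive relation using \Cref{lem:polysimplex_primitive_relation}:
\[
u_{\mathbf A}+u_{\mathbf B}=u_{\mathbf A\wedge\mathbf B}+u_{\mathbf A\vee\mathbf B}+\sum_{i\in Z(\mathbf A,\mathbf B)}e_{S_i}.
\]
Here $\mathbf A\wedge\mathbf B\subseteq\mathbf A\vee\mathbf B$, so the first two terms lie in a single cone of ${}^{\wedge}\Psi(\mathbf S)$. A term may vanish when a tuple equals $\mathbf S$, and the two tuples may coincide; the coefficient $2$ is then harmless. Evaluating $h$ using linearity on that cone plus lineality gives exactly the right-hand side of \eqref{eq:supermodular_inequality}. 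Batyrev's inequality $h(u_{\mathbf A})+h(u_{\mathbf B})\le h(u_{\mathbf A}+u_{\mathbf B})$ therefore becomes \eqref{eq:supermodular_inequality} for incomparable pairs.

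Third, I dispose of comparable pairs, which the statement quantifies over as well. If $\mathbf A\subseteq\mathbf B$, then $A_i\cap B_i=A_i\neq\emptyset$, so $Z=\emptyset$, $\mathbf A\wedge\mathbf B=\mathbf A$ and $\mathbf A\vee\mathbf B=\mathbf B$. The inequality is then an equality, so the two descriptions agree.

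Combining these steps with Batyrev's criterion, which applies after passing to the quotient by lineality or equivalently to the pointed representative, as the paper does for \Cref{cor:supermodular_simplex}, shows that $\operatorname{Def}(\mathsf P(\mathbf S))$ is the stated cone.

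The main obstacle is justifying Batyrev in the presence of lineality. Concretely, one must check that the cone containing $u_{\mathbf A}+u_{\mathbf B}$ modulo $\mathsf L(\mathbf S)$ is the one indexed by the chain $\mathbf A\wedge\mathbf B\subseteq\mathbf A\vee\mathbf B$, and that the lineality correction enters with coefficients exactly $g(i)$. To handle this, I would work in the quotient fan $\Psi(\mathbf S)/\mathsf L(\mathbf S)$, which is rational simplicial and complete in its span. There Batyrev applies directly, and I lift back through the splitting of \Cref{prop:eta}, using that $h$ restricted to $\mathsf L(\mathbf S)$ is the linear functional $g$.
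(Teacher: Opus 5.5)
Your proposal is correct and follows essentially the same route as the paper: you identify the primitive collections as incomparable pairs $\{u_{\mathbf A},u_{\mathbf B}\}$, evaluate $h$ on $u_{\mathbf A}+u_{\mathbf B}$ via \Cref{lem:polysimplex_primitive_relation} with the lineality correction contributing $\sum_{i\in Z(\mathbf A,\mathbf B)}g(i)$, apply Batyrev's criterion, and observe that comparable pairs give equality. Your extra remarks make explicit two points the paper leaves implicit in its appeal to \Cref{prop:eta}: the case where the two uncrossings coincide, giving coefficient $2$, and the justification of Batyrev's criterion in the presence of lineality by passing to the quotient fan.
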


\begin{proof}
A collection of rays of the barycentric subdivision $\Psi(\mathbf{S})$ spans a cone if and only if the corresponding valid tuples form a chain. 
Consequently, its primitive collections are precisely the pairs
\[
    \{u_{\mathbf{A}},u_{\mathbf{B}}\}
\]
indexed by incomparable proper valid tuples $\mathbf{A}$ and
$\mathbf{B}$.

Let $h\in\operatorname{PL}(\Psi(\mathbf{S}))$ correspond to the pair $(f,g)$. 
By \Cref{lem:polysimplex_primitive_relation}, we have
\[
    u_{\mathbf{A}}+u_{\mathbf{B}}
    =
    u_{\mathbf{A}\wedge\mathbf{B}}
    +
    u_{\mathbf{A}\vee\mathbf{B}}
    +
    \sum_{i\in Z(\mathbf{A},\mathbf{B})}e_{S_i}.
\]

The first two terms on the right-hand side belong to a common cone of ${}^{\wedge}\Psi(\mathbf{S})$, while the remaining terms belong to $\mathsf{L}(\mathbf{S})$. 
Therefore, under the decomposition of \Cref{prop:eta},
\[
\begin{split}
    h(u_{\mathbf{A}}+u_{\mathbf{B}})
    =\;&
    f\left(\mathbf{A}\wedge\mathbf{B}\right)
    +
    f\left(\mathbf{A}\vee\mathbf{B}\right)\\
    &+
    \sum_{i\in Z(\mathbf{A},\mathbf{B})}g(i).
\end{split}
\]

Here we use $f(\mathbf{S})=0$ whenever one of the two uncrossings is $\mathbf{S}$.

By Batyrev's criterion, in the form of \Cref{prop:barycentric_deformation_inequalities}, the function $h$ belongs to $\operatorname{Def}(\mathsf{P}(\mathbf{S}))$ if and only if
\[
    h(u_{\mathbf{A}})+h(u_{\mathbf{B}})
    \leq
    h(u_{\mathbf{A}}+u_{\mathbf{B}})
\]
for every incomparable pair $\mathbf{A},\mathbf{B}$. 
Under the $(f,g)$-coordinates, these are precisely the inequalities \eqref{eq:supermodular_inequality}.

It remains only to observe that the same inequalities may equivalently be imposed for every pair of valid tuples. 
Indeed, if $\mathbf{A}\subseteq\mathbf{B}$, then $ Z(\mathbf{A},\mathbf{B})=\emptyset, \quad \mathbf{A}\wedge\mathbf{B}=\mathbf{A}, \quad \mathbf{A}\vee\mathbf{B}=\mathbf{B}, $ so \eqref{eq:supermodular_inequality} holds with equality.
\end{proof}

\begin{remark}\label{rem:deformation_cone_simplex}
When $k=1$, \Cref{thm:deformation_cone} recovers the usual (type $A$) supermodular inequalities.  When $|S_i|=2$ for all $1\leq i\leq k$, i.e. $\Delta(\mathbf S)$ is a cube, we recover the type $B/C$ Coxeter supermodular functions; see \cite[Theorem~5.2]{ardila2020coxeter}.\footnote{Due to min vs. max conventions, the version in \cite[Theorem~5.2]{ardila2020coxeter} is the submodular version of our statement.}
\end{remark}

\subsection{Indecomposable truncation and fragment bases}

Deep truncations of polysimplices need not be indecomposable; see \Cref{ex:deep_truncation_edge_3d}. The following theorem gives a sufficient condition for deep truncations and fragments to be indecomposable.

\begin{theorem}\label{thm:small_truncated_indecomposable}
	For every full tuple $ \mathbf{A} \subseteq \mathbf{S} $, the deep truncation $ \operatorname{DTr} \left[ \Delta ( \mathbf{S}), \Delta ( \mathbf{A}) \right] $ and the fragment $ \operatorname{Fr} \left[ \Delta ( \mathbf{S}), \Delta ( \mathbf{A}) \right] $ are both indecomposable.
\end{theorem}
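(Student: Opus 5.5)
Both polytopes will be handled with the classical edge-graph criterion for indecomposability (Shephard/McMullen, to be recalled in \Cref{sec:zonotopes}). Suppose a polytope $\mathsf Q$ has a family of indecomposable faces, for instance triangles and more generally simplices, together with edges. If these faces are strongly connected, meaning that any two of them can be joined by a chain in which consecutive members share at least a vertex (an edge is safer), and their union meets every vertex, then $\mathsf Q$ is indecomposable. The reason is that in any summand $\lambda\mathsf Q=\mathsf Q_1+\mathsf Q_2$, each indecomposable face $\mathsf F$ of $\mathsf Q$ induces a face of $\mathsf Q_1$ homothetic to $\mathsf F$. The homothety ratio must then agree along every shared edge. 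A single ratio therefore propagates to all edges, and $\mathsf Q_1$ is homothetic to $\mathsf Q$.

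Because $\mathbf A$ is full, every $A_i\subsetneq S_i$. By \Cref{prop:delzant} the deep truncation is flat, so $\operatorname{DTr}[\Delta(\mathbf S),\Delta(\mathbf A)]=\operatorname{Conv}(\text{vertices } e_{s_1}+\dots+e_{s_k} \text{ with some } s_i\notin A_i)$. The fragment is the convex hull of the vertices of $\Delta(\mathbf A)$ together with their neighbors. In each case the edges are of two kinds. Old edges change one coordinate inside some $S_i$. New edges lie in the new facet $\{u_{\mathbf A}^\intercal x=1\}$, which is spanned by the neighbors of $\Delta(\mathbf A)$, i.e. by the points with exactly one coordinate outside $\mathbf A$.

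For the deep truncation, the plan is as follows.
\begin{enumerate}
\item The new facet contains triangles. Take the points $x=(\dots,s_i\notin A_i,\dots,a_j,\dots)$ and $y=(\dots,a_i,\dots,s_j\notin A_j,\dots)$, whose midpoint equals the midpoint of $(a_i,a_j)$ and $(s_i,s_j)$. Together with a third point, these give triangles built from the squares $\Delta(\{a_i,s_i\})\times\Delta(\{a_j,s_j\})$ whose $(a_i,a_j)$ corner has been cut off.
\item Apart from these, I would use the triangular $2$-faces $\Delta(\{s,s',s''\})\times$point inside $\Delta(S_i)$ factors. These are available only when $|S_i|\ge 3$.
\item For factors of size $2$, I would instead use the cut-off squares directly: each such square becomes a triangle in the truncation.
\end{enumerate}
I would check that the vertex $(s_1,\dots,s_k)$ with all coordinates outside $\mathbf A$, and its neighbors, are reached by chains of such triangles. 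The point is that every vertex of the truncation can be moved coordinate by coordinate toward one with exactly one coordinate outside $\mathbf A$, and the triangles of step 1 connect all of those, provided $k\ge2$ (full tuples with $k=1$ are just simplices).

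For the fragment the argument is symmetric. The cut-off corner of each square $\Delta(\{a_i,s_i\})\times\Delta(\{a_j,s_j\})$ based at a vertex of $\Delta(\mathbf A)$ is a triangle in $\operatorname{Fr}$. In addition, $\Delta(\mathbf A)$ together with triangles $\Delta(\{a,a',s\})$ inside a single factor glue everything. Each neighbor is adjacent via such a triangle to a vertex of $\Delta(\mathbf A)$. The vertices of $\Delta(\mathbf A)$ are themselves linked through these triangles: two vertices $a,a'$ in the same $A_i$ are linked by $\Delta(\{a,a',s\})$ with $s\in S_i\setminus A_i$, which exists because $\mathbf A$ is full.

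I expect the main obstacle to be verifying that the claimed triangles really are $2$-faces of the truncation and of the fragment. This comes down to exhibiting a linear functional minimized exactly on each one, which I would build as $u_{\mathbf A}$ plus a perturbation by coordinate functionals. A second difficulty is making the connectivity argument uniform in the mixed cases where some factors have size $2$ and others have size at least $3$. If this bookkeeping becomes messy, the fallback is to reduce via \Cref{prop:factor_out}-style restriction to pairs of factors: first prove the case $k=2$ directly, then propagate homothety ratios between pairs of factors through the shared new-facet triangles.
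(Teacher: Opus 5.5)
Your fragment half essentially works. The deep-truncation half has a genuine gap once $k\geq 3$, because your criterion requires the family of indecomposable faces to cover every vertex. Write $o(v)$ for the number of coordinates of a vertex $v$ outside $\mathbf A$.

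A square $2$-face of $\Delta(\mathbf S)$ through $v$ loses a corner only if it contains a vertex with $o=0$, which forces $o(v)\le 2$. So when $o(v)\ge 3$, every square through $v$ survives intact. Every other $2$-face of the truncation through $v$ lies in a fiber simplex $\Delta(S_i)\times\{\mathrm{pt}\}$, and the $2$-faces through edges of that simplex are again intact squares or triangles of the same simplex. Hence no triangle through $v$ is edge-linked to your cut triangles.

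In the product-of-segments case this is fatal. Take $\mathbf S=(\{1,2\},\{3,4\},\{5,6\})$ and $\mathbf A=(\{1\},\{3\},\{5\})$. The vertex $e_2+e_4+e_6$ of the truncated cube lies only on three intact squares and three edges. So no strong chain of indecomposable proper faces reaches it, and a vertex-covering criterion can never certify this polytope, although it is indecomposable. Your claim that the all-outside vertex is reached by chains of triangles is therefore false. The pairwise fallback fails the same way, since the faces $\Delta(S_i)\times\Delta(S_j)\times\{\mathrm{pt}\}$ only contain vertices with $o\le 2$. Note also that edge-sharing is necessary, not merely safer: the two triangles and one lateral edge of a triangular prism form a vertex-connected family covering all vertices, yet the prism decomposes.

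The repair is to change the criterion, not the triangles. One option is McMullen's version, which asks only for a strongly connected family of indecomposable faces that touches every facet. The other is the parallelogram rule of \Cref{prop:edge_criterion}, which carries the common weight through the intact squares. For McMullen's version, take the following family:
\begin{itemize}
\item your cut triangles;
\item the fiber triangles $\Delta(\{a_i,s_i,s_i'\})\times\{\mathrm{pt}\}$ through vertices with $o=2$, which link cut triangles with different $s_i$;
\item for $k\ge3$, the new-facet triangles cut from the $3$-faces $\Delta(\{a_i,s_i\})\times\Delta(\{a_j,s_j\})\times\Delta(\{a_l,s_l\})\times\{\mathrm{pt}\}$, which link different pairs $(i,j)$ and $(i,l)$.
\end{itemize}
This family is edge-connected. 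Every facet of the truncation is either the new facet or of the form $\{x(s)=0\}$, and each contains a vertex with $o=1$, by fullness and $k\ge2$.

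In the fragment every vertex has $o\le 1$, so your corner triangles and the fiber triangles $\Delta(\{a,a',s\})\times\{\mathrm{pt}\}$ do cover everything and are edge-connected. Just do not put $\Delta(\mathbf A)$ itself in the family, since it is usually a nontrivial product. All of these triangles are intersections of faces of $\Delta(\mathbf S)$, or of the new facet, with the polytope, so no perturbed functionals are needed.

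The paper avoids face chains altogether. It invokes the Higashitani--Padrol--Sanyal dichotomy that a $0/1$-polytope is indecomposable or a nontrivial product. It then shows that the distinguished new facet cannot split across two groups of factors. That route is short but rests on a deep external theorem; the repaired chain argument is elementary.
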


See \Cref{fig:triangle-product-edge-truncation-fragment} for illustrations of indecomposable truncations and fragments.

\begin{proof}
If $k=1$, both polytopes are simplices by \Cref{cor:multisimplex_drop_dimension}, and hence are indecomposable.  We may therefore assume that $k\geq 2$.  We invoke a recent result of Higashitani--Padrol--Sanyal~\cite{higashitani2026indecomposability}, which states that every $0/1$-polytope is either indecomposable or a nontrivial Cartesian product of $0/1$-polytopes.
Both the fragment and the deep truncation are $0/1$-polytopes.  

Both the deep truncation and the fragment have a distinguished facet $\mathsf{F}$ whose vertices are precisely the neighbors of $\Delta(\mathbf{A})$.  Let $\mathsf{P}$ denote either polytope.  Suppose for contradiction that $\mathsf{P}$ is decomposable.  The cited result gives a decomposition
$\mathsf{P}=\mathsf{P}'\times\mathsf{P}''$ along a nontrivial
partition of the coordinate set $\mathscr{S}$.  This partition cannot
split any $S_i$.  Indeed, since $\mathbf{A}$ is full and $k\geq 2$,
every coordinate takes the value $1$ at some vertex of $\mathsf{P}$;
if $S_i$ met both coordinate blocks, the product structure would
therefore produce a point $x\in\mathsf{P}$ with
$\sum_{s\in S_i}x(s)=2$.  Thus the coordinate partition induces a
nontrivial partition
$\mathbf{S}'\sqcup\mathbf{S}''=\mathbf{S}$, with
$\mathsf{P}'\subseteq\Delta(\mathbf{S}')$ and
$\mathsf{P}''\subseteq\Delta(\mathbf{S}'')$.  In particular $\mathsf{F} = \mathsf{F}' \times \mathsf{F}'' \subset \mathsf{P}' \times \mathsf{P}''$.

Since the tuple $\mathbf{A}$ is full, 
\[
\mathsf{F} = \operatorname{Conv}(\bigcup_{i \in [k]} \big( \,\Delta(S_i \setminus {A}_i) \times \prod_{j \neq i} \Delta(A_j) \big)).
\]

Let $S_i \in \mathbf{S}'$ and $S_j \in \mathbf{S}''$.  There exists some vertex $p \in \mathsf{F}$ whose projection $p'$ to $\Delta(\mathbf{S}')$ satisfies $p'_i \notin \Delta(A_i)$.  Similarly, there exists a vertex $q \in \mathsf{F}$ whose projection $q''$ to $\Delta(\mathbf{S}'')$ satisfies $q''_j \notin \Delta(A_j)$.  Thus the vertex $p'\times q'' \in \mathsf{F}$ has projections to $\Delta(S_i)$ and $\Delta(S_j)$ not in $\Delta(A_i)$ and not in $\Delta(A_j)$, respectively, a contradiction.

\end{proof}

\begin{figure}[!htb]
    \centering
    \includegraphics[width=.85\textwidth]{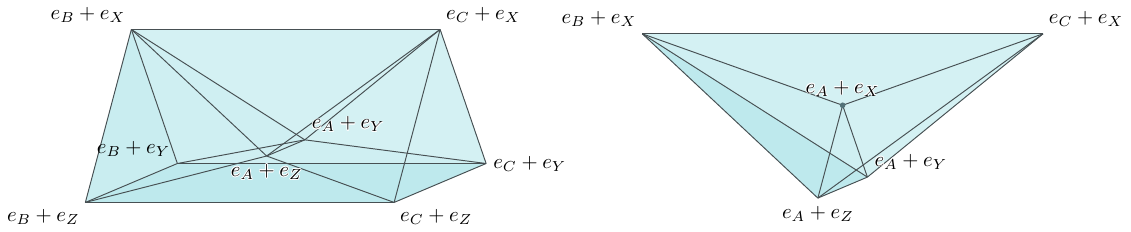}

    \vspace{6mm}
    \includegraphics[width=.85\textwidth]{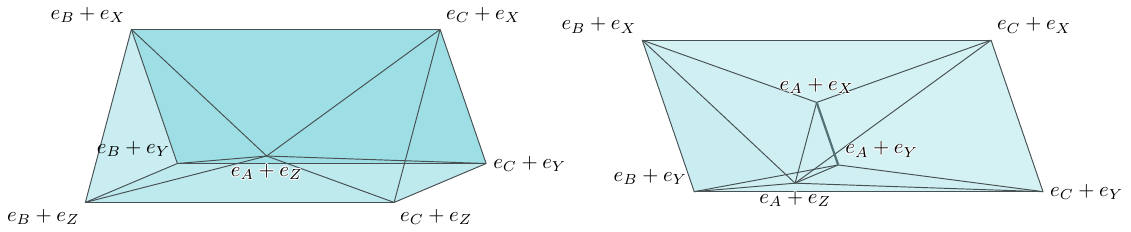}
    \caption{Examples of indecomposable flat truncations and fragments of $\Delta_3 \times \Delta_3$. Top: the flat truncation along the vertex $e_A+e_X$ and its associated fragment. Bottom: the flat truncation along an edge $[e_A+e_X, e_A+e_Y]$ and its associated fragment.}
    \label{fig:triangle-product-edge-truncation-fragment}
\end{figure}

We now apply the results of \Cref{sec:basis} to construct an indecomposable basis for
the deformation space of an omnitruncation of $\Delta(\mathbf{S})$.
We start with a polytopal basis that is not yet indecomposable.

\begin{proposition}\label{prop:basis_polysimplex}
    A polytopal basis for the deformation space of $\mathsf{P}(\mathbf{S})$ is given by
    \begin{equation}\label{eq:old_basis}
        \left\{ \operatorname{DTr}\left[\Delta(\mathbf{S}),\, \Delta(\mathbf{A})\right]
            \,\middle|\, \mathbf{S}\neq\mathbf{A} \text{ valid} \right\}
        \cup
        \left\{ \Delta(S) \,\middle|\, S \in \mathbf{S} \right\}.
    \end{equation}
\end{proposition}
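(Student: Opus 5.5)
The plan is to reduce \Cref{prop:basis_polysimplex} to a dimension count plus linear independence in the $(f,g)$-coordinates of \Cref{def:functions_tuples}. First, the set has the right size. By \Cref{prop:eta} and \Cref{lem:fan_polysimplex_bary}, $\operatorname{PL}(\Psi(\mathbf S))\cong V(\mathbf S)\oplus U(\mathbf S)$. Its dimension is the number of proper valid tuples plus $k$, and this equals the cardinality of \eqref{eq:old_basis}. Second, every element of the set is a deformation of $\mathsf P(\mathbf S)$. For deep truncations this holds by construction, since their normal fans are coarsened by the barycentric subdivision. For the simplices $\Delta(S_i)$, regarded via \Cref{rem:embed}, it is \Cref{lem:face_deformation_omni}, because $\Delta(S_i)$ is a Minkowski factor of the face $\Delta(\mathbf A)$ with $A_j$ a singleton for $j\neq i$ and $A_i=S_i$. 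Alternatively, $\Delta(S_i)$ is a factor of $\Delta(\mathbf S)$.

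It therefore suffices to prove linear independence. I would compute the support functions in $(f,g)$-form. By \Cref{lem:polysimplex_truncation_support}, $\operatorname{DTr}[\Delta(\mathbf S),\Delta(\mathbf A)]$ has $f(\mathbf B)=[\mathbf B\subseteq\mathbf A]$ for proper $\mathbf B$ and $g\equiv 1$. For $\Delta(S_i)$ embedded in $\mathbb R^{\mathscr S}$, the support function is computed directly. Since $u_{\mathbf B}$ restricted to $S_i$ is the indicator of $S_i\setminus B_i$ and $B_i\neq\emptyset$, the minimum over $\Delta(S_i)$ is $0$, so $f\equiv 0$. On the lineality space, $h(\Delta(S_i),e_{S_j})=\delta_{ij}$, so $g=e_i$.

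Now suppose a linear combination $\sum_{\mathbf A}\lambda_{\mathbf A}h(\operatorname{DTr}_{\mathbf A})+\sum_i\mu_i h(\Delta(S_i))$ vanishes. The $V$-component gives $\sum_{\mathbf A\supseteq\mathbf B}\lambda_{\mathbf A}=0$ for every proper valid $\mathbf B$. This system is unitriangular with respect to a linear extension of containment, as in \Cref{lem:key} and \Cref{rem:key_delzant}, or by Möbius inversion. Hence all $\lambda_{\mathbf A}=0$. The $U$-component then reads $\sum_i\mu_i e_i=0$, so every $\mu_i=0$. This is the same mechanism as \Cref{prop:key_non_full_general}, and one could simply cite that result for the product $\Delta(\mathbf S)=\prod\Delta(S_i)$, whose factors each have non-linear codimension-one affine span. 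The difference is that here the seed factors are exactly the $\Delta(S_i)$ in the basis.

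The main point to check carefully is the lineality bookkeeping: that the $g$-values of the deep truncations are all $1$, while the $\Delta(S_i)$ have $g=e_i$ and $f=0$. These values are exactly what decouple the two blocks and make the full matrix block-triangular with invertible diagonal blocks. A degenerate case to note is a singleton $S_i$. By \Cref{rem:singleton} such factors may be discarded, or kept harmlessly, since there $\Delta(S_i)$ is a point with $g=e_i$.
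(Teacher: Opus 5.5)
Your argument is correct and is essentially the paper's: the paper obtains this proposition directly from \Cref{prop:key_non_full_general}, whose proof is exactly the block computation you carry out. That computation is triangularity on the pointed coordinates $V(\mathbf S)$, with the factor simplices $\Delta(S_i)$ vanishing there and separating the lineality coordinates $U(\mathbf S)$ through $g=e_i$. The only cosmetic difference is that the paper's underlying calculation (\Cref{prop:key_non_full}) works with shallow-truncation differences and passes to deep truncations via \Cref{lem:convtrunc}, whereas you read off the deep-truncation values $f(\mathbf B)=[\mathbf B\subseteq\mathbf A]$ directly from \Cref{lem:polysimplex_truncation_support}.
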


\begin{proof}
    Direct consequence of \Cref{prop:key_non_full_general}.
\end{proof}

We now upgrade the basis of \Cref{prop:basis_polysimplex} to an indecomposable one.

\begin{theorem}\label{thm:basis_polysimplex_deep}
    An indecomposable polytopal basis for the deformation space of $\mathsf{P}(\mathbf{S})$ is given by
    \begin{equation}\label{eq:basis_polysimplex_deep}
        \left\{ \operatorname{DTr}\!\left[\Delta(\mathbf{T}_\mathbf{A}),\,
            \Delta(\mathbf{A}|_{\mathbf{T}_\mathbf{A}})\right]
            \,\middle|\, \mathbf{S}\neq\mathbf{A} \text{ valid} \right\}
        \cup
        \left\{ \Delta(S) \,\middle|\, S \in \mathbf{S} \right\}.
    \end{equation}
\end{theorem}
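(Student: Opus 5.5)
The plan is to obtain \eqref{eq:basis_polysimplex_deep} from the basis of \Cref{prop:basis_polysimplex} by a triangular change of basis in support-function coordinates, and then to invoke \Cref{thm:small_truncated_indecomposable} for indecomposability.

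\textbf{Elements are deformations.} By \Cref{prop:factor_out}, for every proper valid $\mathbf A$ we have
\[
\operatorname{DTr}[\Delta(\mathbf S),\Delta(\mathbf A)]=\operatorname{DTr}[\Delta(\mathbf T_{\mathbf A}),\Delta(\mathbf A|_{\mathbf T_{\mathbf A}})]\times\Delta(\overline{\mathbf T}_{\mathbf A}).
\]
Under the embedding of \Cref{rem:embed}, a Cartesian product is a Minkowski sum:
\[
h(\operatorname{DTr}[\Delta(\mathbf S),\Delta(\mathbf A)])=h(D_{\mathbf A})+\sum_{i\notin\mathrm{Slk}(\mathbf A)}h(\Delta(S_i)),
\]
where $D_{\mathbf A}$ denotes the reduced truncation $\operatorname{DTr}[\Delta(\mathbf T_{\mathbf A}),\Delta(\mathbf A|_{\mathbf T_{\mathbf A}})]$. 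In particular $D_{\mathbf A}$ is a Minkowski summand of a deformation of $\mathsf P(\mathbf S)$. Its normal fan therefore coarsens $\Psi(\mathbf S)$, so it lies in $\operatorname{Def}(\mathsf P(\mathbf S))$. The simplices $\Delta(S_i)$ are deformations by \Cref{lem:face_deformation_omni}.

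\textbf{Basis.} The displayed identity expresses each old basis vector as the new vector $h(D_{\mathbf A})$ plus a combination of the vectors $h(\Delta(S_i))$. These $h(\Delta(S_i))$ belong to both families. The transition matrix between \eqref{eq:old_basis} and \eqref{eq:basis_polysimplex_deep} is thus unitriangular, with the $\Delta(S_i)$ listed last. The two families have the same cardinality, so \eqref{eq:basis_polysimplex_deep} is again a basis.

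\textbf{Indecomposability.} Each $\Delta(S_i)$ is a simplex and hence indecomposable. For $D_{\mathbf A}$, the tuple $\mathbf A|_{\mathbf T_{\mathbf A}}$ is full with respect to $\mathbf T_{\mathbf A}$, as noted before \Cref{cor:multisimplex_drop_dimension}. \Cref{thm:small_truncated_indecomposable}, applied to the polysimplex $\Delta(\mathbf T_{\mathbf A})$, then shows that $D_{\mathbf A}$ is indecomposable. Indecomposability is intrinsic to the polytope, so it is unaffected by embedding into $\mathbb R^{\mathscr S}$.

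\textbf{Expected obstacle.} There is a subtlety when $\mathrm{slk}(\mathbf A)=1$. In that case $D_{\mathbf A}$ is the simplex $\Delta(S_i\setminus A_i)$, which is a point if $|S_i\setminus A_i|=1$. A point is still a legitimate basis element, since it accounts for the translation lineality space, and it is trivially indecomposable. I expect the main care to go into this bookkeeping and into confirming that the full-tuple hypothesis of \Cref{thm:small_truncated_indecomposable} holds after restricting to $\mathbf T_{\mathbf A}$ in every case.
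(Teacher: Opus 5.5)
Your proposal is correct and follows the paper's own proof. Like the paper, you factor each deep truncation via \Cref{prop:factor_out}, absorb the factor $\Delta(\overline{\mathbf T}_{\mathbf A})$ into the span of the factor simplices, pass from the basis of \Cref{prop:basis_polysimplex} by a cardinality (unitriangular) argument, and get indecomposability from \Cref{thm:small_truncated_indecomposable}; your explicit check that each extracted factor is a deformation, being a Minkowski summand of one, fills in a step the paper leaves implicit.
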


\begin{proof}
  Starting from the basis of \Cref{prop:basis_polysimplex}, when $\mathbf{A}$ is not full, \eqref{eq:factor_out} shows that $\operatorname{DTr}\!\left[\Delta(\mathbf{S}),\, \Delta(\mathbf{A})\right]$ decomposes as a Cartesian product, from which we extract the factor $\operatorname{DTr}\!\left[\Delta(\mathbf{T}_\mathbf{A}),\,
    \Delta(\mathbf{A}|_{\mathbf{T}_\mathbf{A}})\right]$.
    Each such factor is indecomposable by \Cref{thm:small_truncated_indecomposable}, and the remaining factor $\Delta(\overline{\mathbf T}_\mathbf{A})$ is generated by $\{\Delta(S) \mid S \in \mathbf{S}\}$.
    The set \eqref{eq:basis_polysimplex_deep} therefore spans the basis from \Cref{eq:old_basis}, and has the same cardinality as this basis, hence it is itself a basis.
\end{proof}

For the case of cubes, the specialization of the basis of \Cref{thm:basis_polysimplex_deep} differs from the polytopal bases of \cite{bastidas2021polytope} and \cite{eur2024signed}, which consist entirely of simplices, and from the shard polytopes of \cite{padrol2023shard}.  In \Cref{prop:basis_polysimplex_fragments} we will describe a basis constructed from fragments which specializes, up to translation, to the basis of \cite{eur2024signed}, and is closely related to the one studied in \cite{bastidas2021polytope}.

In general, a fragment of a polytope $\mathsf{S}$ truncated along a face does not live in the deformation cone of $\operatorname{Omni}(\mathsf{S})$.  However, we will now show that the fragments $\operatorname{Fr}\left[\Delta(\mathbf{S}),\Delta(\mathbf{A})\right]$ are deformations of $\mathsf{P}(\mathbf{S})$.

\begin{lemma}\label{lem:fragment_new_facet}
Let $\mathbf{A}$ be a valid tuple with $\mathrm{slk}(\mathbf{A}) \geq 1$, and define the tuple $\mathbf{A}^{\ast}$ by
\[
    A^{\ast}_{i} =
    \begin{cases}
        S_i \setminus A_i, & i \in \mathrm{Slk}(\mathbf{A}),\\
        S_i, & i \notin \mathrm{Slk}(\mathbf{A}).
    \end{cases}
\]
Then $\mathbf{A}^{\ast}$ is valid and
\begin{equation}\label{eq:fragment_new_facet}
    u_{\mathbf{A}} + u_{\mathbf{A}^{\ast}} = \sum_{i \in \mathrm{Slk}(\mathbf{A})} e_{S_i}.
\end{equation}
In particular,  $-u_{\mathbf{A}}$ agrees with $u_{\mathbf{A}^{\ast}}$ modulo the lineality space $\mathsf{L}(\mathbf{S})$, and hence is a ray generator of $\Psi(\mathbf{S})$.
\end{lemma}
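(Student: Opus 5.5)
The plan is a direct componentwise verification in each factor $\mathbb{R}^{S_i}$, in the same spirit as the proof of \Cref{lem:polysimplex_primitive_relation}.

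\emph{Validity.} We check $\emptyset\subsetneq A^{\ast}_i\subseteq S_i$ for every $i$. If $i\notin\mathrm{Slk}(\mathbf{A})$, then $A^{\ast}_i=S_i$, which is nonempty. If $i\in\mathrm{Slk}(\mathbf{A})$, then $A_i\subsetneq S_i$, so $S_i\setminus A_i$ is nonempty. The hypothesis $\mathrm{slk}(\mathbf{A})\geq 1$ further guarantees $\mathbf{A}^{\ast}\neq\mathbf{S}$. This is needed for the final claim, since only proper valid tuples index rays of ${}^{\wedge}\Psi(\mathbf{S})$.

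\emph{The identity \eqref{eq:fragment_new_facet}.} Recall from \eqref{eq:normal_vector_polysimplex_face} that the restriction of $u_{\mathbf{A}}$ to $\mathbb{R}^{S_i}$ is $e_{S_i\setminus A_i}$, using the convention $e_\emptyset=0$.
\begin{itemize}
\item For $i\notin\mathrm{Slk}(\mathbf{A})$, both $A_i$ and $A^{\ast}_i$ equal $S_i$. Hence both contributions vanish, matching the absence of $e_{S_i}$ on the right-hand side.
\item For $i\in\mathrm{Slk}(\mathbf{A})$, the contributions are $e_{S_i\setminus A_i}$ and $e_{S_i\setminus(S_i\setminus A_i)}=e_{A_i}$. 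These sum to $e_{S_i}$ because $A_i$ and $S_i\setminus A_i$ partition $S_i$.
\end{itemize}
Summing over all factors gives \eqref{eq:fragment_new_facet}.

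\emph{The final claim.} By \Cref{lem:fan_polysimplex} the right-hand side of \eqref{eq:fragment_new_facet} lies in $\mathsf{L}(\mathbf{S})$. Therefore $-u_{\mathbf{A}}\equiv u_{\mathbf{A}^{\ast}}$ modulo $\mathsf{L}(\mathbf{S})$. Since $\mathbf{A}^{\ast}$ is a proper valid tuple, $u_{\mathbf{A}^{\ast}}$ is the canonical generator of a ray of ${}^{\wedge}\Psi(\mathbf{S})$, by \Cref{lem:fan_polysimplex_bary} and the discussion preceding \Cref{def:polysimplex_uncrossing}. Consequently $-u_{\mathbf{A}}$ generates the corresponding ray of $\Psi(\mathbf{S})={}^{\wedge}\Psi(\mathbf{S})+\mathsf{L}(\mathbf{S})$.

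There is no real obstacle here. The only point requiring care is the properness of $\mathbf{A}^{\ast}$, which is exactly where the hypothesis $\mathrm{slk}(\mathbf{A})\geq1$ is used.
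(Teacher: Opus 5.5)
Your proof is correct and is essentially the paper's argument. Both work factor by factor, noting that $u_{\mathbf{A}}$ and $u_{\mathbf{A}^{\ast}}$ restrict to $e_{S_i\setminus A_i}$ and $e_{A_i}$ on slack factors and vanish elsewhere, and then read off the relation modulo $\mathsf{L}(\mathbf{S})$.

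Your explicit check that $\mathbf{A}^{\ast}\neq\mathbf{S}$ uses $\mathrm{slk}(\mathbf{A})\geq 1$ together with $A_i\neq\emptyset$. It spells out a point the paper leaves implicit when it concludes that $u_{\mathbf{A}^{\ast}}$ indexes a ray.
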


\begin{proof}
For $i \in \mathrm{Slk}(\mathbf{A})$ the set $S_i \setminus A_i$ is nonempty by Equation \eqref{eq:gap_defi}, so $\mathbf{A}^{\ast}$ is valid.
The complement of the support of $\mathbf{A}$ meets $S_i$ in $S_i \setminus A_i$, while the complement of the support of $\mathbf{A}^{\ast}$ meets $S_i$ in $A_i$ for $i \in \mathrm{Slk}(\mathbf{A})$ and is empty for $i \notin \mathrm{Slk}(\mathbf{A})$.
By Equation \eqref{eq:normal_vector_polysimplex_face} we therefore have $u_{\mathbf{A}} = \sum_{i \in \mathrm{Slk}(\mathbf{A})} e_{S_i \setminus A_i}$ and $u_{\mathbf{A}^{\ast}} = \sum_{i \in \mathrm{Slk}(\mathbf{A})} e_{A_i}$, and adding these gives Equation \eqref{eq:fragment_new_facet}.
The last claim follows from Equation \eqref{eq:lineality_polysimplex}.
\end{proof}

\begin{remark}
When $\mathrm{slk}(\mathbf{A}) = 1$ the tuple $\mathbf{A}^{\ast}$ is the tuple $\mathbf{A}'$ of Equation \eqref{eq:deep_truncation_slk_1}.
\end{remark}

\begin{proposition}\label{prop:fragment}
    Let $\Delta(\mathbf{A})$ be a proper face of $\Delta(\mathbf{S})$.
    The fragment $\operatorname{Fr}\left[\Delta(\mathbf{S}),\Delta(\mathbf{A})\right]$ is a deformation of $\mathsf{P}(\mathbf{S})$.
\end{proposition}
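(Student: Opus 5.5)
We need to show the normal fan of the fragment $\mathsf{Q}\coloneqq\operatorname{Fr}[\Delta(\mathbf S),\Delta(\mathbf A)]$ coarsens $\Psi(\mathbf S)$. Equivalently, by \Cref{prop:coarsening_summand}, its support function must be a piecewise linear function on $\Psi(\mathbf S)$ lying in the deformation cone. We will use the explicit description in \Cref{thm:deformation_cone}.

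The first step is to reduce to the case where $\mathbf A$ is full. By \eqref{eq:factor_out_II}, $\mathsf Q=\operatorname{Fr}[\Delta(\mathbf T_{\mathbf A}),\Delta(\mathbf A|_{\mathbf T_{\mathbf A}})]\times\Delta(\overline{\mathbf T}_{\mathbf A})$. The second factor is a Minkowski sum of the simplices $\Delta(S)$, which are faces of $\Delta(\mathbf S)$ and hence deformations by \Cref{lem:face_deformation_omni}. So it suffices to treat the first factor. That factor lives over the polysimplex $\Delta(\mathbf T_{\mathbf A})$, and we then need that a deformation of $\mathsf P(\mathbf T_{\mathbf A})$, embedded via \Cref{rem:embed}, is a deformation of $\mathsf P(\mathbf S)$. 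This follows from the corollary to \Cref{prop:omni_product}: $\operatorname{Omni}(\Delta(\mathbf T_{\mathbf A}))$ is a deformation of $\operatorname{Omni}(\Delta(\mathbf S))$, and deformations are transitive. If $\mathrm{slk}(\mathbf A)=1$, then \Cref{cor:multisimplex_drop_dimension} gives that the fragment is $\Delta(\mathbf S)$ itself and we are done. We may therefore assume $\mathbf A$ is full with $k\ge 2$.

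In the full case, the facets of $\mathsf Q$ are of two kinds. The first are facets $x(s)\ge 0$ of $\Delta(\mathbf S)$ meeting $\Delta(\mathbf A)$, whose normals $e_s$ are rays of $\Psi(\mathbf S)$. The second is the new facet with normal $-u_{\mathbf A}$, which by \Cref{lem:fragment_new_facet} equals the ray $u_{\mathbf A^\ast}$ modulo lineality. So every facet normal of $\mathsf Q$ is a ray of $\Psi(\mathbf S)$. Coarsening is stronger than this, however, since a polytope whose facet normals are rays need not have a coarsening normal fan. The plan is therefore to check directly that $(f,g)$, with $f(\mathbf B)=\max(0,|Z(\mathbf A,\mathbf B)|-1)$ and $g\equiv 1$ from \Cref{lem:polysimplex_fragment_support}, satisfies \eqref{eq:supermodular_inequality}. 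We must also confirm that the polytope \eqref{eq:polytope_fg_description} defined by these values is exactly $\mathsf Q$, which holds because these values are the true support values of $\mathsf Q$ and the facet normals are among the rays (cf.\ \Cref{rmk:subtletyminkowski}). Given supermodularity, the piecewise linear extension of $(f,g)$ is convex, so it is the support function of some polytope in $\operatorname{Def}$. That polytope agrees with $\mathsf Q$ on all rays, and since $\mathsf Q$'s facet normals are rays, the two coincide.

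The main obstacle is the inequality itself. Write $z(\mathbf B)=|Z(\mathbf A,\mathbf B)|$ and $\phi(t)=\max(0,t-1)$. We must show
\[
\phi(z(\mathbf B))+\phi(z(\mathbf C))\le \phi(z(\mathbf B\wedge\mathbf C))+\phi(z(\mathbf B\vee\mathbf C))+|Z(\mathbf B,\mathbf C)|.
\]
We will analyze this factor by factor. For each $i$, let $\beta_i,\gamma_i\in\{0,1\}$ indicate $A_i\cap B_i=\emptyset$ and $A_i\cap C_i=\emptyset$ respectively, and similarly for the uncrossings. For $i\notin Z(\mathbf B,\mathbf C)$ we have intersection and union, so the indicator for the union is $\beta_i\gamma_i$ and the indicator for the intersection is at least $\max(\beta_i,\gamma_i)$; hence the per-factor modular inequality $\beta_i+\gamma_i\le$ (lower) $+$ (upper) holds. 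For $i\in Z(\mathbf B,\mathbf C)$, the lower uncrossing is $B_i\cup C_i$, with indicator $\beta_i\gamma_i$, and the upper is $S_i$, with indicator $0$ because $A_i\neq\emptyset$; the deficit here is at most $1$, which is paid for by the $g$ term. Summing over factors gives $z(\mathbf B)+z(\mathbf C)\le z_\wedge+z_\vee+|Z|$ with $z_\vee\le\min(z(\mathbf B),z(\mathbf C))$. Convexity of $\phi$ together with $\phi$ being $1$-Lipschitz should then give the claim, by a majorization-type argument in which the slack from $|Z|$ absorbs the lost $-1$ in the case where $z_\vee=0$. The delicate subcase is when $z_\vee=0<z(\mathbf B),z(\mathbf C)$ and $Z=\emptyset$, where we need $z_\wedge\ge z(\mathbf B)+z(\mathbf C)-1$. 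We will check this using that the factors with $\beta_i=\gamma_i=1$ contribute to $z_\vee$, so there are none, and the disjoint supports then make $z_\wedge=z(\mathbf B)+z(\mathbf C)$.
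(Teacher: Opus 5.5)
Your proposal is correct and is essentially the paper's proof. You identify the fragment with the polytope cut out by its true support values on the rays of $\Psi(\mathbf S)$, with the new facet normal handled by \Cref{lem:fragment_new_facet}. You then verify the inequalities of \Cref{thm:deformation_cone} factor by factor and reach the same summed bound $z(\mathbf B\wedge\mathbf C)+z(\mathbf B\vee\mathbf C)\geq z(\mathbf B)+z(\mathbf C)-|Z(\mathbf B,\mathbf C)|$.

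Your reduction to full $\mathbf A$ is valid but unnecessary, since the paper runs the same argument for any $\mathbf A$ of slack at least two. Your closing step is also easier than you suggest. Set $\ell^\ast=z(\mathbf B)+z(\mathbf C)-z(\mathbf B\vee\mathbf C)$, which is at least $\max(z(\mathbf B),z(\mathbf C))$ by your bound $z(\mathbf B\vee\mathbf C)\le\min(z(\mathbf B),z(\mathbf C))$. Convexity (majorization) then gives $\phi(z(\mathbf B))+\phi(z(\mathbf C))\leq\phi(\ell^\ast)+\phi(z(\mathbf B\vee\mathbf C))$. Monotonicity and the $1$-Lipschitz property give $\phi(\ell^\ast)\leq\phi(z(\mathbf B\wedge\mathbf C))+|Z(\mathbf B,\mathbf C)|$. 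So no delicate subcase arises, and in the subcase you single out one only gets $z(\mathbf B\wedge\mathbf C)\geq z(\mathbf B)+z(\mathbf C)$, not equality.
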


\begin{proof}
Write $\mathsf{F} = \operatorname{Fr}\left[\Delta(\mathbf{S}),\Delta(\mathbf{A})\right]$ and let $(f,g)$ be the pair recorded in \Cref{lem:polysimplex_fragment_support}.
If $\mathrm{slk}(\mathbf{A}) \leq 1$, then $\mathsf{F} = \Delta(\mathbf{S})$ by \Cref{cor:multisimplex_drop_dimension} and the fact that any polytope is a deformation of its omnitruncation.
By \Cref{rem:singleton} we may also assume that every factor has at least two elements.
We first identify $\mathsf{F}$ with the polytope attached to $(f,g)$ by Equation \eqref{eq:polytope_fg_description}, and then verify that $(f,g)$ lies in the deformation cone.

Let $\mathsf{Q}$ denote the polytope in Equation \eqref{eq:polytope_fg_description} determined by $(f,g)$.
Since $f(\mathbf{B}) = h(\mathsf{F}, u_{\mathbf{B}})$ for every valid tuple $\mathbf{B}$ and $g(i) = 1$ for every factor, all of the defining conditions of $\mathsf{Q}$ hold on $\mathsf{F}$, so $\mathsf{F} \subseteq \mathsf{Q}$. 
For the reverse inclusion, note that $\mathsf{F}$ is cut out by the equalities $\sum_{s \in S_i} x(s) = 1$, the inequalities $x(s) \geq 0$ for $s \in \mathscr{S}$, and the single inequality $u_{\mathbf{A}} \cdot x \leq h(\Delta(\mathbf{S}), u_{\mathbf{A}}) + \delta_{\Delta(\mathbf{A})}$.
For $s \in S_i$, let $\mathbf{B}(s)$ be the valid tuple with $\overline{\mathbf{B}(s)} = \{s\}$, so that $u_{\mathbf{B}(s)} = e_s$.
Since $f(\mathbf{B}(s)) \geq 0$, the corresponding inequality of $\mathsf{Q}$ implies $x(s) \geq 0$.
By \Cref{lem:fragment_new_facet} and the equalities above, the remaining inequality is equivalent to $u_{\mathbf{A}^{\ast}} \cdot x \geq c$ for a constant $c$, and $c \leq h(\mathsf{F}, u_{\mathbf{A}^{\ast}}) = f(\mathbf{A}^{\ast})$ because $\mathsf{F}$ satisfies it.
Hence the corresponding inequality of $\mathsf{Q}$ implies it as well, and $\mathsf{Q} \subseteq \mathsf{F}$.
Therefore $\mathsf{F} = \mathsf{Q}$.

It remains to check that $(f,g)$ satisfies the inequalities from Equation \eqref{eq:supermodular_inequality}.
For a valid tuple $\mathbf{X}$ set $z(\mathbf{X}) \coloneqq |Z(\mathbf{A},\mathbf{X})|$ as in Equation \eqref{eq:disjoint_factors}, so that $f(\mathbf{X}) = \max(0, z(\mathbf{X}) - 1)$ and $g \equiv 1$; since every $A_i$ is nonempty we have $z(\mathbf{S}) = 0$, in accordance with the convention $f(\mathbf{S}) = 0$.
Write $\zeta_i(\mathbf{X}) = 1$ if $A_i \cap X_i = \emptyset$ and $\zeta_i(\mathbf{X}) = 0$ otherwise, so that $z = \sum_{i \in [k]} \zeta_i$.
Fix valid tuples $\mathbf{B}$ and $\mathbf{C}$ and set $t = |Z(\mathbf{B},\mathbf{C})|$.

For $i \notin Z(\mathbf{B},\mathbf{C})$ we have $(\mathbf{B} \wedge \mathbf{C})_i = B_i \cap C_i$ and $(\mathbf{B} \vee \mathbf{C})_i = B_i \cup C_i$ by \Cref{def:polysimplex_uncrossing}, so that
\[
    \zeta_i(\mathbf{B} \wedge \mathbf{C}) \geq \max\left(\zeta_i(\mathbf{B}), \zeta_i(\mathbf{C})\right),
    \qquad
    \zeta_i(\mathbf{B} \vee \mathbf{C}) = \min\left(\zeta_i(\mathbf{B}), \zeta_i(\mathbf{C})\right).
\]
For $i \in Z(\mathbf{B},\mathbf{C})$ we have $(\mathbf{B} \wedge \mathbf{C})_i = B_i \cup C_i$ and $(\mathbf{B} \vee \mathbf{C})_i = S_i$, so that $\zeta_i(\mathbf{B} \wedge \mathbf{C}) = \min(\zeta_i(\mathbf{B}), \zeta_i(\mathbf{C}))$ and $\zeta_i(\mathbf{B} \vee \mathbf{C}) = 0$.
Summing over $i \in [k]$ and using $\max + \min = \zeta_i(\mathbf{B}) + \zeta_i(\mathbf{C})$ together with $\min \geq \zeta_i(\mathbf{B}) + \zeta_i(\mathbf{C}) - 1$ yields
\begin{equation}\label{eq:fragment_supermodular_a}
    z(\mathbf{B} \wedge \mathbf{C}) + z(\mathbf{B} \vee \mathbf{C}) \geq z(\mathbf{B}) + z(\mathbf{C}) - t,
\end{equation}
while discarding the indices in $Z(\mathbf{B},\mathbf{C})$ in the first sum yields
\begin{equation}\label{eq:fragment_supermodular_b}
    z(\mathbf{B} \wedge \mathbf{C}) \geq \max\left(z(\mathbf{B}), z(\mathbf{C})\right) - t.
\end{equation}

Since $g $ is identically 1, the inequality \eqref{eq:supermodular_inequality} for the pair $\mathbf{B}, \mathbf{C}$ reads
\[
    \max(0, z(\mathbf{B}) - 1) + \max(0, z(\mathbf{C}) - 1)
    \leq
    \max(0, z(\mathbf{B} \wedge \mathbf{C}) - 1) + \max(0, z(\mathbf{B} \vee \mathbf{C}) - 1) + t .
\]
Suppose first that $z(\mathbf{B}) \geq 1$ and $z(\mathbf{C}) \geq 1$.
Using $\max(0, x - 1) \geq x - 1$ on the right-hand side and Equation \eqref{eq:fragment_supermodular_a}, the right-hand side is at least $z(\mathbf{B} \wedge \mathbf{C}) + z(\mathbf{B} \vee \mathbf{C}) + t - 2 \geq z(\mathbf{B}) + z(\mathbf{C}) - 2$, which is the left-hand side.
Otherwise one of the two values vanishes, say $z(\mathbf{B}) = 0$ by symmetry, and the left-hand side equals $\max(0, z(\mathbf{C}) - 1)$.
If $z(\mathbf{C}) = 0$ the inequality is immediate, and if $z(\mathbf{C}) \geq 1$ then Equation \eqref{eq:fragment_supermodular_b} gives
\[
    \max(0, z(\mathbf{B} \wedge \mathbf{C}) - 1) + t
    \geq
    z(\mathbf{B} \wedge \mathbf{C}) - 1 + t
    \geq
    z(\mathbf{C}) - 1 .
\]
In either case Equation \eqref{eq:supermodular_inequality} holds.

By \Cref{thm:deformation_cone} the pair $(f,g)$ lies in $\operatorname{Def}(\mathsf{P}(\mathbf{S}))$, and the associated polytope is $\mathsf{Q} = \mathsf{F}$.
Hence $\mathsf{F}$ is a deformation of $\mathsf{P}(\mathbf{S})$.
\end{proof}

We want to construct another polytopal basis for $\operatorname{PL}(\Psi(\mathbf{S}))$ using fragments.
However, there are several faces whose fragments coincide with the whole polytope; see \Cref{cor:multisimplex_drop_dimension}.
This case therefore requires more care.

By \Cref{lem:polysimplex_fragment_support}, the fragment of any tuple $\mathbf{A}$ with $\mathrm{slk}(\mathbf{A}) = 1$ coincides with the entire polytope $\Delta(\mathbf{S})$.
For the remaining tuples, we have the following result.

\begin{proposition}\label{prop:explicit_truncation_fragment_transform}
Let $\mathbf{C}\subseteq\mathbf{S}$ be a valid tuple with $L=\mathrm{Slk}(\mathbf{C})$ and $m=\mathrm{slk}(\mathbf{C})=|L|>1$.
For every subset $U\subseteq L$ with $|U|\geq 2$, let $\mathbf{A}^{U}(\mathbf{C})$ be the valid tuple with
\[
    A_i^{U}(\mathbf{C})=
    \begin{cases}
        S_i\setminus C_i, & i\in U,\\
        S_i, & i\notin U,
    \end{cases}.
\]
Then, as support functions on $\Psi(\mathbf{S})$,
\begin{equation}\label{eq:explicit_truncation_fragment_transform}
    h\!\left(\operatorname{DTr}\left[\Delta(\mathbf{S}),\Delta(\mathbf{C})\right]\right)
    =
    \sum_{\substack{U\subseteq L\\ |U|\geq 2}}
    (-1)^{|U|}\,
    h\!\left(\operatorname{Fr}\left[\Delta(\mathbf{S}),\Delta(\mathbf{A}^{U}(\mathbf{C}))\right]\right)
    +(2-m)\,h\!\left(\Delta(\mathbf{S})\right).
\end{equation}
In particular, every deep truncation with slack greater than one lies in the span of the fragments with slack greater than one together with the factor simplices $\{\Delta(S_i)\mid i\in[k]\}$.
\end{proposition}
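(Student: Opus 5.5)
Both sides of \eqref{eq:explicit_truncation_fragment_transform} are piecewise linear functions on $\Psi(\mathbf S)$, so we compare them in the coordinates $(f,g)\in V(\mathbf S)\oplus U(\mathbf S)$ of \Cref{def:functions_tuples}. All polytopes involved have $g\equiv 1$: this holds for the deep truncation and the fragments by \Cref{lem:polysimplex_truncation_support,lem:polysimplex_fragment_support}, and for $\Delta(\mathbf S)$ by \Cref{ex:polysimplex_full_support}. The number of fragment terms is $\sum_{j\geq 2}\binom{m}{j}(-1)^j = 1-(1-m)-1+\ldots$; more precisely, $\sum_{j=0}^m\binom mj(-1)^j=0$ gives $\sum_{j\geq2}(-1)^j\binom mj=m-1$. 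So the $g$-coordinates on the right sum to $(m-1)+(2-m)=1$, which matches the left.

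It remains to compare $f$-coordinates. Since $f\equiv 0$ for $\Delta(\mathbf S)$, we must show, for every proper valid tuple $\mathbf B$, that
\[
[\mathbf B\subseteq \mathbf C]=\sum_{U\subseteq L,\ |U|\geq 2}(-1)^{|U|}\max\bigl(0,\,z_U(\mathbf B)-1\bigr),
\qquad z_U(\mathbf B)\coloneqq|Z(\mathbf A^U(\mathbf C),\mathbf B)|.
\]
Here the left side uses \Cref{lem:polysimplex_truncation_support} and the right side uses \Cref{lem:polysimplex_fragment_support}. The key computation is $z_U(\mathbf B)$. For $i\notin U$ we have $A^U_i=S_i$, which meets $B_i$. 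For $i\in U$ the intersection $A^U_i\cap B_i$ is empty exactly when $B_i\subseteq C_i$. Setting $W\coloneqq\{i\in L: B_i\subseteq C_i\}$, this gives $z_U(\mathbf B)=|U\cap W|$.

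Next observe that $\mathbf B\subseteq\mathbf C$ holds iff $W=L$, because $C_i=S_i$ for $i\notin L$. So the identity reduces to the purely combinatorial claim
\[
\sum_{U\subseteq L,\,|U|\geq2}(-1)^{|U|}\max(0,|U\cap W|-1)=[W=L].
\]
We prove this by splitting $U=U_1\sqcup U_2$ with $U_1\subseteq W$ and $U_2\subseteq L\setminus W$. The summand depends only on $|U_1|$, and terms with $|U_1|\leq1$ vanish. Hence the constraint $|U|\geq2$ is automatic whenever the summand is nonzero, and we may sum over all $U$. If $L\setminus W\neq\emptyset$, summing over $U_2$ gives the factor $\sum_{U_2}(-1)^{|U_2|}=0$, so the total is $0$. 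If $W=L$, the total is $\sum_{j}(-1)^j\binom mj\max(0,j-1)$. Since $\max(0,j-1)=j-1+[j=0]$, this equals
\[
\sum_j(-1)^j\binom mj j-\sum_j(-1)^j\binom mj+1=0-0+1=1,
\]
using $m\geq 2$ for the first sum.

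The main obstacle is bookkeeping rather than difficulty. First, the coordinate $f(\mathbf S)=0$ is consistent on both sides: $\mathbf S\not\subseteq\mathbf C$ since $L\neq\emptyset$. Second, each $\mathbf A^U(\mathbf C)$ is valid and has slack $|U|\geq 2$, so \Cref{lem:polysimplex_fragment_support} applies to it; this holds because $C_i\neq S_i$ for $i\in U$.

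The final assertion then follows in two steps. By \Cref{lem:face_deformation_omni}-style reasoning, $h(\Delta(\mathbf S))=\sum_i h(\Delta(S_i))$. By \Cref{prop:fragment}, each fragment is a genuine deformation of $\mathsf P(\mathbf S)$, so the fragment terms on the right are legitimate elements of $\operatorname{Def}(\mathsf P(\mathbf S))$.
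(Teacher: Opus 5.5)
Your proof is correct and follows the paper's argument. You compare $(f,g)$ coordinates, use $|Z(\mathbf A^U(\mathbf C),\mathbf B)|=|U\cap W|$ to reduce to the Boolean identity, and check the $g$-coordinates via $\sum_{j\geq 2}(-1)^j\binom{m}{j}=m-1$. The only cosmetic difference is in evaluating the Boolean sum: you factor it over $U\cap W$ and $U\setminus W$, whereas the paper writes $\max(0,x-1)=x-[x\neq 0]$ and evaluates the two resulting alternating sums separately.
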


\begin{proof}
We compare both sides in the $(f,g)$ coordinates of \Cref{def:functions_tuples}.
Fix a valid tuple $\mathbf{B}$ with associated ray $u_{\mathbf{B}}$.
By \Cref{lem:polysimplex_truncation_support},
\[
    h\!\left(\operatorname{DTr}\left[\Delta(\mathbf{S}),\Delta(\mathbf{C})\right],u_{\mathbf{B}}\right)
    =
    \begin{cases}
        1, & B_i\subseteq C_i \text{ for every } i,\\
        0, & \text{otherwise.}
    \end{cases}
\]
Since $C_i=S_i$ for $i\notin L$, this condition depends only on the indices in $L$, hence is equivalent to $Y(\mathbf{B})=L$, where
\[
    Y(\mathbf{B})\coloneqq\{i\in L\mid B_i\subseteq C_i\}.
\]
For $U\subseteq L$ with $|U|\geq 2$, \Cref{lem:polysimplex_fragment_support} gives
\[
    h\!\left(\operatorname{Fr}\left[\Delta(\mathbf{S}),\Delta(\mathbf{A}^{U}(\mathbf{C}))\right],u_{\mathbf{B}}\right)
    =
    \max\!\left(0,\,\left|\{i\in[k]\mid A_i^{U}(\mathbf{C})\cap B_i=\emptyset\}\right|-1\right)
    =
    \max\!\left(0,\,|U\cap Y(\mathbf{B})|-1\right),
\]
because for $i\notin U$ the set $A_i^{U}(\mathbf{C})=S_i$ meets the nonempty set $B_i$, while for $i\in U$ the condition $(S_i\setminus C_i)\cap B_i=\emptyset$ is equivalent to $B_i\subseteq C_i$.
Writing $Y=Y(\mathbf{B})$, the $f$-coordinates agree once we prove
\begin{equation}\label{eq:boolean_truncation_fragment_identity}
    \sum_{\substack{U\subseteq L\\ |U|\geq 2}}
    (-1)^{|U|}\max(0,|U\cap Y|-1)
    =
    \begin{cases}
        1, & Y=L,\\
        0, & Y\neq L,
    \end{cases}
\end{equation}
for every $Y\subseteq L$.
The terms with $|U|<2$ vanish, so we may sum over all $U\subseteq L$.
The left-hand side is equal to
\begin{equation}\label{eq:two_summands}
    \sum_{U\subseteq L}(-1)^{|U|}|U\cap Y|
    -
    \sum_{\substack{U\subseteq L\\ U\cap Y\neq\emptyset}}(-1)^{|U|}.
\end{equation}
The first summand of \eqref{eq:two_summands} vanishes, since
\[
    \sum_{U\subseteq L}(-1)^{|U|}|U\cap Y|
    =
    \sum_{i\in Y}\sum_{\substack{U\subseteq L\\ i\in U}}(-1)^{|U|}
    =
    -\sum_{i\in Y}\sum_{W\subseteq L\setminus\{i\}}(-1)^{|W|}
    =
    0,
\]
using $m=|L|>1$.
For the second summand of \eqref{eq:two_summands}, if $Y=\emptyset$ it is empty, hence zero.
If $Y\neq\emptyset$, then
\[
    \sum_{\substack{U\subseteq L\\ U\cap Y\neq\emptyset}}(-1)^{|U|}
    =
    \sum_{U\subseteq L}(-1)^{|U|}
    -
    \sum_{U\subseteq L\setminus Y}(-1)^{|U|}
    =
    -\sum_{U\subseteq L\setminus Y}(-1)^{|U|},
\]
which vanishes unless $Y=L$, in which case it equals $-1$.
This proves \eqref{eq:boolean_truncation_fragment_identity}, so the $f$-coordinates of \eqref{eq:explicit_truncation_fragment_transform} agree.

For the $g$-coordinates, \Cref{lem:polysimplex_truncation_support}, \Cref{lem:polysimplex_fragment_support}, and \Cref{ex:polysimplex_full_support} give $g(i)=1$ on every term.
The fragment coefficients sum to $\sum_{r=2}^{m}(-1)^{r}\binom{m}{r}=m-1$, so the right-hand side contributes $(m-1)+(2-m)=1$ in each $g(i)$, matching the left-hand side.
Hence the support functions agree.

Finally, under the embedding of \Cref{rem:embed} the product $\Delta(\mathbf{S})=\prod_i\Delta(S_i)$ is the Minkowski sum $\sum_i\Delta(S_i)$, so $h(\Delta(\mathbf{S}))=\sum_i h(\Delta(S_i))$ lies in the span of the factor simplices.
\end{proof}

\begin{proposition}\label{prop:basis_polysimplex_fragments}
A polytopal basis for the deformation space of $\mathsf{P}(\mathbf{S})$ is given by
\begin{equation*}
\mathscr{F} \cup \mathscr{C},
\qquad
\mathscr{F} = \left\{ \operatorname{Fr}\left[\Delta(\mathbf{S}), \Delta(\mathbf{A})\right] \,\middle|\, \mathrm{slk}(\mathbf{A}) > 1 \right\},
\qquad
\mathscr{C} = \left\{ \Delta(I) \,\middle|\, \emptyset \subsetneq I \subseteq S \in \mathbf{S} \right\}.
\end{equation*}
\end{proposition}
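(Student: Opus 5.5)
The plan is to show that $\mathscr{F}\cup\mathscr{C}$ spans the polytopal basis of \Cref{prop:basis_polysimplex} and has the same cardinality. It then follows that $\mathscr{F}\cup\mathscr{C}$ is itself a basis.

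First I will check that every element is a deformation of $\mathsf{P}(\mathbf{S})$.
\begin{enumerate}
\item For $\mathscr{F}$ this is \Cref{prop:fragment}.
\item For $\Delta(I)$ with $\emptyset\subsetneq I\subseteq S_i$, the proof of \Cref{lem:face_deformation_omni} shows that $\Delta(I)$ is a Minkowski summand of either a deep truncation (if $I\subsetneq S_i$) or of $\Delta(\mathbf{S})$ (if $I=S_i$), via the embedding of \Cref{rem:embed}. Equivalently, $\Delta(I)$ is a translate of the face $\Delta(I)\times\prod_{j\neq i}\{e_{s_j}\}$, and translations only change the support function by an element of the lineality space.
\end{enumerate}

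Next comes the cardinality count. The basis \eqref{eq:old_basis} is indexed by the proper valid tuples together with the $k$ factor simplices. I will split the proper valid tuples by slack.
\begin{enumerate}
\item Tuples with $\mathrm{slk}(\mathbf{A})>1$ are in bijection with $\mathscr{F}$.
\item Tuples with $\mathrm{slk}(\mathbf{A})=1$ are determined by an index $i$ and a nonempty proper subset $A_i\subsetneq S_i$. There are $\sum_i(2^{|S_i|}-2)$ of them.
\end{enumerate}
Adding the $k$ factor simplices $\Delta(S_i)$ gives $\sum_i(2^{|S_i|}-1)=|\mathscr{C}|$. Hence $|\mathscr{F}\cup\mathscr{C}|$ equals the size of \eqref{eq:old_basis}.

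For spanning, I treat the elements of \eqref{eq:old_basis} in three groups.
\begin{enumerate}
\item The factor simplices $\Delta(S_i)$ lie in $\mathscr{C}$.
\item If $\mathrm{slk}(\mathbf{A})=1$ with slack index $i$, then \eqref{eq:deep_truncation_slk_1} gives $\operatorname{DTr}[\Delta(\mathbf{S}),\Delta(\mathbf{A})]=\Delta(\mathbf{A}')$. As a Minkowski sum this is $\Delta(S_i\setminus A_i)+\sum_{j\neq i}\Delta(S_j)$, so its support function lies in the span of $\mathscr{C}$.
\item If $\mathrm{slk}(\mathbf{C})>1$, I apply \Cref{prop:explicit_truncation_fragment_transform}. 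Each tuple $\mathbf{A}^{U}(\mathbf{C})$ appearing there has $\mathrm{Slk}=U$ with $|U|\geq 2$, so the corresponding fragments belong to $\mathscr{F}$. The remaining term is $h(\Delta(\mathbf{S}))=\sum_i h(\Delta(S_i))$, which lies in the span of $\mathscr{C}$.
\end{enumerate}
Therefore the span of $\mathscr{F}\cup\mathscr{C}$ contains the basis \eqref{eq:old_basis}. Combined with the equality of cardinalities, $\mathscr{F}\cup\mathscr{C}$ is a basis.

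The only delicate points are the bookkeeping in the cardinality count and checking that the $U$-indexed fragments really have slack at least $2$. The latter follows directly from the definition of $\mathbf{A}^{U}(\mathbf{C})$: for $i\in U$ we have $A^U_i=S_i\setminus C_i$, which is a nonempty proper subset of $S_i$ because $i\in L=\mathrm{Slk}(\mathbf{C})$.
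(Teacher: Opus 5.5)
Your proposal is correct and takes essentially the same route as the paper. Like the paper, you start from the basis of \Cref{prop:basis_polysimplex}, recover the slack-one truncations and factor simplices from $\mathscr{C}$ via \eqref{eq:deep_truncation_slk_1}, recover the truncations of slack greater than one via \Cref{prop:explicit_truncation_fragment_transform}, and finish with a cardinality comparison. Your explicit count $\sum_i(2^{|S_i|}-1)=|\mathscr{C}|$, your check that $\mathrm{Slk}(\mathbf{A}^U(\mathbf{C}))=U$, and your citation of \Cref{prop:fragment} for the fragments simply spell out steps the paper leaves implicit.
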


\begin{proof}
We start from the basis $\mathscr{B}$ of \Cref{prop:basis_polysimplex} and split its deep truncations by slack,
\[
	\mathscr{B} = \mathscr{D} \cup \mathscr{D}_1 \cup \mathscr{L},
    \qquad
		\mathscr{D} = \{\operatorname{DTr}[\Delta(\mathbf{S}),\Delta(\mathbf{A})]\mid\mathrm{slk}(\mathbf{A})>1\},
		\qquad
\mathscr{L} =  \{\Delta(S)\mid S\in\mathbf{S}\},
\]
where $\mathscr{D}_1$ collects the deep truncations of slack one.  By \Cref{lem:face_deformation_omni}, the elements of $\mathscr{C}$ belong to the deformation cone of $\mathsf{P}(\mathbf{S})$.  By \eqref{eq:deep_truncation_slk_1} and \Cref{rem:embed}, the sets $\mathscr{D}_1\cup \mathscr{L}$ and $\mathscr{C}$ span the same subspace and have the same cardinality.
By \Cref{prop:explicit_truncation_fragment_transform}, the set $\mathscr{F} \cup \mathscr{C}$ generates $\mathscr{D}$.
Since $\mathscr{F}\cup\mathscr{C}$ therefore generates the basis $\mathscr{D}\cup\mathscr{C}$ and has the same cardinality as $\mathscr{D}\cup\mathscr{C}$, it is itself a basis.
\end{proof}

\Cref{cor:multisimplex_drop_dimension} shows that fragments of non-full tuples are decomposable and explains how to factor them.
Parallel to \Cref{thm:basis_polysimplex_deep}, we obtain the following.

\begin{theorem}\label{thm:basis_polysimplex_fragments}
An indecomposable basis for the deformation space of $\mathsf{P}(\mathbf{S})$ is given by
\begin{equation}\label{eq:basis_polysimplex_fragment}
\left\{ \operatorname{Fr}\!\left[\Delta(\mathbf{T}_\mathbf{A}),\, \Delta(\mathbf{A}|_{\mathbf{T}_\mathbf{A}})\right] \,\middle|\, \mathrm{slk}(\mathbf{A})>1 \right\}
\cup
\left\{ \Delta(I) \,\middle|\,  \emptyset \subsetneq I \subseteq S \in \mathbf{S}  \right\}.
\end{equation}
\end{theorem}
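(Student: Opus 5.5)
The plan mirrors the proof of \Cref{thm:basis_polysimplex_deep}, now starting from the fragment basis $\mathscr F\cup\mathscr C$ of \Cref{prop:basis_polysimplex_fragments}.

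For each valid tuple $\mathbf A$ with $\mathrm{slk}(\mathbf A)>1$, \eqref{eq:factor_out_II} gives the product decomposition
\[
\operatorname{Fr}[\Delta(\mathbf S),\Delta(\mathbf A)]=\operatorname{Fr}[\Delta(\mathbf T_{\mathbf A}),\Delta(\mathbf A|_{\mathbf T_{\mathbf A}})]\times\Delta(\overline{\mathbf T}_{\mathbf A}).
\]
Under the embedding of \Cref{rem:embed}, this Cartesian product is a Minkowski sum. Its support function is therefore the sum of the support function of the reduced fragment and $\sum_{S_i\in\overline{\mathbf T}_{\mathbf A}}h(\Delta(S_i))$. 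Each $\Delta(S_i)$ already lies in $\mathscr C$, taking $I=S_i$.

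Next we check that every element of \eqref{eq:basis_polysimplex_fragment} lies in the deformation cone of $\mathsf P(\mathbf S)$. Each reduced fragment is a Minkowski summand of the fragment, and the fragment is a deformation by \Cref{prop:fragment}. Hence the reduced fragment's normal fan coarsens $\Psi(\mathbf S)$ by \Cref{prop:coarsening_summand}, since a summand of a deformation is a deformation. The elements of $\mathscr C$ are deformations by \Cref{lem:face_deformation_omni}.

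The proposed set has the same cardinality as $\mathscr F\cup\mathscr C$. Two distinct tuples of slack $>1$ give distinct indexing, so the replacement is one-to-one. The proposed set also spans every element of $\mathscr F\cup\mathscr C$, by the decomposition above. It is therefore a basis.

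For indecomposability, the simplices $\Delta(I)$ are indecomposable. For the reduced fragments, note that $\mathbf A|_{\mathbf T_{\mathbf A}}$ is full with respect to $\mathbf T_{\mathbf A}$. We can then apply \Cref{thm:small_truncated_indecomposable} to the polysimplex $\Delta(\mathbf T_{\mathbf A})$. Here $|\mathbf T_{\mathbf A}|=\mathrm{slk}(\mathbf A)\ge 2$, so we are in the genuine case of that theorem.

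The main point needing care is to confirm that \Cref{thm:small_truncated_indecomposable} applies verbatim to the smaller polysimplex, viewed inside $\mathbb R^{\mathscr S}$. This holds because indecomposability is invariant under the affine embedding. A second point is to make sure that the counting argument does not double count the $\Delta(S_i)$. This is handled by noting that $\mathscr C$ is unchanged in the replacement.
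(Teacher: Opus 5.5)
Your proposal is correct and follows essentially the paper's argument. You start from the basis of \Cref{prop:basis_polysimplex_fragments}, split each fragment via \eqref{eq:factor_out_II}, absorb $\Delta(\overline{\mathbf T}_{\mathbf A})$ into the factor simplices $\Delta(S_i)$, conclude by spanning plus equal cardinality, and get indecomposability from \Cref{thm:small_truncated_indecomposable}, exactly as the paper does by analogy with \Cref{thm:basis_polysimplex_deep}. Your explicit check that each reduced fragment is a deformation, being a Minkowski summand of the deformation from \Cref{prop:fragment}, is a detail the paper leaves implicit.
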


\begin{proof}
Fragments of full tuples are indecomposable by \Cref{thm:small_truncated_indecomposable}.
The rest of the proof is analogous to \Cref{thm:basis_polysimplex_deep}.
\end{proof}

\begin{remark}\label{rmk:integralpolysimplexbasis}
One can show that the bases presented in \Cref{thm:basis_polysimplex_deep} and \Cref{thm:basis_polysimplex_fragments} are both integral bases.  For the flat-truncation basis, this follows by verifying that the results of \Cref{subsec:codim1} admit integral versions using \Cref{thm:basis_seed} together with the integrality of the factorization in \Cref{eq:factor_out}.  For the fragment basis, this follows by confirming that the inverse of the transformation in \Cref{prop:explicit_truncation_fragment_transform} is integral.
\end{remark}

\section{Zonotopes}
\label{sec:zonotopes}

In this section, we investigate our truncation bases for deformation cones of barycentric subdivisions of hyperplane arrangements (equivalently, omnitruncations of zonotopes).  Our main example is the 2-permutahedron.
\subsection{Truncations of zonotopes}

Let $\mathsf E$ be a collection of line segments in $\mathbb R^n$.
The \textbf{zonotope} $\mathsf{Z}(\mathsf E)$ is the Minkowski sum
\[
    \mathsf{Z}(\mathsf E)=\sum_{[u,v]\in\mathsf E}[u,v].
\]

\begin{example}\label{ex:hexagon}
In \Cref{fig:hexagon} we have a regular hexagon written as the Minkowski sum of three segments.
\begin{figure}[ht]
\centering
\begin{tikzpicture}
    \tikzset{
        paperedge/.style={color=Turquoise!25!black, line width=0.46pt},
        paperface/.style={fill=Turquoise!27, draw=none}
    }
    \begin{scope}
        \draw[paperedge] (180:0.5) -- (0:0.5);
    \end{scope}
    \begin{scope}[xshift=2cm]
        \draw[paperedge] (60:0.5) -- (240:0.5);
    \end{scope}
    \begin{scope}[xshift=4cm]
        \draw[paperedge] (120:0.5) -- (300:0.5);
    \end{scope}

    \node at (1,0) {$+$};
    \node at (3,0) {$+$};
    \node at (5.5,0) {$=$};

    \begin{scope}[xshift=7cm]
        \coordinate (hex0) at (0:1);
        \coordinate (hex1) at (60:1);
        \coordinate (hex2) at (120:1);
        \coordinate (hex3) at (180:1);
        \coordinate (hex4) at (240:1);
        \coordinate (hex5) at (300:1);
        \fill[paperface] (hex0) -- (hex1) -- (hex2) -- (hex3) --
            (hex4) -- (hex5) -- cycle;
        \draw[paperedge] (hex0) -- (hex1) -- (hex2) -- (hex3) --
            (hex4) -- (hex5) -- cycle;
    \end{scope}
\end{tikzpicture}
\caption{A zonotope from three segments.}\label{fig:hexagon}
\end{figure}
\end{example}

Recall that the normal fan of $\mathsf{Z}(\mathsf E)$ is induced by the hyperplane arrangement consisting of one linear hyperplane orthogonal to each generating segment.
Faces of zonotopes are zonotopes themselves.
We will need the following lemma about truncating a zonotope.

\begin{lemma}[Contraction]\label{lem:contraction}
    Let $\mathsf E$ be a set of distinct segments and let $\mathsf{F} = \mathsf{Z}(\mathsf W) + p \subseteq \mathsf{Z}(\mathsf E)$ be a face, where $\mathsf W \subseteq \mathsf E$.
    For $0<\epsilon\leq\delta_{\mathsf F}$,
    \begin{equation}\label{eq:lem_contraction}
        \operatorname{Tr}_\epsilon\left[\mathsf{Z}(\mathsf E),\, \mathsf{F}\right]
        = \operatorname{Tr}_\epsilon^{u_{\mathsf{F}}}\left[\mathsf{Z}(\mathsf E \setminus \mathsf W),\, p\right] + \mathsf{Z}(\mathsf W),
    \end{equation}
    where $u_\mathsf{F}$ is the inner face normal of $\mathsf{F}$ in $\mathsf{Z}(\mathsf E)$.
\end{lemma}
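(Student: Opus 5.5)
Throughout, $u=u_{\mathsf F}$ is fixed and both sides of \eqref{eq:lem_contraction} are compared by their support functions. Since $\mathsf F=\mathsf Z(\mathsf W)+p$ is the face of $\mathsf Z(\mathsf E)$ minimized by $u$, every segment $w\in\mathsf W$ is orthogonal to $u$. Every segment in $\mathsf E\setminus\mathsf W$ has a unique endpoint minimizing $u$. Write $\mathsf Z(\mathsf E)=\mathsf Z(\mathsf E\setminus\mathsf W)+\mathsf Z(\mathsf W)$. Then $p$ is the vertex of $\mathsf Z(\mathsf E\setminus\mathsf W)$ minimizing $u$, up to the choice of translation built into the notation, and $u$ lies in the interior of its normal cone in $\mathsf Z(\mathsf E\setminus \mathsf W)$. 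Hence the right-hand side is a genuine (noncentral) truncation of the smaller zonotope at a vertex, in direction $u$.

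The plan is to prove the equality pointwise, using the fact that $\mathsf Z(\mathsf W)$ lies in a hyperplane orthogonal to $u$. Fix any point $z_0\in\mathsf Z(\mathsf W)$ and note that $u^{\intercal}z$ is constant, equal to $c$, on $\mathsf Z(\mathsf W)$. Then $h(\mathsf Z(\mathsf E),u)=h(\mathsf Z(\mathsf E\setminus\mathsf W),u)+c$.

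For the inclusion $\supseteq$, take $y$ in the truncated smaller zonotope and $z\in\mathsf Z(\mathsf W)$. Then $y+z\in\mathsf Z(\mathsf E)$, and
\[
u^{\intercal}(y+z)\geq h(\mathsf Z(\mathsf E\setminus\mathsf W),u)+\epsilon+c=h(\mathsf Z(\mathsf E),u)+\epsilon .
\]

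For $\subseteq$, take $x\in\mathsf Z(\mathsf E)$ with $u^{\intercal}x\geq h(\mathsf Z(\mathsf E),u)+\epsilon$. We must write $x=y+z$ with $y$ in the smaller zonotope, $z\in\mathsf Z(\mathsf W)$, and $u^{\intercal}y\geq h(\mathsf Z(\mathsf E\setminus\mathsf W),u)+\epsilon$. Any decomposition $x=y+z$ coming from the Minkowski sum works: $u^{\intercal}z=c$ automatically, so the inequality on $y$ follows. Thus the inclusion is immediate once we know $u$ is constant on $\mathsf Z(\mathsf W)$, and this direction is free.

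The only point needing care is that the notation $\operatorname{Tr}^{u}_\epsilon[\mathsf Z(\mathsf E\setminus\mathsf W),p]$ requires $\{p\}$ to be a face of $\mathsf Z(\mathsf E\setminus\mathsf W)$ with $u$ in the interior of its normal cone. I expect this to be the main obstacle. I would check that no segment of $\mathsf E\setminus\mathsf W$ is orthogonal to $u$: if one were, it would lie in the face $\mathsf F$, contradicting that $\mathsf W$ consists exactly of the segments parallel to $\mathsf F$ (using distinctness of the segments). So $u$ is generic for the smaller zonotope and selects a vertex. The translation $p$ must be normalized so that this vertex is $p$, which matches how the face $\mathsf F$ is written as a translate. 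Finally, the hypothesis $\epsilon\le\delta_{\mathsf F}$ is not needed for the set equality itself. It only guarantees that both sides are the intended shallow or deep truncations and in particular are nonempty.
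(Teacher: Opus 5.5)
Your argument is correct and is essentially the paper's proof. Both write $x=y+z$ via $\mathsf Z(\mathsf E)=\mathsf Z(\mathsf E\setminus\mathsf W)+\mathsf Z(\mathsf W)$ and use that $u_{\mathsf F}$ is constant on $\mathsf Z(\mathsf W)$, so the truncating inequality on $x$ becomes the shifted inequality on $y$.

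The well-definedness point you flag, that $p$ is the unique $u_{\mathsf F}$-minimizer on $\mathsf Z(\mathsf E\setminus\mathsf W)$, is handled in the paper's remark after the lemma. Note that it follows from cancellation rather than from distinctness: $\mathsf Z(\mathsf W)+p$ equalling the $u_{\mathsf F}$-face forces $\mathsf W$ to be exactly the (nondegenerate) segments orthogonal to $u_{\mathsf F}$.
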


\begin{proof}
    We use the Minkowski decomposition $\mathsf{Z}(\mathsf E) = \mathsf{Z}(\mathsf E \setminus \mathsf W) + \mathsf{Z}(\mathsf W)$.
    By definition,
    \[
        \operatorname{Tr}_\epsilon\left[\mathsf{Z}(\mathsf E),\, \mathsf{F}\right]
        = \left\{ x \in \mathsf{Z}(\mathsf E) \,\middle|\, u_\mathsf{F} \cdot x \geq C + \epsilon \right\},
        \qquad C = \min_{x \in \mathsf{Z}(\mathsf E)} u_\mathsf{F} \cdot x.
    \]
    Since $\mathsf{F} = \mathsf{Z}(\mathsf W) + p$ is a face, $u_\mathsf{F}$ is constant on $\mathsf{Z}(\mathsf W)$; call this constant value $\lambda = u_\mathsf{F} \cdot z$ for any $z \in \mathsf{Z}(\mathsf W)$.
    Writing $x = y + z$ with $y \in \mathsf{Z}(\mathsf E \setminus \mathsf W)$ and $z \in \mathsf{Z}(\mathsf W)$,
    \begin{align*}
        \operatorname{Tr}_\epsilon\left[\mathsf{Z}(\mathsf E),\, \mathsf{F}\right]
        &= \left\{ y + z \in \mathsf{Z}(\mathsf E \setminus \mathsf W) + \mathsf{Z}(\mathsf W) \,\middle|\, u_\mathsf{F} \cdot y + \lambda \geq C + \epsilon \right\} \\
        &= \left\{ y \in \mathsf{Z}(\mathsf E \setminus \mathsf W) \,\middle|\, u_\mathsf{F} \cdot y \geq C' + \epsilon \right\} + \mathsf{Z}(\mathsf W) \\
        &= \operatorname{Tr}_\epsilon^{u_\mathsf{F}}\left[\mathsf{Z}(\mathsf E \setminus \mathsf W),\, p\right] + \mathsf{Z}(\mathsf W),
    \end{align*}
    where $C' = C - \lambda$.
\end{proof}

\begin{remark}
    Intuitively, \Cref{lem:contraction} asserts that one can factor out a face by \emph{contracting} it to a vertex: truncating the original face is equivalent to truncating the resulting vertex.
    See \Cref{fig:contraction} for an example.
\begin{figure}[ht]
\centering
\begin{tikzpicture}
    \tikzset{
        paperedge/.style={color=Turquoise!25!black, line width=0.46pt},
        paperremovededge/.style={color=Turquoise!40!black!75,
            dash pattern=on 1.2pt off 1.2pt, line width=0.35pt},
        paperface/.style={fill=Turquoise!27, draw=none},
        paperdistinguishededge/.style={color=Turquoise!45!black,
            line width=1.05pt},
        paperdistinguishedvertex/.style={circle,
            draw=Turquoise!45!black, fill=Turquoise!45!black,
            inner sep=0.8pt}
    }
    \coordinate (conA) at (0:1);
    \coordinate (conB) at (60:1);
    \coordinate (conC) at (120:1);
    \coordinate (conD) at (180:1);
    \coordinate (conE) at (240:1);
    \coordinate (conF) at (300:1);

    \fill[paperface] (conA) -- (conB) -- (conC) -- (conD) -- cycle;
    \draw[paperedge] (conA) -- (conB) -- (conC) -- (conD) -- cycle;
    \draw[paperdistinguishededge] (conE) -- (conF);
    \draw[paperremovededge] (conD) -- (conE) (conA) -- (conF);

    \node at (1.75,0) {$=$};
    \node at (4,0) {$+$};

    \begin{scope}[xshift=2.5cm]
        \coordinate (localA) at (0:1);
        \coordinate (localB) at (60:1);
        \coordinate (localF) at (300:1);
        \coordinate (localO) at (0,0);
        \fill[paperface] (localA) -- (localB) -- (localO) -- cycle;
        \draw[paperedge] (localA) -- (localB) -- (localO) -- cycle;
        \draw[paperremovededge] (localA) -- (localF) -- (localO);
        \node[paperdistinguishedvertex] at (localF) {};
    \end{scope}
    \begin{scope}[xshift=5cm]
        \draw[paperedge] (180:0.5) -- (0:0.5);
    \end{scope}
\end{tikzpicture}
\caption{A deep truncation of a face in a zonotope illustrating \Cref{lem:contraction}.}
\label{fig:contraction}
\end{figure}
\end{remark}

In Equation \eqref{eq:lem_contraction} we specify the vector $ u_\mathsf{F} $ since the inner normal vector for the vertex $ p $ in the polytope $ \mathsf{Z} ( \mathsf E\setminus \mathsf W ) $ may be different from the original $ u_{\mathsf{F}} $ with respect to $ \mathsf{Z}(\mathsf E) $.
However, we do have that $ u_{\mathsf{F}} $ is in the interior of the normal cone of $ p $ in $ \mathsf{Z} ( \mathsf E\setminus \mathsf W ) $, since $ p$ uniquely minimizes the dot product 
with $ u_{\mathsf{F}} $ in $ \mathsf{Z} ( \mathsf E\setminus \mathsf W ) $.

\begin{example}\label{ex:contraction_normal}
Let $v_1=(1,0)$, $v_2=(-1,1)$, $v_3=(1,2)$, and let
$\mathsf E=\{[0,v_1],[0,v_2],[0,v_3]\}$.
The zonotope $\mathsf{Z}(\mathsf E)$ has six vertices $(0,0)$, $(1,0)$, $(2,2)$, $(1,3)$, $(0,3)$, and $(-1,1)$.
Consider the edge $\mathsf{F} = \mathsf{Z}(\{[0,v_1]\}) + (0,0)$, connecting $(0,0)$ to $(1,0)$.
Its inner normal in $\mathsf{Z}(\mathsf E)$ is $u_\mathsf{F} = (0,1)$.

After contracting $\mathsf W = \{[0,v_1]\}$, the contracted zonotope $\mathsf{Z}(\{[0,v_2],[0,v_3]\})$ is the parallelogram with vertices $(0,0)$, $(1,2)$, $(0,3)$, $(-1,1)$.
The normal cone of $p = (0,0)$ in $\mathsf{Z}(\{[0,v_2],[0,v_3]\})$ is the cone spanned by the rays $(1,1)$ and $(-2,1)$.
Its canonical inner normal is $(-1,2)$, which differs from $u_\mathsf{F} = (0,1)$.
See \Cref{fig:contraction_normal}.
\end{example}

\begin{figure}[ht]
\centering
\begin{tikzpicture}[scale=0.75]
    \tikzset{
        paperedge/.style={color=Turquoise!25!black, line width=0.46pt},
        paperface/.style={fill=Turquoise!27, draw=none},
        paperdistinguishededge/.style={color=Turquoise!45!black,
            line width=1.05pt},
        paperdistinguishedvertex/.style={circle,
            draw=Turquoise!45!black, fill=Turquoise!45!black,
            inner sep=0.8pt},
        paperarrow/.style={->, color=Turquoise!55!black,
            line width=0.8pt}
    }
    \begin{scope}
        \fill[paperface]
            (0,0)--(1,0)--(2,2)--(1,3)--(0,3)--(-1,1)--cycle;
        \draw[paperedge]
            (0,0)--(1,0)--(2,2)--(1,3)--(0,3)--(-1,1)--cycle;
        \draw[paperdistinguishededge] (0,0)--(1,0)
            node[midway, below=3pt, text=Turquoise!45!black]
            {\small$\mathsf{F}$};
        \draw[paperarrow] (0.5,0)--(0.5,1.2)
            node[above, text=Turquoise!55!black]
            {\small$u_\mathsf{F}$};
        \node[paperdistinguishedvertex] at (0,0) {};
        \node[above, font=\small] at (0.5,3.4) {$\mathsf{Z}(\mathsf E)$};
    \end{scope}

    \begin{scope}[xshift=7.5cm]
        \fill[paperface] (0,0)--(1,2)--(0,3)--(-1,1)--cycle;
        \draw[paperedge] (0,0)--(1,2)--(0,3)--(-1,1)--cycle;
        \draw[paperarrow] (0,0)--(-0.6,1.2)
            node[above right=2pt, yshift=-3pt,
                text=Turquoise!55!black] {\small$u_p$};
        \node[paperdistinguishedvertex] at (0,0) {};
        \node[above, font=\small] at (0,3.4)
            {$\mathsf{Z}(\mathsf E\setminus\{[0,v_1]\})$};
    \end{scope}
\end{tikzpicture}
\caption{The edge $\mathsf{F}$ has inner normal $u_\mathsf{F}=(0,1)$ in $\mathsf{Z}(\mathsf E)$.
After contracting $[0,v_1]$, the vertex $p$ has a two-dimensional normal cone spanned by $(1,1)$
and $(-2,1)$; both $u_\mathsf{F}=(0,1)$ and the canonical inner normal $(-1,2)$ lie inside it,
but only $u_\mathsf{F}$ gives the correct truncation in \Cref{lem:contraction}.}
\label{fig:contraction_normal}
\end{figure}

\begin{lemma}\label{lem:delzant_uF}
Let $\mathsf{Z}(\mathsf E) \subseteq \mathbb{R}^{n}$ be a primitive Delzant zonotope and let $\mathsf{F} = \mathsf{Z}(\mathsf W) + p \subseteq \mathsf{Z}(\mathsf E)$ be a face.
Then the deep truncation of $p$ in $\mathsf{Z}(\mathsf E \setminus \mathsf W)$ in direction $u_{\mathsf{F}}$ is flat.
\end{lemma}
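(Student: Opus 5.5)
Our plan is to reduce to the flatness argument of \Cref{prop:delzant}, using \Cref{lem:contraction} to move between $\mathsf{Z}(\mathsf E)$ and $\mathsf{Z}(\mathsf E\setminus\mathsf W)$. By \Cref{prop:delzant}, the deep truncation of $\mathsf F$ in $\mathsf{Z}(\mathsf E)$ is flat. Concretely, with $u_{\mathsf F}=\sum_i\eta_i$ (the sum of the primitive facet normals of the facets containing $\mathsf F$) and $b=h(\mathsf Z(\mathsf E),u_{\mathsf F})$, every neighbor $w$ of $\mathsf F$ satisfies $u_{\mathsf F}^{\intercal}w=b+1$. Integrality of the vertices then gives that every vertex outside $\mathsf F$ has $u_{\mathsf F}$-value at least $b+1$, so $\delta_{\mathsf F}=1$.

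We next transport this to $\mathsf{Z}(\mathsf E\setminus\mathsf W)$. As in the proof of \Cref{lem:contraction}, $u_{\mathsf F}$ is constant, equal to $\lambda$, on $\mathsf Z(\mathsf W)$, and $p$ is the unique minimizer of $u_{\mathsf F}$ on $\mathsf Z(\mathsf E\setminus\mathsf W)$, with value $b-\lambda$. The neighbors of $p$ in $\mathsf Z(\mathsf E\setminus\mathsf W)$ are the points $p+g$, where $g$ ranges over the generator vectors of $\mathsf E\setminus\mathsf W$ oriented away from $p$. The key claim is that $u_{\mathsf F}^{\intercal}g=1$ for every such $g$. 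To prove it, we take a vertex $v=p+z$ of $\mathsf F$, with $z$ a vertex of $\mathsf Z(\mathsf W)$, chosen so that $v+g$ is a vertex of $\mathsf Z(\mathsf E)$ adjacent to $v$. Such a choice exists because the edge $[p,p+g]$ of the contracted zonotope lifts to an edge of $\mathsf Z(\mathsf E)$ leaving $\mathsf F$. To see this, pick a generic functional in the relative interior of the normal cone of that edge, perturbed by a small multiple of a generic functional on $\mathsf{W}$. Then $v+g$ is a neighbor of $\mathsf F$, so $u_{\mathsf F}^{\intercal}(v+g)=b+1$, and therefore $u_{\mathsf F}^{\intercal}g=1$.

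It follows that all neighbors of $p$ lie on the hyperplane $u_{\mathsf F}^{\intercal}x=b-\lambda+1$. To conclude flatness, we also need every vertex of $\mathsf Z(\mathsf E\setminus\mathsf W)$ other than $p$ to lie weakly above this hyperplane. Every vertex of the contracted zonotope has the form $p+\sum_{g\in G}g$ for a nonempty subset $G$ of oriented generators. Each such $g$ contributes $u_{\mathsf F}^{\intercal}g=1$, so the value is $b-\lambda+|G|\geq b-\lambda+1$. Hence the $\delta$ for $p$ in direction $u_{\mathsf F}$ equals $1$. The hyperplane at that depth passes through every neighbor, and the deep truncation equals the convex hull of the vertices other than $p$, which is flatness.

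The step we expect to be the main obstacle is the lifting claim: that each edge direction $g$ at $p$ in $\mathsf Z(\mathsf E\setminus\mathsf W)$ occurs as an edge from some vertex of $\mathsf F$ to a neighbor of $\mathsf F$ in $\mathsf Z(\mathsf E)$. This needs care when several segments of $\mathsf E\setminus\mathsf W$ become parallel modulo the span of $\mathsf W$. We would handle it with the normal-fan description: edges of $\mathsf Z(\mathsf E)$ correspond to walls of the arrangement, and we restrict to the star of the cone of $\mathsf F$. If that approach fails, the fallback is to use the Delzant hypothesis directly. Simplicity of $\mathsf{Z}(\mathsf E)$ forces the arrangement to be simplicial, so the edges at $p$ in the contraction are in bijection with the facets of $\mathsf Z(\mathsf E)$ containing $\mathsf F$, and primitivity then gives $\eta_i^{\intercal}g=1$ exactly as in \Cref{prop:delzant}.
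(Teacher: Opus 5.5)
Your core argument is the paper's proof. You invoke \Cref{prop:delzant} in $\mathsf Z(\mathsf E)$, lift each edge $[p,p+g]$ of $\mathsf Z(\mathsf E\setminus\mathsf W)$ to an edge $[p+v,p+v+g]$ of $\mathsf Z(\mathsf E)$ leaving $\mathsf F$, and use constancy of $u_{\mathsf F}$ on $\mathsf Z(\mathsf W)$ to get $u_{\mathsf F}^{\intercal}g=1$.

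The lifting, which the paper calls straightforward, needs no perturbation. $\mathsf W$ consists of all generators orthogonal to $u_{\mathsf F}$, so no generator of $\mathsf W$ is parallel to $g$; parallelism modulo $\operatorname{span}\mathsf W$ is not the relevant issue. A generic $c$ in the relative interior of the normal cone of $[p,p+g]$, which is open in $g^{\perp}$, therefore avoids every $w^{\perp}$. It thus selects a single vertex $v$ of $\mathsf Z(\mathsf W)$, and the face $[p+v,p+v+g]$ of $\mathsf Z(\mathsf E)$.

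Your third paragraph, however, contains a false step. The neighbors of $p$ in $\mathsf Z(\mathsf E\setminus\mathsf W)$ are $p+g$ only for $g$ in the set $\mathsf U$ of \Cref{lem:locality}, not for every generator. For the remaining generators, $u_{\mathsf F}^{\intercal}g=1$ fails. For example, in $\Pi_3$ take $\mathsf F=\{p\}$ with $p=(0,1,2)$, so $\mathsf W=\emptyset$ and $u_{\mathsf F}=e_1+(e_1+e_2)=(2,1,0)$. The generator $e_1-e_3$, oriented away from $p$, has $u_{\mathsf F}^{\intercal}(e_1-e_3)=2$, and $p+(e_1-e_3)=(1,1,1)$ is not a vertex. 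So the formula $b-\lambda+|G|$ is wrong.

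The conclusion you draw from it (no other vertex lies below the hyperplane) is true but needs a different reason, and the step is actually superfluous. Since $\mathsf Z(\mathsf E\setminus\mathsf W)\subseteq p+\operatorname{Cone}\{g\mid g\in\mathsf U\}$ and $u_{\mathsf F}^{\intercal}g=1$ for $g\in\mathsf U$, any point with $u_{\mathsf F}$-value below $u_{\mathsf F}^{\intercal}p+1$ lies in $\operatorname{Conv}(\{p\}\cup\{p+g\mid g\in\mathsf U\})$. A vertex of the zonotope lying there must be one of these points, hence equals $p$. This is why equal height of the neighbors already means flat, as in the footnote to the definition of flat truncation.

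Your fallback would also fail. The edges at $p$ in the contraction are indexed by $\mathsf U$, which can be larger than the set of facets containing $\mathsf F$. For instance, when $\mathsf F$ is an edge of the hexagon $\Pi_3$, one facet contains $\mathsf F$, but $p$ has two edges in the contracted parallelogram.
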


\begin{proof}

Since $\mathsf{Z}(\mathsf E)$ is primitive Delzant, \Cref{prop:delzant} implies that the central deep truncation of $\mathsf{F}$ is flat.  By \Cref{lem:contraction}, the deep truncation of $\mathsf{F}$ in $\mathsf{Z}(\mathsf E)$ decomposes as the deep truncation of $p$ in $\mathsf{Z}(\mathsf E \setminus \mathsf W)$ in direction $u_\mathsf{F}$, plus the summand $\mathsf{Z}(\mathsf W)$.  It is straightforward to verify that for any edge $[p,q]$ in $\mathsf{Z}(\mathsf E \setminus \mathsf W)$, there exists some $v\in\operatorname{Vert}(\mathsf Z(\mathsf W))$ such that $[p+v,q+v]$ is an edge of $\mathsf{Z}(\mathsf E)$.  Since $u_{\mathsf{F}}$ is constant on $\mathsf{Z}(\mathsf W)$, this implies that the deep truncation of $p$ in $\mathsf{Z}(\mathsf E \setminus \mathsf W)$ in direction $u_\mathsf{F}$ is also flat.
\end{proof}

\begin{lemma}[Locality]\label{lem:locality}
Let $\mathsf E$ be a set of distinct segments and let $p$ be a vertex of $\mathsf Z(\mathsf E)$.
For each $s\in\mathsf E$, let $p(s)$ denote the endpoint of the segment $s$ selected by $p$, so that
\[
p=\sum_{s\in\mathsf E}p(s).
\]
Let $\mathsf U\subseteq\mathsf E$ be the set of segments whose directions are parallel to edges of $\mathsf Z(\mathsf E)$ incident to $p$.
Set
\[
p_{\mathsf U}\coloneqq\sum_{s\in\mathsf U}p(s).
\]
For $0<\epsilon\leq \delta_{p}$,
\begin{equation}\label{eq:locality}
\operatorname{Tr}_{\epsilon}\left[\mathsf Z(\mathsf E),p\right]
=
\operatorname{Tr}_{\epsilon}\left[\mathsf Z(\mathsf U),p_{\mathsf U}\right]
+
\mathsf Z(\mathsf E\setminus\mathsf U).
\end{equation}
\end{lemma}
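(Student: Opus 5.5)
Since $p$ is a vertex, $\mathsf Z(\mathsf E)=\mathsf Z(\mathsf U)+\mathsf Z(\mathsf E\setminus\mathsf U)$, and the plan is to show that the truncation hyperplane only interacts with the $\mathsf Z(\mathsf U)$ summand. Write $u=u_p$ for the canonical inner normal of $p$ in $\mathsf Z(\mathsf E)$; every $\operatorname{Tr}_\epsilon$ in \eqref{eq:locality} is taken in this direction $u$, as in \Cref{lem:contraction}. Two facts come first. The minimizer of $u$ on $\mathsf Z(\mathsf U)$ is the vertex $p_{\mathsf U}$, and on $\mathsf Z(\mathsf E\setminus\mathsf U)$ it is the vertex $p_{\overline{\mathsf U}}\coloneqq\sum_{s\notin\mathsf U}p(s)$, because $u$ picks out the endpoint $p(s)$ of each segment separately. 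In addition, the normal cone of $p$ in $\mathsf Z(\mathsf E)$ is the intersection of the normal cones of $p(s)$ in the segments $s$. Its facets are determined only by the hyperplanes $s^\perp$ for $s\in\mathsf U$, since these are exactly the edge directions at $p$. The remaining hyperplanes $s^\perp$ with $s\notin\mathsf U$ do not meet the interior of this cone.

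The inclusion ``$\supseteq$'' comes from support functions. For $y\in\operatorname{Tr}_\epsilon[\mathsf Z(\mathsf U),p_{\mathsf U}]$ and $z\in\mathsf Z(\mathsf E\setminus\mathsf U)$ we have
\[
u^{\intercal}(y+z)\geq h(\mathsf Z(\mathsf U),u)+\epsilon+h(\mathsf Z(\mathsf E\setminus\mathsf U),u)=h(\mathsf Z(\mathsf E),u)+\epsilon,
\]
so the sum lies in the truncation. For ``$\subseteq$'' I would compare vertices. I would also check that the right side is defined with $\epsilon\leq\delta_p$: the neighbors of $p_{\mathsf U}$ in $\mathsf Z(\mathsf U)$ are $p_{\mathsf U}$ with a single segment $s\in\mathsf U$ flipped. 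In $\mathsf Z(\mathsf E)$ these correspond to the neighbors $p$ with $s$ flipped, which cost the same $u$-value, so the two $\delta$'s agree.

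The main obstacle is proving that every vertex $x$ of the left-hand side decomposes as $y+z$ with $y$ in the truncated $\mathsf Z(\mathsf U)$ and $z\in\mathsf Z(\mathsf E\setminus\mathsf U)$. Vertices of $\mathsf Z(\mathsf E)$ with $u^{\intercal}x\geq h+\epsilon$ are handled directly, since each such $x$ is $x_{\mathsf U}+x_{\overline{\mathsf U}}$. Here $u^{\intercal}x_{\overline{\mathsf U}}\geq h(\mathsf Z(\mathsf E\setminus\mathsf U),u)$. This alone does not guarantee $u^{\intercal}x_{\mathsf U}\geq h(\mathsf Z(\mathsf U),u)+\epsilon$, so I would use a support-function argument instead. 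The idea is to compare the inequality descriptions of both sides on the common refinement, which is the normal fan of the left side: the old fan plus the ray $u$. The right side's normal fan is the normal fan of $\mathsf Z(\mathsf U)$ refined by $u$, combined with that of $\mathsf Z(\mathsf E\setminus \mathsf U)$. Its rays are the rays of $\Sigma(\mathsf Z(\mathsf E))$ together with $u$.

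In the direction $u$ both sides have support $h+\epsilon$. For any other ray $w$ of $\Sigma(\mathsf Z(\mathsf E))$, the truncation of $\mathsf Z(\mathsf E)$ is shallow, so the value is still $h(\mathsf Z(\mathsf E),w)$. On the right side the value is $h(\operatorname{Tr},w)+h(\mathsf Z(\mathsf E\setminus\mathsf U),w)$, and this equals $h(\mathsf Z(\mathsf E),w)$ provided the $w$-minimizing vertex of $\mathsf Z(\mathsf U)$ survives the truncation, or the minimizing face meets the surviving part. This is where the locality argument is used: if $w$ is minimized at $p_{\mathsf U}$ in $\mathsf Z(\mathsf U)$, then $w$ lies in the normal cone of $p_{\mathsf U}$. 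By the first paragraph this cone agrees locally with the normal cone of $p$ in $\mathsf Z(\mathsf E)$, and its rays are the facet normals at $p$. Those facets meet the truncation hyperplane, so the minimum is unchanged. In the deep case $\epsilon=\delta_p$ I would argue by continuity in $\epsilon$ using \Cref{lem:convtrunc}. Equal support functions give equal polytopes, since both sides are deformations of the same refined fan and we can read off all their facet values.
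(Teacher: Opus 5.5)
Your argument is correct, but for the hard inclusion it takes a different route from the paper. Both proofs start from the equality $\operatorname{ncone}(p,\mathsf Z(\mathsf E))=\operatorname{ncone}(p_{\mathsf U},\mathsf Z(\mathsf U))$. This equality is also what justifies using the single direction $u$ on both sides: the two central directions coincide, so this is a consequence rather than a convention you may impose.

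The paper then proves ``$\subseteq$'' vertex by vertex. It closes exactly the obstacle you ran into with a separate lemma: if a vertex $q$ of $\mathsf Z(\mathsf E)$ satisfies $q(s)=p(s)$ for all $s\in\mathsf U$, then $q=p$. To see this, write $q-p$ as a nonnegative combination of the edge vectors $v_i-p$ at $p$, observe that $q+(v_i-p)\in\mathsf Z(\mathsf E)$, and exhibit $q$ as a nontrivial convex combination. Consequently every old vertex $q\neq p$ has $q_{\mathsf U}\neq p_{\mathsf U}$, so $q_{\mathsf U}$ survives the shallow truncation of $\mathsf Z(\mathsf U)$. Each new vertex on an edge at $p$ is the corresponding new vertex of the truncated $\mathsf Z(\mathsf U)$, translated by $p_{\overline{\mathsf U}}$.

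Your support-function argument avoids that lemma. Away from the interior of $\operatorname{ncone}(p,\mathsf Z(\mathsf E))$, neither truncation changes any support value. No ray of $\Sigma(\mathsf Z(\mathsf E))$ lies in that interior, and the direction $u$ is checked by hand. This is shorter and more conceptual, and it makes your ``$\supseteq$'' paragraph redundant.

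The cost is that you must know the facet normals of the right-hand side. That is, refining the stellar subdivision of $\Sigma(\mathsf Z(\mathsf U))$ by $\Sigma(\mathsf Z(\mathsf E\setminus\mathsf U))$ must add no rays other than those of $\Sigma(\mathsf Z(\mathsf E))$ and $u$. You assert this without proof. It does follow from your first paragraph, because $\operatorname{ncone}(p,\mathsf Z(\mathsf E))$ lies inside the single maximal cone $\operatorname{ncone}(p_{\overline{\mathsf U}},\mathsf Z(\mathsf E\setminus\mathsf U))$, but that step should be written out. The paper's vertex check never needs to identify any normal fan.

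A small imprecision: if $\mathsf E$ contains parallel segments, a neighbor of $p_{\mathsf U}$ flips an entire parallel class rather than a single segment. Your matching of the two $\delta$'s is unaffected.
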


\begin{proof}
For every vertex $v$ of $\mathsf Z(\mathsf E)$, there is a unique choice of an endpoint $v(s)$ of each segment $s\in\mathsf E$ such that $v=\sum_{s\in\mathsf E}v(s)$.

Write
\[
\mathsf P\coloneqq\mathsf Z(\mathsf E),
\qquad
\mathsf A\coloneqq\mathsf Z(\mathsf U),
\qquad
\mathsf B\coloneqq\mathsf Z(\mathsf E\setminus\mathsf U),
\]
and set
\[
p_{\mathsf B}\coloneqq\sum_{s\in\mathsf E\setminus\mathsf U}p(s),
\qquad
p=p_{\mathsf U}+p_{\mathsf B}.
\]

Since the edge directions incident to $ p $ and $ p_\mathsf{U} $ are the same, it follows that their corresponding normal cones are equal and so the canonical inner normals coincide.
Call it $ u $.

We first record the key consequence of the choice of $\mathsf U$.
Suppose that $q$ is a vertex of $\mathsf P$ satisfying $q(s)=p(s)$ for every $s\in\mathsf U$ but $ q \neq p $.
Let $v_1,\dots,v_m$ be the vertices of $\mathsf P$ adjacent to $p$, and set $w_i\coloneqq v_i-p \text{for }i\in[m]$.
There exist $\alpha_1,\dots,\alpha_m\geq 0$ such that $q-p=\sum_{i=1}^m\alpha_iw_i$.
Note that for each $ i \in [m] $ the point $ q+w_i = q+v_i-p $ belongs to $ \mathsf{P} $.
Since $q\neq p$, $A\coloneqq\sum_{i=1}^m\alpha_i>0,$ and
\begin{equation}\label{eq:q_combination}
q = \frac{1}{1+A}p + \sum_{i=1}^m\frac{\alpha_i}{1+A}(q+w_i).
\end{equation}
This expresses $q$ as a nontrivial convex combination of points of $\mathsf P$ distinct from $q$, contradicting that $q$ is a vertex.
Therefore $q=p$.
In other words, the restriction of the selector of $p$ to $\mathsf U$ uniquely determines the vertex $p$.

We now prove the inclusion from right to left in \eqref{eq:locality}.
Since $u$ lies in the relative interior of the normal cone of $p$ and $ p_U $, the point $p_{\mathsf B}$, satisfying $ p_{\mathsf{U}} + p_{\mathsf{B}} = p $, minimizes $u$ on $\mathsf B$.
Thus, for $a\in\operatorname{Tr}_{\epsilon}[\mathsf A,p_{\mathsf U}]$ and $ b\in\mathsf B,$ we have
\[
u^{\intercal}(a+b) \geq u^{\intercal}p_{\mathsf U}+\epsilon+u^{\intercal}p_{\mathsf B} = u^{\intercal}p+\epsilon.
\]
It follows that $\operatorname{Tr}_{\epsilon}[\mathsf A,p_{\mathsf U}]+\mathsf B \subseteq \operatorname{Tr}_{\epsilon}[\mathsf P,p]$, since all the other defining inequalities agree.

The edges incident to $p$ and $p_{\mathsf U}$ differ by translation, so the two truncations have the same deep depth. Hence, for $0<\epsilon<\delta_p$, both truncations are shallow.

\begin{description}
	\item[Old vertices] Let $q\neq p$ be a vertex of $\mathsf P$.
Set
\[
q_{\mathsf U}\coloneqq\sum_{s\in\mathsf U}q(s), \qquad q_{\mathsf B}\coloneqq\sum_{s\in\mathsf E\setminus\mathsf U}q(s).
\]
The point $q_{\mathsf U}$ is a vertex of $\mathsf A$.
Since the restriction of the selector determines the vertex, we have that $q_{\mathsf U}\neq p_{\mathsf U}$.
Therefore $q_{\mathsf U}$ is still a vertex of a shallow truncation of $\mathsf A$, and $q=q_{\mathsf U}+q_{\mathsf B} \in \operatorname{Tr}_{\epsilon}[\mathsf A,p_{\mathsf U}]+\mathsf B.$

\item[New vertices]
Let $\mathsf e$ be an edge of $\mathsf P$ incident to $p$, and let $\mathsf e_{\mathsf U}$ be the parallel edge of $\mathsf A$ incident to $p_{\mathsf U}$.
We have $\mathsf e=\mathsf e_{\mathsf U}+p_{\mathsf B}$.  In other words, they differ by translation by $ p_{\mathsf{B}} $.
If $r_{\mathsf e}$ and $r_{\mathsf e_{\mathsf U}}$ are the new vertices created on these edges by the respective truncations, then $r_{\mathsf e}=r_{\mathsf e_{\mathsf U}}+p_{\mathsf B},$ since $p=p_{\mathsf U}+p_{\mathsf B}$ and the two truncating hyperplanes differ by the same translation.
Therefore $r_{\mathsf e}\in \operatorname{Tr}_{\epsilon}[\mathsf A,p_{\mathsf U}]+\mathsf B.$
\end{description}

Thus every vertex of $\operatorname{Tr}_{\epsilon}[\mathsf P,p]$ belongs to the right-hand side of \eqref{eq:locality}.
The inclusion follows.

The equality at $\epsilon=\delta_p$ follows by continuity.
\end{proof}

\begin{figure}[ht]
\centering
\begin{tikzpicture}
    \tikzset{
        paperedge/.style={color=Turquoise!25!black, line width=0.46pt},
        paperremovededge/.style={color=Turquoise!40!black!75,
            dash pattern=on 1.2pt off 1.2pt, line width=0.35pt},
        paperface/.style={fill=Turquoise!27, draw=none},
        paperdistinguishededge/.style={color=Turquoise!45!black,
            line width=1.05pt}
    }
    \coordinate (locA) at (0:1);
    \coordinate (locB) at (60:1);
    \coordinate (locC) at (120:1);
    \coordinate (locD) at (180:1);
    \coordinate (locE) at (240:1);
    \coordinate (locF) at (300:1);
    \coordinate (locG) at (30:{sqrt(3)/2});
    \coordinate (locH) at (330:{sqrt(3)/2});

    \fill[paperface]
        (locF)--(locH)--(locG)--(locB)--(locC)--(locD)--(locE)--cycle;
    \draw[paperedge]
        (locF)--(locH)--(locG)--(locB)--(locC)--(locD)--(locE)--cycle;
    \draw[paperdistinguishededge] (locH)--(locG);
    \draw[paperremovededge] (locG)--(locA)--(locH);

    \node at (1.75,0) {$=$};
    \node at (4,0) {$+$};

    \begin{scope}[xshift=2.5cm]
        \coordinate (fragA) at (0:1);
        \coordinate (fragB) at (60:1);
        \coordinate (fragF) at (300:1);
        \coordinate (fragO) at (0,0);
        \coordinate (fragG) at (30:{sqrt(3)/2});
        \coordinate (fragH) at (330:{sqrt(3)/2});
        \fill[paperface]
            (fragG)--(fragB)--(fragO)--(fragF)--(fragH)--cycle;
        \draw[paperedge]
            (fragG)--(fragB)--(fragO)--(fragF)--(fragH)--cycle;
        \draw[paperdistinguishededge] (fragH)--(fragG);
        \draw[paperremovededge] (fragG)--(fragA)--(fragH);
    \end{scope}
    \begin{scope}[xshift=5cm]
        \draw[paperedge] (180:0.5) -- (0:0.5);
    \end{scope}
\end{tikzpicture}
\caption{A shallow truncation of a vertex in a zonotope illustrating \Cref{lem:locality}.}
\label{fig:locality}
\end{figure}

The proof of \Cref{lem:locality} applies verbatim when both truncations are taken in any fixed direction $u$ in the relative interior of their common normal cone. Taking $u=u_{\mathsf F}$ and combining this with \Cref{lem:contraction}, we obtain the following.

\begin{lemma}\label{lem:locality_contraction}
Let $\mathsf{F} = \mathsf{Z}(\mathsf W) + p$ be a face of $\mathsf{Z}(\mathsf E)$, where $\mathsf W \subseteq \mathsf E$ and $p$ is the corresponding vertex of $\mathsf{Z}(\mathsf E \setminus \mathsf W)$.
Let $\mathsf U \subseteq \mathsf E \setminus \mathsf W$ be the set of segments whose directions are parallel to edges of $\mathsf{Z}(\mathsf E \setminus \mathsf W)$ incident to $p$, and set $p_{\mathsf U} \coloneqq \sum_{s \in \mathsf U} p(s)$.
For $0 < \epsilon \leq \delta_\mathsf{F}$,
\begin{align*}
    \operatorname{Tr}_\epsilon\left[\mathsf{Z}(\mathsf E),\, \mathsf{Z}(\mathsf W) + p\right]
    &= \operatorname{Tr}_\epsilon^{u_\mathsf{F}}\left[\mathsf{Z}(\mathsf E \setminus \mathsf W),\, p\right] + \mathsf{Z}(\mathsf W) \\
    &= \operatorname{Tr}_\epsilon^{u_\mathsf{F}}\left[\mathsf{Z}(\mathsf U),\, p_{\mathsf U}\right] + \mathsf{Z}(\mathsf E \setminus \mathsf U),
\end{align*}
where $u_\mathsf{F}$ is the inner face normal of $\mathsf{F}$ in $\mathsf{Z}(\mathsf E)$.
In particular, taking $\epsilon = \delta_\mathsf{F}$,
\[
    \operatorname{DTr}\left[\mathsf{Z}(\mathsf E),\, \mathsf{F}\right]
    = \operatorname{Tr}^{u_\mathsf{F}}_{\delta_{\mathsf{F}}} \left[\mathsf{Z}(\mathsf U),\, p_{\mathsf U}\right] + \mathsf{Z}(\mathsf E \setminus \mathsf U).
\]
\end{lemma}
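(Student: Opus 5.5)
The plan is to chain the two previous lemmas. For the first equality, apply \Cref{lem:contraction} directly: since $\mathsf F=\mathsf Z(\mathsf W)+p$ is a face of $\mathsf Z(\mathsf E)$ and $0<\epsilon\leq\delta_{\mathsf F}$, it gives
\[
\operatorname{Tr}_\epsilon[\mathsf Z(\mathsf E),\mathsf F]=\operatorname{Tr}^{u_{\mathsf F}}_\epsilon[\mathsf Z(\mathsf E\setminus\mathsf W),p]+\mathsf Z(\mathsf W).
\]
Before moving on, I will record two facts about $u_{\mathsf F}$ relative to the zonotope $\mathsf Z(\mathsf E\setminus\mathsf W)$. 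First, $u_{\mathsf F}$ lies in the interior of the normal cone of $p$ there, as noted after \Cref{lem:contraction}. Second, the deep depth of $p$ in direction $u_{\mathsf F}$ equals $\delta_{\mathsf F}$. To see the second fact, note that the last two lines of the proof of \Cref{lem:contraction} are an identity of truncation parameters: the truncating hyperplane is shifted by the constant $\lambda$. Moreover, edges of $\mathsf Z(\mathsf E\setminus\mathsf W)$ at $p$ lift to edges of $\mathsf Z(\mathsf E)$ from a vertex of $\mathsf F$ (the same observation used in \Cref{lem:delzant_uF}). Hence the neighbor values of $u_{\mathsf F}$ agree after subtracting $\lambda$.

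For the second equality, I will apply the directional version of \Cref{lem:locality} to the zonotope $\mathsf Z(\mathsf E\setminus\mathsf W)$, its vertex $p$, and the direction $u=u_{\mathsf F}$. The remark preceding the statement says the proof of \Cref{lem:locality} works for any fixed $u$ in the relative interior of the common normal cone. So I will check that $u_{\mathsf F}$ is interior to the normal cone of $p_{\mathsf U}$ in $\mathsf Z(\mathsf U)$. This holds because the two cones coincide: they have the same incident edge directions, exactly as in the proof of \Cref{lem:locality}. That proof also shows the deep depths agree, since the edges at $p$ and $p_{\mathsf U}$ are translates. The lemma then gives
\[
\operatorname{Tr}^{u_{\mathsf F}}_\epsilon[\mathsf Z(\mathsf E\setminus\mathsf W),p]=\operatorname{Tr}^{u_{\mathsf F}}_\epsilon[\mathsf Z(\mathsf U),p_{\mathsf U}]+\mathsf Z((\mathsf E\setminus\mathsf W)\setminus\mathsf U).
\]
Adding $\mathsf Z(\mathsf W)$ to both sides, and using additivity of zonotopes together with $\mathsf U\cap\mathsf W=\emptyset$, the summands combine to $\mathsf Z(\mathsf E\setminus\mathsf U)$. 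The final display is then the specialization $\epsilon=\delta_{\mathsf F}$.

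The main obstacle is the bookkeeping of depths. The range $0<\epsilon\leq\delta_{\mathsf F}$ in the statement must match the range $0<\epsilon\leq\delta_p$ for which \Cref{lem:locality} applies in the noncentral direction $u_{\mathsf F}$. In addition, $\epsilon$ in the directional truncation must be measured relative to $h(\mathsf Z(\mathsf E\setminus\mathsf W),u_{\mathsf F})$, not relative to $h(\mathsf Z(\mathsf E),u_{\mathsf F})$. My approach is to make the constant shift $C'=C-\lambda$ from \Cref{lem:contraction} explicit, which shows both conventions give the same hyperplane. Then I will verify the equality of neighbor values through the edge-lifting argument above. If that verification is awkward, the fallback is to prove the identity for small $\epsilon$ and extend to $\epsilon=\delta_{\mathsf F}$ by continuity, as at the end of \Cref{lem:locality}.
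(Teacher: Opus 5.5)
Your proposal is correct and follows the paper's own route: \Cref{lem:contraction} gives the first equality, and the directional version of \Cref{lem:locality}, applied to $\mathsf Z(\mathsf E\setminus\mathsf W)$ at $p$ with $u=u_{\mathsf F}$, gives the second once $\mathsf Z(\mathsf W)$ is added back. Your depth bookkeeping fills in details the paper leaves implicit: the shift $C'=C-\lambda=h(\mathsf Z(\mathsf E\setminus\mathsf W),u_{\mathsf F})$, and the edge-lifting argument. The lifting argument shows that $\delta_{\mathsf F}$ is at most the directional deep depth of $p$, which is exactly the inequality needed to apply locality on the whole range $0<\epsilon\leq\delta_{\mathsf F}$.
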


In what follows we identify $p$ and $p_\mathsf{U}$ when the context makes $\mathsf{U}$ clear.

\subsection{A graphical criterion for indecomposability}\label{sec:criteria}

We start this section with an example of an indecomposable polytope that is not detected by McMullen's criteria \cite{mcmullen1987indecomposable}.

\begin{example}\label{ex:rhombic_dodecahedron}
Consider the Minkowski sum of segments
\[
    \mathsf{Z} = [\mathbf{0}, (1,1,1)] + [\mathbf{0}, (1,-1,1)] + [\mathbf{0}, (-1,1,1)] + [\mathbf{0}, (-1,-1,1)].
\]
This polytope is the rhombic dodecahedron, which has $12$ faces, all of which are parallelograms.
The deep truncation $\operatorname{DTr}[\mathsf{Z}, \mathbf{0}]$ is indecomposable even though it has only four indecomposable faces, and they are not strongly connected to one another.
See \Cref{fig:schlegel} for reference.
\begin{figure}
    \centering
    \begin{minipage}[c]{0.4\linewidth}
        {\hfuzz=20pt\input{tikz/Img_poly.tex}}
    \end{minipage}
    \begin{minipage}[c]{0.4\linewidth}
        {\hfuzz=20pt\input{tikz/schlegel.tex}}
    \end{minipage}
    \caption{A vertex truncation of the rhombic dodecahedron and its planar graph.}
    \label{fig:schlegel}
\end{figure}
\end{example}

We now present our tool to prove indecomposability, using a different parametrization of the deformation cone.

\begin{definition}\label{def:1_mink_weights}
The set of all deformations of $\mathsf{P}$, up to translation, is parametrized by non-negative \textbf{1-Minkowski weights} \cite[Lemma 8.1]{mcmullen1996weights}.
This is the polyhedral cone
\begin{equation}\label{eq:1_mink_weights}
    \Omega_1(\mathsf{P}) =
    \left\{
        \omega \in \mathbb{R}_{\geq 0}^{\mathcal{F}_1(\mathsf{P})} \,\middle|\,
\sum_{\mathsf E\subseteq\mathsf F}\omega(\mathsf E)\,\vec v_{\mathsf E,\mathsf F}=\mathbf{0},
        \text{ for every 2-dimensional face } \mathsf{F}
    \right\},
\end{equation}
where $\mathcal F_1(\mathsf P)$ is the set of edges of $\mathsf P$, and, for each two-face $\mathsf F$, the vector $\vec v_{\mathsf E,\mathsf F}$ is the edge vector of $\mathsf E$, oriented according to a cyclic orientation of the boundary of $\mathsf F$.  Each equality in \eqref{eq:1_mink_weights} is called a \textbf{balancing condition}.
\end{definition}

We are now ready to present another indecomposability criterion.
This was independently discovered by Padrol--Poullot \cite{padrol2026indecomposability} and is a special case of their Theorem A.

\begin{proposition}[Edge criterion]\label{prop:edge_criterion}
Let $\mathsf{P}$ be a polytope and $\mathcal{F}_1(\mathsf{P})$ its set of edges.
Form a graph $\mathdutch{G}$ with vertex set $\mathcal{F}_1(\mathsf{P})$ by connecting $\mathsf{E}_1, \mathsf{E}_2 \in \mathcal{F}_1(\mathsf{P})$ if either
\begin{enumerate}
    \item they belong to a common triangular 2-face, or
    \item they are opposite edges of a parallelogram 2-face.
\end{enumerate}
If $\mathdutch{G}$ is connected, then $\dim \Omega_1(\mathsf{P}) = 1$, so $\mathsf{P}$ is indecomposable.
\end{proposition}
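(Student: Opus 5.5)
The plan is to work directly with the 1-Minkowski weights of \Cref{def:1_mink_weights}. Since $\Omega_1(\mathsf P)$ contains the weight vector $\omega\equiv 1$ (corresponding to $\mathsf P$ itself), it suffices to show that every $\omega\in\Omega_1(\mathsf P)$ is a constant function on $\mathcal F_1(\mathsf P)$. To do this, I will show that the balancing conditions force $\omega(\mathsf E_1)=\omega(\mathsf E_2)$ whenever $\mathsf E_1,\mathsf E_2$ are adjacent in $\mathdutch G$. Connectivity of $\mathdutch G$ then propagates this equality along paths, so $\omega$ is constant. Hence $\Omega_1(\mathsf P)$ is the ray spanned by $\omega\equiv1$, and every deformation of $\mathsf P$ is, up to translation, a homothety of $\mathsf P$ or a point. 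This is exactly the indecomposability of \Cref{def:indecomposable}.

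The two local steps are as follows. First, for a triangular 2-face $\mathsf F$ with edges $\mathsf E_1,\mathsf E_2,\mathsf E_3$, the cyclically oriented edge vectors satisfy $\vec v_{\mathsf E_1}+\vec v_{\mathsf E_2}+\vec v_{\mathsf E_3}=\mathbf 0$. Any two of these vectors are linearly independent, so the only linear relations among them are multiples of this one. The balancing condition $\sum\omega(\mathsf E_i)\vec v_{\mathsf E_i}=\mathbf 0$ therefore forces all three weights to be equal, which covers condition (1). Second, for a parallelogram 2-face with edge vectors $a,b,-a,-b$ in cyclic order, the balancing condition reads $(\omega(\mathsf E_1)-\omega(\mathsf E_3))a+(\omega(\mathsf E_2)-\omega(\mathsf E_4))b=\mathbf 0$. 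Because $a$ and $b$ are linearly independent, opposite edges receive equal weight, which covers condition (2).

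The argument is elementary, but one point requires care and is the main (mild) obstacle. We must confirm that the parametrization in \Cref{def:1_mink_weights}, taken from McMullen, identifies translation classes of deformations bijectively with $\Omega_1(\mathsf P)$, where a deformation is recorded through the edge lengths it assigns relative to $\mathsf P$. In particular, the point deformation corresponds to $\omega=0$, and the ray through $\omega\equiv1$ consists exactly of the homotheties $\lambda\mathsf P$. Granting this correspondence, which the paper cites as \cite[Lemma 8.1]{mcmullen1996weights}, $\dim\Omega_1(\mathsf P)=1$ is equivalent to every deformation being trivial, and the proof is complete.
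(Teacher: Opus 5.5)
Your proof is correct and follows the paper's argument: the balancing conditions on triangular and parallelogram 2-faces tie together the weights of edges adjacent in $\mathdutch{G}$, and connectivity then determines every $\omega\in\Omega_1(\mathsf P)$ from a single scalar. Your sharper conclusion that all weights are \emph{equal}, not merely determined by one another, uses the convention of \Cref{def:1_mink_weights} that $\vec v_{\mathsf E,\mathsf F}$ is the actual edge vector, so that $\omega\equiv 1$ represents $\mathsf P$; the paper's phrasing does not depend on this normalization.
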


\begin{proof}
In a triangular face, the balancing condition uniquely determines the weights of any two edges from the third.
In a parallelogram face, opposite edges must carry the same weight.
If $\mathdutch{G}$ is connected, a single edge weight determines all others, so $\Omega_1(\mathsf{P})$ is one-dimensional and $\mathsf{P}$ is indecomposable.
\end{proof}

\begin{example}\label{ex:rhombic_dodecahedron_edge_criterion}
The flat truncation of the rhombic dodecahedron from \Cref{ex:rhombic_dodecahedron} is indecomposable by the above criterion.
The edges not on the square $\square = \operatorname{Conv}\{(1,1,1),(1,-1,1),(-1,-1,1),(-1,1,1)\}$ fall into four parallel classes, each connected internally via parallelograms.
The four triangles adjacent to $\square$ then connect all classes to one another (see \Cref{fig:graph}).
Note that the quadrilaterals in the figure are parallelograms in the rhombic dodecahedron, even if they do not appear so in the two-dimensional picture.
\begin{figure}
    \centering
    \input{tikz/graph.tex}
    \caption{The graph $\mathdutch{G}$ induced by parallelogram and triangle adjacency.}
    \label{fig:graph}
\end{figure}
\end{example}

\subsection{Faberg\'e and Gherkin polytopes}

We now describe a general construction that yields indecomposable polytopes.

\begin{definition}\label{def:gherkin}
	Given a $ n $-polytope $ \mathsf{P} \subseteq \mathbb{R}^{n} $ we define the \textbf{Faberg\'e egg} polytope as 
	\begin{equation}\label{eq:faberge}
		\mathrm{F}(\mathsf{P}) \coloneqq \sum_{v\in\operatorname{Vert}(\mathsf P)}	[\mathbf{0}, (v, 1)] \subseteq \mathbb{R}^{n+1}.
	\end{equation}

	We define the \textbf{Gherkin} polytope as
	\begin{equation}\label{eq:gherkin}
		\mathsf{G}(\mathsf{P}) \coloneqq \mathrm{F}(\mathsf{P}) \cap \mathsf{H}, \text{ where } \mathsf{H} \coloneqq  \left\{ x \in \mathbb{R}^{n+1} \,\middle|\, x_{n+1} \geq 1 \right\}  .
	\end{equation}
\end{definition}

The deep truncation of the Faberg\'e polytope $\mathrm{F}(\mathsf{P})$ at the origin in direction $e_{n+1}$ is a flat truncation which is equal to the Gherkin polytope $\mathsf{G}(\mathsf{P})$.
The polytope $\mathsf{P}$ appears as the facet $\mathsf{G}(\mathsf{P}) \cap \left\{ x \in \mathbb{R}^{n+1} \,\middle|\, x_{n+1} = 1 \right\} $  of $ \mathsf{G}(\mathsf{P}) $.  We refer to the origin $\mathbf{0}$ as the \textbf{apex} of $\mathrm{F}(\mathsf{P})$, and we refer to this truncation as the \textbf{apex truncation}.

\begin{theorem}\label{thm:gherkin}
For every $ n $-polytope $ \mathsf{P} $ the Gherkin polytope $ \mathsf{G}(\mathsf{P}) $ is indecomposable.
\end{theorem}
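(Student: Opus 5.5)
The plan is to verify the hypothesis of \Cref{prop:edge_criterion} for $\mathsf{G}(\mathsf{P})$. Write $w_v \coloneqq (v,1)$ for $v\in\operatorname{Vert}(\mathsf P)$, so that $\mathrm{F}(\mathsf P)=\mathsf Z(\{[\mathbf 0,w_v]\})$ and $\mathsf G(\mathsf P)$ is its apex truncation. It is flat, with new facet $\mathsf P\times\{1\}=\operatorname{Conv}\{w_v\}$.

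First I will describe the edges and $2$-faces of $\mathsf G(\mathsf P)$. The truncating hyperplane $x_{n+1}=1$ passes exactly through the apex neighbors $w_v$, so it only affects faces of $\mathrm F(\mathsf P)$ containing $\mathbf 0$. Hence the edges of $\mathsf G(\mathsf P)$ are of two kinds.
\begin{enumerate}
\item The edges of $\mathsf P\times\{1\}$, namely $[w_v,w_u]$ for each edge $[v,u]$ of $\mathsf P$.
\item The edges of $\mathrm F(\mathsf P)$ not incident to $\mathbf 0$. Each of these is a translate of some generator $[\mathbf 0,w_v]$; say it lies in the class of $w_v$.
\end{enumerate}
The $2$-faces of $\mathrm F(\mathsf P)$ containing $\mathbf 0$ are the parallelograms spanned by $w_v,w_u$, where $\operatorname{Cone}(w_v,w_u)$ is a $2$-face of the cone over $\mathsf P\times\{1\}$, i.e.\ where $[v,u]$ is an edge of $\mathsf P$. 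The truncation turns each of these into the triangle with vertices $w_v,\,w_u,\,w_v+w_u$. Every other $2$-face of $\mathrm F(\mathsf P)$ survives as a parallelogram.

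Second, the triangles do most of the linking. The triangle attached to the edge $[v,u]$ of $\mathsf P$ contains the top edge $[w_v,w_u]$, an edge in the class of $w_u$ (namely $[w_v,w_v+w_u]$), and an edge in the class of $w_v$. So in the graph $\mathdutch G$ each top edge is joined to the classes of both its endpoints. Since the graph of $\mathsf P$ is connected, all top edges and all classes become mutually connected, provided that each class is internally connected.

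Third, and this is the step I expect to be the main obstacle, I must show that every edge of $\mathsf G(\mathsf P)$ in the class of $w_v$ is joined through parallelograms to the edge $[w_u,w_u+w_v]$ of some apex triangle. Note that the zone of $w_v$ in $\mathrm F(\mathsf P)$ has lost its member $[\mathbf 0,w_v]$.
\begin{enumerate}
\item Pick a linear functional $\ell$ vanishing on $w_v$ and generic otherwise, such that $\mathbf 0+[\mathbf 0,w_v]$ is the unique $\ell$-minimal edge of the zone. This is possible because the edges of the zone correspond to the maximal cones of the complete fan obtained by restricting the normal fan of $\mathrm F(\mathsf P)$ to $w_v^\perp$.
\item Starting from a given edge $\mathsf E$ of the class, repeatedly cross a parallelogram $2$-face of the zone to a parallel edge with smaller $\ell$-value. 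This is a walk in the restricted fan toward the chamber of $[\mathbf 0,w_v]$. The walk ends at $[\mathbf 0,w_v]$ itself.
\item The last step of the walk passes through a parallelogram containing $\mathbf 0$, hence through one of the apex triangles. The edge reached just before $[\mathbf 0,w_v]$ is therefore $[w_u,w_u+w_v]$ for some $u$.
\end{enumerate}
All earlier parallelograms in the walk avoid $\mathbf 0$ and survive untruncated, so this gives the required path in $\mathdutch G$. I will also check the degenerate situations: the walk may reach a triangle immediately, and $\ell$ must be chosen to avoid ties.

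Combining the three steps, $\mathdutch G$ is connected, so \Cref{prop:edge_criterion} gives $\dim\Omega_1(\mathsf G(\mathsf P))=1$, and $\mathsf G(\mathsf P)$ is indecomposable.
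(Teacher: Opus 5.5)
Your Steps~1 and~2 are sound, and Step~2 correctly isolates what is needed: for each vertex $v$ of $\mathsf P$, all surviving edges parallel to $w_v$ must lie in a single component of $\mathdutch G$. Step~3, however, proves something strictly weaker. Write $a_{u,v}=[w_u,w_u+w_v]$ for $u$ adjacent to $v$. Your $\ell$-decreasing walk shows only that each class-$v$ edge is joined through surviving parallelograms to \emph{some} $a_{u,v}$, and $u$ depends on the starting edge. Started at $a_{u',v}$ itself, the walk may be the single step into $[\mathbf 0,w_v]$.

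So nothing in Step~3 connects $a_{u,v}$ to $a_{u',v}$ for two different neighbors $u\neq u'$ of $v$. Yet this is exactly the link Step~2 uses to pass from the triangle over $[v,u]$ to the triangle over $[v,u']$. Distinct apex triangles share no edge of $\mathsf G(\mathsf P)$, and the facet $\mathsf P\times\{1\}$ need not have any triangular or parallelogram $2$-faces that would link top edges directly. Hence the three steps, as stated, do not yet give connectivity of $\mathdutch G$.

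The repair is to walk upward instead. Take $\ell$ to be the homogenization of an affine functional vanishing at $v$ and positive at the other vertices of $\mathsf P$, so that $\ell(w_v)=0<\ell(w_u)$ for $u\neq v$. The class-$v$ edges of $\mathrm F(\mathsf P)$ and the parallelograms of the zone correspond to the vertices and edges of the projection of $\mathrm F(\mathsf P)$ along $w_v$. On this projection, $\ell$ is minimized uniquely at (the image of) $[\mathbf 0,w_v]$ and maximized uniquely at (the image of) $[\sum_{u\neq v}w_u,\sum_{u}w_u]$.

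Every non-maximal vertex has an $\ell$-increasing neighbor, and an increasing walk started at a surviving edge never returns to $[\mathbf 0,w_v]$. The only zone parallelograms containing $\mathbf 0$ are the $[\mathbf 0,w_v]+[\mathbf 0,w_u]$, whose class-$v$ edges are $[\mathbf 0,w_v]$ and $a_{u,v}$, so such a walk never crosses a truncated face. Hence every surviving class-$v$ edge is joined by rule~(2) to the maximal one. This is the internal connectivity Step~2 requires; it is the standard argument behind Balinski's theorem.

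With this change your proof is essentially the paper's: each parallel class is connected, classes are linked by the triangles over edges of $\mathsf P$, and top edges hang off those triangles. You should also state why every $2$-face of $\mathrm F(\mathsf P)$ is a parallelogram: three generators in a common $2$-plane would force three collinear vertices of $\mathsf P$. All your rule-(2) links depend on this.
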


\begin{proof}
	Let $ \mathsf{Z} = \mathrm{F}(\mathsf{P}) $.
	Since no three vertices of $ \mathsf{P}  $ are collinear, no three of the segments in Equation \eqref{eq:faberge} are coplanar.
	This implies that every 2-face of $ \mathsf{Z} $ is a sum of exactly two segments and thus a parallelogram.
	After intersecting with the closed halfspace $ \mathsf{H} $, each 2-face $\mathsf{F}$ of $\mathsf{Z}$ contributes to $\mathsf{G}(\mathsf{P})$ according to the following:
\begin{itemize}
		\item it is a parallelogram if $ \mathsf{F} $ does not contain the origin,
		\item it is a triangle if $ \mathsf{F} $ contains the origin.
	\end{itemize}  
	
We split the set of edges of the Gherkin $  \mathsf{G}(\mathsf{P})  $ into two:
\begin{enumerate}
  \item Type I: Edges parallel to some $  [0, (v,1) ]  $.
  \item Type II: Edges lying in the hyperplane $ \partial \mathsf{H} $.
\end{enumerate}

	Every Type II edge is one side of a triangular 2-face of $\mathsf{G}(\mathsf{P})$ whose other two sides are of Type I.  We now apply \Cref{prop:edge_criterion} by showing that the associated graph $\mathdutch{G}$ on the edge set of $\mathsf{G}(\mathsf{P})$ is connected.  The Type I edges partition into parallel classes $\mathscr C_v$, one for each vertex $v$ of $\mathsf{P}$, where $\mathscr{C}_v$ consists of the edges parallel to $(v,1)$. 
We show in turn that each $\mathscr{C}_v$ is connected in $\mathdutch{G}$, that any two classes $\mathscr{C}_v, \mathscr{C}_w$ are connected in $\mathdutch{G}$, and that every Type II edge is connected to some $\mathscr{C}_v$.

	The 2-faces of $\mathsf{Z}$ having $[\mathbf{0},(v,1)]$ as a Minkowski factor form the \textbf{zone} of $v$, and they are all parallelograms. 
	Together with the edges parallel to $(v,1)$, they form a subcomplex of $\partial \mathsf{Z}$ on which any two edges of $\mathscr{C}_v$ are connected by a chain of adjacent parallelograms. 
	A parallelogram in this zone is cut by $\mathsf{H}$ only when it has $\mathbf{0}$ as a vertex, so after truncation each surviving parallelogram still links two consecutive edges of $\mathscr{C}_v$ via rule~(2) of \Cref{prop:edge_criterion}, and each cut parallelogram produces a triangle that links the two surviving neighbors of the deleted edge via rule~(1). 
	Hence $\mathscr{C}_v$ is connected in $\mathdutch{G}$.

	Now we show that different classes are connected.
	If $vw$ is an edge of $\mathsf{P}$, then there is a linear functional on $\mathbb{R}^{n+1}$ that attains its minimum on $\mathsf{Z}$ precisely along the parallelogram $[\mathbf{0},(v,1)] + [\mathbf{0},(w,1)]$, which is therefore a 2-face of $\mathsf{Z}$ with the origin as a vertex. 
	Its image in $\mathsf{G}(\mathsf{P})$ is a triangle with one edge in $\mathscr{C}_v$, one in $\mathscr{C}_w$, and one of Type II. 
	Rule~(1) of \Cref{prop:edge_criterion} then links $\mathscr{C}_v$ to $\mathscr{C}_w$.
	Since the edge graph of $\mathsf{P}$ is connected, the classes $\{\mathscr C_v\}$ for $v$ ranging over the vertices of $\mathsf P$ are pairwise connected in $\mathdutch{G}$.

	Finally, every Type II edge sits in a triangle with two Type I edges, so it is adjacent in $\mathdutch{G}$ to some $\mathscr{C}_v$. 
	The graph $\mathdutch{G}$ is therefore connected, and by \Cref{prop:edge_criterion} the Gherkin polytope $\mathsf{G}(\mathsf{P})$ is indecomposable.
\end{proof}

A slight generalization of Gherkin polytopes is the following:
Let $ k \in \mathbb{Z}_{>0} $ and define
\begin{equation}\label{eq:generalize_gherkin}
	\mathsf{G}_k(\mathsf{P}) \coloneqq \mathrm{F}(\mathsf{P}) \cap \mathsf{H}, \text{ where } \mathsf{H} \coloneqq  \left\{ x \in \mathbb{R}^{n+1} \,\middle|\, x_{n+1} \geq k \right\}  .
\end{equation}

\Cref{thm:gherkin} only holds for $ \mathsf{G}_1 $.
Indeed, one can check that with $k=2$ in \Cref{ex:rhombic_dodecahedron} the resulting graph has two connected components.  Moreover, the corresponding polytope is decomposable.

\begin{figure}[p]
    \centering
    \includegraphics[width=.86\textwidth]{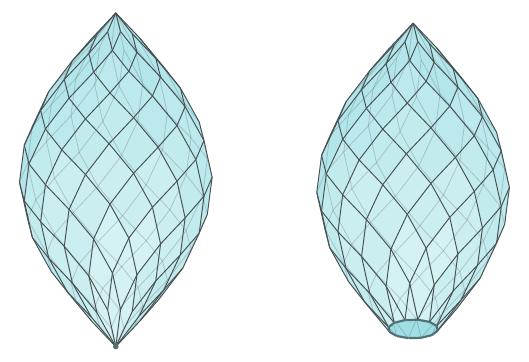}
    \caption{The Faberg\'e egg associated to a regular dodecagon (left) and its corresponding Gherkin (right).}
    \vspace{8mm}
    \includegraphics[width=.86\textwidth]{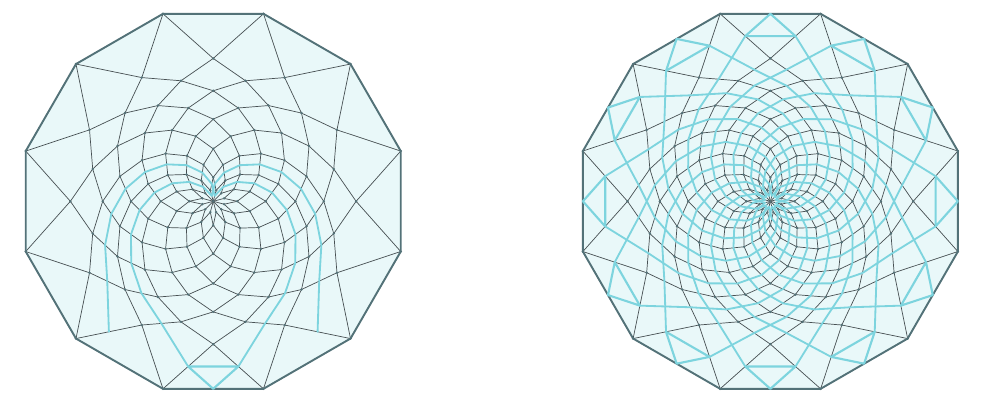}
    \caption{Planar drawings of the dodecagonal Gherkin.  In the left panel, the two paths beginning at the Type~I sides of one boundary triangle are traced, and the small triangle records the pairwise adjacencies of its three sides in $\mathdutch{G}$.  The right panel traces all twelve paths and records the same adjacencies in every boundary triangle.}
    \label{fig:dodecagon-gherkin-planar-paths}
\end{figure}
\FloatBarrier

\subsection{Simple zonotopes}

In this section we study the piecewise linear functions supported on the barycentric subdivision of fans induced by hyperplane arrangements.
Applying \Cref{thm:basis_seed} with seed $\mathsf{S} = \mathsf{Z}(\mathsf E)$, a simple zonotope (where we assume $\mathsf E$ spans the ambient space so that $\mathsf{Z}(\mathsf E)$ is full-dimensional, and $\mathsf{Z}(\mathsf E)$ contains the origin in its interior), we obtain that
\begin{equation}\label{eq:truncation_basis}
    \left\{ \operatorname{Tr}_\epsilon[\mathsf{Z}, \mathsf{F}] \,\middle|\, \emptyset \subsetneq \mathsf{F} \subsetneq \mathsf{Z} \right\}
\end{equation}
is a polytopal basis for $\operatorname{PL}(\Psi)$, where $\Psi = \operatorname{Bar}(\Sigma(\mathsf{Z}))$.

Using \eqref{eq:truncation_basis} as a starting point, we describe an indecomposable basis.
\Cref{lem:locality_contraction} decomposes each face truncation into several pieces, each of which may be further decomposable.
From now on we assume that our seed zonotope is primitive Delzant.

The reason for restricting to primitive Delzant zonotopes is the following corollary to \Cref{lem:locality_contraction}.

\begin{corollary}\label{cor:indecomposable_factor_zonotopes}
Let $\mathsf{Z}(\mathsf E)$ be a primitive Delzant zonotope and $\mathsf{F} = \mathsf{Z}(\mathsf W) + p$ a face.
With $\mathsf U$ and $\mathsf W$ as in \Cref{lem:locality_contraction}, we have the Minkowski decomposition
\begin{equation}\label{eq:zonotope_strong_truncation}
    \operatorname{DTr}\left[\mathsf{Z}(\mathsf E),\, \mathsf{F}\right] = \operatorname{Tr}^{u_{\mathsf F}}_{\delta_{\mathsf F}}\left[\mathsf{Z}(\mathsf U),\, p\right] + \mathsf{Z}(\mathsf E \setminus \mathsf U),
\end{equation}
where $\operatorname{Tr}^{u_{\mathsf F}}_{\delta_{\mathsf F}}\left[\mathsf{Z}(\mathsf U),\, p\right]$ is an indecomposable Gherkin polytope.
\end{corollary}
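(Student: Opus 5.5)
The displayed decomposition is just \Cref{lem:locality_contraction} at $\epsilon=\delta_{\mathsf F}$. So the real content of \Cref{cor:indecomposable_factor_zonotopes} is that the first summand $\operatorname{Tr}^{u_{\mathsf F}}_{\delta_{\mathsf F}}[\mathsf Z(\mathsf U),p]$ is (a translate of, after a linear change of coordinates) a Gherkin polytope, so that \Cref{thm:gherkin} applies. Since indecomposability is invariant under translation and invertible linear maps, I would translate so that $p=p_{\mathsf U}=\mathbf 0$. Then $\mathsf Z(\mathsf U)=\sum_{s\in\mathsf U}[\mathbf 0,w_s]$, where $w_s$ is the edge vector of $s$ leaving $p$.

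The first key step is to show that all the $w_s$ satisfy $u_{\mathsf F}^{\intercal}w_s=\delta_{\mathsf F}$. By \Cref{lem:delzant_uF}, the deep truncation of $p$ in $\mathsf Z(\mathsf E\setminus\mathsf W)$ in direction $u_{\mathsf F}$ is flat. Every $w_s$ with $s\in\mathsf U$ is an edge of $\mathsf Z(\mathsf E\setminus\mathsf W)$ at $p$, so all the neighbors $w_s$ of $p$ lie on the common level $u_{\mathsf F}^{\intercal}x=\delta_{\mathsf F}$. This uses that the edges at $p$ in $\mathsf Z(\mathsf U)$ and in $\mathsf Z(\mathsf E\setminus\mathsf W)$ are translates, as in the proof of \Cref{lem:locality}, and that the truncation depth agrees. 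I will also check that the neighbors are exactly the $w_s$, since segments are distinct.

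Next, choose coordinates. Let $\mathsf V=\operatorname{span}\{w_s\}$. Pick a linear isomorphism from $\mathsf V$ onto $\mathbb R^{m}\times\mathbb R$ sending the functional $u_{\mathsf F}/\delta_{\mathsf F}$ to the last coordinate. Then each $w_s$ becomes $(v_s,1)$ for some $v_s\in\mathbb R^{m}$, and the vectors $v_s$ are pairwise distinct because the segments are distinct and not parallel. Let $\mathsf P=\operatorname{Conv}\{v_s\}$. Under this map, $\mathsf Z(\mathsf U)=\sum_s[\mathbf 0,(v_s,1)]$ and the truncation becomes $\mathsf Z(\mathsf U)\cap\{x_{m+1}\ge1\}$, which has the form of $\mathsf G(\mathsf P)$.

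The main obstacle is that \Cref{def:gherkin} sums only over the vertices of $\mathsf P$, so I must show every $v_s$ is a vertex of $\mathsf P$ and that $\mathsf P$ is full-dimensional. I would argue this from simplicity of the zonotope. Because $\mathsf Z(\mathsf E\setminus\mathsf W)$ is a face-type contraction of a simple (Delzant) zonotope, $p$ is a simple vertex, so the edge directions $w_s$ at $p$ are linearly independent. Then the $v_s$ are affinely independent, $\mathsf P$ is a simplex of dimension $|\mathsf U|-1$ in $\mathbb R^{|\mathsf U|-1}$, and every $v_s$ is a vertex. In fact $\mathsf Z(\mathsf U)$ is then a parallelepiped, and the truncation is its flat vertex truncation. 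With this established, \Cref{thm:gherkin} gives indecomposability. If that theorem is ever applied with $\mathsf P$ a point (the case $|\mathsf U|=1$), the truncation is a point and is trivially indecomposable.
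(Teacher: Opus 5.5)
Your overall route is the paper's: the decomposition is \Cref{lem:locality_contraction} at $\epsilon=\delta_{\mathsf F}$, flatness comes from \Cref{lem:delzant_uF}, and after putting $p$ at the origin and rescaling $u_{\mathsf F}$ the first summand is identified with a Gherkin, so that \Cref{thm:gherkin} applies. You are also right that this identification needs every $v_s$ to be a vertex of $\mathsf P$, a point the paper passes over.

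However, your argument for that step is false. The contraction $\mathsf Z(\mathsf E\setminus\mathsf W)$ of a simple zonotope is generally not simple, and $p$ is generally not a simple vertex. Simplicity of $\mathsf Z(\mathsf E)$ only says that each vertex of $\mathsf F$ has $\operatorname{codim}\mathsf F$ edges leaving $\mathsf F$. By contrast, $\mathsf U$ collects the leaving directions from all vertices of $\mathsf F$ at once.

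For example, take the edge of $\Pi_4$ indexed by $1|23|4$:
\begin{itemize}
\item the endpoint $1|2|3|4$ contributes the leaving directions $e_2-e_1$ and $e_4-e_3$;
\item the endpoint $1|3|2|4$ contributes $e_3-e_1$ and $e_4-e_2$.
\end{itemize}
So $\mathsf U$ has four vectors spanning only a $3$-dimensional space; indeed $e_4-e_3=(e_4-e_2)+(e_2-e_1)-(e_3-e_1)$. Here $\mathsf P$ is a parallelogram, not a simplex. Also, $\mathsf Z(\mathsf U)$ is, up to a linear map, the rhombic dodecahedron of \Cref{ex:rhombic_dodecahedron}, not a parallelepiped. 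If your claim held, every Gherkin would be a flat corner truncation of a parallelepiped, contradicting the paper's examples.

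The fact you need is nevertheless true, for a different reason. With $p=\mathbf 0$, the cone $\operatorname{Cone}\{w_s\mid s\in\mathsf U\}$ is the tangent cone of $\mathsf Z(\mathsf U)$ at the vertex $p$. It is pointed, and its extreme rays are exactly the edge rays $\mathbb R_{\geq 0}w_s$, which are pairwise distinct.

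Since $u_{\mathsf F}$ lies in the interior of the normal cone of $p$ (as noted after \Cref{lem:contraction}), it is strictly positive on this cone away from the origin. Hence the slice $\{x\mid u_{\mathsf F}^{\intercal}x=\delta_{\mathsf F}\}$ of the cone is a polytope whose vertices are its intersections with the extreme rays. By flatness, these are precisely the $w_s$. So $\operatorname{Conv}\{w_s\}$ equals this slice, and each $v_s$ is a vertex of $\mathsf P$.

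Moreover, the $w_s$ span $\mathsf V$ and lie on an affine hyperplane avoiding the origin, so $\mathsf P$ is full-dimensional in $\mathbb R^m$. Replacing your simplicity paragraph with this argument, and dropping the parallelepiped claim, completes the proof.
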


\begin{proof}
By \Cref{lem:locality_contraction},
\[
    \operatorname{DTr}\left[\mathsf{Z}(\mathsf E),\, \mathsf{Z}(\mathsf W) + p\right] = \operatorname{Tr}^{u_{\mathsf F}}_{\delta_{\mathsf F}}\left[\mathsf{Z}(\mathsf U),\, p\right] + \mathsf{Z}(\mathsf E \setminus \mathsf U).
\]
By \Cref{lem:delzant_uF,lem:locality}, the deep truncation $\operatorname{Tr}^{u_{\mathsf F}}_{\delta_{\mathsf F}}\left[\mathsf{Z}(\mathsf U),\, p\right]$ is flat.  After translating $p$ to the origin and rescaling $u_{\mathsf F}$, the other endpoints of the generating segments lie in the hyperplane $u_{\mathsf F}=1$. Thus $\operatorname{Tr}^{u_{\mathsf F}}_{\delta_{\mathsf F}}\left[\mathsf{Z}(\mathsf U),\, p\right]$ is affinely isomorphic to the corresponding Gherkin, and is indecomposable by \Cref{thm:gherkin}.
\end{proof}

In light of the previous result, we will adopt the notation
\[
 \mathsf{G}(\mathsf{Z}, \mathsf{F}) \coloneqq \operatorname{Tr}^{u_{\mathsf F}}_{\delta_{\mathsf F}}\left[\mathsf{Z}(\mathsf U),\, p\right].
 \]

\begin{theorem}\label{thm:primitive_delzant_zonotope}
Let $\mathsf{Z}(\mathsf E)$ be a primitive Delzant zonotope with $\mathsf E$ integral.  Let $\mathsf{P}$ be an omnitruncation of $\mathsf{Z}$, and $\mathscr{T}$ be an indecomposable polytopal basis for the deformation space of $\mathsf{Z}$.
Then
\[
    \mathscr{G} \coloneqq \left\{ \mathsf{G}(\mathsf{Z}, \mathsf{F}) \,\middle|\, \emptyset \subsetneq \mathsf{F} \subsetneq \mathsf{Z},\, \operatorname{codim}(\mathsf{F}) > 1 \right\} \cup \mathscr{T}
\]
is an indecomposable basis for the deformation space of $\mathsf{P}$.  Moreover, if $\mathscr{T}$ is an integral basis for the deformation space of $\mathsf{Z}$, then $\mathscr{G}$ is an integral basis for the deformation space of $\mathsf{P}$.
\end{theorem}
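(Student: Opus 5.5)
We start from \Cref{cor:basis_augmented_barycentric} applied with seed $\mathsf S=\mathsf Z(\mathsf E)$, which is simple because it is Delzant, and with the given basis $\mathscr T$ of $\operatorname{PL}(\Sigma(\mathsf Z))$. Since $\mathsf Z$ is primitive Delzant, \Cref{prop:delzant} and the proof of \Cref{thm:basis_seed} identify $\operatorname{Tr}_1[\mathsf Z,\mathsf F]$ with $\operatorname{DTr}[\mathsf Z,\mathsf F]$. Hence
\[
\mathscr C_1=\{\operatorname{DTr}[\mathsf Z,\mathsf F]\mid \operatorname{codim}(\mathsf F)>1\}\cup\mathscr T
\]
is a polytopal basis of $\operatorname{PL}(\Psi)$, and it is integral whenever $\mathscr T$ is.

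Next, for each $\mathsf F$ with $\operatorname{codim}(\mathsf F)>1$, we apply \Cref{cor:indecomposable_factor_zonotopes} to obtain
\[
h(\operatorname{DTr}[\mathsf Z,\mathsf F]) = h(\mathsf G(\mathsf Z,\mathsf F)) + h(\mathsf Z(\mathsf E\setminus\mathsf U)).
\]
The correction term $\mathsf Z(\mathsf E\setminus\mathsf U)$ is a sum of segments, each of which is a Minkowski summand of $\mathsf Z$. Its support function therefore lies in $\operatorname{PL}(\Sigma(\mathsf Z))=\operatorname{Span}(\mathscr T)$. We need to check that $\mathsf G(\mathsf Z,\mathsf F)$ is a deformation of $\mathsf P$. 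It is a Minkowski summand of $\operatorname{DTr}[\mathsf Z,\mathsf F]$, which is a deformation of $\mathsf P$, so its normal fan coarsens $\Psi$ by \eqref{eq:sum_refinement} and \Cref{prop:coarsening_summand}.

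The change of basis from $\mathscr C_1$ to $\mathscr G$ is therefore unitriangular. It fixes $\mathscr T$ and subtracts a vector in $\operatorname{Span}(\mathscr T)$ from each truncation. The cardinalities agree, so $\mathscr G$ is a basis. Indecomposability follows from \Cref{cor:indecomposable_factor_zonotopes} for the Gherkins and from the hypothesis on $\mathscr T$.

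The main obstacle is the integral statement. We need $h(\mathsf Z(\mathsf E\setminus\mathsf U))$ to be an \emph{integer} combination of $\mathscr T$, and we need each $\mathsf G(\mathsf Z,\mathsf F)$, after its translation by $p$, to be a lattice polytope with integral support function. For the first point, the segments of $\mathsf E$ are integral, so $\mathsf Z(\mathsf E\setminus\mathsf U)$ is a lattice zonotope whose support function lies in $\mathrm{DS}_{\mathbb Z}(\mathsf Z)$; since $\mathscr T$ generates $\mathrm{DS}_{\mathbb Z}(\mathsf Z)$ over $\mathbb Z$, the coefficients are integers. For the second point, $\mathsf G(\mathsf Z,\mathsf F)$ is the difference of two integral support functions, namely that of the integral polytope $\operatorname{DTr}[\mathsf Z,\mathsf F]=\operatorname{Tr}_1[\mathsf Z,\mathsf F]$ and that of a lattice zonotope, so it is integral. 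With both points in hand, the triangular transformation is unimodular, and integrality of $\mathscr G$ follows from that of $\mathscr C_1$.
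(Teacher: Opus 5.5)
Your proposal is correct and follows the paper's proof. You start from the basis $\{\operatorname{DTr}[\mathsf Z,\mathsf F]\mid \operatorname{codim}(\mathsf F)>1\}\cup\mathscr T$ given by \Cref{rem:key_delzant} and the proof of \Cref{cor:basis_augmented_barycentric}, split off the factor $\mathsf Z(\mathsf E\setminus\mathsf U)$ via \eqref{eq:zonotope_strong_truncation}, absorb its support function into the span of $\mathscr T$ (integrally when $\mathscr T$ is integral), and conclude by a unitriangular change of basis with equal cardinalities. Your extra checks that each $\mathsf G(\mathsf Z,\mathsf F)$ is a deformation of $\mathsf P$ and has integral support function are fine. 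The integrality is in fact automatic from the unimodular change of basis, and no translation by $p$ is needed, because $\mathsf G(\mathsf Z,\mathsf F)$ already appears untranslated in \eqref{eq:zonotope_strong_truncation}.
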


\begin{proof}
By \Cref{prop:delzant}, the deep truncations below are the
unit-depth truncations.  By \Cref{rem:key_delzant} and the proof of
\Cref{cor:basis_augmented_barycentric}, the set
\[
    \mathscr{C} = \left\{ \operatorname{DTr}[\mathsf{Z}, \mathsf{F}] \,\middle|\, \emptyset \subsetneq \mathsf{F} \subsetneq \mathsf{Z},\, \operatorname{codim}(\mathsf{F}) > 1 \right\} \cup \mathscr{T}
\]
is a polytopal basis.
Write $\mathsf{Z}=\mathsf{Z}(\mathsf E)$. By \eqref{eq:zonotope_strong_truncation}, the zonotope factor $\mathsf{Z}(\mathsf E \setminus \mathsf U)$ in the decomposition of $\operatorname{DTr}[\mathsf{Z}, \mathsf{F}]$ is a deformation of $\mathsf{Z}$, so its support function is generated by elements of $\mathscr{T}$.
Hence the elements of $\mathscr{G}$ generate those of $\mathscr{C}$, and since $\mathscr{G}$ and $\mathscr{C}$ have the same cardinality, $\mathscr{G}$ is also a basis.  For the integrality statement, first observe that $\mathsf{Z}(\mathsf E \setminus \mathsf U)$ is integral.  If $\mathscr{T}$ is an integral basis, it integrally generates $\mathsf{Z}(\mathsf E \setminus \mathsf U)$, hence passing to the set of Gherkins is a unimodular change of basis.
\end{proof}

The following figure illustrates why we cannot omit $\mathscr{T}$ in \Cref{thm:primitive_delzant_zonotope}, and instead include the facet Gherkins.

\begin{figure}[ht]
    \centering
    \includegraphics[width=.55\textwidth]{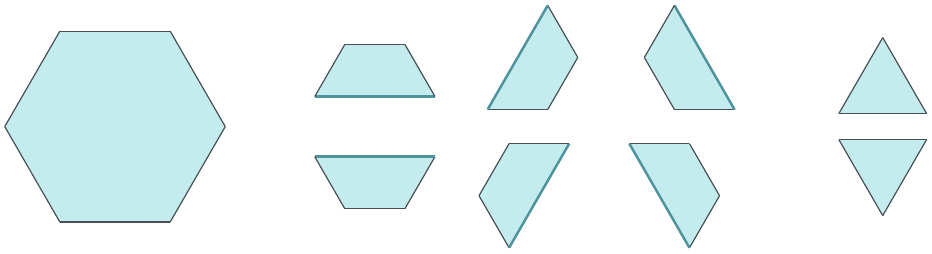}
    \caption{The hexagonal permutahedron (left), its six deep facet truncations (center), and the two translation classes of their Gherkin factors (right).  Each triangular Gherkin occurs three times up to translation.  Thus, although the permutahedron together with its deep facet truncations forms a basis, replacing those truncations by their Gherkin factors leaves only the triangle and its negative, which do not span the deformation space of the hexagon.}
    \label{fig:hexagonal-permutahedron-facet-gherkins}
\end{figure}
\begin{remark}\label{rmk:cubeisboth}
The standard hypercube is both a primitive Delzant zonotope and a product of standard simplices.  The two bases we produce in \Cref{thm:basis_polysimplex_deep,thm:primitive_delzant_zonotope} agree in this case.
\end{remark}

\subsection{The 2-permutahedron}

Previous works of Gaiffi \cite{gaiffi2015permutonestohedra} and Castillo--Liu \cite{castillo2022deformation} investigated polytopes whose normal fan is the barycentric subdivision of the braid arrangement, which they called the permutopermutohedron and nested permutahedron, respectively.  These are normally equivalent to an omnitruncation of the standard permutahedron.  We propose defining a $2$-permutahedron to be an $S_n$-invariant polytope whose normal fan is the $2$-braid fan.  In this section we apply our techniques to describe an indecomposable polytopal basis for the deformation cones of such polytopes.  This was one of the primary motivations for this work.

Recall that the standard permutahedron is a primitive Delzant zonotope that admits the following combinatorial description:
\begin{equation}\label{eq:permutohedron}
    \Pi_n \coloneqq \sum_{1 \leq i < j \leq n} [e_i, e_j]
    = \operatorname{Conv}\left\{ (\pi(1), \dots, \pi(n)) \,\middle|\, \pi \colon [n] \rightarrow \left\{ 0, 1, \dots, n-1 \right\} \text{ bijective}\right\} \subseteq \mathbb{R}^n.
\end{equation}
Nonempty faces are in bijection with ordered set partitions of $[n]$, i.e., tuples $\mathcal{S} = (S_1, \dots, S_k)$ of pairwise disjoint nonempty subsets of $[n]$ whose union is $[n]$.
For each ordered set partition $\mathcal{S}$ there is a corresponding face $\mathsf{F}(\mathcal{S}) \subseteq \Pi_n$.

We define, for every ordered set partition $\mathcal{S}$ with at least two blocks, the indecomposable polytope
\begin{equation}\label{eq:nested_pieces}
    \mathsf{R}(\mathcal{S}) \coloneqq \mathsf{G}(\Pi_n, \mathsf{F}(\mathcal{S})).
\end{equation}

We now describe these polytopes more explicitly.
Let $\mathcal{S} = (S_1, \dots, S_k)$ with $k\geq2$ be an ordered set partition of $[n]$, and let $\mathscr{G}$ be the directed graph with vertex set $[n]$ and a directed edge from $a$ to $b$ whenever $a \in S_i$ and $b \in S_{i+1}$ for some $i$.
The vectors $\{e_b - e_a \mid (a,b) \text{ edge of } \mathscr{G}\}$ are the edge directions connecting a vertex of $\mathsf{F}(\mathcal{S})$ to a vertex outside the face.
The Faberg\'e zonotope is
\begin{equation}
    \mathsf{Z}(\mathcal{S}) = \sum_{(a,b) \text{ edge of } \mathscr{G}} [\mathbf{0},\, e_b - e_a].
\end{equation}
In \Cref{subsec:pemplatesrootpoly} we relate this construction to other objects in the literature. 
Observe that
\[
    \mathsf{R}(\mathcal{S}) =\operatorname{Tr}^{u_{\mathsf F(\mathcal S)}}_{1}\!\left[\mathsf Z(\mathcal S),\mathbf 0\right].
\]

The normal fan of $\Pi_n$ is the braid fan $\mathcal{B}^1_n$ (see \Cref{ex:braid_one}).
We let $\Pi^2_n$ be a 2-permutahedron, and denote its normal fan by $\mathcal{B}^2_n$; the latter is the barycentric subdivision of $\mathcal{B}^1_n$ (see \Cref{ex:k_braid}).  It follows from \Cref{cor:indecomposable_factor_zonotopes} that the polytopes $\mathsf{R}(\mathcal{S})$, for $\mathcal{S}$ an ordered set partition of $[n]$, are deformations of $\Pi^2_n$.

\begin{figure}[p]
    \centering
    \captionsetup{font=small}
    \includegraphics[width=.70\textwidth]{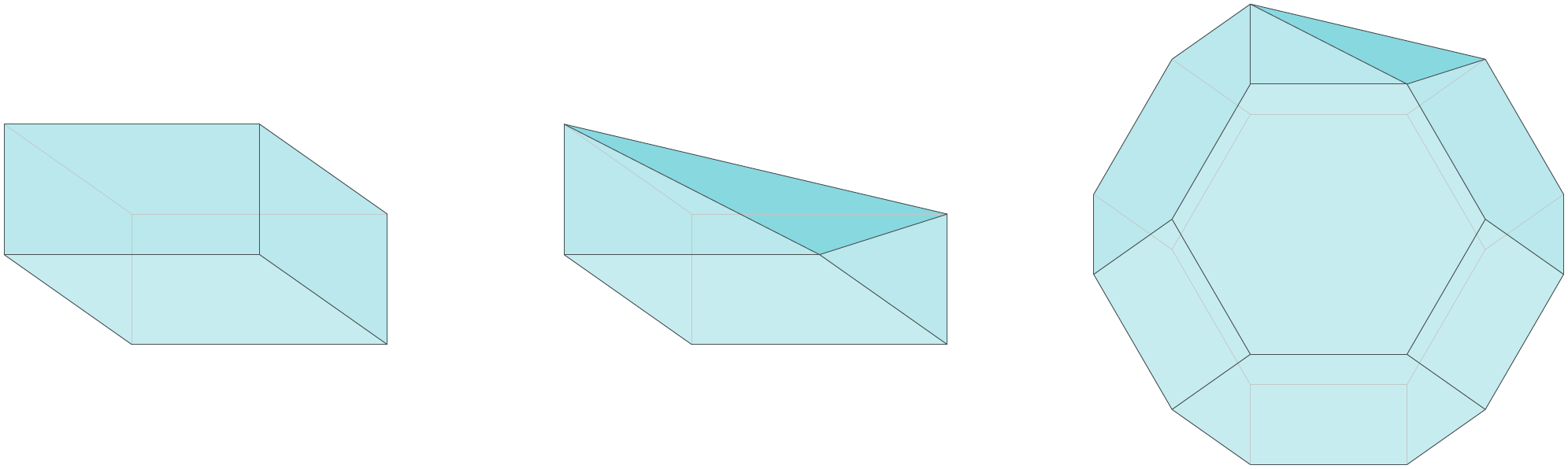}\\[.9em]
    \includegraphics[width=.70\textwidth]{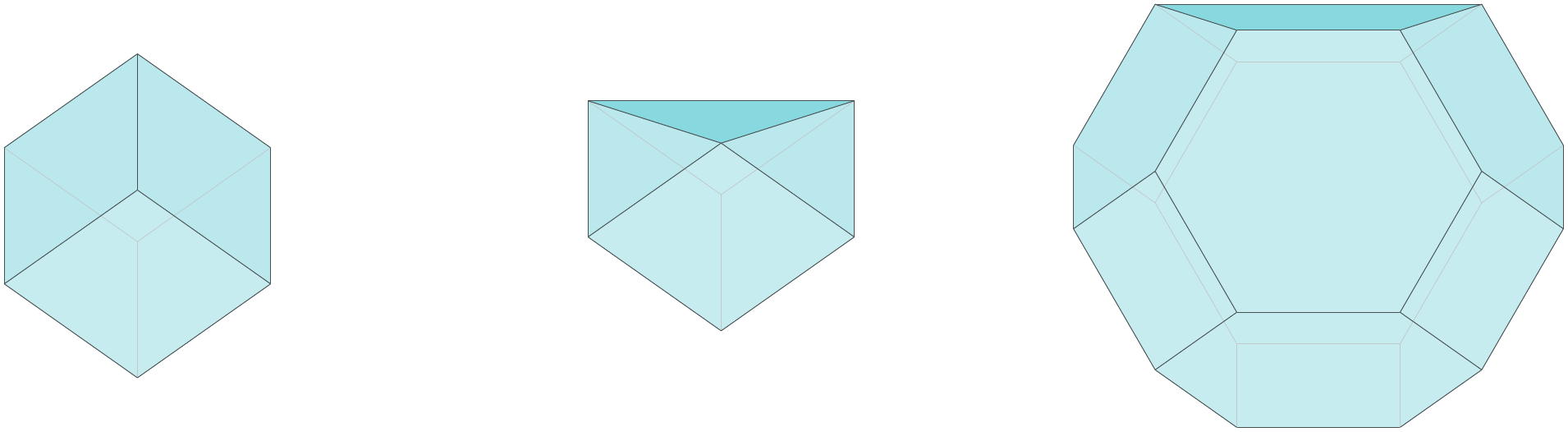}\\[.9em]
    \includegraphics[width=.70\textwidth]{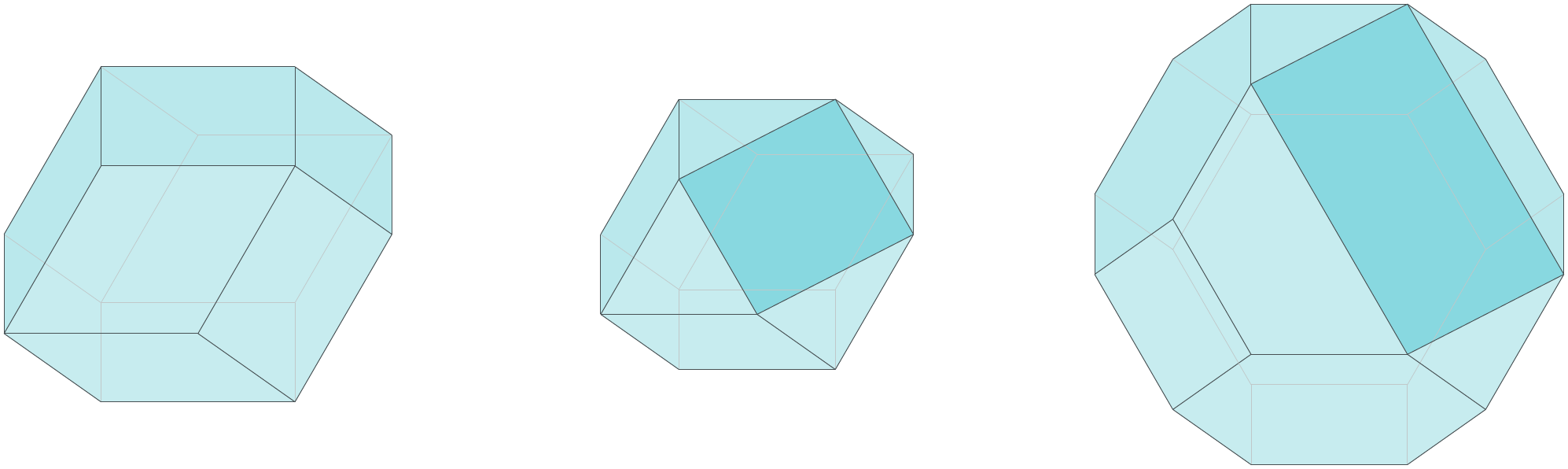}\\[.9em]
    \includegraphics[width=.70\textwidth]{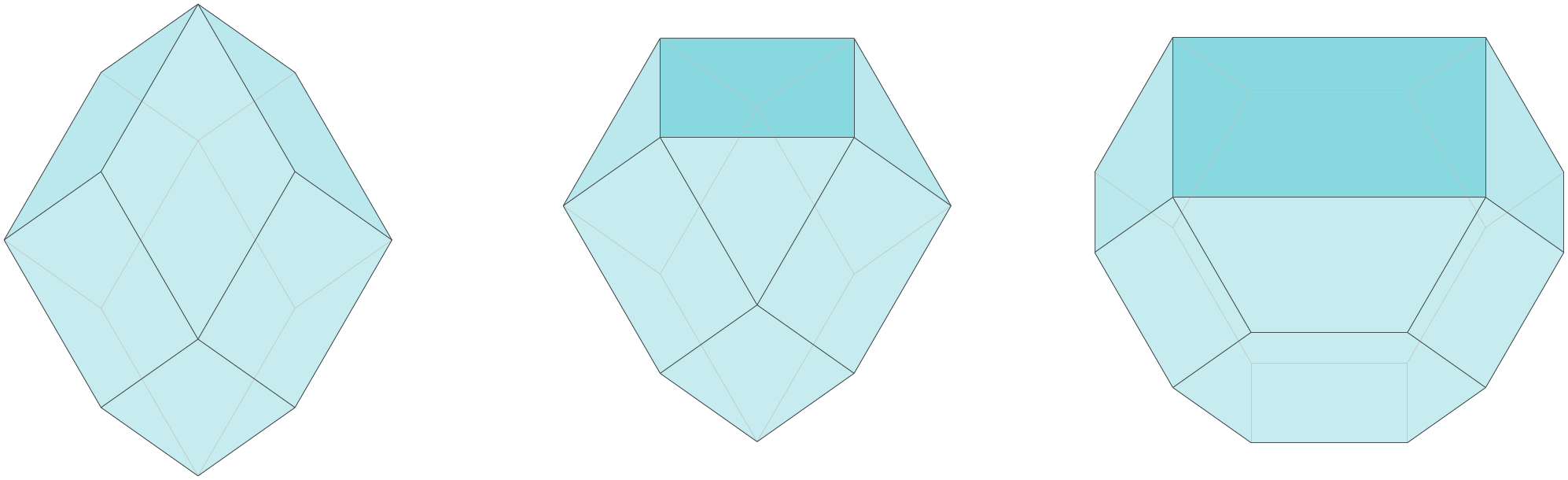}\\[.05em]
    \includegraphics[width=.70\textwidth]{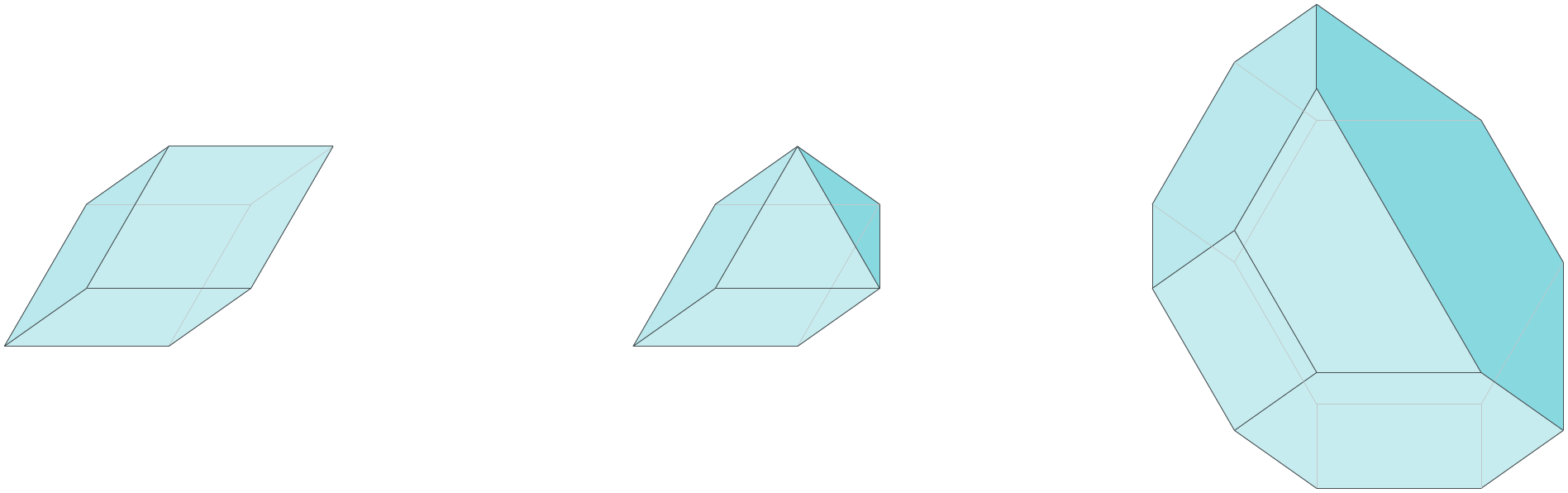}
    \caption{For each row, the first column shows the Faberg\'e egg, the second column shows the associated Gherkin, and the third column shows the corresponding truncated permutahedron.  The rows correspond respectively to truncating a vertex, an edge, the other type of edge, a square facet, and a hexagonal facet of the permutahedron.}
    \label{fig:permutahedron-gherkin-face-types}
\end{figure}

\begin{figure}[htbp]
    \centering
    \begin{minipage}{.25\textwidth}
        \centering
        \includegraphics[width=\linewidth]{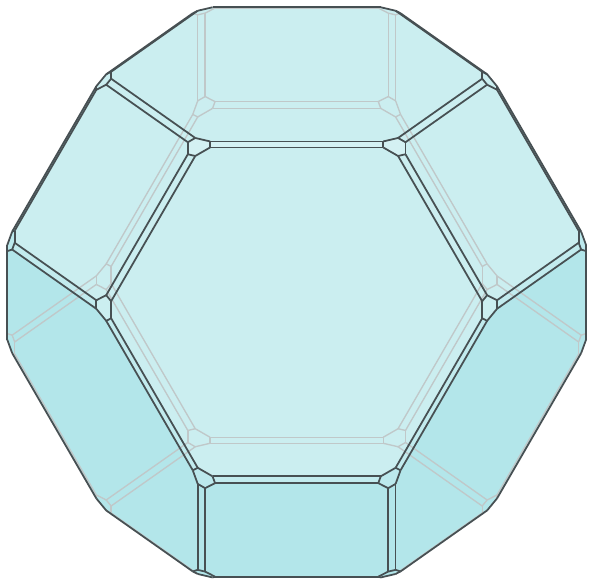}
    \end{minipage}
    \hfill
    \begin{minipage}{.25\textwidth}
        \centering
        \includegraphics[width=\linewidth]{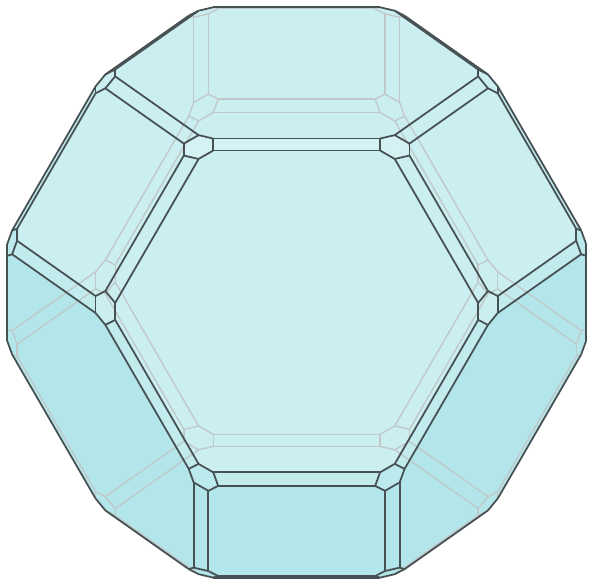}
    \end{minipage}
    \hfill
    \begin{minipage}{.25\textwidth}
        \centering
        \includegraphics[width=\linewidth]{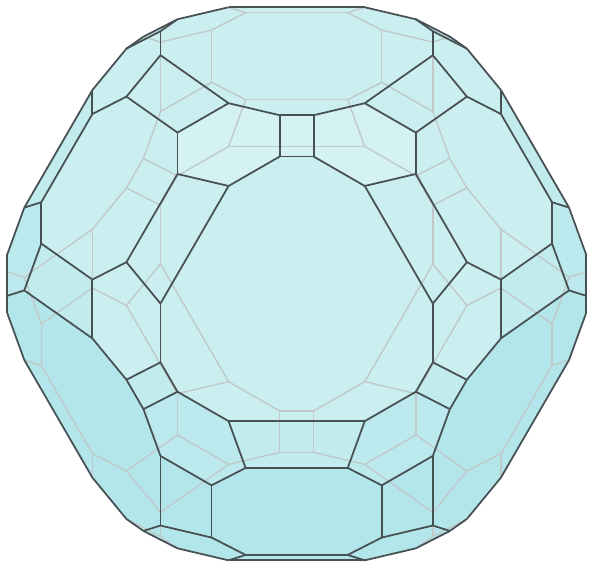}
    \end{minipage}
    \caption{The Minkowski sum of all Gherkins, the Minkowski sum of only the edge Gherkins (see \Cref{rmk:sumofridgetruncs}), and the Castillo--Liu realization of a 2-permutahedron.  All three polytopes are normally equivalent.}
    \label{fig:permutahedron-gherkin-sums}
    \label{fig:castillo-liu-nested-permutahedron-permutohedron-section}
\end{figure}

Let $P_{\mathrm{CL}}$ denote the Castillo--Liu realization of the $2$-permutahedron shown in \Cref{fig:castillo-liu-nested-permutahedron-permutohedron-section}.
For $I\subseteq [4]$, write $\Delta_I=\operatorname{Conv}\{e_i\mid i\in I\}$.
The corresponding signed Minkowski decomposition in the Gherkin basis is the following support-function identity:
\begin{equation}\label{eq:castillo-liu-nested-permutahedron-gherkin-decomposition}
\begin{aligned}
h_{P_{\mathrm{CL}}}
={}&
\sum_{\substack{A\subset[4],\, |A|=2\\ [4]\setminus A=\{i,j\}}}
\Big(
h_{\mathsf{R}(A|i|j)}
+h_{\mathsf{R}(A|j|i)}
+h_{\mathsf{R}(i|A|j)}
+h_{\mathsf{R}(j|A|i)}
+h_{\mathsf{R}(i|j|A)}
+h_{\mathsf{R}(j|i|A)}
\Big) \\
&+
31\sum_{i\in[4]}h_{\Delta_i}
-13\sum_{\substack{I\subset[4]\\ |I|=2}}h_{\Delta_I}
-3\sum_{\substack{I\subset[4]\\ |I|=3}}h_{\Delta_I}
+6h_{\Delta_{[4]}}.
\end{aligned}
\end{equation}
Here the first summation ranges over unordered two-element subsets $A\subset[4]$; if $[4]\setminus A=\{i,j\}$, then both orders of the complementary singleton blocks are included explicitly.
Equivalently, this realization is obtained from the sum of the $36$ edge-type Gherkins together with the simplicial correction appearing in \eqref{eq:castillo-liu-nested-permutahedron-gherkin-decomposition}.

To apply \Cref{thm:primitive_delzant_zonotope} to the permutohedron we use the simplicial basis (see \Cref{ex:postnikov_basis}) for its deformation space.
This gives the following result. 

\begin{theorem}\label{thm:basis_nested}
The set
\[
	\left\{ \mathsf{R}(\mathcal{S}) \,\middle|\, \mathcal{S} \text{ an ordered set partition of $[n]$ of size at least three } \right\}
    \cup
    \left\{ \Delta_I \,\middle|\, \emptyset \subsetneq I \subseteq [n] \right\}
\]
is an indecomposable polytopal basis for $\operatorname{PL}(\mathcal{B}^2_n)$.
\end{theorem}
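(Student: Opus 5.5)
This is a direct specialization of \Cref{thm:primitive_delzant_zonotope} with seed $\mathsf Z=\Pi_n$ and $\mathscr T$ the simplicial basis. The plan is to check the hypotheses, identify the faces of codimension greater than one, and then handle the lineality issue.

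First, the hypotheses. The permutahedron $\Pi_n=\sum_{i<j}[e_i,e_j]$ is a zonotope with integral generators. It is primitive Delzant with respect to the lattice $N=\mathbb Z^n\cap\{\sum x_i=\text{const}\}$ directions: its edges are the primitive vectors $e_i-e_j$, and at each vertex the $n-1$ edge directions $e_{\pi(k+1)}-e_{\pi(k)}$ form a lattice basis. \Cref{thm:primitive_delzant_zonotope} is phrased for full-dimensional seeds, so I work in the affine hull and pass to pointed representatives via \Cref{prop:eta}. The lineality $\mathsf L=\operatorname{Span}\{e_{[n]}\}$ is common to $\mathcal B^1_n$ and $\mathcal B^2_n$ and contributes one extra dimension.

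For $\mathscr T$ I take the simplicial basis $\{\Delta_I\mid \emptyset\subsetneq I\subseteq[n]\}$ of \Cref{ex:postnikov_basis}. It is a basis of $\operatorname{PL}(\mathcal B^1_n)$ including the lineality coordinate; this is the codimension-one version, \Cref{prop:key_non_full}, with $\Delta([n])$ supplying $h$ on $e_{[n]}$. It is integral, and its elements are indecomposable since simplices are indecomposable.

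The argument in the proof of \Cref{thm:primitive_delzant_zonotope} goes through verbatim. By \Cref{cor:basis_augmented_barycentric} (which does not need full-dimensionality), $\{\operatorname{DTr}[\Pi_n,\mathsf F]\mid\operatorname{codim}\mathsf F>1\}\cup\mathscr T$ is a basis of $\operatorname{PL}(\mathcal B^2_n)$. By \Cref{cor:indecomposable_factor_zonotopes}, each such deep truncation equals $\mathsf G(\Pi_n,\mathsf F)+\mathsf Z(\mathsf E\setminus\mathsf U)$. The zonotope summand is a generalized permutahedron, hence lies in the span of $\mathscr T$. Therefore the Gherkins together with $\mathscr T$ span, and a cardinality count gives a basis. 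Indecomposability of the Gherkins is \Cref{thm:gherkin}.

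Finally, the indexing. Faces $\mathsf F(\mathcal S)$ of $\Pi_n$ have dimension $n-k$, where $k$ is the number of blocks of $\mathcal S$. Since $\dim\Pi_n=n-1$, we have $\operatorname{codim}=k-1$, so $\operatorname{codim}>1$ is exactly $k\ge 3$. The Gherkins for those faces are by definition $\mathsf R(\mathcal S)$.

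The step I expect to require care is the non-full-dimensional bookkeeping: making sure that $\mathscr T$ may be taken to include $\Delta_{[n]}$, i.e. that it spans the lineality part so the counts match. Concretely, the dimension of $\operatorname{PL}(\mathcal B^2_n)$ is $1$ plus the number of proper faces of $\Pi_n$. The count of $\mathscr T$ is $2^n-1$, which equals $1$ plus the number of facets ($2^n-2$). So the counts balance, and I would verify this explicitly.
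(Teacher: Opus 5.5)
Your proposal is correct and takes the same route as the paper, which proves this theorem simply by applying \Cref{thm:primitive_delzant_zonotope} to $\Pi_n$ with $\mathscr T$ the simplicial basis of \Cref{ex:postnikov_basis}. Your additional checks fill in details that the paper's one-line application leaves implicit: a $k$-block face has codimension $k-1$, the lineality is handled by including $\Delta_{[n]}$, and the count $2^n-1=1+(2^n-2)$ balances.
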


The ordered set partitions of size two correspond precisely to facets, which are not included in the basis of \Cref{thm:basis_nested}.

\begin{example}\label{ex:braid_fragment}
In contrast to the case of the product of simplices, the fragments of the permutohedron are \emph{not} always deformations of $\Pi^2_n$.   Consider the vertex $(0,1,2,3,4) \in \Pi_5$.
Its fragment has vertex set given by the columns of
\begin{equation}\label{eq:fragment_counterexample}
    \operatorname{ConvexHull}
    \begin{bmatrix}
        0 & 1 & 0 & 0 & 0 \\
        1 & 0 & 2 & 1 & 1 \\
        2 & 2 & 1 & 3 & 2 \\
        3 & 3 & 3 & 2 & 4 \\
        4 & 4 & 4 & 4 & 3
    \end{bmatrix}.
\end{equation}
Full-dimensional cones of the normal fan of $\Pi^2_n$ are described in \cite[Section 4]{castillo2022deformation}.
In particular, the following is a full-dimensional normal cone:
\begin{equation}
    \sigma = \left\{ x \in \mathbb{R}^5 \,\middle|\,
        \begin{array}{c}
            x_2 \geq x_4 \geq x_1 \geq x_5 \geq x_3 \\
            x_2 - x_4 \geq x_4 - x_1 \geq x_1 - x_5 \geq x_5 - x_3
        \end{array}
    \right\}.
\end{equation}
The vector $(3,10,0,6,1) \in \sigma$ attains its minimum over the vertices of \eqref{eq:fragment_counterexample} at the second column, whereas 
$(7,18,0,12,3) \in \sigma$ attains its minimum at the fourth column.
Hence $\sigma$ is not contained in any single normal cone of the fragment, so the fragment is not a deformation of $\Pi^2_5$.
\end{example}

\begin{remark}\label{rmk:graphicalzonotopes}
Graphical zonotopes associated to simple graphs have primitive edges, and if they are simple, they are smooth. Padrol--Pilaud--Poullot \cite{padrol2025deformed} proved that certain collections of standard simplices form a basis for the deformation cones of graphical zonotopes.  Thus, one might hope to use their work to construct bases for deformation cones of omnitruncations of primitive Delzant graphical zonotopes.  However, it was shown by Postnikov--Reiner--Williams \cite[Proposition~5.2]{postnikov2008faces} that graphical zonotopes are simple if and only if the blocks in their block decompositions are complete graphs.  This implies that any simple graphical zonotope associated to a simple graph is unimodularly equivalent to a product of permutahedra, where one has a simplicial basis in each factor.   Here our construction applies and gives a basis for the deformation cone of the omnitruncation of a product of permutahedra.

\end{remark}

\section{Connections and applications}\label{applicationssection}

The main motivation of the present paper was to provide a systematic framework for Minkowski decompositions of polytopes whose normal fans arise from barycentric subdivisions of other fans.
In this section we illustrate the broad scope of this approach via connections and applications to other works in the literature.

\subsection{Archimedean solids}\label{subsec:archimedean}
The Archimedean solids are a collection of 13 different 3-dimensional polytopes  which are vertex-transitive and whose facets are regular polygons.  They naturally extend the Platonic solids by allowing distinct facets.  It can be shown that 11 of the 13 Archimedean solids (all but the snub dodecahedron and the snub cube) live in the deformation cones of the $A_3$, $B_3$ and $H_3$ arrangements.  These arrangements are the normal fans of omnitruncations of the standard simplex, the cube, and the dodecahedron respectively.  Therefore, these Platonic and Archimedean solids can be expressed in our truncation bases.

In this section we focus on the case of $H_3$, which is the most novel aspect of our contribution to this topic.  We observe that in this case, the deep truncations are all flat, despite the fact that the dodecahedron is not smooth (although it is simple).  We calculate that, in addition to the flat truncations, the corresponding fragments live in the deformation cone of the $H_3$-Coxeter arrangement, and also form a basis.  Furthermore, by subtracting the distinguished faces of these fragments, we identify an indecomposable basis of pyramids.  This is like a fragment version of the factoring out of faces in our Gherkin construction.  We then provide some explicit decompositions of the $H_3$-Archimedean solids in our bases.

\begin{figure}[ht]
    \centering
    \includegraphics[width=.95\textwidth]{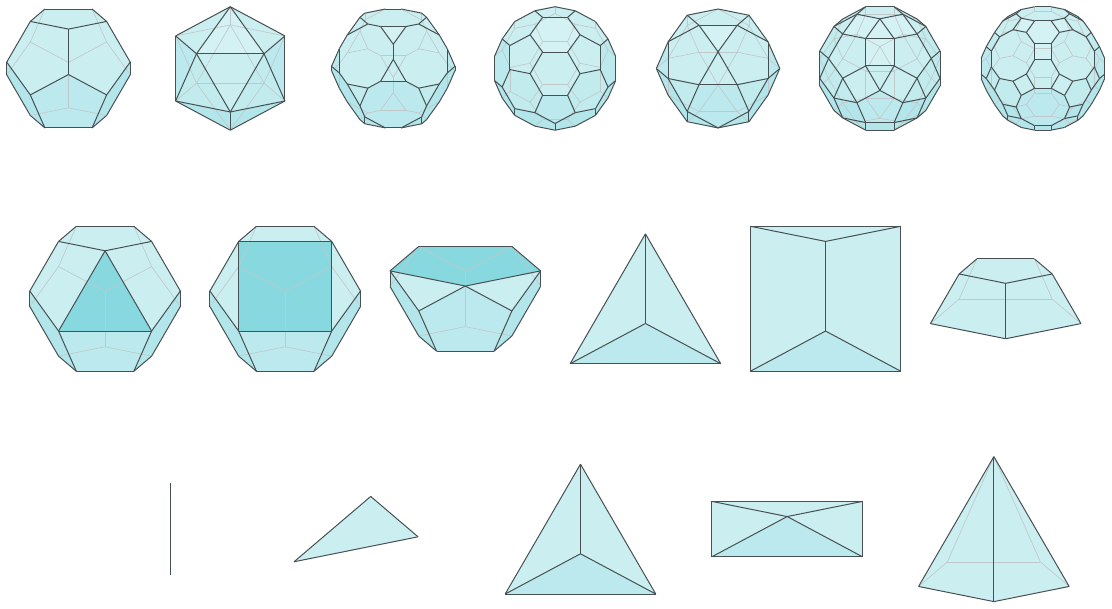}
    \caption{First row from left to right: dodecahedron, icosahedron, truncated dodecahedron, truncated icosahedron, icosidodecahedron, rhombicosidodecahedron, truncated icosidodecahedron.  Second row: representative vertex, edge, and facet truncations of the dodecahedron, and the corresponding fragments.  Third row: representative polytopes from the extremal rays of the nef cones of the fragments.  The $H_3$-orbits of the three on the right form an indecomposable polytopal basis for the deformation cone of the $H_3$ Coxeter arrangement. }
    \label{fig:dodecahedron-truncated-icosidodecahedron}
\end{figure}

The $H_3$ Coxeter zonotope is the truncated icosidodecahedron which, despite its name, is not obtained as a truncation of the icosidodecahedron, which is itself obtained by truncating the dodecahedron (in a non-shallow way).  The truncated icosidodecahedron is, however, an omnitruncation of the dodecahedron.  Interestingly, this omnitruncation cannot be realized as a positive Minkowski sum of shallow truncations.  We give here its decomposition as a signed sum of elements of our deep truncation basis.  Let $\mathsf P$ be the dodecahedron with presentation

\[
\mathsf P =
\left\{
(x,y,z)\in\mathbb R^3 \,\middle|\,
\begin{array}{rcl}
\pm x \pm \varphi z &\geq& -5\varphi,\\
\pm \varphi x \pm y &\geq& -5\varphi,\\
\pm \varphi y \pm z &\geq& -5\varphi.
\end{array}
\right\},
\]

where $\varphi$ is the golden ratio.  
Let $\mathsf Q$ be the Archimedean truncated icosidodecahedron whose
decagonal facet hyperplanes are the facet hyperplanes of $\mathsf P$.  Normalize the facet normals of
$\mathsf{P}$ so that $h_{\mathsf{P}}(u_{\mathsf{F}})=-1$ for every
facet $\mathsf{F}$ of $\mathsf{P}$.  With this normalization, the deep
truncation parameter is
\[
\delta = 3-\sqrt{5}
\]
for every proper face of $\mathsf{P}$.

The polytope $\mathsf{Q}$ is obtained from $\mathsf{P}$ by first
truncating all vertices at depth $3\delta/5$, and then truncating all
original edges at depth $\delta/5$.  In the deep truncation basis, we have that
\[
h_{\mathsf{Q}}
=
h_{\mathsf{P}}
+
\frac{1}{5}
\sum_{\mathsf{E}\in \operatorname{Edges}(\mathsf{P})}
\left(
h_{\operatorname{DTr}[\mathsf{P},\mathsf{E}]}
-
h_{\mathsf{P}}
\right).
\]
Since $\mathsf{P}$ has $30$ edges, this expands to the signed
Minkowski identity
\[
h_{\mathsf{Q}}
=
\frac{1}{5}
\sum_{\mathsf{E}\in \operatorname{Edges}(\mathsf{P})}
h_{\operatorname{DTr}[\mathsf{P},\mathsf{E}]}
-
5h_{\mathsf{P}}.
\]

This shows that, although $\mathsf{Q}$ is an omnitruncation of $\mathsf{P}$, its expression in the deep truncation
basis has a negative coefficient for $\mathsf{P}$.

Let $\varphi=(1+\sqrt{5})/2$, and choose the numbering of the
fundamental weights of $H_3$ so that
\[
\mathsf D=P_{H_3}(\omega_1),\qquad
\mathsf M=P_{H_3}(\omega_2),\qquad
\mathsf I=P_{H_3}(\omega_3)
\]
are the dodecahedron, icosidodecahedron, and icosahedron,
respectively.  We use the common Wythoff normalization in which their
edges have length $\sqrt{2}$.  Minkowski-linearity of weight polytopes
gives
\[
\mathsf{tD}=\mathsf D+\mathsf M,\qquad
\mathsf{tI}=\mathsf M+\mathsf I,
\]
and
\[
\mathsf R=\mathsf D+\mathsf I,\qquad
\mathsf Q=\mathsf D+\mathsf M+\mathsf I;
\]
see \cite[Proposition~6.4, Remark~8.6, and
Figure~5]{ardila2020coxeter}.  It therefore suffices to calculate the
decompositions of $\mathsf D$, $\mathsf M$, and $\mathsf I$.

For $d\in\{0,1,2\}$, define the orbit sums
\[
\begin{aligned}
\mathsf T_d
&=
\sum_{\dim\mathsf F=d}
h_{\operatorname{DTr}[\mathsf D,\mathsf F]},\\
\mathsf F_d
&=
\sum_{\dim\mathsf F=d}
h_{\operatorname{Fr}[\mathsf D,\mathsf F]},\\
\mathsf K_d
&=
\sum_{\dim\mathsf F=d}h_{\Pi_{\mathsf F}},
\end{aligned}
\]
where $\Pi_{\mathsf F}$ denotes the indecomposable pyramid determined, up to translation, by
$\operatorname{Fr}[\mathsf D,\mathsf F]=\mathsf F+\Pi_{\mathsf F}.$  For $\mathsf X\in\{\mathsf T,\mathsf F,\mathsf K\}$, write
\[
[a,b,c]_{\mathsf X}
=a\mathsf X_0+b\mathsf X_1+c\mathsf X_2.
\]
The decompositions in the deep-truncation, fragment, and indecomposable
pyramid bases are as follows.
\begin{center}
\small
\renewcommand{\arraystretch}{1.6}
\setlength{\tabcolsep}{4pt}
\begin{tabular}{c|c|c|c}
 & Deep truncations & Fragments & Indecomposable pyramids \\ \hline
$h_{\mathsf D}$
& $\left[0,0,\dfrac{5-\varphi}{38}\right]_{\mathsf T}$
& $\left[0,\dfrac{3\varphi-2}{11},\dfrac{6-9\varphi}{22}\right]_{\mathsf F}$
& $\left[0,\dfrac{\varphi-1}{2},\dfrac{2-3\varphi}{4}\right]_{\mathsf K}$ \\
$h_{\mathsf M}$
& $\left[\varphi-1,0,\dfrac{9(6-5\varphi)}{19}\right]_{\mathsf T}$
& $\left[1-\varphi,\dfrac{10\varphi-14}{11},\dfrac{10-4\varphi}{11}\right]_{\mathsf F}$
& $\left[1-\varphi,0,\varphi\right]_{\mathsf K}$ \\
$h_{\mathsf I}$
& $\left[1-\varphi,\varphi-1,\dfrac{37-34\varphi}{38}\right]_{\mathsf T}$
& $\left[\varphi-1,\dfrac{4-6\varphi}{11},\dfrac{7\varphi-1}{22}\right]_{\mathsf F}$
& $\left[\varphi-1,-\dfrac12,\dfrac{3-\varphi}{4}\right]_{\mathsf K}$
\end{tabular}
\end{center}
The decompositions of the other four Archimedean solids of type $H_3$ follow
immediately from the preceding Minkowski identities.

\begin{remark}\label{rmk:regularpoly}
If $\mathsf{S}$ is a regular polytope with symmetry group $W$, then the rays of $\operatorname{Bar}(\Sigma(\mathsf{S}))$ are the rays fixed by the stabilizers of the faces of $\mathsf{S}$, so $\operatorname{Bar}(\Sigma(\mathsf{S}))$ is the Coxeter fan of $W$ \cite{humphreys1990reflection,abramenko2008buildings}.
Since $\mathsf{S}$ and its dual $\mathsf{S}^{*}$ have the same symmetry group, their omnitruncations have the same normal fan, and both are normally equivalent to $W$-permutohedra \cite{coxeter1973regular,hohlweg2012permutahedra}.
\end{remark}

The only regular simple polytopes are the polygons, the regular simplex and hypercube in every dimension, the dodecahedron in dimension 3, and the 120-cell in dimension 4. It would be interesting to carry out a similar analysis for the 120-cell, which is the only remaining case.

\subsection{Permutahedral plates and root polytopes}\label{subsec:pemplatesrootpoly}
Permutahedral plates are the tangent cones of the permutahedron. 
The terminology of plates is attributed by Early to private communication of Ocneanu \cite{early2018canonical}, and is also used in Ardila-Sanchez \cite{ardila2021valuations}.  We observe, perhaps for the first time, that the primitive ray generators for a fixed permutahedral plate lie on a common hyperplane.  If we cut the permutahedral plate with this hyperplane, we obtain a codimension 1 polytope, and this polytope is precisely the distinguished facet $F$ of the Gherkins introduced by the truncation of the corresponding Faberg\'e egg.

A type-$A$ root polytope is the convex hull of a collection of type-$A$ roots \cite{gelfand1996combinatorics,postnikov2009permutohedra,meszaros2011root}.  One popular class of root polytopes considered consists of those associated to directed bipartite graphs with a positive root taken for each directed edge; see \cite{kalman2017root,kalman2022root}.  We observe that the distinguished facet $F$ of the Gherkins introduced by the truncation is a root polytope associated to a chain of complete directed bipartite graphs (see Figure \ref{fig:plate-root-polytope-cone-slices}).  These root polytopes, and more generally root polytopes associated to Hasse diagrams of ranked posets, have been investigated by K\'alm\'an--T\'othm\'er\'esz, Numata--Takahashi--Tamaki, and
Rietsch--Williams
\cite{kalman2022root,numata2024faces,rietsch2025root}.

The classical root polytope is the convex hull of all type-$A$ roots.  Its facets correspond to complete bipartite graphs, equivalently bipartitions of $[n]$.  It is a deformation of the standard permutahedron having the same set of facet normals.  We were drawn to wonder whether these root polytopes associated to chains of complete bipartite graphs, i.e. ordered set partitions, could form the facets of a deformation of the 2-permutahedron having the same set of facet normals.  Such a polytope might naturally be considered as a 2-version of the root polytope.  We used GPT to investigate this question and found that the answer is ``no, but yes''.  

Unlike in the case of the classical root polytope, there is some ambiguity about how to abstractly glue the root polytopes associated to ordered set partitions to form the boundary of a polytope.  However, we have found that the following choice is quite natural.

\begin{definition}
Let $T=S_1|\cdots|S_k$ be an ordered set partition of $[n]$  with $k\geq 2$, and let
\[
\mathsf{R}_T
=
\operatorname{conv}
\left\{
\mathbf e_j-\mathbf e_i
:
i\in S_a,\ j\in S_{a+1},\ 1\leq a<k
\right\}.
\]
  An \textbf{admissible refinement} of $T$ is an ordered set
partition obtained by replacing exactly one block $S_a$ by two
consecutive nonempty blocks $C|D$, where $S_a=C\sqcup D$, subject to the following initial and terminal restrictions:
\begin{enumerate}
    \item if $a=1$, then $|C|=1$;
    \item if $a=k$, then $|D|=1$.
\end{enumerate}
Glue a pair of root polytopes $\mathsf{R}_T$ and $\mathsf{R}_{T'}$ along their common facet whenever one is an admissible refinement of the other.  We define the corresponding abstract polyhedral complex to be the \textbf{2-root complex} on $[n]$. 
\end{definition}

The following theorem will be established in future work.

\begin{theorem}\label{thm:2root}
For each $n\geq 2$, the $2$-root complex on $[n]$ admits a realization as the boundary of a convex polytope $\mathsf{P}$, which we call the 2-root polytope.
\end{theorem}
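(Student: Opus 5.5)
I would realize the $2$-root complex as a polytope whose facet normals are the ray generators $u_{\mathcal F}$ of ${}^{\wedge}\mathcal{B}^{2}_{n}$ attached to the Gherkin faces $\mathsf{R}(\mathcal S)$. The natural candidate is obtained by assigning each ordered set partition $T$ the ray $\rho_T$ of $\mathcal{B}^2_n$ corresponding to the face $\mathsf F(T)$ of $\Pi_n$. The observation in \Cref{subsec:pemplatesrootpoly} says that the primitive ray generators of the permutahedral plate at $\mathsf F(T)$ lie on a common hyperplane, and that the slice is $\mathsf R_T$. So the facet $\mathsf R_T$ should lie in a hyperplane $\{x : w_T^{\intercal}x = c_T\}$, where $w_T$ is the appropriately signed normal to the plane of the plate generators. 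I would set
\[
\mathsf P=\left\{x\in\mathbb R^n \,\middle|\, \textstyle\sum_i x_i=0,\ w_T^{\intercal}x\leq c_T \text{ for all } T\right\},
\]
with $c_T$ chosen so that $\mathsf R_T$ (or a translate) sits exactly on its hyperplane. Since the $\mathsf R_T$ have integer vertices $\mathbf e_j-\mathbf e_i$, the first attempt is to use $\mathsf R_T$ itself, untranslated, so that every vertex of the complex is a root.

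The key steps, in order, are as follows.

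First, compute $w_T$ explicitly from the ordered set partition. I expect $w_T$ to take the value $a$ on block $S_a$ up to shift, so that $w_T^{\intercal}(\mathbf e_j-\mathbf e_i)=1$ for consecutive blocks. Then check the key inequality: for every root $\mathbf e_j-\mathbf e_i$ appearing in the complex, $w_T^{\intercal}(\mathbf e_j-\mathbf e_i)\leq 1$, with equality exactly on the vertices of $\mathsf R_T$. This would show that each $\mathsf R_T$ is a face of $\mathsf P':=\operatorname{conv}$ of all vertices of the complex.

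Second, identify the codimension-two faces. Two facets $\mathsf R_T,\mathsf R_{T'}$ should meet in a common facet exactly when one is an admissible refinement of the other. The initial and terminal restrictions $|C|=1$ and $|D|=1$ are precisely what force the intersection to be a facet rather than something smaller. I would verify this by comparing vertex sets: refining $S_a=C\sqcup D$ deletes the edges $C\times D$ internally while keeping adjacency to $S_{a\pm1}$ only partially. The correct version may require that the vertex set be all roots $\mathbf e_j-\mathbf e_i$ with "no jumps", matching the characterization of $u_{\mathcal F}$.

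Third, conclude from a combinatorial argument. Because every ridge of the complex lies in exactly two facets, and the facets cover $\partial\mathsf P'$ (each supporting direction of $\mathsf P'$ is maximized on some $\mathsf R_T$), the complex equals the boundary of $\mathsf P'$.

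I expect the first step to be the main obstacle. The naive untranslated placement probably fails: vertices such as $\mathbf e_j-\mathbf e_i$ with $i,j$ in non-adjacent blocks get value $\geq 2$, and the convex hull of all roots is just the classical root polytope. This is presumably the "no" in "no, but yes". So instead I would allow each $\mathsf R_T$ to be dilated and translated, $\lambda_T\mathsf R_T+t_T$, and choose the parameters compatibly along admissible refinements so that shared facets coincide. This is a linear gluing system indexed by the refinement graph. I would try $\lambda_T$ depending only on the number of blocks, with $t_T$ a multiple of $\sum_a a\,\mathbf e_{S_a}$, and then verify convexity via the local criterion: a dihedral-angle or Batyrev-type inequality across each ridge, in the spirit of \Cref{prop:barycentric_deformation_inequalities}.
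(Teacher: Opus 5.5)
There is a genuine gap, and it is in the fallback you propose for the first step. Placing each facet as $\lambda_T\mathsf R_T+t_T$ is rigid. In any realization of the complex, the placements of $\mathsf R_T$ and of an admissible refinement $\mathsf R_{T'}$ must agree on their common facet $\mathsf G$. For $n\geq 4$ that facet is $(n-3)$-dimensional, so it has two distinct vertices $v\neq v'$. Then $\lambda_T(v-v')=\lambda_{T'}(v-v')$ forces $\lambda_T=\lambda_{T'}$, and hence $t_T=t_{T'}$.

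The facet--ridge graph of a polytope boundary is connected. So if your plan succeeded, every facet would be placed by one and the same homothety, which is the untranslated configuration you already ruled out. Concretely, every root $\mathbf e_j-\mathbf e_i$ is a vertex of $\mathsf R_{\{i\}|[n]\setminus\{i\}}$. The convex hull would therefore be the classical root polytope, whose facets are only the two-block $\mathsf R_T$. Yet the three-block facet $\mathsf R_{1|2|\{3,\ldots,n\}}$ contains $\tfrac12(\mathbf e_3-\mathbf e_1)$, which is a relative interior point of that root polytope. So your ``linear gluing system'' has only the trivial solution, and no choice of $\lambda_T$ (by number of blocks or otherwise) and $t_T$ can work.

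Even for $n=3$ the abstract complex is not the union $\bigcup_T\mathsf R_T\subseteq\mathbb R^n$. It is a $12$-gon in which each root labels two distinct vertices. Your Step~1 inequality also fails, as you noticed: $w_T^{\intercal}(\mathbf e_j-\mathbf e_i)=2$ for $i\in S_a$, $j\in S_{a+2}$.

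The missing idea is that the facets must be placed by genuinely affine, non-homothetic maps. The paper points the same way, although it defers the proof of this theorem to future work, so there is no argument to compare line by line. It states that for $n\geq 4$ the complex is not the boundary of any deformation of the $2$-permutahedron. So a Batyrev-type check on $\mathcal B^2_n$, as in your last step, cannot succeed. It also states that no realization has facets isometric to the $\mathsf R_T$, only affinely equivalent ones. The realization it announces is a deformation of a \emph{skew} $2$-permutahedron: an omnitruncation of $\Pi_n$ along noncentral truncation directions chosen close to the central vectors $u_{\mathsf F}$. Its facet normals are therefore tilted off your $w_T=u_{\mathsf F(T)}$.

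A viable plan would go as follows:
\begin{itemize}
\item fix such a skew simplicial fan, where Batyrev's criterion does apply;
\item choose support values so that each facet is an affine image of $\mathsf R_T$, compatibly with the admissible gluings;
\item verify the primitive-collection inequalities on that fan.
\end{itemize}
Your Step~3 (each ridge lies in exactly two facets and every $\mathsf R_T$ is a facet, hence the complex is the whole boundary) is fine once such an injective placement exists, but that placement is precisely what is missing.
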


For $n\geq 4$, the $2$-root complex on $[n]$ cannot be realized as the boundary of a deformation of the 2-permutahedron.  However, the 2-root polytope we've found is a deformation of a skew 2-permutahedron, meaning an omnitruncation of the standard permutahedron with respect to noncentral truncation directions.  Moreover, these directions can be chosen arbitrarily close to the central ones (those which can yield a 2-permutahedron).  The facets are, individually, affinely equivalent to the corresponding ordered set partition root polytopes.  On the other hand, we have found that for $n\geq 4$ it is not possible to produce a polytopal realization of the 2-root complex where each facet is isometric (not just affinely equivalent) to the corresponding ordered set partition root polytope.

\begin{figure}[ht]
	\centering
	\includegraphics[width=.88\textwidth]{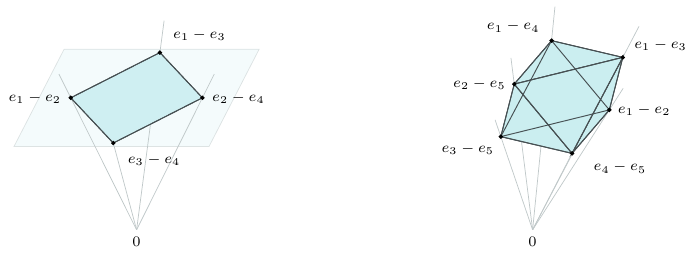}
	\caption{Root polytopes obtained as slices of permutahedral plates.  The primitive ray generators for the plates $1|23|4$ and $1|234|5$ lie in common affine slices, giving respectively a square and an octahedron.}
	\label{fig:plate-root-polytope-cone-slices}
\end{figure}

\begin{figure}[ht]
		\centering
			\includegraphics[width=.65\textwidth]{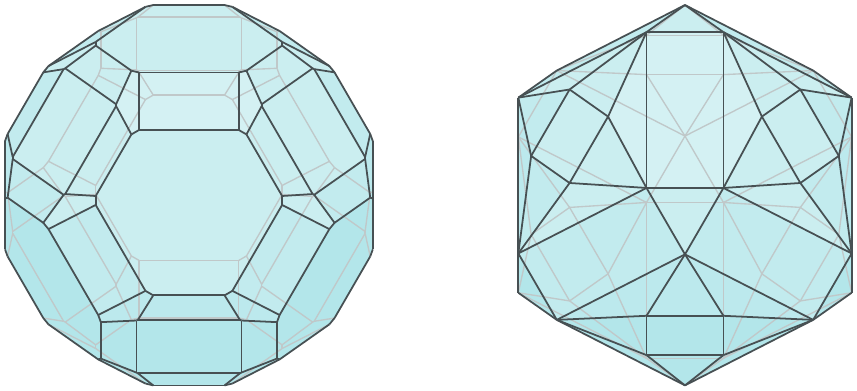}
			\caption{A skew $2$-permutahedron (left) and its corresponding 2-root polytope deformation (right).}
		\label{fig:ordered-set-partition-root-complex}
\end{figure}
\FloatBarrier

\subsection{Deformed 2-permutahedra: the permutonestohedra and simple permutonestohedra}\label{subsec:deformed2perm}

Kapranov introduced the permutoassociahedron as a poset and conjectured that it was realized as the face poset of a polytope \cite{kapranov1993permutoassociahedron}.  This was confirmed by Reiner--Ziegler \cite{reiner1994coxeter}.  Gaiffi introduced permutonestohedra as a generalization of the permutoassociahedra providing a new realization of the latter.  Recently, another realization of the permutoassociahedron was constructed in \cite{castillo2023permuto} which is normally equivalent to Gaiffi's realization.  The permutoassociahedral realizations of Gaiffi and Castillo--Liu are deformations of the 2-permutahedron $\Pi_n^2$ (see Figure \ref{fig:permutoassociahedron-simple-comparison}).  By \Cref{thm:basis_nested}, they can therefore be expressed as a signed Minkowski sum of the elements of our basis of Gherkins.  It is not difficult to see that they cannot be a
positive sum; by inspecting the normal fan, one can observe that the chambers cannot be obtained as common refinements of chambers of the Gherkins.  We provide the Gherkin decomposition here for the $n=4$ realization of Castillo--Liu.

Let $[4]=\{1,2,3,4\}$.  We write
\[
\mathsf R(S_1|\cdots|S_k)
\]
for the Gherkin associated to the ordered partition $S_1|\cdots|S_k$, and
\[
\Delta_I=\operatorname{conv}\{e_i:i\in I\}.
\]
For the three-dimensional permutoassociahedron, the decomposition in the Gherkin basis is
\[
\begin{aligned}
P_{\mathrm{PA}}
={}&
-\sum_{(i,j,k,l)\in \mathfrak S_4}
\mathsf R(i|j|k|l) \\
&+
2\sum_{\substack{|A|=2\\ [4]\setminus A=\{i,j\}}}
\Big(
\mathsf R(A|i|j)+\mathsf R(A|j|i)
+\mathsf R(i|j|A)+\mathsf R(j|i|A)
\Big) \\
&+
\sum_{\substack{|A|=2\\ [4]\setminus A=\{i,j\}}}
\Big(
\mathsf R(i|A|j)+\mathsf R(j|A|i)
\Big) \\
&+
30\sum_{i\in[4]}\Delta_i
-14\sum_{\substack{I\subset[4]\\ |I|=2}}\Delta_I
+
4\Delta_{[4]} .
\end{aligned}
\]
Here the summations over $A$ range over unordered two-element subsets of $[4]$, and if
$[4]\setminus A=\{i,j\}$, then both possible orders of the two complementary singleton blocks are included explicitly.

Barali\'{c}--Ivanovi\'{c}--Petri\'{c} \cite{baralic2019simple} introduced a family of simple permutonestohedra as certain truncations of the permutohedron along a building set.
In \cite[Section 5]{ivanovic2020geometrical} Ivanovi\'{c} produced a Minkowski sum decomposition of the simple permutonestohedra which uses deep truncations of certain nestohedra; this is in the same spirit as our general approach; however, those decompositions are not always indecomposable \cite[Figure 12]{ivanovic2020geometrical}. Using \Cref{thm:basis_nested} one can decompose a normally equivalent polytope as a positive Minkowski sum of Gherkin polytopes, which have the advantage of being indecomposable.

\begin{figure}[htbp]
    \centering
    \includegraphics[width=.95\textwidth]{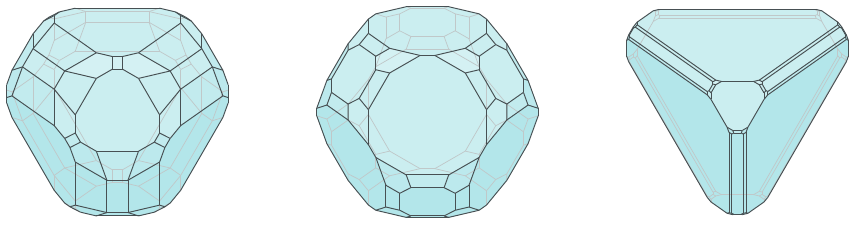}
    \caption{The Castillo--Liu realization of Kapranov's (nonsimple) permutoassociahedron (left), and two realizations of the simple permutoassociahedron of Barali\'{c}--Ivanovi\'{c}--Petri\'{c}: as a building-set truncation of $P_4$ (center) and as the Minkowski sum of the corresponding Gherkins (right). The truncation realization in the center admits a signed Minkowski decomposition in the basis of \Cref{thm:basis_nested}, in which every Gherkin has a nonnegative coefficient; only the standard-simplex coefficients may be negative.}
    \label{fig:permutoassociahedron-simple-comparison}
\end{figure}

\subsection{A deformed 2-nestohedron: the cosmohedron}\label{subsec:cosmo}

Earlier we introduced the notion of a 2-nestohedron as a polytope which is obtained as the 2-pass truncation of a standard simplex.   We anticipate that many interesting polytopes can be obtained as deformations of 2-nestohedra.  In addition to the permutoassociahedron and the simple permutoassociahedron in Subsection \ref{subsec:deformed2perm}, there is also the cosmohedron of Arkani-Hamed--Figueiredo--Vaz\~ao \cite{arkani2025cosmohedra}.  In recent work of Ardila-Mantilla--Arkani-Hamed--Figueiredo--Vaz\~ao \cite{ardila2026combinatorics}, the polytopal realization provided can be obtained as a deformation of an omnitruncation of Loday's associahedron.  Thus its support function restricted to the rays of the 2-braid fan (which include the rays of the cosmohedron's normal fan) can be generated uniquely as a linear combination of the support functions of the Gherkins on the rays of the 2-braid fan.  There is a subtle issue here: the normal fan of the cosmohedron is not a coarsening of the 2-braid fan.  This implies that the corresponding signed Minkowski sum of elements of the Gherkin basis (see \Cref{thm:basis_nested}) does not recover the cosmohedron.  Instead we can apply \Cref{cor:basis_building} to conclude that the cosmohedron is a signed Minkowski sum of shallow truncations of faces of Loday's associahedron.

Let $L$ denote the Loday associahedron cut out by the affine hyperplane
\[
x_1+x_2+x_3+x_4=10
\]
together with the following supporting inequalities:
\[
\begin{array}{lll}
H_0: x_1\ge 1, & H_1: x_2\ge 1, & H_2: x_3\ge 1,\\
H_3: x_4\ge 1, & H_4: x_1+x_2\ge 3, & H_5: x_2+x_3\ge 3,\\
H_6: x_3+x_4\ge 3, & H_7: x_1+x_2+x_3\ge 6, & H_8: x_2+x_3+x_4\ge 6.
\end{array}
\]
Let $F_i$ denote the facet of $L$ obtained by setting $H_i$ to an equality.
We use the Ardila-Mantilla--Arkani-Hamed--Figueiredo--Vaz\~ao realization of the $3$-cosmohedron.
Fix a real number $\delta$ with $0<\delta<1$.
For a nonempty set $A\subseteq\{0,\ldots,8\}$ whose intersection
\[
F_A=\bigcap_{i\in A}F_i
\]
is a face of $L$, let
\[
T_A^\delta=\operatorname{Tr}_{\delta}(L,F_A)
\]
be the shallow truncation of $L$ along $F_A$ at common depth $\delta$.

Then the $3$-cosmohedron $\mathsf C$ satisfies the following signed support-function identity:
\[
\begin{aligned}
h_{\mathsf C}
={}& h_L
-\frac{1}{2\delta}\sum_{i\in\{0,1,3,4,5,6\}} h_{T^\delta_{\{i\}}}
-\frac{3}{4\delta}\sum_{i\in\{2,7,8\}} h_{T^\delta_{\{i\}}} \\
&+
\frac{1}{4\delta}\sum_{\{i,j\}\in E_1} h_{T^\delta_{\{i,j\}}}
+
\frac{1}{2\delta}\sum_{\{i,j\}\in E_2} h_{T^\delta_{\{i,j\}}}
-
\frac{1}{4\delta}\sum_{\{i,j,k\}\in V} h_{T^\delta_{\{i,j,k\}}},
\end{aligned}
\]
where
\[
\begin{aligned}
E_1={}&
\big\{
\{0,7\},\{0,3\},\{0,2\},\{2,7\},\{1,7\},
\{1,3\},\{1,8\},\{3,8\},\{2,8\}
\big\},\\
E_2={}&
\big\{
\{0,4\},\{4,7\},\{3,4\},\{0,6\},\{2,6\},\{3,6\},
\{1,4\},\{5,7\},\{1,5\},\{2,5\},\{5,8\},\{6,8\}
\big\},\\
V={}&
\big\{
\{0,4,7\},\{0,3,4\},\{0,2,6\},\{0,3,6\},
\{1,4,7\},\{1,3,4\},\{1,5,7\},\{2,5,7\},\\
&\qquad
\{1,5,8\},\{2,5,8\},\{2,6,8\},\{3,6,8\}
\big\}.
\end{aligned}
\]
The vertex-face truncations corresponding to the active facet sets
\[
\{0,2,7\}
\qquad\text{and}\qquad
\{1,3,8\}
\]
have coefficient zero and are therefore omitted.

\begin{figure}[htbp]
    \centering
    \includegraphics[width=.95\textwidth]{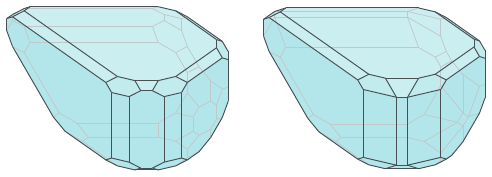}
    \caption{An omnitruncation of Loday's associahedron (left) and the cosmohedron (right).}
    \label{fig:loday-omnitruncation-cosmohedron}
\end{figure}

\subsection{Truncations of crystallographic zonotopes}
For any crystallographic root system $\Phi$, the Weyl permutahedron
$Z_{\Phi}=\operatorname{conv}(W\cdot\rho)$ is, up to translation, a primitive Delzant zonotope with
respect to the root lattice, where $\rho$ is the half-sum of the positive
roots.
Indeed, at the dominant vertex $\rho$, the adjacent edge directions are
$-\alpha_i$, as $\alpha_i$ ranges over the simple roots; these form a
$\mathbb Z$-basis of the root lattice. The same holds at every vertex by
the Weyl group action; see \cite[Chapter~4]{bjorner2005combinatorics}.  We think it would be interesting to study polytopes which are deformations of the omnitruncations of $Z_{\Phi}$.  In Figure \ref{fig:type-b-gherkin-sum-omnitruncation}, we illustrate how the Minkowski sum of the associated Gherkins is normally equivalent to the omnitruncation of the type-B permutahedron.

\begin{figure}[htbp]
    \centering
    \begin{minipage}{.48\textwidth}
        \centering
        \includegraphics[width=\linewidth]{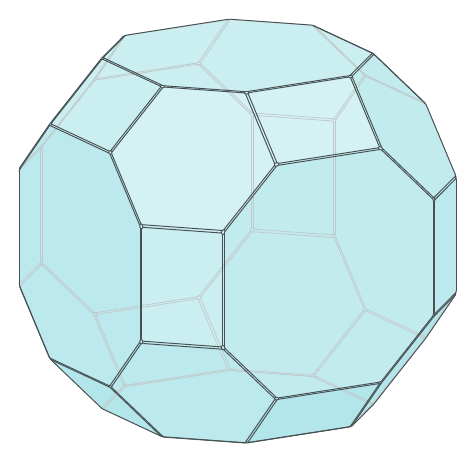}
    \end{minipage}
    \hfill
    \begin{minipage}{.48\textwidth}
        \centering
        \includegraphics[width=\linewidth]{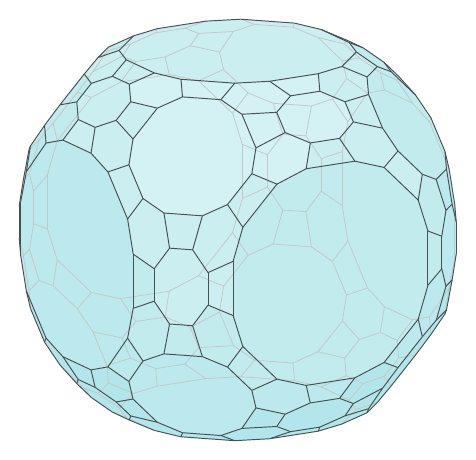}
    \end{minipage}
    \caption{The Minkowski sum of all type $B_3$ edge Gherkins and the result of subtracting $66.5\cdot P_{B_3}$, where $P_{B_3}$ is the type-B permutahedron in dimension 3. The two polytopes are both normally equivalent to an omnitruncation of $P_{B_3}$; note the tiny hexagonal and quadrilateral faces of the polytope on the left.}
    \label{fig:type-b-gherkin-sum-omnitruncation}
\end{figure}

\subsection{Attempting the bipermutahedron}

The bipermutahedron was introduced by Ardila--Denham--Huh in their work on Lagrangian geometry of matroids \cite{ardila2023lagrangian,ardila2023lagrangian2,ardila2022bipermutahedron}.
This is a smooth polytope whose normal fan, the bipermutahedral fan, contains the conormal fan of any matroid as a subfan.  
This story is analogous to the more classical case where the normal fan of the permutahedron, the braid arrangement, contains the fine Bergman fan of any matroid as a subfan.

As was observed in the work of Backman--Danner (see footnote 3 in \cite{backman2024convex}), the face poset of the conormal fan of a matroid behaves like the nested set complex associated to a building set of biflats on the lattice ${\mathcal{L}(M)}^{op}\times \mathcal{L}(M^{\perp})$.
For subtle reasons related to lineality spaces explained in that footnote, the biflats do not quite form a building set.   
Motivated by this opaque connection to building sets, the authors of the present article wondered whether the bipermutahedron could be obtained  from some simpler polytope by a sequence of well-chosen truncations, as is the case for the classical permutahedron.  
While we failed at achieving this goal, we feel that the ways in which we failed are interesting.  
We record here our attempts in hopes that other researchers may glean something useful from our approach.  
We will assume familiarity with the bipermutahedron and the associated combinatorics of bisubsets.

\subsubsection{A mock bipermutahedron}
As in the footnote in Backman--Danner, the collection of bisubsets nearly forms a building set for the product of two Boolean posets $ \mathscr{B}^{\,op} \times  \mathscr{B}$.  
This suggested to us that perhaps the bisubsets could be viewed as a building set for the face poset of a product of two standard simplices of the same size, which would then fit naturally with the framework of \Cref{sec:polysimplex}.
In what follows, we will identify the face poset of the normal fan of this product of simplices with $( \mathscr{B}^{\,op}\setminus \{\emptyset\} )\times ( \mathscr{B}\setminus\{ E\})$.  
If we slightly augment the set of bisubsets by unioning with the pairs of the form $(E\setminus\left\{i\right\},\emptyset)$ for $i$ an element of the ground set, and removing pairs of the form $(F,E)$, we obtain a building set which we call the \textbf{mock bisubsets}.  
Note that these additional pairs of flats added correspond to facets coming from the first simplex and thus play no role in the truncation process, while the pairs removed simply do not make sense in this setting.  
We additionally observe that adding all pairs of the form $(F,\emptyset)$ would not produce a building set.  

Performing truncations of the faces of the product of these simplices associated to the bisubsets, one obtains a polytope with nearly the same set of facet normals as the bipermutahedron; it is missing facets corresponding to $(F,\emptyset)$ for $0<|F|<n$.  We then do a second-pass truncation of these faces, which themselves form a building set on the face poset of the polytope already obtained.  The resulting polytope is not the bipermutahedron -- we call it the  \textbf{mock bipermutahedron}.  Not only is this a smooth polytope with the same facet normals as the bipermutahedron, but we have the following conjecture which has been verified for $n \leq 5$:

\begin{conjecture}\label{mockconj}
The bipermutahedron and the mock bipermutahedron have the same $f$-vector.
\end{conjecture}

\begin{figure}[ht]
    \centering
    \includegraphics[width=.82\textwidth]{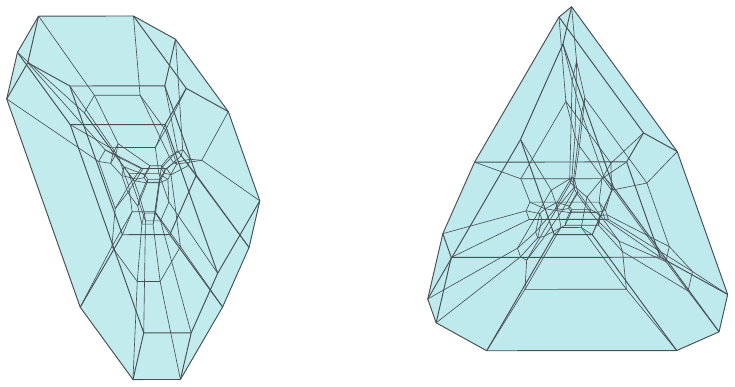}
    \caption{On the left is the bipermutahedron, and on the right is the mock bipermutahedron for $n=3$.  They are both smooth, have the same set of facet normals, and have the same $f$-vector, $(1,90,180,114,24,1)$.}
    \label{fig:bipermutahedron-mock-bipermutahedron}
\end{figure}

We note that a similar phenomenon has been reported recently by Abram--Bastidas--Dequêne--Morales--Park--Thomas \cite{abram2026flows}; they have introduced the mutoperhedron, a polytope with the same $f$-vector as the permutohedron, but a different face poset.

\subsubsection{Including the diagonal simplex} 
The previous approach fails to incorporate the role of the diagonal simplex, i.e. 
the convex hull of all vectors $e_i+f_i$ for $i \in E$.  
The harmonic polytope is the product of two permutahedra plus the diagonal simplex \cite{ardila2023lagrangian,ardila2021harmonic}, and the  harmonic polytope is a deformation of the bipermutahedron.  
So, the authors next considered whether the bipermutahedron could be obtained by starting from a polytope $P$, which is the product of two standard simplices of the same size, Minkowski-summed with the diagonal simplex, and then performing a sequence of truncations of $P$ encoded by the bisubsets.  
This works for $n=3$, producing the 4-dimensional bipermutahedron living in $\mathbb{R}^6$, although we note the sequence of truncations is not performed according to a building set.  However, for $n=4$ this is not possible; the 6-dimensional bipermutahedron living in $\mathbb{R}^8$ cannot be obtained from $P$ by a sequence of truncations.  In fact, one normal cone of $P$ contains four required bipermutahedral rays, and none of the $4!=24$ possible orders of stellar subdivision along these rays produces the corresponding local subdivision of the bipermutahedral fan.

\begin{remark}[Conormal building sets]
It would be interesting if one could build upon the observations of the footnote of Backman--Danner to produce a family of fan structures on the product of a Bergman fan of a matroid and the Bergman fan of its dual which generalizes the conormal fan analogously to the way that building sets on the lattice of flats of a matroid produce different fan structures on the Bergman fan which generalize the fine Bergman fan.  This might go by way of first developing a general theory of wonderful compactifications of conormal bundles for hyperplane arrangement complements.
\end{remark}

\subsection{\texorpdfstring{$\pi$}{}-colored fans}\label{subsec:picolored}

While this work was in progress, the article \cite{clader2024multimatroids} introduced the notion of a $\pi$-colored fan and its associated Chow ring.\footnote{At the time the arXiv version of  \cite{clader2024multimatroids} first appeared, we had produced and verified the flat truncation basis for the product of simplices.  We had also privately conjectured that the fragment basis should be a basis; however, we did not have a proof.  The proof we have presented is inspired by the inclusion-exclusion proof of the restricted statement given in  \cite{clader2024multimatroids}.} 
This Chow ring has been further studied by Nathanson \cite{nathanson2026multipermutohedral}.
We observe that the $\pi$-colored fans are subfans of the barycentric subdivisions of the normal fans of products of standard simplices.  In particular, they are subfans of smooth projective fans (a theme in the area of Hodge theory for matroids; see \cite{mantovani2025facial,backman2024convex,ardila2023lagrangian,braden2022semi,crowley2022bergman,eur2023intersection}).  
We further demonstrate that the restrictions of the support functions of fragments give the $h$-basis of \cite{clader2024multimatroids} and the restrictions of the support functions of deep truncations are closely related to their $f$-basis.
\begin{figure}[ht]
    \centering
    \includegraphics[width=.82\textwidth]{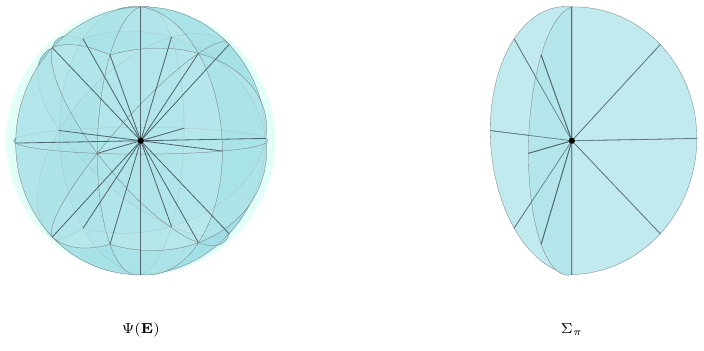}
    \caption{For $\mathbf{E}=(\{a,b,c\},\{x,y\})$, the barycentric subdivision $\Psi(\mathbf{E})$ of the normal fan of $\Delta(\mathbf{E})=\Delta^2\times\Delta^1$ (see Figure \ref{fig:triangle-segment-prism-omnitruncation} for a polytope normal to $\Psi(\mathbf{E})$) and the corresponding $\pi$-colored subfan $\Sigma_{\pi}$.}
    \label{fig:triangle-segment-pi-colored-fan}
\end{figure}

\begin{definition}
Let $\pi:E\to[n]$ be a surjection from a finite set $E$, with
$|\pi^{-1}(i)|\geq2$ for every $i$. Set $E_i=\pi^{-1}(i)$ and
$\mathbf E=(E_1,\ldots,E_n)$, and set
\[
\mathsf L(\mathbf E)
=
\operatorname{Span}\{e_{E_1},\ldots,e_{E_n}\}
\subseteq\mathbb R^E.
\]
A set $S\subseteq E$ is \emph{$\pi$-colored} if
$|S\cap E_i|\leq1$ for every $i$. The pointed $\pi$-colored fan
${}^{\wedge}\Sigma_\pi$ consists of the cones
\[
\operatorname{Cone}(e_{S_1},\ldots,e_{S_k}),
\]
where $S_1\subsetneq\cdots\subsetneq S_k$ is a chain of nonempty
$\pi$-colored sets. Its prequotient and quotient versions are
\[
\widetilde\Sigma_\pi
=
{}^{\wedge}\Sigma_\pi+\mathsf L(\mathbf E)
\subseteq\mathbb R^E,
\qquad
\Sigma_\pi
=
\widetilde\Sigma_\pi/\mathsf L(\mathbf E).
\]
\end{definition}

\begin{proposition}
\label{prop:pi_colored_subfan_polysimplex}
The pointed fan ${}^{\wedge}\Sigma_\pi$ is a subfan of
${}^{\wedge}\Psi(\mathbf E)$. Consequently,
$\widetilde\Sigma_\pi$ is a subfan of $\Psi(\mathbf E)$, and
$\Sigma_\pi$ is a subfan of
$\Psi(\mathbf E)/\mathsf L(\mathbf E)$.
\end{proposition}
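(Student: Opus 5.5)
The plan is to compare ray generators directly, using the description of ${}^{\wedge}\Psi(\mathbf E)$ from \Cref{lem:fan_polysimplex_bary}. There, cones are indexed by chains of proper valid tuples $\mathbf A_1\subsetneq\cdots\subsetneq\mathbf A_p$, with rays $u_{\mathbf A}=e_{\overline{\mathbf A}}$. Given a nonempty $\pi$-colored set $S$, I take $\mathbf A(S)$ to be the tuple with $A_i=E_i\setminus S$. Since $|S\cap E_i|\leq 1$ and $|E_i|\geq 2$, each $A_i$ is nonempty, so $\mathbf A(S)$ is valid. Since $S\neq\emptyset$, the tuple is proper. Its complement of support is $\overline{\mathbf A(S)}=S$, so $u_{\mathbf A(S)}=e_S$. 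Thus every ray $e_S$ of ${}^{\wedge}\Sigma_\pi$ is a canonical ray of ${}^{\wedge}\Psi(\mathbf E)$. The vector $e_S$ is $0/1$, hence primitive.

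Next I handle chains. If $S_1\subsetneq\cdots\subsetneq S_k$ are nonempty $\pi$-colored sets, then $S\mapsto\mathbf A(S)$ is order-reversing and injective, so $\mathbf A(S_k)\subsetneq\cdots\subsetneq\mathbf A(S_1)$ is a chain of proper valid tuples. By the description of cones of the barycentric subdivision (chains of faces, cf.\ \Cref{def:barycentric} and \Cref{lem:fan_polysimplex_bary}), the cone $\operatorname{Cone}(e_{S_1},\ldots,e_{S_k})$ is therefore exactly a cone of ${}^{\wedge}\Psi(\mathbf E)$. In ordered set partition language it is ${}^{\wedge}\sigma(\mathfrak T)$ with $\mathscr T_0=E\setminus S_k$ and $\mathscr T_j=S_{k-j+1}\setminus S_{k-j}$, and $\mathscr T_0$ is rooted because each $E_i\setminus S_k\neq\emptyset$. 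So every cone of ${}^{\wedge}\Sigma_\pi$ is a cone of ${}^{\wedge}\Psi(\mathbf E)$.

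To conclude that ${}^{\wedge}\Sigma_\pi$ is a subfan, I then check closure under faces. A face of a simplicial cone $\operatorname{Cone}(e_{S_1},\ldots,e_{S_k})$ is spanned by a subchain, which is again a chain of nonempty $\pi$-colored sets, and the zero cone corresponds to the empty chain. Because both collections are sets of cones of the fan ${}^{\wedge}\Psi(\mathbf E)$, the intersection property is inherited automatically.

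The consequences then follow formally. Adding $\mathsf L(\mathbf E)$ to every cone sends ${}^{\wedge}\Psi(\mathbf E)$ to $\Psi(\mathbf E)$ and ${}^{\wedge}\Sigma_\pi$ to $\widetilde\Sigma_\pi$. This correspondence is a bijection on cones by the pointed-representative property, so $\widetilde\Sigma_\pi$ is a subfan of $\Psi(\mathbf E)$. Quotienting by $\mathsf L(\mathbf E)$ preserves this, giving that $\Sigma_\pi$ is a subfan of $\Psi(\mathbf E)/\mathsf L(\mathbf E)$.

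The only delicate point is the bookkeeping in the first step, namely matching the convention $u_{\mathbf A}=e_{\overline{\mathbf A}}$ so that inclusion reverses correctly. This is where the hypothesis $|E_i|\geq 2$ is used, to guarantee that $\mathbf A(S)$ is valid.
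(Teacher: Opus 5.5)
Your proof is correct and takes essentially the same route as the paper. Both proofs identify a chain $S_1\subsetneq\cdots\subsetneq S_k$ of nonempty $\pi$-colored sets with the rooted ordered set partition $(E\setminus S_k, S_k\setminus S_{k-1},\ldots,S_1)$, equivalently with the reversed chain of valid tuples $\mathbf A(S_k)\subsetneq\cdots\subsetneq\mathbf A(S_1)$ where $u_{\mathbf A(S)}=e_S$; they then invoke the description of ${}^{\wedge}\Psi(\mathbf E)$ and pass to the lineality space and the quotient. Your version simply makes explicit the valid-tuple bookkeeping and the closure under faces that the paper leaves implicit.
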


\begin{proof}
A chain $S_1\subsetneq\cdots\subsetneq S_k$ of nonempty
$\pi$-colored sets determines the rooted ordered set partition
\[
(E\setminus S_k,
  S_k\setminus S_{k-1},\ldots,
  S_2\setminus S_1,S_1).
\]
Indeed, its first block meets every $E_i$. By
\Cref{lem:fan_polysimplex_bary}, the corresponding cone is
$\operatorname{Cone}(e_{S_1},\ldots,e_{S_k})$. The remaining
statements follow by adding and then quotienting by the lineality
space from \Cref{lem:fan_polysimplex}.
\end{proof}

We offer a conceptual explanation of the above result.  We first observe that the $\pi$-colored fans can alternately be interpreted as the barycentric subdivisions of a product of tropical lines.  Such a tropical line is a subfan of the normal fan of a standard simplex.  Because barycentric subdivisions respect passing to subfans, the result follows.  We note that the same proof applies if $|S\cap E_i|\leq1$ is replaced by
$|S\cap E_i|\leq c_i<|E_i|$; see
\cite[Remark~2.3]{clader2024multimatroids}.  This extension corresponds to replacing tropical lines with tropical linear spaces which are the $c_i$-skeleta of normal fans of standard simplices.

We use the minimum support function $h(\mathsf Q,u)$. Under the
preceding proposition, the rays of $\Sigma_\pi$ are inward normals
for the omnitruncation of a polysimplex and outward normals for its negative.
Accordingly, to compare our minimum support functions with the ray
coefficients in \cite{clader2024multimatroids}, we use the formal
identity
\[
h_{\max}(-\mathsf Q,u)=-h(\mathsf Q,u).
\]
To distinguish the two uses of the letter $h$, write
$\mathrm h_S^\pi$ and $\mathrm f_S^\pi$ for the functions denoted
$h_S$ and $f_S$ in \cite{clader2024multimatroids}; they are determined
on the rays by
\[
\mathrm h_S^\pi(e_R)=\mathbf1_{S\cap R\neq\varnothing},
\qquad
\mathrm f_S^\pi(e_R)=\mathbf1_{S\subseteq R}.
\]

For $s\in E$, let $\ell_s(e_R)=\mathbf1_{s\in R}$. If
$s,t\in E_i$, then $\ell_s-\ell_t$ is the restriction of the global
linear function $u\mapsto u(s)-u(t)$. We denote the common class of
the $\ell_s$, for $s\in E_i$, by $\alpha_i$. The functions
$\mathrm f_S^\pi$ form the zeta basis before quotienting by global
linear functions. Thus, after quotienting, the classes $\alpha_i$
together with $[\mathrm f_S^\pi]$ for $|S|\geq2$ form a basis.

Fix a nonempty $\pi$-colored set $S$ and put
\[
I(S)=\{i:S\cap E_i\neq\varnothing\},
\qquad
\overline E_i(S)=E_i\setminus(S\cap E_i).
\]
Write
\[
\mathbf E_S=(E_i)_{i\in I(S)},
\qquad
\overline{\mathbf E}_S
=(\overline E_i(S))_{i\in I(S)},
\qquad
\overline{\mathbf E}(S)
=(\overline E_1(S),\ldots,\overline E_n(S)).
\]
Define the factored fragment and factored deep truncation
\[
\mathsf F_S
=
\operatorname{Fr}[\Delta(\mathbf E_S),\{e_S\}],
\qquad
\mathsf T_S
=
\operatorname{DTr}
[\Delta(\mathbf E_S),\Delta(\overline{\mathbf E}_S)].
\]
To obtain support functions on the quotient fan, we translate each
factored polytope into $\mathsf L(\mathbf E_S)^\perp$, equivalently
normalizing its restriction to the lineality space to be zero. We use
the translates $\mathsf F_S-e_S$ and $\mathsf T_S-e_S$. Any two such
normalizations differ by a translation in
$\mathsf L(\mathbf E_S)^\perp$, and hence determine the same class
modulo global linear functions on the quotient fan.

\begin{proposition}
\label{prop:pi_colored_fragments_h_basis}
The outward-support coefficients of $e_S+(-\mathsf F_S)$ restrict to
$\mathrm h_S^\pi$. Equivalently,
\[
-h(\mathsf F_S-e_S,e_R)=\mathrm h_S^\pi(e_R)
\]
for every nonempty $\pi$-colored set $R$.
\end{proposition}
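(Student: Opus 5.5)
The plan is a direct computation of the left-hand side, using the explicit support function of a fragment from \Cref{lem:polysimplex_fragment_support} together with \Cref{rem:single_point}. By additivity of support functions and \Cref{rem:single_point},
\[
h(\mathsf F_S-e_S,e_R)=h(\mathsf F_S,e_R)-e_S^{\intercal}e_R=h(\mathsf F_S,e_R)-|S\cap R|.
\]
So it suffices to prove $h(\mathsf F_S,e_R)=\max(0,|S\cap R|-1)$. Indeed, $\max(0,m-1)-m$ equals $-1$ when $m\geq 1$ and $0$ when $m=0$, which is exactly $-\mathrm h_S^\pi(e_R)$.

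To compute $h(\mathsf F_S,e_R)$, first regard $\mathsf F_S\subseteq\Delta(\mathbf E_S)\subseteq\mathbb R^{E}$ via \Cref{rem:embed}. All coordinates outside $\bigcup_{i\in I(S)}E_i$ vanish on $\mathsf F_S$, so only $R'\coloneqq R\cap\bigcup_{i\in I(S)}E_i$ matters: $h(\mathsf F_S,e_R)=h(\mathsf F_S,e_{R'})$.

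Next, express $e_{R'}$ as the inner normal $u_{\mathbf B}$ of a valid tuple of $\mathbf E_S$. Set $B_i=E_i\setminus(R\cap E_i)$ for $i\in I(S)$, so that $\overline{\mathbf B}=R'$. Each $B_i$ is nonempty because $R$ is $\pi$-colored and $|E_i|\geq 2$, so $\mathbf B$ is valid and $u_{\mathbf B}=e_{R'}$. If $R'=\varnothing$ then $\mathbf B=\mathbf E_S$ and the value is $0$, which is consistent with the formula below.

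The fragment is taken along the vertex $e_S$, which is the face $\Delta(\mathbf A)$ with $A_i=\{s_i\}$, where $S\cap E_i=\{s_i\}$ for $i\in I(S)$. Then $A_i\cap B_i=\varnothing$ exactly when $s_i\in R$, so $|Z(\mathbf A,\mathbf B)|=|S\cap R|$. \Cref{lem:polysimplex_fragment_support} now gives $h(\mathsf F_S,e_R)=\max(0,|S\cap R|-1)$, completing the argument. When $|I(S)|=1$ the tuple $\mathbf A$ has slack one; then the fragment is $\Delta(\mathbf E_S)$ itself by \Cref{cor:multisimplex_drop_dimension}, and the lemma still applies verbatim.

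The main point requiring care is the bookkeeping of the embedding of $\Delta(\mathbf E_S)$ into $\mathbb R^E$ and the translation by $e_S$. One must check that the ray $e_R$ of $\Psi(\mathbf E)$ restricts to a genuine ray generator $u_{\mathbf B}$ (or zero) for the smaller polysimplex, so that the lemma applies. After that, the identity is the elementary computation above. The equivalent formulation with $e_S+(-\mathsf F_S)$ and outward supports follows from the formal identity $h_{\max}(-\mathsf Q,u)=-h(\mathsf Q,u)$.
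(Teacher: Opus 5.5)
Your proposal is correct and is essentially the paper's proof. Both read off $h(\mathsf F_S,e_R)=\max\{0,|S\cap R|-1\}$ from \Cref{lem:polysimplex_fragment_support} and subtract the term $e_S^{\intercal}e_R=|S\cap R|$ coming from the translation. Your extra bookkeeping (restricting $e_R$ to the coordinates of $\mathbf E_S$, writing it as $u_{\mathbf B}$ for a valid tuple of $\mathbf E_S$, and checking $|Z(\mathbf A,\mathbf B)|=|S\cap R|$) makes explicit a step the paper applies without comment.
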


\begin{proof}
By \Cref{lem:polysimplex_fragment_support},
\[
h(\mathsf F_S,e_R)
=
\max\{0,|S\cap R|-1\}.
\]
Hence the formal conversion from minimum to outward support gives
\[
-h(\mathsf F_S-e_S,e_R)
=
|S\cap R|-\max\{0,|S\cap R|-1\}
=
\mathbf1_{S\cap R\neq\varnothing}.
\]
\end{proof}

\begin{proposition}
\label{prop:pi_colored_truncations_basis}
The minimum support function of $\mathsf T_S-e_S$ restricts to
\[
h(\mathsf T_S-e_S,e_R)
=
\mathrm f_S^\pi(e_R)-\sum_{s\in S}\ell_s(e_R).
\]
Consequently, in the outward convention, the translated factor
simplices and the translated truncations restrict to the nef basis
\[
\alpha_i,
\qquad
\sum_{i\in I(S)}\alpha_i-[\mathrm f_S^\pi]
\quad (|S|\geq2).
\]
Moreover, if
\[
\widehat{\mathsf T}_S
=
\operatorname{DTr}
[\Delta(\mathbf E),\Delta(\overline{\mathbf E}(S))],
\]
then
\[
\left(
h(\widehat{\mathsf T}_S,\mathord\cdot)
-h(\Delta(\mathbf E),\mathord\cdot)
\right)\big|_{\Sigma_\pi}
=
\mathrm f_S^\pi.
\]
\end{proposition}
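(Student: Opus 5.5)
The plan is to reduce every claim to \Cref{lem:polysimplex_truncation_support}, evaluated on the rays $e_R$ of ${}^{\wedge}\Sigma_\pi$. By \Cref{prop:pi_colored_subfan_polysimplex}, these rays are rays of ${}^{\wedge}\Psi(\mathbf E)$. For the first identity I will work inside the polysimplex $\Delta(\mathbf E_S)\subseteq\mathbb R^{E_S}$, where $E_S\coloneqq\bigsqcup_{i\in I(S)}E_i$.

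\emph{Step 1: identify the ray with a valid tuple.} Since $\mathsf T_S\subseteq\mathbb R^{E_S}$, we have $h(\mathsf T_S,e_R)=h(\mathsf T_S,e_{R\cap E_S})$. Set $B_i\coloneqq E_i\setminus R$ for $i\in I(S)$. Each $B_i$ is nonempty, because $R$ is $\pi$-colored and $|E_i|\geq 2$, so $|R\cap E_i|\leq 1<|E_i|$. Hence $\mathbf B=(B_i)_{i\in I(S)}$ is a valid tuple of $\mathbf E_S$ with $\overline{\mathbf B}=R\cap E_S$, and $e_{R\cap E_S}=u_{\mathbf B}$. If $R\cap E_S=\varnothing$, then $\mathbf B=\mathbf E_S$ and the support value is $0$.

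\emph{Step 2: apply the lemma.} The truncated face is $\Delta(\mathbf A)$ with $A_i=E_i\setminus(S\cap E_i)$. This tuple is full in $\mathbf E_S$ and is a proper face. By \Cref{lem:polysimplex_truncation_support}, $h(\mathsf T_S,u_{\mathbf B})=1$ exactly when $B_i\subseteq A_i$ for all $i\in I(S)$, and $0$ otherwise. Now
\[
E_i\setminus R\subseteq E_i\setminus S \iff S\cap E_i\subseteq R.
\]
So the condition holds for all $i\in I(S)$ exactly when $S\subseteq R$, and $h(\mathsf T_S,e_R)=\mathrm f_S^\pi(e_R)$. This also covers the empty case, since $S\neq\varnothing$. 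Translating by $-e_S$ subtracts the linear term $e_S\cdot e_R=|S\cap R|=\sum_{s\in S}\ell_s(e_R)$, which gives the displayed formula.

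\emph{Step 3: the nef basis.} Negating for the outward convention, the translated truncation contributes $\sum_{s\in S}\ell_s-\mathrm f_S^\pi$. Modulo global linear functions, $\ell_s$ has class $\alpha_{\pi(s)}$, and $S$ meets each $E_i$ with $i\in I(S)$ exactly once, so the class is $\sum_{i\in I(S)}\alpha_i-[\mathrm f_S^\pi]$.

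For the factor simplex $\Delta(E_i)$, I compute $h(\Delta(E_i),e_R)=\min_{t\in E_i}\mathbf 1_{t\in R}=0$, since $R$ misses some element of $E_i$. Translating by $-e_t$ for any $t\in E_i$ gives $-\ell_t$, so in the outward convention the class is $\alpha_i$.

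The family $\{\alpha_i\}\cup\{\sum_{i\in I(S)}\alpha_i-[\mathrm f_S^\pi]\}_{|S|\geq 2}$ is obtained from the basis $\{\alpha_i\}\cup\{[\mathrm f_S^\pi]\}_{|S|\geq 2}$ recalled above by a unitriangular (in particular invertible) change of coordinates, so it is a basis. The classes are nef because they are restrictions of support functions of deformations of $\mathsf P(\mathbf E)$ to a subfan: the factors $\mathsf T_S$ lie in the basis of \Cref{thm:basis_polysimplex_deep}, and the $\Delta(E_i)$ are faces, handled by \Cref{lem:face_deformation_omni}.

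\emph{Step 4: the unfactored truncation.} I repeat Steps 1 and 2 in $\Delta(\mathbf E)$ with the tuple $\overline{\mathbf E}(S)$ and $B_i=E_i\setminus R$ for all $i\in[n]$. For $i\notin I(S)$ we have $\overline E_i(S)=E_i$, so the condition $B_i\subseteq\overline E_i(S)$ is automatic. Hence $h(\widehat{\mathsf T}_S,e_R)=\mathrm f_S^\pi(e_R)$, while $h(\Delta(\mathbf E),e_R)=0$ by \Cref{ex:polysimplex_full_support}. On the lineality space both functions take the value $g\equiv 1$, so these contributions cancel in the difference.

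The main obstacle is not computational but bookkeeping. One has to check that the ambient embedding of \Cref{rem:embed} makes coordinates outside $E_S$ irrelevant. One also has to handle the sign and translation conventions consistently, so that the stated equalities hold on the nose for the chosen translates, while the basis statement only needs to hold modulo global linear functions.
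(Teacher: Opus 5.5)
Your proposal is correct and follows the paper's proof. Like the paper, you evaluate \Cref{lem:polysimplex_truncation_support} on the rays $e_R$ (which you carefully identify with $u_{\mathbf B}$ for $B_i=E_i\setminus R$) to get $\mathbf 1_{S\subseteq R}$, subtract the linear term $|S\cap R|$ coming from the translation, pass to classes via the zeta basis, and handle $\widehat{\mathsf T}_S$ by noting that both support functions agree on $\mathsf L(\mathbf E)$. Your extra bookkeeping (the empty-intersection case, the factor-simplex computation, and the nef justification) fills in what the paper leaves implicit; the only quibble is that the change of basis from $\{\alpha_i\}\cup\{[\mathrm f_S^\pi]\}$ is triangular with diagonal entries $\pm1$ rather than literally unitriangular, which is harmless.
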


\begin{proof}
By \Cref{lem:polysimplex_truncation_support},
\[
h(\mathsf T_S,e_R)=\mathbf1_{S\subseteq R}.
\]
Translation by $-e_S$ therefore gives
\[
h(\mathsf T_S-e_S,e_R)
=
\mathbf1_{S\subseteq R}-|S\cap R|.
\]
Since $R$ is a nonempty $\pi$-colored set, $e_R$ represents a ray of
$\Sigma_\pi$ and $R\cap E_i\subsetneq E_i$; thus, writing
$S\cap E_i=\{s_i\}$ for $i\in I(S)$, we have
\[
h_{\max}\!\left(
\sum_{i\in I(S)}\bigl(e_{s_i}-\Delta(E_i)\bigr),e_R
\right)
=
\sum_{i\in I(S)}\ell_{s_i}(e_R)
=
|S\cap R|,
\]
so the formal sign change gives the class
$\sum_{i\in I(S)}\alpha_i-[\mathrm f_S^\pi]$, and the basis claim
follows from the preceding zeta-basis observation. Finally,
$h(\widehat{\mathsf T}_S,\mathord\cdot)$ and
$h(\Delta(\mathbf E),\mathord\cdot)$ have the same restriction to
$\mathsf L(\mathbf E)$, so their difference descends to the quotient
fan, where its value on $e_R$ is
$\mathbf1_{S\subseteq R}-0=\mathrm f_S^\pi(e_R)$.
\end{proof}

\subsection{The tropical $\alpha$ and $\beta$ classes}

  The $\alpha$ and $\beta$ classes introduced by Huh--Katz \cite{huh2012log} play a fundamental role in the proof of the Heron--Rota--Welsh conjecture \cite{adiprasito2018hodge}.  The $\alpha$ and $\beta$ classes correspond to the restrictions of maximal standard simplices and their negatives to the Bergman fan; see \cite{backman2023simplicial}.  We observe that there appears to be an interesting parallel between the fragment and truncation bases for the barycentric subdivisions of the normal fans of products of simplices and the $\alpha$ and $\beta$ classes, respectively.  
  
The $\alpha$ and $\beta$ classes correspond to restrictions of matroid base polytopes for the dual matroids $U^1_n$ and $U^{n-1}_n$, respectively. In the case of the type $B$ permutahedron, the fragments and vertex truncations in this article also correspond to matroid independence polytopes for the dual matroids $U^1_n$ and $U^{n-1}_n$, respectively (as well as their images under the action of ${\mathbb{Z}/2\mathbb{Z}}^{n}$; see Figure \ref{fig:uniform-matroid-base-independence-polytopes}).   Moreover, the $\alpha$ and $\beta$ classes are related by inclusion--exclusion.  This is also the case for the fragment and truncation bases (see Proposition \ref{prop:explicit_truncation_fragment_transform}) as well as their restricted $\pi$-colored versions \cite[Lemma~2.11]{clader2024multimatroids}.

\begin{figure}[ht]
    \centering
    \includegraphics[width=.50\textwidth]{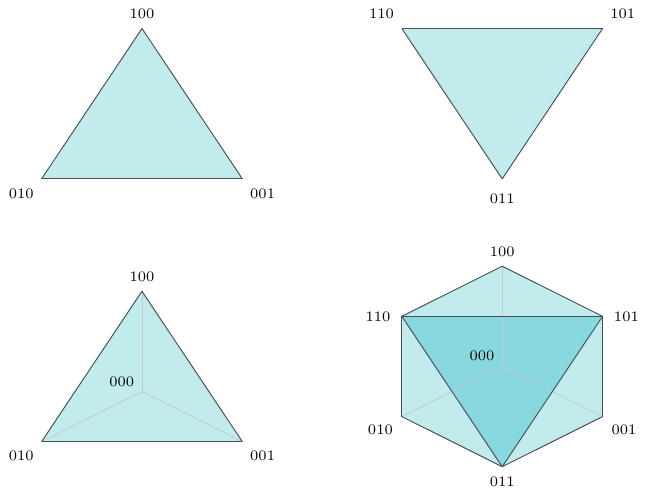}
    \caption{Uniform matroid base and independence polytopes: the top row shows the base polytopes of $U^1_3$ and $U^2_3$, and the bottom row shows the independence polytopes of $U^1_3$ and $U^2_3$.}
    \label{fig:uniform-matroid-base-independence-polytopes}
\end{figure}

The $\alpha$ and $\beta$ classes are tropicalizations of hyperplanes and reciprocal hyperplanes, respectively, which are related via the tropical Cremona transform (multiplication by $-1$).  It would be interesting to have a deeper algebro-geometric understanding of the duality between the truncation and fragment bases for products of simplices.

\FloatBarrier
\addcontentsline{toc}{section}{References}
\begingroup
\linespread{0.97}\selectfont
\bibliography{biblio}
\bibliographystyle{alpha}
\endgroup

\end{document}

%% file: tikz/triangle.tex
\begin{tikzpicture}[
    tdplot_main_coords,
    scale = 2,
    back/.style={loosely dotted, thin},
    edge/.style={color=Turquoise!25!black, line width=0.46pt},
    distinguishededge/.style={color=Turquoise!45!black, line width=1.05pt},
    distinguishedvertex/.style={inner sep=0.8pt,circle,draw=Turquoise!45!black,fill=Turquoise!45!black},
    finaledge/.style={color=Turquoise!45!black, line width=2pt},
    endpoint/.style={inner sep=0.55pt,circle,draw=black,fill=black},
    body/.style={fill=Turquoise!27, draw=none},
    vertex/.style={inner sep=0pt,draw=none,fill=none}]
    \begin{scope}[xshift=-3cm]
    \fill[body] (1,0,0) -- (0,1,0) -- (0,0,1) -- cycle;
    \node[vertex] at (1,0,0) {};
    \node[vertex] at (0,1,0) {};
    \node[distinguishedvertex] at (0,0,1) {};
    \draw[edge] (0,1,0)  -- (1,0,0);
    \draw[edge] (0,0,1)  -- (1,0,0);
    \draw[edge] (0,1,0)  -- (0,0,1);

    \end{scope}
    \begin{scope}[xshift= -1cm]
    \fill[body]
        (1,0,0) -- (0,1,0) -- (0,0.5,0.5) -- (0.5,0,0.5) -- cycle;
    \node[vertex] at (1,0,0) {};
    \node[vertex] at (0,1,0) {};
    \node[vertex] at (0,0,1) {};
    \node[vertex] at (0.5,0,0.5) {};
    \node[vertex] at (0,0.5,0.5) {};
    \draw[edge] (0,1,0)  -- (1,0,0);
    \draw[edge, dashed] (0,0,1)  -- (1,0,0);
    \draw[edge, dashed] (0,1,0)  -- (0,0,1);
    \draw[edge] (0.5,0,0.5)  -- (1,0,0);
    \draw[distinguishededge] (0.5,0,0.5)  -- (0,0.5,0.5);
    \draw[edge] (0,1,0)  -- (0,0.5,0.5);

    \end{scope}
    \begin{scope}[xshift= 1cm]
    \fill[body]
        (1,0,0) -- (0,1,0) -- (0,0.8,0.2) -- (0.8,0,0.2) -- cycle;
    \node[vertex] at (1,0,0) {};
    \node[vertex] at (0,1,0) {};
    \node[vertex] at (0,0,1) {};
    \node[vertex] at (0.8,0,0.2) {};
    \node[vertex] at (0,0.8,0.2) {};
    \draw[edge] (0,1,0)  -- (1,0,0);
    \draw[edge, dashed] (0,0,1)  -- (1,0,0);
    \draw[edge, dashed] (0,1,0)  -- (0,0,1);
    \draw[edge] (0.8,0,0.2)  -- (1,0,0);
    \draw[distinguishededge] (0.8,0,0.2)  -- (0,0.8,0.2);
    \draw[edge] (0,1,0)  -- (0,0.8,0.2);

    \end{scope}

    \begin{scope}[xshift= 3cm]
    \node[vertex] at (1,0,0) {};
    \node[vertex] at (0,1,0) {};
    \node[vertex] at (0,0,1) {};
    \draw[finaledge] (0,1,0)  -- (1,0,0);
    \node[endpoint] at (0,1,0) {};
    \node[endpoint] at (1,0,0) {};
    \draw[edge, dashed] (0,0,1)  -- (1,0,0);
    \draw[edge, dashed] (0,1,0)  -- (0,0,1);

    \end{scope}
\end{tikzpicture}

%% file: tikz/split.tex
\begin{tikzpicture}[
	 tdplot_main_coords,
	 scale = 2,
	 back/.style={solid, line width=0.35pt, color=Turquoise!25!black!30},
	 edge/.style={color=Turquoise!25!black, line width=0.46pt},
	 distinguishedface/.style={fill=Turquoise!45, draw=none},
	 distinguishedvertex/.style={inner sep=0.8pt,circle,draw=Turquoise!45!black,fill=Turquoise!45!black},
	 paperfacefill1/.style={fill=Turquoise!31, draw=none},
	 paperfacefill2/.style={fill=Turquoise!28, draw=none},
	 paperfacefill3/.style={fill=Turquoise!24, draw=none},
	 paperfacefill4/.style={fill=Turquoise!20, draw=none},
	 vertex/.style={inner sep=0pt,draw=none,fill=none}]

	\begin{scope}[xshift=-4cm]
	\coordinate (c000) at (0,0,0);
	\coordinate (c100) at (1,0,0);
	\coordinate (c010) at (0,1,0);
	\coordinate (c001) at (0,0,1);
	\coordinate (c110) at (1,1,0);
	\coordinate (c101) at (1,0,1);
	\coordinate (c011) at (0,1,1);
	\coordinate (c111) at (1,1,1);

	\fill[paperfacefill2] (c100) -- (c110) -- (c111) -- (c101) -- cycle;
	\fill[paperfacefill1] (c010) -- (c110) -- (c111) -- (c011) -- cycle;
	\fill[paperfacefill3] (c001) -- (c101) -- (c111) -- (c011) -- cycle;

	\draw[edge, back] (c000) -- (c100);
	\draw[edge, back] (c000) -- (c010);
	\draw[edge, back] (c000) -- (c001);
	\draw[edge] (c010) -- (c110);
	\draw[edge] (c001) -- (c101);
	\draw[edge] (c011) -- (c111);
	\draw[edge] (c100) -- (c110);
	\draw[edge] (c001) -- (c011);
	\draw[edge] (c101) -- (c111);
	\draw[edge] (c100) -- (c101);
	\draw[edge] (c010) -- (c011);
	\draw[edge] (c110) -- (c111);
	\node[distinguishedvertex] at (c111) {};
	\end{scope}

	\begin{scope}[xshift=+2cm]
	\fill[paperfacefill2] (1,1,0) -- (1,0,1) -- (1,1,1) -- cycle;
	\fill[paperfacefill1] (1,1,0) -- (0,1,1) -- (1,1,1) -- cycle;
	\fill[paperfacefill3] (0,1,1) -- (1,0,1) -- (1,1,1) -- cycle;

	\node[vertex] at (1,1,0) {};
	\node[vertex] at (0,1,1) {};
	\node[vertex] at (1,0,1) {};
	\node[vertex] at (1,1,1) {};
	\node[distinguishedvertex] at (1,1,1) {};

	\draw[edge] (0,1,1)  -- (1,1,0);
	\draw[edge] (0,1,1)  -- (1,0,1);
	\draw[edge] (1,1,0)  -- (1,0,1);
	\draw[edge] (1,1,1)  -- (1,1,0);
	\draw[edge] (1,1,1)  -- (1,0,1);
	\draw[edge] (1,1,1)  -- (0,1,1);

	\end{scope}

	\begin{scope}[xshift= 0cm]
	\fill[paperfacefill2] (1,0,0) -- (1,1,0) -- (1,0,1) -- cycle;
	\fill[paperfacefill1] (0,1,0) -- (1,1,0) -- (0,1,1) -- cycle;
	\fill[paperfacefill3] (0,0,1) -- (1,0,1) -- (0,1,1) -- cycle;
	\fill[distinguishedface] (1,1,0) -- (0,1,1) -- (1,0,1) -- cycle;
	\draw[edge, back] (0,0,0) -- (1,0,0);
	\draw[edge, back] (0,0,0) -- (0,1,0);
	\draw[edge, back] (0,0,0) -- (0,0,1);

	\node[vertex] at (0,0,0) {};
	\node[vertex] at (1,0,0) {};
	\node[vertex] at (0,1,0) {};
	\node[vertex] at (0,0,1) {};
	\node[vertex] at (1,1,0) {};
	\node[vertex] at (0,1,1) {};
	\node[vertex] at (1,0,1) {};

	\draw[edge] (1,0,0)  -- (1,1,0);
	\draw[edge] (0,1,0)  -- (1,1,0);
	\draw[edge] (0,1,0)  -- (0,1,1);
	\draw[edge] (0,0,1)  -- (0,1,1);
	\draw[edge] (0,1,1)  -- (1,1,0);
	\draw[edge] (0,0,1)  -- (0,1,1);
	\draw[edge] (0,0,1)  -- (1,0,1);
	\draw[edge] (0,1,1)  -- (1,0,1);
	\draw[edge] (1,0,0)  -- (1,0,1);
	\draw[edge] (1,1,0)  -- (1,0,1);

	\end{scope}

	\begin{scope}[xshift= -2cm]
	\fill[paperfacefill2]
		(1,0,0) -- (1,1,0) -- (1,1,1/2) -- (1,1/2,1) -- (1,0,1) -- cycle;
	\fill[paperfacefill1]
		(0,1,0) -- (1,1,0) -- (1,1,1/2) -- (1/2,1,1) -- (0,1,1) -- cycle;
	\fill[paperfacefill3]
		(0,0,1) -- (1,0,1) -- (1,1/2,1) -- (1/2,1,1) -- (0,1,1) -- cycle;
	\fill[distinguishedface]
		(1,1,1/2) -- (1/2,1,1) -- (1,1/2,1) -- cycle;
	\draw[edge, back] (0,0,0) -- (1,0,0);
	\draw[edge, back] (0,0,0) -- (0,1,0);
	\draw[edge, back] (0,0,0) -- (0,0,1);

	\node[vertex] at (0,0,0) {};
	\node[vertex] at (1,0,0) {};
	\node[vertex] at (0,1,0) {};
	\node[vertex] at (0,0,1) {};
	\node[vertex] at (1,1,0) {};
	\node[vertex] at (0,1,1) {};
	\node[vertex] at (1,0,1) {};
	\node[vertex] at (1,1,1/2) {};
	\node[vertex] at (1/2,1,1) {};
	\node[vertex] at (1,1/2,1) {};

	\draw[edge] (1,0,0)  -- (1,1,0);
	\draw[edge] (0,1,0)  -- (1,1,0);
	\draw[edge] (0,1,0)  -- (0,1,1);
	\draw[edge] (0,0,1)  -- (0,1,1);
	\draw[edge] (0,0,1)  -- (0,1,1);
	\draw[edge] (0,0,1)  -- (1,0,1);
	\draw[edge] (1,0,0)  -- (1,0,1);
	\draw[edge] (1,1,1/2)  -- (1/2,1,1) -- (1,1/2,1) -- cycle;
	\draw[edge] (1,1/2,1)  -- (1,0,1);
	\draw[edge] (1,1,1/2)  -- (1,1,0);
	\draw[edge] (1/2,1,1)  -- (0,1,1);
	
	\end{scope}
\end{tikzpicture}

%% file: tikz/badminkowski_example.tex
\begin{tikzpicture}[scale=0.82, line cap=round, line join=round]
  \tikzset{
    poly/.style={draw=Turquoise!25!black, line width=0.46pt, fill=Turquoise!27},
    guide/.style={draw=Turquoise!30, line width=0.35pt}
  }

  \newcommand{\axesandgrid}[1]{
    \begin{scope}[xshift=#1]
      \draw[guide] (0,0) rectangle (2,2);
      \draw[guide] (1,0) -- (1,2);
      \draw[guide] (0,1) -- (2,1);
    \end{scope}
  }

  \axesandgrid{0cm}
  \begin{scope}
    \filldraw[poly] (0,0) -- (2,0) -- (2,2) -- (0,2) -- cycle;
  \end{scope}

  \axesandgrid{3.2cm}
  \begin{scope}[xshift=3.2cm]
    \filldraw[poly] (1,0) -- (1,1) -- (0,1) -- cycle;
  \end{scope}

  \axesandgrid{6.4cm}
  \begin{scope}[xshift=6.4cm]
    \filldraw[poly] (0,0) -- (1,0) -- (1,1) -- (0,1) -- cycle;
  \end{scope}

  \axesandgrid{9.6cm}
  \begin{scope}[xshift=9.6cm]
    \filldraw[poly] (1,0) -- (2,0) -- (2,2) -- (0,2) -- (0,1) -- cycle;
    \draw[Turquoise!45!black, dashed, line width=0.45pt] (0,0) -- (1,0) -- (0,1) -- cycle;
  \end{scope}
\end{tikzpicture}

%% file: tikz/nonexample.tex
\begin{tikzpicture}[
	tdplot_main_coords,
	scale = 2.0,
	back/.style={color=Turquoise!25!black!30, solid, line width=0.35pt},
	edge/.style={color=Turquoise!25!black, line width=0.46pt},
	distinguishededge/.style={color=Turquoise!45!black, line width=1.05pt},
	distinguishedface/.style={fill=Turquoise!45,draw=none},
	paperfacefill0/.style={fill=Turquoise!35,draw=none},
	paperfacefill1/.style={fill=Turquoise!31,draw=none},
	paperfacefill2/.style={fill=Turquoise!28,draw=none},
	paperfacefill3/.style={fill=Turquoise!24,draw=none},
	paperfacefill4/.style={fill=Turquoise!20,draw=none},
	vertex/.style={inner sep=0pt,minimum size=0pt,draw=none,fill=none}]

	\begin{scope}[xshift=-3cm]
	\fill[paperfacefill2] (1,0,0) -- (1,1,0) -- (1,1,1/2) -- (1,0,1) -- cycle;
	\fill[paperfacefill1] (0,1,0) -- (1,1,0) -- (1,1,1/2) -- (0,1,1/2) -- cycle;
	\fill[paperfacefill3] (0,0,1) -- (1,0,1) -- (1,1,1/2) -- (0,1,1/2) -- cycle;
	\draw[edge, back] (0,0,0) -- (1,0,0);
	\draw[edge, back] (0,0,0) -- (0,1,0);
	\draw[edge, back] (0,0,0) -- (0,0,1);

	\node[vertex] at (0,0,0) {};
	\node[vertex] at (1,0,0) {};
	\node[vertex] at (0,1,0) {};
	\node[vertex] at (0,0,1) {};
	\node[vertex] at (1,1,0) {};
	\node[vertex] at (1,0,1) {};
	\node[vertex] at (0,1,1/2) {};
	\node[vertex] at (1,1,1/2) {};

	\draw[edge] (1,0,0)  -- (1,1,0);
	\draw[edge] (1,0,0)  -- (1,0,1);
	\draw[edge] (0,1,0)  -- (1,1,0);
	\draw[edge] (0,1,0)  -- (0,1,1/2);
	\draw[edge] (0,0,1)  -- (0,1,1/2);
	\draw[edge] (0,0,1)  -- (1,0,1);
	\draw[edge] (1,1,0)  -- (1,1,1/2);
	\draw[distinguishededge] (1,0,1)  -- (1,1,1/2);
	\draw[edge] (0,1,1/2)  -- (1,1,1/2);
	
	\end{scope}
	
	\begin{scope}[xshift=-1cm]
	\fill[paperfacefill2] (1,0,0) -- (1,1,0) -- (1,1,0.25) -- (1,0,0.75) -- cycle;
	\fill[paperfacefill1] (0,1,0) -- (1,1,0) -- (1,1,0.25) -- (0.50,1,0.50) -- (0,1,0.50) -- cycle;
	\fill[paperfacefill3] (0,0,1) -- (0.50,0,1) -- (0.50,1,0.50) -- (0,1,0.50) -- cycle;
	\fill[distinguishedface] (1,1,0.25) -- (1,0,0.75) -- (0.50,0,1) -- (0.50,1,0.50) -- cycle;

	\draw[edge,back] (0, 0, 0) -- (1, 0, 0);
	\draw[edge,back] (0, 0, 1) -- (0, 0, 0);
	\draw[edge,back] (0, 1, 0) -- (0, 0, 0);

	\draw[edge] (1, 1, 0) -- (1, 0, 0);
	\draw[edge] (1, 0, 0) -- (1, 0, 0.75);
	\draw[edge] (0, 1, 0.50) -- (0, 0, 1);
	\draw[edge] (0, 1, 0.50) -- (0, 1, 0);
	\draw[edge] (0, 1, 0.50) -- (0.50, 1, 0.50);
	\draw[edge] (0, 0, 1) -- (0.50, 0, 1);
	\draw[edge] (1, 1, 0) -- (0, 1, 0);
	\draw[edge] (1, 1, 0) -- (1, 1, 0.25);
	\draw[edge] (1, 1, 0.25) -- (1, 0, 0.75);
	\draw[edge] (1, 1, 0.25) -- (0.50, 1, 0.50);
	\draw[edge] (1, 0, 0.75) -- (0.50, 0, 1);
	\draw[edge] (0.50, 0, 1) -- (0.50, 1, 0.50);

	\node[vertex] at (1, 0, 0)     {};
	\node[vertex] at (0, 1, 0.50)     {};
	\node[vertex] at (0, 0, 1)     {};
	\node[vertex] at (1, 1, 0)     {};
	\node[vertex] at (0, 1, 0)     {};
	\node[vertex] at (0, 0, 0)     {};
	\node[vertex] at (1, 1, 0.25)     {};
	\node[vertex] at (1, 0, 0.75)     {};
	\node[vertex] at (0.50, 0, 1)     {};
	\node[vertex] at (0.50, 1, 0.50)     {};
		
	\end{scope}

	\begin{scope}[xshift=1cm]
	\fill[paperfacefill2] (1,0,0) -- (1,1,0) -- (1,0,0.5) -- cycle;
	\fill[paperfacefill1] (0,1,0) -- (1,1,0) -- (0,1,0.5) -- cycle;
	\fill[distinguishedface] (0,0,1) -- (1,0,0.5) -- (1,1,0) -- (0,1,0.5) -- cycle;

	\draw[edge,back] (1, 0, 0) -- (0, 0, 0);
	\draw[edge,back] (0, 0, 1) -- (0, 0, 0);
	\draw[edge,back] (0, 1, 0) -- (0, 0, 0);
	
	\node[vertex] at (1, 0, 0)     {};

	\draw[edge] (1, 0, 0) -- (1, 0, 0.5);
	\draw[edge] (1, 0, 0) -- (1, 1, 0);
	\draw[edge] (0, 1, 0.5) -- (0, 0, 1);
	\draw[edge] (0, 1, 0.5) -- (1, 1, 0);
	\draw[edge] (0, 1, 0.5) -- (0, 1, 0);
	\draw[edge] (0, 0, 1) -- (1, 0, 0.5);
	\draw[edge] (1, 1, 0) -- (1, 0, 0.5);
	\draw[edge] (1, 1, 0) -- (0, 1, 0);

	\node[vertex] at (0, 1, 0.5)     {};
	\node[vertex] at (0, 0, 1)     {};
	\node[vertex] at (1, 1, 0)     {};
	\node[vertex] at (1, 0, 0.5)     {};
	\node[vertex] at (0, 1, 0)     {};
	\node[vertex] at (0, 0, 0)     {};
	\end{scope}

	\begin{scope}[xshift=3cm]
	\fill[paperfacefill4] (1,0,0) -- (1,1,0) -- (0,0,1) -- cycle;
	\fill[paperfacefill1] (0,1,0) -- (1,1,0) -- (0,1,1/2) -- cycle;
	\fill[paperfacefill4] (0,0,1) -- (1,1,0) -- (0,1,1/2) -- cycle;
	\draw[edge, back] (0,0,0) -- (1,0,0);
	\draw[edge, back] (0,0,0) -- (0,1,0);
	\draw[edge, back] (0,0,0) -- (0,0,1);

	\node[vertex] at (0,0,0) {};
	\node[vertex] at (1,0,0) {};
	\node[vertex] at (0,1,0) {};
	\node[vertex] at (0,0,1) {};
	\node[vertex] at (1,1,0) {};
	\node[vertex] at (0,1,1/2) {};

	\draw[edge] (1,0,0)  -- (1,1,0);
	\draw[edge] (0,1,0)  -- (1,1,0);
	\draw[edge] (0,1,0)  -- (0,1,1/2);
	\draw[edge] (0,0,1)  -- (0,1,1/2);
	\draw[edge] (0,1,1/2)  -- (1,1,0);
	\draw[edge] (0,0,1)  -- (1,0,0);
	\draw[edge] (0,0,1)  -- (1,1,0);

	\end{scope}
\end{tikzpicture}

%% file: tikz/factor.tex
\begin{tikzpicture}[
	tdplot_main_coords,
	scale = 2.0,
	back/.style={color=Turquoise!30, solid, line width=0.35pt},
	edge/.style={color=Turquoise!25!black, line width=0.46pt},
	distinguishedface/.style={fill=Turquoise!45,draw=none},
	distinguishededge/.style={color=Turquoise!45!black,line width=1.05pt},
	facefar/.style={fill=Turquoise!20,draw=none},
	facemid/.style={fill=Turquoise!27,draw=none},
	facenear/.style={fill=Turquoise!35,draw=none},
	vertex/.style={inner sep=0pt,minimum size=0pt,draw=none,fill=none},
	vertex_f/.style={inner sep=0.8pt,circle,draw=Turquoise!45!black,fill=Turquoise!45!black}]

	\begin{scope}[xshift= -2cm]
	\fill[facefar] (0,0,0) -- (0,1,0) -- (0,1,1) -- (0,0,1) -- cycle;
	\fill[facemid] (0,1,0) -- (1,1,0) -- (0,1,1) -- cycle;
	\fill[distinguishedface] (1,0,0) -- (1,1,0) -- (0,1,1) -- (0,0,1) -- cycle;
	\draw[edge, back] (0,0,0) -- (1,0,0);
	\draw[edge, back] (0,0,0) -- (0,1,0);
	\draw[edge, back] (0,0,0) -- (0,0,1);

	\node[vertex] at (0,0,0) {};
	\node[vertex] at (1,0,0) {};
	\node[vertex] at (0,1,0) {};
	\node[vertex] at (0,0,1) {};
	\node[vertex] at (1,1,0) {};
	\node[vertex] at (0,1,1) {};

	\draw[edge] (1,0,0)  -- (1,1,0);
	\draw[edge] (0,1,0)  -- (1,1,0);
	\draw[edge] (0,1,0)  -- (0,1,1);
	\draw[edge] (0,0,1)  -- (0,1,1);
	\draw[edge] (0,1,1)  -- (1,1,0);
	\draw[edge] (0,0,1)  -- (1,0,0);
	\draw[edge] (0,0,1)  -- (0,1,1);

	\draw[distinguishededge, dashed] (1,1,1)  -- (1,0,1);
	\node[vertex_f] at (1,1,1) {};
	\node[vertex_f] at (1,0,1) {};
	
	\end{scope}

	\begin{scope}[xshift= -1cm]
	
	\node at (1/2,1/2,1/2) {$ = $};
	\end{scope}

	\begin{scope}[xshift= 0cm]

	\draw[edge] (0,1,0)  -- (0,0,0);
	
	\end{scope}
	
	\begin{scope}[xshift= 1cm]
	
	\node at (1/2,1/2,1/2) {$ \times  $};
	\end{scope}

	\begin{scope}[xshift= 1cm]
	\fill[facenear] (0,1,1)  -- (1,1,0) -- (0,1,0) -- cycle;
	\draw[edge] (0,1,1)  -- (1,1,0) -- (0,1,0) -- cycle;
	\draw[distinguishededge] (0,1,1) -- (1,1,0);
	\end{scope}

\end{tikzpicture}

%% file: tikz/Img_poly.tex
\begin{tikzpicture}%
	[x={(-0.196524cm, 0.304119cm)},
		y={(0.980499cm, 0.061010cm)},
		z={(-0.000056cm, 0.950678cm)},
		scale=1.5,
		back/.style={color=Turquoise!25!black!30, solid,
			line width=0.35pt},
		edge/.style={color=Turquoise!25!black, line width=0.46pt},
		facefar/.style={fill=Turquoise!20, draw=none},
		facemid/.style={fill=Turquoise!28, draw=none},
		facenear/.style={fill=Turquoise!35, draw=none},
		distinguishedface/.style={fill=Turquoise!45, draw=none},
		distinguishededge/.style={color=Turquoise!45!black,
			line width=1.05pt}]
%
%
\coordinate (-2, 0, 2) at (-2, 0, 2);
\coordinate (-1, -1, 1) at (-1, -1, 1);
\coordinate (-1, -1, 3) at (-1, -1, 3);
\coordinate (-1, 1, 1) at (-1, 1, 1);
\coordinate (-1, 1, 3) at (-1, 1, 3);
\coordinate (0, -2, 2) at (0, -2, 2);
\coordinate (0, 0, 4) at (0, 0, 4);
\coordinate (0, 2, 2) at (0, 2, 2);
\coordinate (1, -1, 1) at (1, -1, 1);
\coordinate (1, -1, 3) at (1, -1, 3);
\coordinate (1, 1, 1) at (1, 1, 1);
\coordinate (1, 1, 3) at (1, 1, 3);
\coordinate (2, 0, 2) at (2, 0, 2);
\fill[facemid] (2, 0, 2) -- (1, -1, 3) -- (0, 0, 4) -- (1, 1, 3) -- cycle;
\fill[facefar] (2, 0, 2) -- (1, -1, 1) -- (0, -2, 2) -- (1, -1, 3) -- cycle;
\fill[facefar] (2, 0, 2) -- (1, 1, 1) -- (0, 2, 2) -- (1, 1, 3) -- cycle;
\fill[facenear] (1, -1, 1) -- (-1, -1, 1) -- (0, -2, 2) -- cycle;
\fill[distinguishedface] (1, 1, 1) -- (-1, 1, 1) -- (-1, -1, 1) -- (1, -1, 1) -- cycle;
\fill[facemid] (1, 1, 1) -- (-1, 1, 1) -- (0, 2, 2) -- cycle;
\fill[facenear] (2, 0, 2) -- (1, -1, 1) -- (1, 1, 1) -- cycle;
\draw[back] (-2, 0, 2) -- (-1, -1, 1);
\draw[back] (-2, 0, 2) -- (-1, -1, 3);
\draw[back] (-2, 0, 2) -- (-1, 1, 1);
\draw[back] (-2, 0, 2) -- (-1, 1, 3);
\draw[back] (-1, -1, 3) -- (0, -2, 2);
\draw[back] (-1, -1, 3) -- (0, 0, 4);
\draw[back] (-1, 1, 3) -- (0, 0, 4);
\draw[back] (-1, 1, 3) -- (0, 2, 2);
\draw[edge] (-1, -1, 1) -- (-1, 1, 1);
\draw[edge] (-1, -1, 1) -- (0, -2, 2);
\draw[edge] (-1, -1, 1) -- (1, -1, 1);
\draw[edge] (-1, 1, 1) -- (0, 2, 2);
\draw[edge] (-1, 1, 1) -- (1, 1, 1);
\draw[edge] (0, -2, 2) -- (1, -1, 1);
\draw[edge] (0, -2, 2) -- (1, -1, 3);
\draw[edge] (0, 0, 4) -- (1, -1, 3);
\draw[edge] (0, 0, 4) -- (1, 1, 3);
\draw[edge] (0, 2, 2) -- (1, 1, 1);
\draw[edge] (0, 2, 2) -- (1, 1, 3);
\draw[edge] (1, -1, 1) -- (1, 1, 1);
\draw[edge] (1, -1, 1) -- (2, 0, 2);
\draw[edge] (1, -1, 3) -- (2, 0, 2);
\draw[edge] (1, 1, 1) -- (2, 0, 2);
\draw[edge] (1, 1, 3) -- (2, 0, 2);
\draw[distinguishededge]
	(1, 1, 1) -- (-1, 1, 1) -- (-1, -1, 1) -- (1, -1, 1) -- cycle;
\end{tikzpicture}

%% file: tikz/schlegel.tex
\begin{tikzpicture}[
scale = 1.5,
edge/.style={color=Turquoise!25!black, line width=0.46pt},
distinguishededge/.style={color=Turquoise!45!black, line width=1.05pt},
vertex/.style={inner sep=0.75pt,circle,draw=none,
	fill=Turquoise!45!black}]

\node[vertex] at (2,2)     {};
\node[vertex] at (-2,2)     {};
\node[vertex] at (2,-2)     {};
\node[vertex] at (-2,-2)     {};

\node[vertex] at (0,1.5)     {};
\node[vertex] at (1.5,0)     {};
\node[vertex] at (0,-1.5)     {};
\node[vertex] at (-1.5,0)     {};

\node[vertex] at (0.4,0.4)     {};
\node[vertex] at (-0.4,0.4)     {};
\node[vertex] at (0.4,-0.4)     {};
\node[vertex] at (-0.4,-0.4)     {};

\node[vertex] at (0,0)     {};

\fill[fill=Turquoise!10, draw=none]
	(2,2) -- (-2,2) -- (-2,-2) -- (2,-2) -- cycle;
\draw[distinguishededge]
	(2,2) -- (-2,2) -- (-2,-2) -- (2,-2) -- cycle;
\draw[edge] (2,2) -- (0,1.5) -- (-2,2) -- (-1.5,0) -- (-2,-2) -- (0,-1.5) -- (2,-2) -- (1.5,0) -- (2,2);
\draw[edge] (0,1.5) -- (0.4,0.4) -- (1.5,0) -- (0.4,-0.4) -- (0,-1.5) -- (-0.4, -0.4) -- (-1.5,0) -- (-0.4, 0.4) -- cycle;
\draw[edge] (0,0) -- (0.4,0.4);
\draw[edge] (0,0) -- (-0.4,0.4);
\draw[edge] (0,0) -- (0.4,-0.4);
\draw[edge] (0,0) -- (-0.4,-0.4);


%
\end{tikzpicture}

%% file: tikz/graph.tex
\begin{tikzpicture}[
scale = 1.5,
edge/.style={color=Turquoise!25!black, line width=0.46pt},
vertex/.style={inner sep=0.75pt,circle,draw=none,
	fill=Turquoise!45!black},
classone/.style={color=Turquoise!85!black, line width=0.9pt},
classtwo/.style={color=Turquoise!70!black, line width=0.9pt},
classthree/.style={color=Turquoise!55!black, line width=0.9pt},
classfour/.style={color=Turquoise!40!black, line width=0.9pt}]

\node[vertex] at (2,2)     {};
\node[vertex] at (-2,2)     {};
\node[vertex] at (2,-2)     {};
\node[vertex] at (-2,-2)     {};

\node[vertex] at (0,1.5)     {};
\node[vertex] at (1.5,0)     {};
\node[vertex] at (0,-1.5)     {};
\node[vertex] at (-1.5,0)     {};

\node[vertex] at (0.4,0.4)     {};
\node[vertex] at (-0.4,0.4)     {};
\node[vertex] at (0.4,-0.4)     {};
\node[vertex] at (-0.4,-0.4)     {};

\node[vertex] at (0,0)     {};

\fill[fill=Turquoise!10, draw=none]
	(2,2) -- (-2,2) -- (-2,-2) -- (2,-2) -- cycle;
\draw[edge] (2,2) --  (-2,2) -- (-2,-2) -- (2,-2) -- cycle;
\draw[edge] (2,2) -- (0,1.5) -- (-2,2) -- (-1.5,0) -- (-2,-2) -- (0,-1.5) -- (2,-2) -- (1.5,0) -- (2,2);
\draw[edge] (0,1.5) -- (0.4,0.4) -- (1.5,0) -- (0.4,-0.4) -- (0,-1.5) -- (-0.4, -0.4) -- (-1.5,0) -- (-0.4, 0.4) -- cycle;
\draw[edge] (0,0) -- (0.4,0.4);
\draw[edge] (0,0) -- (-0.4,0.4);
\draw[edge] (0,0) -- (0.4,-0.4);
\draw[edge] (0,0) -- (-0.4,-0.4);

\draw[classone] (1,-1.75) to [out = 60, in = -60] (0.95, -0.2);
\draw[classone] (0.95, -0.2) to [out = 120, in = -45] (0.2, 0.2);
\draw[classone] (0.2, 0.2) to [out = 135, in = -15] (-0.2, 0.95);
\draw[classone] (-0.2, 0.95) to [out = 165, in = 15] (-1.75, 1);

\draw[classtwo] (1.75,1) to [out = 150, in = 30] ( 0.2,0.95);
\draw[classtwo] ( 0.2,0.95) to [out = 210, in = 45] ( -0.2,0.2);
\draw[classtwo] ( -0.2,0.2) to [out = 225, in = 75] ( -0.95,-0.2);
\draw[classtwo] ( -0.95,-0.2) to [out = 255, in = 105] ( -1,-1.75);

\draw[classthree] (-1,1.75) to [out = 240, in = 120] (-0.95, 0.2);
\draw[classthree] (-0.95, 0.2) to [out = 300, in = 135] (-0.2, -0.2);
\draw[classthree] (-0.2, -0.2) to [out = 315, in = 165] (0.2, -0.95);
\draw[classthree] (0.2, -0.95) to [out = 345, in = 195] (1.75, -1);

\draw[classfour] (-1.75,-1) to [out = 330, in = 210] (-0.2,-0.95);
\draw[classfour] (-0.2,-0.95) to [out = 390, in = 225] (0.2,-0.2);
\draw[classfour] (0.2,-0.2) to [out = 405, in = 255] (0.95,0.2);
\draw[classfour] (0.95,0.2) to [out = 435, in = 285] (1,1.75);

\end{tikzpicture}